 \numberwithin{equation}{section} 
\theoremstyle{plain}
\newtheorem{thm}{Theorem}[section]
\newtheorem{lem}[thm]{Lemma}
\newtheorem{pro}[thm]{Proposition}
\newtheorem{cor}[thm]{Corollary}
\newtheorem{ex}[thm]{Example}
\newtheorem{de}[thm]{Definition}
\newtheorem{rem}[thm]{Remark}
\def\R {{\Bbb R}}
\def\N {{\Bbb N}}
\def\Z {{\Bbb Z}}
\def\M {{\mathcal M}}
\def\D {{\mathcal D}}
\def\ba{{\bf a}}
\def\va{{\bf a}}
\newcommand{\Leb}{\mathcal{L}}
\newcommand{\bi}{\mathbf{i}}
\newcommand{\bj}{\mathbf{j}}
\newcommand{\ldim}[1]{\underline{\dim}_{\mathrm{#1}}}
\newcommand{\udim}[1]{\overline{\dim}_{\mathrm{#1}}}
\newcommand{\Dim}[1]{{\dim}_{\mathrm{#1}}}
\newcommand{\diam}{\mathrm{diam}}
\DeclareMathOperator*{\esssup}{ess\,sup}
\DeclareMathOperator*{\essinf}{ess\,inf}
\begin{document}
\baselineskip 13.7pt
\title{Dimensions of projected sets and measures on typical self-affine sets}

\author{De-Jun FENG}
\address{
Department of Mathematics\\
The Chinese University of Hong Kong\\
Shatin,  Hong Kong\\
}
\email{djfeng@math.cuhk.edu.hk}

\author{Chiu-Hong Lo}
\address{
Department of Mathematics\\
The Chinese University of Hong Kong\\
Shatin,  Hong Kong\\
}
\email{chlo@math.cuhk.edu.hk}

\author{Cai-Yun Ma}
\address{
Department of Mathematics\\
The Chinese University of Hong Kong\\
Shatin,  Hong Kong\\
}
\email{cyma@math.cuhk.edu.hk}
\keywords{}

\keywords{Affine iterated function systems, coding maps, self-affine sets, projections of Borel sets and measures, local dimensions, exact dimensionality, fractal dimensions}
\thanks {
2020 {\it Mathematics Subject Classification}: 28A80, 37C45, 31A15, 49Q15, 60B05}


\date{}

\begin{abstract}
Let $T_1,\ldots, T_m$ be a family of $d\times d$   invertible real matrices with $\|T_i\|<1/2$ for $1\leq i\leq m$.
 For $\ba=(a_1,\ldots, a_m)\in \R^{md}$, let  $\pi^{\ba}:\; \Sigma=\{1,\ldots, m\}^\N\to \R^d$ denote the coding map associated with the
  affine IFS $\{T_ix+a_i\}_{i=1}^m$.  We show that for every Borel probability measure $\mu$ on $\Sigma$, each of the following dimensions (lower and upper Hausdorff dimensions,  lower and upper packing dimensions) of $\pi^\ba_*\mu$ is constant for $\mathcal L^{md}$-a.e.~$\ba\in \R^{md}$, where $\pi^\ba_*\mu$ stands for  the push-forward of $\mu$ by $\pi^\ba$. In particular, we give a necessary and sufficient condition on $\mu$ so that $\pi^\ba_*\mu$ is exact dimensional for $\mathcal L^{md}$-a.e.~$\ba\in \R^{md}$. Moreover, for every analytic set $E\subset \Sigma$, each of the Hausdorff, packing, lower and upper box-counting dimensions of $\pi^{\ba}(E)$ is constant for  $\mathcal L^{md}$-a.e.~$\ba\in \R^{md}$.  Formal dimension formulas of these projected measures and sets are given. The Hausdorff dimensions of exceptional sets are estimated.
  \end{abstract}

\maketitle
\section{Introduction}
\label{S1}

In this paper, we study various dimensions of the projections of sets and measures under the coding maps associated with typical affine iterated function systems.

By an {\it affine iterated function system} (affine IFS) on $\R^d$  we mean a finite  family ${\mathcal F}=\{f_i\}_{i=1}^m$ of affine mappings from $\R^d$ to $\R^d$,
taking  the form
\begin{equation*}
\label{e-form}
f_i(x)=T_ix+a_i,\qquad i=1,\ldots, m,
\end{equation*}
where $T_i$ are contracting $d\times d$ invertible real  matrices and $a_i\in \R^d$.  It is  well known \cite{Hut81} that  there  exists a unique non-empty compact set $K\subset \R^d$ such that
$$
K=\bigcup_{i=1}^m f_i(K).
$$
We call $K$ the {\it attractor} of $\mathcal F$, or the {\em self-affine set} generated by ${\mathcal F}$.

In what follows, we let $T_1,\ldots, T_m$ be a fixed family of contracting $d\times d$ invertible real  matrices. Let $(\Sigma,\sigma)$ be the one-sided full shift over the alphabet $\{1,\ldots, m\}$, that is, $\Sigma=\{1,\ldots, m\}^\N$ and $\sigma:\Sigma\to \Sigma$ is  the left shift map.  Endow $\Sigma$ with the  product topology and let $\mathcal P(\Sigma)$ denote the space of  Borel probability measures on $\Sigma$.

For $\va = (a_1, \ldots, a_m) \in \R^{md}$, let $\pi^\ba: \Sigma \to \R^d$ be the coding map associated with the IFS $\{ f_i^{\ba}(x) = T_ix + a_i\}_{i=1}^m$, here we write $f_i^\ba$ instead of $f_i$ to emphasize its dependence of $\ba$. That is,
\begin{equation}
\label{e-pia}
\pi^\ba (\bi) = \lim_{n \to \infty} f^{\va}_{i_1} \circ \cdots \circ f^{\va}_{i_n}(0)
\end{equation}
for $\bi =(i_n)_{n=1}^\infty\in \Sigma$. It is well known \cite{Hut81} that the image $\pi^\ba(\Sigma)$ of $\Sigma$ under $\pi^\ba$ is exactly the attractor of $\{f^\ba_i\}_{i=1}^m$. For $\mu\in \mathcal P(\Sigma)$, let $\pi^\ba_*\mu$ denote the projection of $\mu$ under $\pi^\ba$, that is, $\pi^\ba_*\mu$ is the Borel probability measure on $\R^d$ defined by $$\pi^\ba_*\mu(A)=\mu((\pi^\ba)^{-1}(A))$$
for every Borel set $A\subset \R^d$. This measure is also called the {\it push-forward of $\mu$ by $\pi^\ba$}. In the special case when $\mu$ is the Bernoulli product measure on $\Sigma$ generated by a probability vector $(p_1,\ldots, p_m)$, $\nu:=\pi^\ba_*\mu$ is the unique Borel probability measure on $\R^d$ satisfying
$$
\nu=\sum_{i=1}^m p_i\nu\circ (f_i^\ba)^{-1},
$$
which is called the {\it self-affine measure} associated with $\{f^\ba_i\}_{i=1}^m$ and $(p_1,\ldots, p_m)$. The goal of this paper is to study  dimensional properties of the projections of general Borel sets and measures under $\pi^\ba$.

Let us first introduce some necessary notation and definitions about various dimensions of sets and measures. For  $A\subset \R^d$, we use $\dim_{\rm H}A$, $\dim_{\rm P}A$, $\underline{\dim}_{\rm B}A$ and $ \overline{\dim}_{\rm B}A$ to denote the Hausdorff, packing, lower and upper box-counting dimensions of $A$, respectively (see e.g.~\cite{Fal03, Mattila1995} for the definitions).  If $\underline{\dim}_{\rm B}A= \overline{\dim}_{\rm B}A$ we use $\dim_{\rm B}A$ to denote the common value and call it the box-counting dimension of $A$.

Recall that for a probability measure $\eta$ on $\R^d$,  the {\it local upper and lower dimensions} of $\eta$ at $x\in \R^d$ are defined respectively by
$$\overline{\dim}_{\rm loc}(\eta, x)=\limsup_{r\to 0}\frac{\log \eta (B(x,r))}{\log r},\quad \underline{\dim}_{\rm loc}(\eta, x)=\liminf_{r\to 0}\frac{\log \eta (B(x,r))}{\log r},$$
 where $B(x,r)$ stands for  the closed ball of radius $r$ centered at $x$. If $$\overline{\dim}_{\rm loc}(\eta, x)=\underline{\dim}_{\rm loc} (\eta, x),$$ the common value is denoted as $\dim_{\rm loc}(\eta,x)$ and is called the {\it local dimension} of $\eta$ at $x$. We say that  $\eta$  is  {\it exact dimensional}
if there exists a constant $C$ such that the  local dimension
$\dim_{\rm loc}(\eta, x)$
exists and equals $C$ for $\eta$-a.e.~$x\in \R^d$. It is well known that if $\eta$ is  exact dimensional, then the lower and upper Hausdorff/packing dimensions of $\eta$  coincide and are equal to the involved constant $C$, and so are some other notions of dimension (e.g.~ entropy dimension); see \cite{Young1982, Fal-technique}. Recall that the lower and upper Hausdorff/packing dimensions of $\eta$ are defined by
\begin{equation*}
\begin{split}
\ldim{H} \eta &= \essinf_{x\in {\rm spt}(\eta)} \underline{\dim}_{\rm loc} (\eta, x) , \quad
\udim{H} \eta = \esssup_{x\in {\rm spt}(\eta)} \underline{\dim}_{\rm loc} (\eta, x),\\
\ldim{P} \eta &= \essinf_{x\in {\rm spt}(\eta)}\overline{\dim}_{\rm loc} (\eta, x), \quad
\udim{P} \eta = \esssup_{x\in {\rm spt}(\eta)}\overline{\dim}_{\rm loc} (\eta, x).
\end{split}
\end{equation*}
Equivalently these dimensions of measures can be given in terms of dimensions of sets; see e.g.~ \cite{FLR02}:
\begin{align*}
\ldim{H} \eta &= \inf \{\Dim{H}A: A \text{ is a Borel set with }\eta(A)>0\}, \\
\udim{H} \eta &= \inf \{ \Dim{H} A: A \text{ is a Borel set with }\eta( A)=1 \}, \\
\ldim{P} \eta &= \inf \{ \Dim{P} A: A \text{ is a Borel set with }\eta(A)>0\}, \\
\udim{P} \eta &= \inf \{ \Dim{P} A: A \text{ is a Borel set with }\eta( A)=1 \}.
\end{align*}

In his seminal paper \cite{Fal88},  Falconer  introduced a quantity associated to the matrices $T_1,\ldots, T_m$, nowadays usually called the {\em affinity dimension} $\dim_{\rm AFF}(T_1,\ldots, T_m)$, which is always an upper bound for the upper box-counting dimension of $\pi^\ba(\Sigma)$, and such that when
\begin{equation}
\label{e-norm}
\|T_i\|<\frac{1}{2} \quad \mbox{ for all } 1\leq i\leq m,
\end{equation}
then for ${\mathcal L}^{md}$-a.e.~$\ba$,   $$\dim_{\rm H}\pi^\ba(\Sigma)=\dim_{\rm B}\pi^\ba(\Sigma)=\min \{d, \dim_{\rm AFF}(T_1,\ldots, T_m)\}.$$
 In fact, Falconer proved this with $1/3$ as the upper bound on the norms; it was subsequently shown by Solomyak \cite{Sol98} that $1/2$ suffices.

The affinity dimension $\dim_{\rm AFF}(T_1,\ldots, T_m)$ is defined as follows. Let ${\rm GL}_d(\R)$ denote the collection of all $d\times d$  invertible real matrices. For $T\in {\rm GL}_d(\R)$, let $$\alpha_1(T)\geq\cdots\geq \alpha_d(T)$$ denote the  singular values of $T$. Following \cite{Fal88},  for $s\geq 0$ we define the {\it singular value function} $\phi^s:\; {\rm GL}_d(\R)\to [0,\infty)$ as
\begin{equation}
\label{e-singular}
\phi^s(T)=\left\{
\begin{array}
{ll}
\alpha_1(T)\cdots \alpha_k(T) \alpha_{k+1}^{s-k}(T) & \mbox{ if }0\leq s< d,\\
\det(T)^{s/d} & \mbox{ if } s\geq d,
\end{array}
\right.
\end{equation}
where $k=[s]$ is the integral part of $s$. Then the affinity dimension of the tuple $(T_1,\ldots, T_m)$ is defined by
\begin{equation}\label{e-aff}
\dim_{\rm AFF}(T_1,\ldots, T_m)=\inf\left\{s\geq 0:\; \lim_{n\to \infty}\frac{1}{n} \log \sum_{i_1,\ldots, i_n=1}^m\phi^s(T_{i_1}\cdots T_{i_n})\leq 0\right\}.
\end{equation}

Further developments have been made after the work of Falconer and Solomyak.   In \cite{Kae04} K\"{a}enm\"{a}ki showed that, under the assumption \eqref{e-norm}, there exists a $\sigma$-invariant ergodic measure $\mu$ on $\Sigma$ such that $\underline{\dim}_{\rm H} \pi^\ba_*\mu={\dim}_{\rm H} \pi^\ba(\Sigma)$ for ${\mathcal L}^{md}$-a.e.~$\ba$.  Under the same norm assumption, Jordan, Pollicott and Simon \cite{JPS07} further showed that for every $\sigma$-invariant ergodic measure $\mu$ on $\Sigma$ and for ${\mathcal L}^{md}$-a.e.~$\ba$,  the lower and upper Hausdorff dimensions of $\pi^\ba_*\mu$ coincide and are equal to the so-called Lyapunov dimension of $\mu$; as pointed out by  Jordan \cite{Jor11} and Rossi \cite{Ros14}, in this case $\pi^\ba_*\mu$ is exact dimensional for ${\mathcal L}^{md}$-a.e.~$\ba$. Later, K\"{a}enm\"{a}ki and Vilppolainen \cite{KaVi10} proved that under the assumption \eqref{e-norm}, for every compact subset $E$ of $\Sigma$ with $\sigma E\subset E$ and for ${\mathcal L}^{md}$-a.e.~$\ba$, the Hausdorff and box-counting dimensions of $\pi^\ba(E)$ coincide and are equal to the root of a pressure function. In \cite{JJKKSS14} J\"{a}rvenp\"{a}\"{a} et al.~further proved that for every compact subset $E$ of $\Sigma$, $\dim_{\rm H}\pi^\ba(E)=\min\{d, \dim_{\mathcal M}E\}$ almost surely under the same norm assumption, where $\dim_{\mathcal M}E$ is defined as in \eqref{e-dimM}. Moreover, they obtained some dimensional results for a general class of deterministic or random affine code tree fractals; see also \cite{JJLS2016, JJWW2017}. In \cite{Fami08} Falconer and Miao estimated the Hausdorff dimensions of  exceptional sets of $\ba$ for which $\dim_{\rm H} \pi^\ba(\Sigma)$ takes an exceptionally small value.   Besides these results, for almost all $\ba$ the constancy of the $L^q$ dimensions ($1<q\leq 2$) was also proved  for the projections of  general Borel probability measures under $\pi^\ba$  \cite{Fal99, Fal10}, and the multifractal structure of the projections of Bernoulli product measures under $\pi^\ba$ was investigated in certain cases \cite{BaFe13}.

Inspired by the above developments, in this paper we aim to further investigate the dimensional properties of the projections of arbitrary Borel sets and measures under $\pi^\ba$ for almost all $\ba$.
In particular, we would like to study under which general condition on a given $\mu\in \mathcal P(\Sigma)$, the projection $\pi^\ba_*\mu$ is exact dimensional for  almost all $\ba$. It is known \cite{Fen19} that when $\mu$ is $\sigma$-invariant and  ergodic,  $\pi^\ba_*\mu$ is exact dimensional for every  $\ba\in \R^{md}$; see also \cite{BaKa17,FeHu09} for some earlier results. So it remains to investigate the more general case.

 To state our main results, we still need to introduce some notation and definitions. Let $\mu\in \mathcal P(\Sigma)$. For $x = (x_i)_{i =1}^\infty \in \Sigma$ and $n \in \N$, we write $x|n:\; =x_1 \cdots x_n$ and set $S_n(\mu, x)$ to be the unique number $t\in [0,\infty]$ such that
\begin{equation}
\label{e-2.2}
\phi^t(T_{x|n})= \mu ([x_1 \cdots x_n]),
\end{equation}
where $T_{x|n}:=T_{x_1}\cdots T_{x_n}$ and $[x_1 \cdots x_n]: = \{ (y_i)_{i =1}^\infty : y_i = x_i \text{ for } i = 1, \dots, n\}$.  Next we define
\begin{equation}
\label{e-Smux}
S(\mu, x)= \liminf_{n\to \infty} S_n(\mu,x),\qquad x\in \Sigma,
\end{equation}
and
\begin{equation}
\label{e-Gammamu}
\underline{S}(\mu)  = \essinf_{x\in {\rm spt}\mu} S(\mu, x), \quad
\overline{S}(\mu)  = \esssup_{x\in {\rm spt}\mu} S(\mu, x).
\end{equation}

   For $x,y\in \Sigma$,  let $x\wedge y$ denote  the common initial segment of $x$ and $y$. If $x\neq y$, we let  $\alpha_k(T_{x\wedge y})$ denote the $k$-th singular value of $T_{x\wedge y}$,  $k=1,\ldots, d$. Here for the empty word $\varepsilon$, $T_\varepsilon$ stands for  the $d\times d$ identity matrix.
  Following \cite{JPS07}, we define a function
   $Z_{x\wedge y}: (0,\infty)\to (0,1]$ by
\begin{equation}
\label{ef-4.2}
Z_{x \wedge y}(r)=
\left\{
\begin{array}{ll}
\displaystyle \prod_{k=1}^d  \frac{ \min \{r,\; \alpha_k(T_{x\wedge y})\} } {\alpha_k(T_{x\wedge y})}, & \mbox{ if }x\neq y,\\
1, & \mbox{ otherwise.}
\end{array}
\right.
\end{equation}
It is easy to check that
\begin{equation}
\label{e-Zre}
\frac{1}{Z_{x \wedge y}(r)}
=\prod_{k=1}^d
\frac{ \max \{r,\; \alpha_k(T_{x\wedge y})\} } {r},  \quad \mbox{ if }x\neq y.
\end{equation}
For $x\in \Sigma$ and  $r>0$, define
 \begin{equation}
 \label{e-Gmur}
  G_\mu(x, r)=\int Z_{x\wedge y}(r) \; d\mu(y),
    \end{equation}
and
 \begin{equation}
 \label{e-D1}
 D(\mu, x)=\limsup_{r\to 0} \frac{\log G_\mu(x, r)}{\log r}.
 \end{equation}
Write
\begin{equation}
\label{e-D1'}
 \underline{D}(\mu)=\essinf_{x\in {\rm spt}(\mu)}D(\mu, x),\quad  \overline{D}(\mu)=\esssup_{x\in {\rm spt}(\mu)} D(\mu, x).
 \end{equation}

Now we are ready to formulate our first result.

\begin{thm}
\label{thm-lower-upper}
 Let $\mu\in \mathcal P(\Sigma)$. Then the following properties hold.
\begin{itemize}
\item[(i)] For every $\ba\in \R^{md}$,
\begin{align*}
    \underline{\dim}_{\rm loc}(\pi^\ba_*\mu, \pi^\ba x)&\leq \min\{S(\mu,x),d\} \; \mbox{ and }\\
    \overline{\dim}_{\rm loc}(\pi^\ba_*\mu, \pi^\ba x)&\leq D(\mu,x)
  \end{align*}
for $\mu$-a.e.~$x\in \Sigma$.
  Consequently for every $\ba\in \R^{md}$,
  \begin{align*}
   \ldim{H} \pi^\ba_*\mu &\leq  \min \{ \underline{S}(\mu), d\},\quad
\udim{H} \pi^\ba_*\mu \leq \min \{ \overline{S}(\mu), d \},\\
\ldim{P} \pi^\ba_*\mu &\leq  \underline{D}(\mu),
\quad\qquad\quad\;
\udim{P} \pi^\ba_*\mu \leq \overline{D}(\mu).\\
\end{align*}
\item[(ii)] Assume that $\|T_i\|<1/2$ for $1\leq i\leq m$. Then for $\mathcal L^{md}$-a.e.~$\ba\in \R^{md}$,
    \begin{align*}
    \underline{\dim}_{\rm loc}(\pi^\ba_*\mu, \pi^\ba x)&= \min\{S(\mu,x),d\} \quad \mbox{and}\\
    \overline{\dim}_{\rm loc}(\pi^\ba_*\mu, \pi^\ba x)&= D(\mu,x)
\end{align*}
for $\mu$-a.e.~$x\in \Sigma$. Consequently for $\mathcal L^{md}$-a.e.~$\ba\in \R^{md}$,
   \begin{align*}
\ldim{H} \pi^\ba_*\mu &=  \min \{ \underline{S}(\mu), d\},\quad  \udim{H} \pi^\ba_*\mu = \min \{ \overline{S}(\mu), d \},\\
\ldim{P} \pi^\ba_*\mu &=  \underline{D}(\mu),
\quad\qquad\quad\;
\udim{P} \pi^\ba_*\mu = \overline{D}(\mu).
\end{align*}
\end{itemize}
\end{thm}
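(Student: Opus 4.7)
The plan is to derive all four global dimension identities in (ii) from pointwise local-dimension identities
\[
\underline{\dim}_{\rm loc}(\pi^\ba_*\mu,\pi^\ba x)=\min\{S(\mu,x),d\},\qquad \overline{\dim}_{\rm loc}(\pi^\ba_*\mu,\pi^\ba x)=D(\mu,x),
\]
holding for $\mathcal L^{md}\otimes\mu$-a.e.~$(\ba,x)$, via the $\essinf$/$\esssup$ formulas recalled in the introduction. The upper bounds in part (i) are deterministic, coming from natural covers by cylinders; the matching lower bounds in part (ii) follow from Fubini over the compact parameter cube $U:=[0,1]^{md}$ combined with a Falconer--Solomyak transversality estimate.

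For part (i), the diameter bound $\pi^\ba([x|n])\subset B(\pi^\ba x,\,C\alpha_1(T_{x|n}))$ (since $\pi^\ba(\Sigma)$ is compact) together with $\mu([x|n])=\phi^{S_n(\mu,x)}(T_{x|n})\le \alpha_1(T_{x|n})^{S_n(\mu,x)}$ gives $\underline{\dim}_{\rm loc}(\pi^\ba_*\mu,\pi^\ba x)\le S(\mu,x)$ after taking logs, dividing by $\log\alpha_1(T_{x|n})\to-\infty$, and passing to $\liminf_n$; the additional cap at $d$ is the classical fact that the lower local dimension of a Borel probability measure on $\R^d$ is at most $d$ almost everywhere. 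For $\overline{\dim}_{\rm loc}(\pi^\ba_*\mu,\pi^\ba x)\le D(\mu,x)$ I plan to prove the deterministic inequality
\begin{equation*}
\pi^\ba_*\mu(B(\pi^\ba x,\,Cr))\,\geq\, c\,G_\mu(x,r),\qquad r>0,
\end{equation*}
by splitting $\Sigma=\{y:|x\wedge y|\ge n_0\}\cup\{y:|x\wedge y|<n_0\}$, where $n_0=n_0(x,r)$ is minimal with $\alpha_1(T_{x|n_0})\le r/C$: on the first set $Z_{x\wedge y}(r)=1$ and $\pi^\ba y\in B(\pi^\ba x,Cr)$ automatically, while the small-$n$ contributions to $G_\mu$ are controlled by $\mu([x|n_0])$ through singular-value bookkeeping. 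Taking $\log$ and dividing by $\log r$ then yields $\overline{\dim}_{\rm loc}\le D(\mu,x)$.

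For part (ii), the key input is the transversality estimate
\begin{equation*}
\int_{U}\mathbf{1}_{|\pi^\ba x-\pi^\ba y|\le r}\,d\ba \,\leq\, C_d\,Z_{x\wedge y}(r),
\end{equation*}
obtained by conditioning on all entries of $\ba$ except $a_{x_{n+1}}-a_{y_{n+1}}$ (where $n=|x\wedge y|$) and applying the singular value decomposition of $T_{x\wedge y}$; the hypothesis $\|T_i\|<1/2$ is what keeps the change of variables admissible on $U$, following Solomyak's refinement of Falconer's $1/3$. Fubini in $y$ then gives $\int_{U}\pi^\ba_*\mu(B(\pi^\ba x,r))\,d\ba\le C_d\,G_\mu(x,r)$. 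For the packing side, apply this along $r_j=2^{-j}$ with Markov's inequality and Borel--Cantelli (plus a countable intersection in $\epsilon>0$) to obtain $\pi^\ba_*\mu(B(\pi^\ba x,r))\le r^{-\epsilon}G_\mu(x,r)$ for all small $r$, $\mathcal L^{md}$-a.e.~$\ba$, $\mu$-a.e.~$x$; taking $\log$ and $\epsilon\downarrow 0$ delivers $\overline{\dim}_{\rm loc}(\pi^\ba_*\mu,\pi^\ba x)\ge D(\mu,x)$. For the Hausdorff side, use the dual energy estimate $\int_{U}|\pi^\ba x-\pi^\ba y|^{-s}\,d\ba\lesssim 1/\phi^s(T_{x\wedge y})$: for any $s<\min\{S(\mu,x),d\}$ one has $\mu([x|n])\le\phi^s(T_{x|n})$ eventually, and the resulting series $\sum_n\mu([x|n])/\phi^s(T_{x|n})$ converges geometrically thanks to $\|T_i\|<1/2$, so $\int|\pi^\ba x-\pi^\ba y|^{-s}\,d\mu(y)<\infty$ for $\mathcal L^{md}$-a.e.~$\ba$ and hence $\underline{\dim}_{\rm loc}\ge s$.

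The hardest step will be the absolute-continuity regime $S(\mu,x)>d$, where the $s$-energy argument only reaches $s<d$ and one still needs $\underline{\dim}_{\rm loc}(\pi^\ba_*\mu,\pi^\ba x)\ge d$ for $\mathcal L^{md}$-a.e.~$\ba$, i.e., a Marstrand-type absolute-continuity statement for $\pi^\ba_*\mu$. This forces working at or past the kink of $\phi^s$ at $s=d$, where the prefactor changes from $\alpha_1\cdots\alpha_k\alpha_{k+1}^{s-k}$ to $\det(T)^{s/d}$; handling this regime cleanly is where $\|T_i\|<1/2$ is used most sharply. Once both local-dimension identities are established on a full $\mathcal L^{md}\otimes\mu$-measure set, the stated identities for $\ldim{H}\pi^\ba_*\mu,\,\udim{H}\pi^\ba_*\mu,\,\ldim{P}\pi^\ba_*\mu,\,\udim{P}\pi^\ba_*\mu$ follow from the $\essinf$/$\esssup$ characterization.
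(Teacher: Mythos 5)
Your part (ii) sketch is essentially the paper's potential-theoretic approach (energy integrals and the Falconer--Solomyak transversality bound, combined via Fubini over a bounded parameter set), with two small misattributions: the geometric convergence of $\sum_n \mu([x|n])/\phi^s(T_{x|n})$ comes from $s<S(\mu,x)$ (so eventually $\mu([x|n])\le\phi^{s+\delta}(T_{x|n})$, giving a ratio $\lesssim\alpha_+^{n\delta}$), not from $\|T_i\|<1/2$; and the case $S(\mu,x)>d$ is not a separate obstacle, since $\underline{\dim}_{\rm loc}\ge s$ for every non-integral rational $s\in(0,d)$ already yields $\underline{\dim}_{\rm loc}\ge d=\min\{S(\mu,x),d\}$ — no Marstrand-type absolute-continuity step is required here (that is used only for the ${\mathcal L}^d(\pi^\ba(E))>0$ assertion of Theorem~\ref{thm-main2}).

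However, both deterministic upper bounds you propose in part (i) have genuine gaps. For $\underline{\dim}_{\rm loc}\le S(\mu,x)$: using balls of radius $\propto\alpha_1(T_{x|n})$, the bound $\pi^\ba_*\mu(B(\pi^\ba x,C\alpha_1(T_{x|n})))\ge\mu([x|n])=\phi^{S_n(\mu,x)}(T_{x|n})$ gives only $\underline{\dim}_{\rm loc}\le\liminf_n\frac{\log\phi^{S_n}(T_{x|n})}{\log\alpha_1(T_{x|n})}$, and since $\phi^s(T)\le\alpha_1(T)^s$ this $\liminf$ is $\ge S(\mu,x)$, strictly so when the singular values of $T_{x|n}$ are genuinely non-conformal. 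Concretely, take $d=2$, all $T_i={\rm diag}(1/3,1/9)$, $m=27$ and $\mu$ the uniform Bernoulli measure: then $S_n(\mu,x)\equiv 2$ but $\log\mu([x|n])/\log\alpha_1(T_{x|n})\equiv 3$, so your argument proves only $\underline{\dim}_{\rm loc}\le 3$, not $\le 2$. The paper's Lemma~\ref{LemJ} instead uses balls of radius $c\,\alpha_{\ell+1}(T_{x|n})$ with $\ell=\lfloor S(\mu,x)\rfloor$ and lower-bounds the ball measure by $\mu([x|n])/N_\ell(x|n)$, where $N_\ell(x|n)=\alpha_1\cdots\alpha_\ell\alpha_{\ell+1}^{-\ell}(T_{x|n})$ counts the balls needed to cover the ellipsoid $f^\ba_{x|n}(B)$; this covering-plus-pigeonhole argument yields only an a.e.~statement with a $(1-\epsilon)^n$ loss after Borel--Cantelli. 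Similarly, your deterministic inequality $\pi^\ba_*\mu(B(\pi^\ba x,Cr))\ge c\,G_\mu(x,r)$ is false: the contribution to $G_\mu(x,r)$ from $\{y:|x\wedge y|<n_0\}$ is \emph{not} controlled by $\mu([x|n_0])$ in general — for instance with $d=1$, $T_1=T_2=1/3$, $\mu$ the Bernoulli $(1/4,3/4)$ measure and $x=(1,1,\dots)$, one has $G_\mu(x,3^{-n_0})\sim 3^{-n_0}$ while $\mu([x|n_0])=4^{-n_0}\ll 3^{-n_0}$. The paper establishes the comparison only up to an $r^\epsilon$-loss and a polynomial factor, only for $\mu$-a.e.~$x$, via a dyadic-cube covering argument with Borel--Cantelli (Lemma~\ref{lem-1.6} and Corollary~\ref{lem-1.7}); this is flagged in the introduction as a new technical device, and it is exactly what your split misses.
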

This result states in particular that under the assumption \eqref{e-norm}, for every $\mu\in \mathcal P(\Sigma)$, each of the following dimensions (lower and upper Hausdorff/packing dimensions) of $\pi^\ba_*\mu$ is constant almost surely. As a direct consequence, we have the following.

 \begin{thm}
 \label{thm-5.1}
 Let $\mu\in \mathcal P(\Sigma)$. Assume that $\|T_i\|<1/2$ for $1\leq i\leq m$. Then $\pi^\ba_*\mu$ is exact dimensional for $\mathcal L^{md}$-a.e.~$\ba\in \R^{md}$ if and only if one of the following two conditions holds:
 \begin{itemize}
 \item[(i)]
 $\liminf_{n\to \infty} S_n(\mu,x)\geq d$ for $\mu$-a.e.~$x\in \Sigma$.
 \item[(ii)]
 There exists $s\in [0,d)$ such that
 \begin{equation}
 \label{e-equv}
 \liminf_{n\to \infty} S_n(\mu,x)=s \mbox{  and }\limsup_{r\to 0} \frac{\log G_\mu(x,r)}{\log r}=s \quad \mbox{ for $\mu$-a.e.~$x\in \Sigma$}.
 \end{equation}
 \end{itemize}
 \end{thm}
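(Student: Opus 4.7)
The plan is to derive this theorem as a direct consequence of Theorem~\ref{thm-lower-upper}(ii), by simply repackaging it in terms of the exact dimensionality criterion. Recall that a measure $\eta$ on $\R^d$ is exact dimensional with local dimension $C$ if and only if $\underline{\dim}_{\rm loc}(\eta, y) = \overline{\dim}_{\rm loc}(\eta, y) = C$ for $\eta$-a.e.\ $y$. By Theorem~\ref{thm-lower-upper}(ii), for $\mathcal L^{md}$-a.e.\ $\ba\in \R^{md}$,
\begin{equation*}
\underline{\dim}_{\rm loc}(\pi^\ba_*\mu, \pi^\ba x) = \min\{S(\mu,x),d\}, \qquad \overline{\dim}_{\rm loc}(\pi^\ba_*\mu, \pi^\ba x) = D(\mu,x)
\end{equation*}
for $\mu$-a.e.\ $x \in \Sigma$. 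Pushing back via $\pi^\ba$, exact dimensionality of $\pi^\ba_*\mu$ with local dimension $C$ translates into $\min\{S(\mu,x), d\} = D(\mu,x) = C$ for $\mu$-a.e.\ $x$. Since both functions $S(\mu,\cdot)$ and $D(\mu,\cdot)$ are manifestly independent of $\ba$, the proof reduces to characterizing when they are $\mu$-a.e.\ equal to a common constant.

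For the sufficiency direction, I would first record the universal bound $\overline{D}(\mu) \leq d$. This follows by applying Theorem~\ref{thm-lower-upper}(ii) for any single generic $\ba$: it gives $\udim{P}\pi^\ba_*\mu = \overline{D}(\mu)$, and since $\pi^\ba_*\mu$ is supported in $\R^d$ its upper packing dimension is at most $d$. Hence $D(\mu,x) \leq d$ for $\mu$-a.e.\ $x$. Under assumption (i), $\min\{S(\mu,x),d\} = d$ $\mu$-a.e.; combined with the trivial chain $\min\{S(\mu,x),d\} \leq D(\mu,x) \leq d$ this forces $D(\mu,x) = d$ $\mu$-a.e., whence $\pi^\ba_*\mu$ is exact dimensional with local dimension $d$ for a.e.\ $\ba$. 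Under assumption (ii) the identities $S(\mu,x) = D(\mu,x) = s$ directly give $\min\{S(\mu,x),d\} = D(\mu,x) = s$ $\mu$-a.e., so exact dimensionality is immediate.

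For the necessity direction, suppose $\pi^\ba_*\mu$ is exact dimensional with common local dimension $C(\ba)$ for $\mathcal L^{md}$-a.e.\ $\ba$. Theorem~\ref{thm-lower-upper}(ii) then forces $\min\{S(\mu,x),d\} = D(\mu,x) = C(\ba)$ for $\mu$-a.e.\ $x$. Since the left-hand side is independent of $\ba$, there is a constant $C$ with $C(\ba) = C$ for almost every $\ba$. If $C = d$, then $\min\{S(\mu,x),d\} = d$ $\mu$-a.e., which is precisely $\liminf_{n\to\infty} S_n(\mu,x) \geq d$ $\mu$-a.e., giving (i). If $C < d$, then $\min\{S(\mu,x),d\} = C$ forces $S(\mu,x) = C$ $\mu$-a.e., and simultaneously $D(\mu,x) = C$ $\mu$-a.e.; this is exactly (ii) with $s = C$.

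No substantive obstacle is anticipated: the entire argument is a translation of Theorem~\ref{thm-lower-upper}(ii) into the language of exact dimensionality. The only mild subtlety is the case split $C = d$ versus $C < d$ and the need to observe, using the universal bound on packing dimensions in $\R^d$, that the function $D(\mu,\cdot)$ is $\mu$-a.e.\ bounded above by $d$ so that condition (i) automatically produces a matching upper local dimension.
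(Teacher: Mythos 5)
Your proof is correct and follows essentially the same route as the paper: both derive the theorem as a direct repackaging of Theorem \ref{thm-lower-upper}(ii), using the observation that $\overline{D}(\mu)\leq d$ (since the upper packing dimension of a measure on $\R^d$ is at most $d$) and that $\min\{S(\mu,x),d\}\leq D(\mu,x)$ for $\mu$-a.e.\ $x$ (because lower local dimension never exceeds upper local dimension). The only cosmetic difference is that you work directly with the pointwise local-dimension equalities while the paper phrases the equivalence in terms of the aggregated quantities $\underline{S}(\mu)$, $\overline{S}(\mu)$, $\overline{D}(\mu)$ via $\ldim{H}$, $\udim{H}$, $\udim{P}$ and the criterion $\min\{\underline{S}(\mu),d\}=\overline{D}(\mu)$; one small caveat is that the inequality $\min\{S(\mu,x),d\}\leq D(\mu,x)$ is not ``trivial'' from the definitions of $S$ and $D$, but it is a valid consequence of the local-dimension formulas you are already invoking.
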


The above theorem gives a complete characterisation of those measures $\mu\in \mathcal P(\Sigma)$ so that
$\pi^\ba_*\mu$ is exact dimensional for almost every $\ba$.  However, the condition \eqref{e-equv} is not easily checked since it involves the quantity
$\limsup_{r\to 0} \frac{\log G_\mu(x,r)}{\log r}$, which is usually difficult to be estimated.
Nevertheless, we are able to provide the following.
 \begin{thm}
 \label{thm-5.2}
 Let $\mu\in \mathcal P(\Sigma)$. Then the following properties hold.
 \begin{itemize}
 \item[(i)] For each $s\in [0,d)$,  \eqref{e-equv} holds if
 \begin{equation}
 \label{e-equv1}
 \lim_{n\to \infty} S_n(\mu,x)=s\quad \mbox{ for $\mu$-a.e.~$x\in \Sigma$}.
 \end{equation}
 \item[(ii)] For every $s\in [0,d)\backslash \N$,  \eqref{e-equv} holds if and only if  \eqref{e-equv1} holds.
 \item[(iii)] Suppose that all $T_i$ ($i=1,\ldots, m$) are scalar multiples of orthogonal matrices. Then for each $s\in [0,d)$, \eqref{e-equv} holds if and only if  \eqref{e-equv1} holds.
   \end{itemize}
 \end{thm}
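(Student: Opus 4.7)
My plan is to prove each part by reducing to estimates on $G_\mu(x,r)=\int Z_{x\wedge y}(r)\,d\mu(y)$ using the relationship $\mu([x|n])=\phi^{S_n}(T_{x|n})$ and well-chosen stopping times $n=n(r,x)$ based on how $r$ compares with the singular values of $T_{x|n}$.

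For part (i), assume $\lim_n S_n(\mu,x)=s$ $\mu$-a.e. I would first establish the elementary lower bound $G_\mu(x,r)\geq Z_{x|n}(r)\,\mu([x|n])$ for every $n$, obtained by restricting the integral in \eqref{e-Gmur} to $y\in[x|n]$ and noting that $k\mapsto Z_{x|k}(r)$ is non-decreasing. Setting $k=[s]$ and letting $n=n(r,x)$ be the smallest integer with $\alpha_{k+1}(T_{x|n})\leq r$, invertibility of the $T_i$ forces $\alpha_{k+1}(T_{x|n})\asymp r$ uniformly. A case analysis on the positions of the other $\alpha_i(T_{x|n})$'s relative to $r$ then yields $Z_{x|n}(r)\,\phi^{S_n}(T_{x|n})\asymp r^{S_n}$, and combined with $S_n\to s$ this gives $G_\mu(x,r)\gtrsim r^{s+\epsilon}$ for small $r$, so $\limsup_{r\to 0}\log G_\mu(x,r)/\log r \leq s$. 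For the reverse direction, I decompose $G_\mu(x,r)=\sum_n Z_{x|n}(r)(\mu([x|n])-\mu([x|n+1]))$, apply the pointwise inequality $Z_T(r)\phi^t(T)\leq C(t)\,r^t$ (verified by case-splitting on where $r$ falls among the singular values of $T$) with $t=s-\epsilon$, and use $\mu([x|n])\leq\phi^{s-\epsilon}(T_{x|n})$, eventually valid from $S_n\to s$, together with a telescoping/stopping-time argument to sum, yielding $G_\mu(x,r)\leq C\,r^{s-\epsilon}$ for small $r$.

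For part (ii), assume \eqref{e-equv} with $s\in[0,d)\setminus\N$; I need $\liminf S_n=s\Rightarrow\lim S_n=s$, argued by contradiction. Suppose on a positive $\mu$-measure set, $\limsup S_n\geq s+\delta$ for some $\delta>0$. Along the peak subsequence, $\mu([x|n])=\phi^{S_n}(T_{x|n})\leq\phi^{s+\delta}(T_{x|n})$, and since $s$ is not an integer, $[s+\delta]=[s]$ for $\delta$ small, yielding $\phi^{s+\delta}(T)=\phi^s(T)\,\alpha_{[s]+1}(T)^{\delta}$, a genuine gain of the factor $\alpha_{[s]+1}(T)^{\delta}$ over $\phi^s$. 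Selecting $r$ at the scale $\alpha_{[s]+1}(T_{x|n})$ and reusing $Z_{x|n}(r)\phi^{S_n}(T_{x|n})\asymp r^{S_n}$ from part (i), the extra factor propagates through $G_\mu$, producing $\log G_\mu(x,r)/\log r\geq s+\delta'$ along a subsequence for some $\delta'>0$, contradicting $\limsup\log G_\mu/\log r=s$. The non-integrality of $s$ is essential: at integer $s$, $\phi^s(T)$ does not depend on $\alpha_{[s]+1}(T)$, so the shift $S_n=s\to s+\delta$ leaves no matching trace on $G_\mu$.

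Part (iii) treats the conformal case $T_i=\rho_iO_i$ with $O_i$ orthogonal, in which all singular values of $T_{x|n}$ coincide with $r_n(x):=|\rho_{x_1}\cdots\rho_{x_n}|$. Partitioning $\Sigma$ by the length of $x\wedge y$ gives
\[
G_\mu(x,r) = p_{N(r,x)} + r^d\sum_{k<N(r,x)} r_k(x)^{-d}(p_k-p_{k+1}),
\]
with $N(r,x)=\min\{n:r_n(x)\leq r\}$ and $p_n=\mu([x|n])$. The monotonicity $p_n\leq p_{n-1}$ translates into $L_n S_n:=-\log p_n$ being non-decreasing (where $L_n:=-\log r_n(x)$), constraining how $S_n$ can fluctuate. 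Assuming \eqref{e-equv} with $\limsup S_n=s^*>s$ for contradiction, the explicit formula together with these monotonicity constraints on $p_n$ permits one to exhibit scales $r$ (lying within "peak regions" where $S_n$ remains close to $s^*$ for many consecutive $n$) at which $G_\mu(x,r)\leq C\,r^{s^*-\epsilon}$, producing $\log G_\mu/\log r\geq s^*-\epsilon$ along a subsequence and contradicting $\limsup\log G_\mu/\log r=s$. Hence $\limsup S_n\leq s$, so $\lim S_n=s$. The conformal structure avoids the integer-$s$ obstruction from part (ii) because $\phi^s(T_{x|n})$ depends on the single scale $r_n(x)$ uniformly in $s$. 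The main obstacle throughout is the meticulous case analysis comparing $r$ with the singular value spectrum of $T_{x|n}$, best organized via $j_n(r):=\#\{i:\alpha_i(T_{x|n})\geq r\}$, together with establishing the sharp pointwise inequality $Z_T(r)\phi^t(T)\leq C(t)\,r^t$ uniform in $T$.
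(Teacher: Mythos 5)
Your part (i) matches the paper's Lemma~\ref{lem-5.5} in spirit, differing only in bookkeeping (the paper parameterizes the scale by $n$ via $r = \alpha_{k+1}(T_{x|n})$ and uses the bounded ratio \eqref{e-geta} between consecutive scales, rather than fixing $r$ and choosing a stopping $n$); this part is fine. You also correctly identify the conceptual role of non-integrality in (ii): a peak $S_N > s + \tau$ forces the extra factor $\alpha_{[s]+1}(T_{x|N})^{\tau}$ into $\mu([x|N]) \leq \phi^{s+\tau}(T_{x|N}) = \phi^{s}(T_{x|N})\,\alpha_{[s]+1}(T_{x|N})^{\tau}$, which becomes visible at the scale $r_N = \alpha_{[s]+1}(T_{x|N})$.

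However, parts (ii) and (iii) have a substantive gap. Writing that ``the extra factor propagates through $G_\mu$, producing $\log G_\mu(x,r)/\log r \geq s + \delta'$ along a subsequence'' asserts the conclusion, not the mechanism: you need an \emph{upper} bound $G_\mu(x,r_N) \leq r_N^{s+\delta'}$, whereas the estimate $Z_{x|N}(r_N)\,\mu([x|N]) \leq r_N^{s+\tau}$ controls only a single term of $G_\mu(x,r_N) = \sum_n Z_{x|n}(r_N)\bigl(\mu([x|n]) - \mu([x|n+1])\bigr)$. The problematic range is $n$ just below $N$: from $\liminf S_n = s$ the only available upper bound there is $\mu([x|n]) \leq \phi^{s-\epsilon}(T_{x|n})$, and with $Z_{x|n}(r_N) \leq Z_{x|N}(r_N)$ this yields only about $r_N^{s-\epsilon}$ per term; summing order $\delta N$ such terms lands far above $r_N^{s+\delta'}$. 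The paper supplies the missing ingredient as Lemma~\ref{lem-5.4}, a Borel--Cantelli argument giving $\mu([x|\lceil(1-\delta)n\rceil]) \leq m^{2\delta n}\mu([x|n])$ eventually, $\mu$-a.e.; this ``backward transfer'' propagates the smallness of $\mu([x|N])$ to all $n$ in the window $[(1-\delta)N, N]$. It then splits $G_\mu(x,r_N)$ into four ranges of $n$ and tunes $\epsilon$, $\delta$ against $\tau$ and the constants $\alpha_{+}$, $\alpha_{-}$ of \eqref{e-eta}; that balancing is where the proof really lives. Without a substitute for Lemma~\ref{lem-5.4} your sketch for (ii) does not close, and the same gap carries over to (iii): the paper proves (iii) as a small modification of (ii), still using Lemma~\ref{lem-5.4}. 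The explicit conformal formula you write for $G_\mu$ is correct, but the claim that it ``permits one to exhibit scales $r$'' at which $G_\mu(x,r) \leq C\,r^{s^*-\epsilon}$ is precisely the step that needs the missing machinery.
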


We remark that  the condition $s\in [0,d)\backslash \N$ in Theorem \ref{thm-5.2}(ii) is sharp. Indeed for every integer $d\geq 2$ and  $s\in \{1,\ldots, d-1\}$,  we can construct a tuple $(T_1,\ldots, T_m)$ of $d\times d$ matrices and a measure $\mu\in \mathcal P(\Sigma)$ such that \eqref{e-equv} does not imply \eqref{e-equv1}. See Example \ref{ex-1} for the details.

 Next we state our result on the Hausdorff, packing, lower and upper box-counting dimensions of  the projections of analytic sets. Recall that a subset of $\Sigma$ is said to be {\em analytic} if it is a continuous image of the Baire space $\N^\N$. It is known that any Borel subset of $\Sigma$ is an analytic set (see e.g. \cite{Rogers70}).

 \begin{thm} \label{thm-main2}
Let $E \subset \Sigma$ be an analytic  set. Then the following properties hold.
\begin{itemize}
\item[(i)] For every $\ba\in \R^{md}$,
\begin{align*}
\dim_{\mathrm{H}} \pi^\ba(E) &\leq  \min \{ \dim_{\M} E, d\},\\
\dim_{\mathrm{P}} \pi^\ba(E) &\leq \sup_{\mu\in {\mathcal P}(\Sigma):\; {\rm spt}(\mu)\subset  E}\underline{D}(\mu),\\
\underline{\dim}_{\rm B} \pi^\ba(E) &\leq \underline{\dim}_C\overline{E},\\
\overline{\dim}_{\rm B} \pi^\ba(E) &\leq \overline{\dim}_C\overline{E},
\end{align*}
where $\dim_{\M}E$ is defined  in \eqref{e-dimM}, and $\underline{\dim}_C\overline{E}$, $\overline{\dim}_C\overline{E}$ are defined  in \eqref{def prof}.
\item[(ii)] Assume that $\|T_i\|<1/2$ for $1\leq i\leq m$. Then  for ${\mathcal L}^{md}$-a.e.~$\ba\in \R^{md}$,
    \begin{align*}
\dim_{\mathrm{H}} \pi^\ba(E) &=  \min \{ \dim_{\M} E, d\},\\
\dim_{\mathrm{P}} \pi^\ba(E) &= \sup_{\mu\in {\mathcal P}(\Sigma):\; {\rm spt}(\mu)\subset  E}\underline{D}(\mu) =\sup_{\mu\in {\mathcal P}(\Sigma):\; {\rm spt}(\mu)\subset  E}\overline{D}(\mu),\\
\underline{\dim}_{\rm B} \pi^\ba(E) &= \underline{\dim}_C\overline{E},\\
\overline{\dim}_{\rm B} \pi^\ba(E) &= \overline{\dim}_C\overline{E},
\end{align*}
Moreover if $\dim_{\M} E>d$, then ${\mathcal L}^d(\pi^\ba(E))>0$ for ${\mathcal L}^{md}$-a.e.~$\ba\in \R^{md}$.
\end{itemize}
\end{thm}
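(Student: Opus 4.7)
The plan is to prove the deterministic upper bounds of part~(i) for every $\ba$ and match them with lower bounds for $\mathcal L^{md}$-a.e.~$\ba$ in part~(ii), concluding with a separate argument for the Lebesgue positivity. For the upper bounds in~(i), the starting point is the standard Falconer observation that each cylinder image $\pi^\ba([\bi])$ is contained in an affine image of the attractor under $T_\bi$, hence in a rectangular parallelepiped with semi-axes comparable to $\alpha_1(T_\bi),\ldots,\alpha_d(T_\bi)$; such a parallelepiped is covered by at most $\phi^s(T_\bi)/\alpha_d(T_\bi)^s$ balls of a common radius, and feeding this into the Carath\'eodory-type definitions in \eqref{e-dimM} and \eqref{def prof} yields the inequalities for $\dimH$, $\underline{\dim}_{\rm B}$ and $\overline{\dim}_{\rm B}$, together with the trivial $\dimH\pi^\ba(E)\le d$. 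Since $\pi^\ba$ is continuous and $\Sigma$ compact, $\overline{\pi^\ba(E)}\subset\pi^\ba(\overline E)$, so passing to $\overline E$ in the box-dimension bounds is free. The packing upper bound $\dimP\pi^\ba(E)\le\sup_\mu\underline D(\mu)$ I would derive from the measure-theoretic identity $\dimP F=\sup\{\ldim{P}\nu:\nu\in\mathcal P(\R^d),\,\nu(F)>0\}$ for analytic $F\subset\R^d$: pulling back any such $\nu$ through $\pi^\ba$ by a measurable selection produces $\mu\in\mathcal P(\Sigma)$ with ${\rm spt}(\mu)\subset E$ and $\pi^\ba_*\mu=\nu$, after which Theorem~\ref{thm-lower-upper}(i) gives $\ldim{P}\nu\le\underline D(\mu)$.

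For~(ii) the idea is to saturate each dimension by individual measures on $E$ and apply Theorem~\ref{thm-lower-upper}(ii). The critical input is a Davies-type approximation: for analytic $E$ and every $s<\dim_\M E$ there exists $\mu\in\mathcal P(\Sigma)$ with compact support in $E$ and $\underline S(\mu)>s$, equivalently $\dim_\M E=\sup\{\underline S(\mu):{\rm spt}(\mu)\subset E\}$. Granted this, for $s_n\nearrow\min\{\dim_\M E,d\}$ Theorem~\ref{thm-lower-upper}(ii) supplies corresponding $\mu_n$ and conull $\Omega_n\subset\R^{md}$ on which $\ldim{H}\pi^\ba_*\mu_n\ge\min\{s_n,d\}$; intersecting over $n$ yields $\dimH\pi^\ba(E)\ge\min\{\dim_\M E,d\}$ a.s. The packing case is parallel: Theorem~\ref{thm-lower-upper}(ii) gives $\udim{P}\pi^\ba_*\mu=\overline D(\mu)$ a.s., and taking countably many $\mu_n$ with $\overline D(\mu_n)$ approaching $\sup_\mu\overline D(\mu)$ produces $\dimP\pi^\ba(E)\ge\sup_\mu\overline D(\mu)$ a.s.; combined with the trivial $\sup\underline D(\mu)\le\sup\overline D(\mu)$ and the upper bound from~(i), both suprema are forced to equal $\dimP\pi^\ba(E)$. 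For the box-counting dimensions I would match the covering upper bounds of~(i) against a Falconer--Solomyak transversality lower bound on the number of well-separated $\pi^\ba$-images of cylinders, in the spirit of \cite{Fal88,Sol98,KaVi10}.

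Finally, for the Lebesgue positivity under $\dim_\M E>d$, I would select $\mu\in\mathcal P(\Sigma)$ with ${\rm spt}(\mu)\subset E$ and $\underline S(\mu)>d$ (via the same approximation), and run a second-moment computation of $\int_{\R^{md}}\iint \mathbf 1_{\{|\pi^\ba x-\pi^\ba y|\le r\}}\,d\mu(x)\,d\mu(y)\,\rho(\ba)\,d\ba$ for a compactly supported cutoff $\rho$. Solomyak's transversality estimate \cite{Sol98}, which is precisely where $\|T_i\|<1/2$ enters, bounds this integral by a uniform constant times $r^d$, so $\pi^\ba_*\mu\in L^2(\R^d)$ for $\mathcal L^{md}$-a.e.~$\ba$, and in particular $\mathcal L^d(\pi^\ba(E))>0$ a.s. The principal obstacle I foresee is the analytic-set Davies-type approximation used to start step~(ii): for compact $E$ it is accessible via K\"aenm\"aki's pressure formalism, but for general analytic $E$ one seems to need either a Choquet capacitability argument applied to $\dim_\M$ or an inner-regularity exhaustion of $E$ by compacta on which $\underline S$ is nearly maximal.
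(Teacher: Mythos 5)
Your overall architecture for the Hausdorff and packing dimension statements tracks the paper's decomposition (Theorems~\ref{main2} and~\ref{thm-packing-sets}) quite closely: the Falconer covering argument for the Hausdorff upper bound, the Davies--Rogers approximation (realised in the paper via Rogers' Theorem~55 plus a Falconer~\cite{Fal86}-style Frostman construction, and recorded as Remark~\ref{rem-5.3}) for the Hausdorff lower bound, and the pullback of an almost-optimal measure on $\pi^\ba(E)$ through $\pi^\ba$ (Haase's theorem plus Mattila's lifting) for the packing upper bound. You also correctly identify the analytic-set approximation as the crux and propose an inner-regularity argument that is exactly what the paper does. The only difference there is cosmetic: for the Hausdorff lower bound the paper applies the Jordan--Pollicott--Simon Proposition~\ref{proJPS} directly rather than routing through Theorem~\ref{thm-lower-upper}(ii), and for the Lebesgue positivity it again quotes Proposition~\ref{proJPS}(2) rather than redoing the second-moment estimate; both alternatives you sketch are mathematically equivalent.

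Where the proposal has a genuine gap is the box-counting dimensions. For the upper bound in~(i), you treat the capacity dimensions $\underline{\dim}_C$ and $\overline{\dim}_C$ as though they were ``Carath\'eodory-type'' covering quantities and claim that feeding the cylinder-covering bound into~\eqref{def prof} concludes. But $C_r(E)$ in~\eqref{e-Cr} is defined by minimising an energy $\iint Z_{x\wedge y}(r)\,d\mu\,d\mu$ over $\mathcal P(E)$, not by counting covers; nothing about covering a parallelepiped by balls directly relates $N_r(\pi^\ba(E))$ to that energy. The paper needs a genuinely new argument here, Proposition~\ref{key Prop}: it takes the equilibrium measure $\mu_0$ from Lemma~\ref{lem-potential}, builds $\ell+1$ auxiliary measures $\nu_0,\dots,\nu_\ell$ on a maximal $r$-separated set in $\pi^\ba(E)$ indexed by cylinder depth, and shows via Lemma~\ref{lem-ae} and Lemma~\ref{lem-gmu} that each atom of $\nu=\sum\nu_n$ has mass at least $1/(C'\,C_r(E))$, giving $N_r(\pi^\ba(E))\lesssim (\log r/\log\alpha_+)\,C_r(E)$. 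Your proposal does not contain this idea, and without it the box upper bound does not follow.

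The box lower bound in~(ii) has the same problem in a sharper form. ``Matching the covering bound against a Falconer--Solomyak transversality lower bound on the number of well-separated $\pi^\ba$-images of cylinders'' is the mechanism used by K\"aenm\"aki--Vilppolainen for compact $\sigma$-invariant $E$, where subadditivity of the pressure makes cylinder counts coherent across scales; for a general compact set with no invariance, cylinder counts do not control $N_r$ in this way. The paper instead extracts, for each $r_k$, the energy-minimising measure $\mu_k$ on $\overline E$, uses the transversality estimate of Lemma~\ref{lem-tran} together with Fubini to bound $\int_{B_\rho}(\mu_k\times\mu_k)\{|\pi^\ba\bi-\pi^\ba\bj|\le r_k\}\,d\ba$ by $C\,r_k^{t'}$, and then applies the separation Lemma~\ref{from F} from~\cite{Fal21} to deduce $N_{r_k}(\pi^\ba(E))\gtrsim r_k^{-t}$ almost surely. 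This capacity/equilibrium-measure route is the essential new technique of Section~\ref{S6} and is precisely what makes the result go through without any dynamical assumption on $E$; your sketch would need to be replaced by it.

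Two smaller points: your claim $\overline{\pi^\ba(E)}\subset\pi^\ba(\overline E)$ should be an equality by compactness of $\Sigma$, but what you actually need is only the elementary $\overline{\dim}_B\pi^\ba(E)=\overline{\dim}_B\pi^\ba(\overline E)$, which holds because box dimensions are closure-invariant. And in the packing-dimension pullback, ``measurable selection'' should be made precise (the paper uses Haase's theorem to reduce to a compact $K\subset E$ and then Mattila's Theorem~1.20 to lift $\nu$ to $\mu$ on $K$); this is implicit in your sketch but worth flagging since $\pi^\ba(E)$ is only analytic, not in general Borel.
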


This result states, in particular, that under the assumption \eqref{e-norm}, each of the Hausdorff, packing, lower and upper box-counting dimensions of $\pi^\ba(E)$ is constant almost surely.  We remark that the analyticity assumption on $E$ is not needed for the statements for the lower and upper box-counting dimensions.

Besides the above results, in Section \ref{S9} we also  provide some inequalities for the Hausdorff dimensions of the sets of translational vectors at which the dimensions of projected sets and measures are exceptionally small; see Theorems \ref{thm-9.1}-\ref{thm-9.2}.

Although the settings are a bit different, our constancy results  (Theorems \ref{thm-lower-upper} and \ref{thm-main2}) on the dimensions of projected  measures and sets on typical self-affine sets are analogous  to the corresponding constancy results for the dimensions of sets and measures under orthogonal projections.   More precisely, Theorem \ref{thm-lower-upper} is analogous to the constancy results of \cite{HK97, FaON99} for the lower and upper local dimensions of measures and to that of \cite{HuTa94, FaHo97} for the Hausdorff and packing dimensions of measures under orthogonal projections; whilst Theorem \ref{thm-main2} is analogous to the work of \cite{Mar54, Mat75} for the Hausdorff dimension of  sets, to that of \cite{FaHo97} for the packing dimension of  sets, and to that of \cite{How01, Fal21} for the  box-counting dimension of  sets under orthogonal projections. Meanwhile, our results are also analogous to the theorems on the Hausdorff and packing dimensions of the images under factional Brownian motions (see \cite{Kahane1985, Xiao1997, ShiehXiao2010, Falconer2020}).

 For  the proofs of Theorems \ref{thm-lower-upper} and \ref{thm-main2},  besides adopting and extending some ideas and strategies from the papers \cite {Fal88, Fal21, FaHo97, HK97, Jor11, JPS07}, we also need to develop new techniques (e.g. Lemma \ref{lem-1.6} and Proposition \ref{key Prop}) to analyse the covering properties of the projections of  sets and measures under the coding maps.   The proof of Theorem \ref{thm-5.2} involves a  lengthy and delicate estimation of $G_\mu(x,r)$. Using a similar strategy we are able to provide a
 simple criterion (see Theorem \ref{thm-8.1}) for the exact dimensionality of projected measures under typical orthogonal projections. For the proofs of Theorems \ref{thm-9.1} and \ref{thm-9.2}, we apply and extend some estimations in \cite{Fami08}.

 Recently significant progress has been made in characterizing concrete (planar) self-affine sets and self-affine measures of which the Hausdorff dimensions coincide with the affinity and Lyapunov dimensions; see \cite{BHR19, HocRap22} and the references therein.  It is expected that one may specify concrete $\ba$ for which the equalities in Theorems \ref{thm-lower-upper}(ii) and \ref{thm-main2}(ii) hold under reasonable assumptions on $T_1,\ldots, T_m$.

The paper is organized as follows. In Sections \ref{S2} and \ref{S3} we investigate the lower and upper local dimensions of projected measures separately. Theorem \ref{thm-lower-upper} is a simple combination of Theorems \ref{thm-lower} and  \ref{thm-packing}. In Section \ref{S4} we prove Theorems \ref{thm-5.1} and \ref{thm-5.2}. In Sections \ref{S5}, we investigate the Hausdorff and packing dimensions of projected sets. In Section \ref{S6} we investigate the lower and upper  box-counting dimensions of projected sets. Theorem \ref{thm-main2} is then a direct combination of Theorems  \ref{main2}, \ref{thm-packing-sets} and \ref{thm-box}. In Section \ref{S8} we prove Theorem \ref{thm-8.1}, which is an analogue of Theorem \ref{thm-5.2} for orthogonal projections. In Section \ref{S9}, we prove Thorems \ref{thm-9.1} and \ref{thm-9.2}  which estimate  the Hausdorff dimensions of the exceptional sets. In Section \ref{S10} we give some final remarks and questions.
\section{Lower local dimensions of projected measures}
\label{S2}

Throughout this section, let $T_1,\ldots, T_m$ be a family  of $d \times d$  invertible real matrices with $\|T_i\|<1$ for $1\leq i\leq m$. For $\ba=(a_1,\ldots, a_m)\in \R^{md}$, let $\pi^\ba:\Sigma\to \R^d$ be the coding map associated with the IFS $\{f_i^\ba(x)=T_ix+a_i\}_{i=1}^m$;  see \eqref{e-pia}. For short we write $f_I^\ba:=f^\ba_{i_1}\circ \cdots\circ f^\ba_{i_n}$ and $T_I:=T_{i_1}\cdots T_{i_n}$ for $I=i_1\cdots i_n\in \Sigma_n:=\{1,\ldots, m\}^n$.

 Recall that we have defined the quantities  $S(\mu,x)$, $\underline{S}(\mu)$ and $\overline{S}(\mu)$ for $\mu\in \mathcal P(\Sigma)$ and $x\in \Sigma$ in \eqref{e-Smux} and \eqref{e-Gammamu}. The main result of this section is the following.

\begin{thm}
\label{thm-lower}
 Let $\mu\in \mathcal P(\Sigma)$. Then the following properties hold.
\begin{itemize}
\item[(i)] For every $\ba\in \R^{md}$, $$\underline{\dim}_{\rm loc}(\pi^\ba_*\mu, \pi^\ba x)\leq \min\{S(\mu,x),d\} \quad \mbox{ for $\mu$-a.e.~$x\in \Sigma$},
$$  consequently,
   $\ldim{H} \pi^\ba_*\mu \leq  \min \{ \underline{S}(\mu), d\}$,
$\udim{H} \pi^\ba_*\mu \leq \min \{ \overline{S}(\mu), d \}.
$

\item[(ii)] Assume that $\|T_i\|<1/2$ for $1\leq i\leq m$. Then for $\mathcal L^{md}$-a.e.~$\ba\in \R^{md}$, $$\underline{\dim}_{\rm loc}(\pi^\ba_*\mu, \pi^\ba x)= \min\{S(\mu,x),d\} \quad \mbox{ for $\mu$-a.e.~$x\in \Sigma$},
$$
 consequently,
   $
\ldim{H} \pi^\ba_*\mu =  \min \{ \underline{S}(\mu), d\}$, $\udim{H} \pi^\ba_*\mu = \min \{ \overline{S}(\mu), d \}.
$
\end{itemize}
\end{thm}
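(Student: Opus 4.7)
I would establish the two upper bounds of part (i) separately, then the matching almost-sure lower bound of part (ii). The bound $\underline{\dim}_{\rm loc}(\pi^\ba_*\mu, \pi^\ba x) \leq d$ is automatic for $\mu$-a.e.\ $x$ and every $\ba$: any Radon probability measure $\eta$ on $\R^d$ satisfies $\underline{\dim}_{\rm loc}(\eta, y) \leq d$ at $\eta$-a.e.\ $y$, so pulling back via $\eta = \pi^\ba_*\mu$ gives the conclusion.

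For $\underline{\dim}_{\rm loc}(\pi^\ba_*\mu, \pi^\ba x) \leq S(\mu, x)$ in part (i), the naive cylinder bound $\pi^\ba_*\mu(B(\pi^\ba x, C\alpha_1(T_{x|n}))) \geq \mu([x|n])$ yields only $\underline{\dim}_{\rm loc} \leq \log\phi^{S_n}(T_{x|n})/\log\alpha_1(T_{x|n})$, which exceeds $S_n$ whenever the singular values of $T_{x|n}$ are unequal. I would instead work at the critical scale $r_n := \alpha_{k+1}(T_{x|n})$ with $k = [S_n(\mu, x)]$, where the ellipsoid $T_{x|n}(B(0,1))$ transitions from ``thick'' to ``thin'' coordinate directions. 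The identity $\pi^\ba y - \pi^\ba x = T_{x|n}(\pi^\ba(\sigma^n y) - \pi^\ba(\sigma^n x))$ for $y \in [x|n]$ identifies the set $\{y \in [x|n] : |\pi^\ba y - \pi^\ba x| \leq Cr_n\}$ with those $y$ for which $\pi^\ba(\sigma^n y) - \pi^\ba(\sigma^n x)$ lies in the ellipsoid $T_{x|n}^{-1}(B(0, Cr_n))$, whose semi-axis is $\gtrsim C$ in each direction $i > k$ (so, for suitable $C$, it covers the image of $K-K$ there) and $r_n/\alpha_i(T_{x|n}) \leq 1$ in each direction $i \leq k$. Aggregating sub-cylinder contributions over this slab and using the product formula $\mu([x|n]) = \phi^{S_n}(T_{x|n}) = \prod_{i \leq k}\alpha_i(T_{x|n}) \cdot \alpha_{k+1}(T_{x|n})^{S_n - k}$, the captured mass is of order at least $\mu([x|n])\prod_{i \leq k}(r_n/\alpha_i(T_{x|n}))$, which gives
\[
\pi^\ba_*\mu\bigl(B(\pi^\ba x, Cr_n)\bigr) \;\gtrsim\; r_n^{S_n(\mu, x)},
\]
and hence $\underline{\dim}_{\rm loc}(\pi^\ba_*\mu, \pi^\ba x) \leq S(\mu, x)$ after passing to $\liminf_n$. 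The corresponding bounds on $\ldim{H}\pi^\ba_*\mu$ and $\udim{H}\pi^\ba_*\mu$ follow from the essential-inf/sup characterisation of these dimensions.

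For the almost-sure matching lower bound in part (ii), under $\|T_i\|<1/2$ I would use a potential-theoretic approach. For fixed $s < \min\{S(\mu, x), d\}$, it suffices to show the $s$-energy $I_s(\pi^\ba_*\mu) = \iint |\pi^\ba x - \pi^\ba y|^{-s}\, d\mu(x)\, d\mu(y)$ is finite for $\mathcal{L}^{md}$-a.e.\ $\ba$ on any bounded $U \subset \R^{md}$. The central ingredient is the Solomyak-type transversality estimate
\[
\int_U |\pi^\ba x - \pi^\ba y|^{-s}\, d\ba \;\leq\; C_U\cdot \phi^{-s}(T_{x\wedge y}),
\]
valid under $\|T_i\|<1/2$ (cf.\ \cite{Sol98, JPS07}). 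By Fubini and the identity $\mu([w]) = \phi^{S_{|w|}}(T_w)$, the double integral $\iint \phi^{-s}(T_{x\wedge y})\, d\mu(x)\, d\mu(y)$ is finite whenever $s < \underline{S}(\mu) \wedge d$. Frostman's lemma then delivers $\underline{\dim}_{\rm loc}(\pi^\ba_*\mu, \pi^\ba x) \geq s$ at $\mu$-a.e.\ $x$ for a.e.\ $\ba$; letting $s \nearrow \min\{S(\mu, x), d\}$ and combining with part (i) gives the matching equality.

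The main obstacle is the sub-cylinder aggregation in part (i): for an arbitrary fixed $\ba$, verifying that the conditional measure $\mu|_{[x|n]}/\mu([x|n])$ pushed forward by $\pi^\ba\circ\sigma^n$ spreads over $K$ enough for the slab $T_{x|n}^{-1}(B(0, Cr_n))$ to capture mass of order $\prod_{i\leq k}(r_n/\alpha_i(T_{x|n}))$ is a delicate geometric lemma that must hold for every $\ba$, compensating for potential concentration of the conditional measure. Once this covering estimate is in place, the potential-theoretic argument in part (ii) is relatively standard.
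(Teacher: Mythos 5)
Your overall architecture (a covering lower bound at the critical scale $r_n=\alpha_{k+1}(T_{x|n})$ for part~(i), and a Fubini/transversality energy argument for part~(ii)) matches the paper's, but both halves have real gaps.

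For part~(i), the ``delicate geometric lemma'' you flag is not just delicate --- as stated it is false. For an arbitrary fixed $\ba$ the conditional measure $\mu|_{[x|n]}/\mu([x|n])$ can concentrate essentially on a single fibre, so the slab $T_{x|n}^{-1}(B(0,Cr_n))$ need not capture a fraction $\prod_{i\le k}(r_n/\alpha_i(T_{x|n}))=1/N_k(x|n)$ of the mass pointwise. The paper circumvents this with Jordan's lemma (Lemma~\ref{LemJ}): rather than a deterministic spreading estimate, it covers $\pi^\ba([I])$ by $\lesssim N_\ell(I)$ balls of radius $\asymp\alpha_{\ell+1}(T_I)$, observes that if \emph{all} points of $\Lambda_n\cap[I]$ had abnormally small ball-measure then $\mu(\Lambda_n\cap[I])\le 2^\ell(1-\epsilon)^n\mu([I])$, and sums over $I$ and applies Borel--Cantelli. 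This gives only $\pi^\ba_*\mu(B(\pi^\ba x, c\alpha_{\ell+1}(T_{x|n})))\ge(1-\epsilon)^n\mu([x|n])/N_\ell(x|n)$, for $\mu$-a.e.\ $x$ and large $n$ --- weaker than your claim by the $(1-\epsilon)^n$ factor, but that loss contributes only $O(\delta)$ to the local dimension and vanishes in the limit. The counting/Borel--Cantelli trick, accepting a small exponential loss in exchange for an almost-everywhere estimate, is the missing idea.

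For part~(ii), your claim that $\iint \phi^{-s}(T_{x\wedge y})\,d\mu(x)\,d\mu(y)<\infty$ whenever $s<\underline{S}(\mu)\wedge d$ does not hold in general. By Lemma~\ref{lem-Smu} the inner integral $g(x)=\int\phi^{-s}(T_{x\wedge y})\,d\mu(y)$ is finite for $\mu$-a.e.\ $x$ when $s<\underline{S}(\mu)$, but $g$ need not be in $L^1(\mu)$: if $S(\mu,x)$ is only slightly above $s$ on a set of positive measure, $g$ can blow up fast enough to make $\int g\,d\mu=\infty$. The paper's fix is to restrict to the truncations $\Lambda_N=\{x: g(x)<N\}$, push the bounded integral through Fubini and Lemma~\ref{lem-2.5}, and then union over $N$ and over rational non-integral $s$. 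This truncation is also what delivers the \emph{pointwise} statement $\underline{\dim}_{\rm loc}(\pi^\ba_*\mu,\pi^\ba x)\ge\min\{S(\mu,x),d\}$ (and hence the $\overline{S}(\mu)$ version of the Hausdorff dimension bound): a single fixed-$s$ energy estimate on the full measure only produces the lower bound $\underline{S}(\mu)\wedge d$, not $S(\mu,x)\wedge d$. You should also replace the appeal to ``Frostman's lemma'' with the Sauer--Yorke characterization (Lemma~\ref{lem-SY}) of $\underline{\dim}_{\rm loc}$ via $\int|x-y|^{-s}\,d\nu(y)$, which is what makes the Fubini chain give pointwise local-dimension information rather than just a dimension bound.
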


To prove part (i) of the above theorem, we need the following result.
\begin{lem}[Jordan {\cite{Jor11}}] \label{LemJ}
Let $\ba\in \R^{md}$. There is a positive constant $c>0$ which depends on $\ba$ and $T_1,\ldots, T_m$ such that the following property holds.  For every $\epsilon\in (0,1)$, $\mu\in \mathcal P(\Sigma)$ and  $\ell\in \{0,1,\ldots, d-1\}$,  we have for $\mu$-a.e.~$x \in \Sigma$,
\begin{equation}
\label{e-lemJ}
\pi^\ba_*\mu \left( B(\pi^\ba x, c \alpha_{\ell+1}(T_{x|n})) \right) \ge (1-\epsilon)^n \dfrac{ \mu([x|n])}{N_{\ell}(x|n)}\quad  \mbox{ for large enough $n$},
\end{equation}
where
\begin{equation}
\label{e-Nxn}
N_{\ell}(x|n) := \alpha_1(T_{x|n}) \cdots \alpha_{\ell}(T_{x|n}) \alpha_{\ell+1}^{-\ell} (T_{x|n}).
\end{equation}
\end{lem}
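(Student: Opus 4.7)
The plan is to combine a covering estimate for image ellipsoids with a pigeonhole-type argument and Borel--Cantelli.

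First I would set up the geometry. Fix $\ba \in \R^{md}$ and let $K = \pi^\ba(\Sigma)$; since $K$ is compact there is some $R > 0$ with $K \subset B(0,R)$. For any $I \in \Sigma_n$, the set $\pi^\ba([I]) = f_I^\ba(K)$ is contained in $T_I K + f_I^\ba(0)$, hence in a translate of the ellipsoid $T_I(B(0,R))$ whose principal semi-axes are $R\alpha_1(T_I), \dots, R\alpha_d(T_I)$. A standard volume/packing argument shows that for any $\ell \in \{0,\dots,d-1\}$ this ellipsoid can be covered by at most $C \cdot N_\ell(I)$ closed balls of radius $c\,\alpha_{\ell+1}(T_I)$, where $c = 2R\sqrt{d}$ (say) and $C$ is a constant depending only on $d$ and $R$ (hence on $\ba$). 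This step is routine: along the first $\ell$ axes we need roughly $\alpha_k(T_I)/\alpha_{\ell+1}(T_I)$ balls, and along the remaining directions the radius already exceeds the semi-axis.

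Next, the pigeonhole step. Fix $\epsilon \in (0,1)$ and $\ell$, and for each $n$ let $\mathcal B_I = \{B_{I,1},\dots,B_{I,N_I}\}$ be such a cover of $\pi^\ba([I])$, with $N_I \le C N_\ell(I)$ and balls of radius $c\alpha_{\ell+1}(T_I)$. Define the ``bad'' subset
\begin{equation*}
E_n = \bigcup_{I\in\Sigma_n}\bigcup_{\substack{j\le N_I:\\ \pi^\ba_*\mu(B_{I,j}) < (1-\epsilon)^n\mu([I])/N_\ell(I)}} (\pi^\ba)^{-1}(B_{I,j})\cap [I].
\end{equation*}
For $x \in [I]\setminus E_n$, the ball $B_{I,j}$ containing $\pi^\ba x$ satisfies $B_{I,j}\subset B(\pi^\ba x, 2c\alpha_{\ell+1}(T_I))$, so the lower bound \eqref{e-lemJ} (with $2c$ in place of $c$) automatically holds. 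Using $\mu((\pi^\ba)^{-1}(B_{I,j})\cap [I]) \le \pi^\ba_*\mu(B_{I,j})$, I estimate
\begin{equation*}
\mu(E_n) \le \sum_{I\in\Sigma_n} N_I\cdot (1-\epsilon)^n\frac{\mu([I])}{N_\ell(I)} \le C(1-\epsilon)^n.
\end{equation*}
Since $\sum_n (1-\epsilon)^n<\infty$, Borel--Cantelli gives that $\mu$-a.e.\ $x$ lies in only finitely many $E_n$, and running $\ell$ over the finite set $\{0,1,\dots,d-1\}$ and replacing $c$ by $2c$ yields the lemma.

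The main obstacle I anticipate is not any single step but a careful bookkeeping of constants: the covering constant $C$ depends on $\ba$, and one must verify that it can be absorbed into $(1-\epsilon)^n$ for all large $n$ uniformly in $\ell$, $\mu$ and $I$. Also one must be careful that $N_\ell(I)$ can be arbitrarily small when $\alpha_{\ell+1}(T_I)$ is close to $\alpha_\ell(T_I)$; the covering lemma must still produce a genuine cover by balls of the prescribed radius in that regime, which is where the choice $c = 2R\sqrt{d}$ (making the balls big enough to be nondegenerate) matters. Everything else is direct.
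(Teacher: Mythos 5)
Your proof is correct and follows essentially the same strategy as the paper's (and Jordan's) argument: cover each cylinder image $\pi^\ba([I])$ by at most a constant times $N_\ell(I)$ balls of radius $\asymp \alpha_{\ell+1}(T_I)$, mark the ``bad'' balls of small measure, bound the total $\mu$-mass of the exceptional set by $C(1-\epsilon)^n$, and conclude via Borel--Cantelli; the doubling from $c=2R\sqrt{d}$ to $2c$ reproduces the paper's $c=4R\sqrt{d}$. One small aside: your worry that $N_\ell(I)$ could be ``arbitrarily small'' is unfounded, since $N_\ell(I)=\prod_{k\le\ell}\alpha_k(T_I)/\alpha_{\ell+1}(T_I)\ge 1$ always, so the only bookkeeping that matters is the covering constant, which your argument handles correctly.
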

\begin{proof}
For the reader's convenience, we include here the detailed argument of Jordan \cite{Jor11}.

Let $\ba=(a_1,\ldots, a_m)\in \R^{md}$. Take a large $R=R(\ba, T_1,\ldots, T_m)>0$ such that
$$
f^\ba_i(B(0,R))\subset B(0, R)  \qquad \mbox{ for } i=1,\ldots,m,
$$
where $f^\ba_i(x):=T_ix+a_i$.  Clearly the attractor $\pi^{\ba}(\Sigma)$ of the IFS $\{f^\ba_i\}_{i=1}^m$ is contained in $B(0, R)$. Take $c=4 R\sqrt{d}$. Below we show that the statement of the lemma holds for such $c$.

Let $\epsilon\in (0,1)$, $\mu\in \mathcal P(\Sigma)$ and $\ell\in \{0,\ldots, d-1\}$. For $n\in \N$, let $\Lambda_n$ denote the set of the points $x\in \Sigma$ such that
$$
\pi^\ba_*\mu \left( B(\pi^\ba x, c \alpha_{\ell+1}(T_{x|n})) \right) < (1-\epsilon)^n \dfrac{ \mu([x|n])}{N_{\ell}(x|n)}.
$$
To prove that \eqref{e-lemJ} holds for $\mu$-a.e.~$x\in \Sigma$,   by the Borel-Cantelli lemma it suffices to show that
\begin{equation}
\label{e-teta}
\sum_{n=1}^\infty \mu(\Lambda_n)<\infty.
\end{equation}
For this purpose, below let us  estimate $\mu(\Lambda_n)$.

 Fix $n\in \N$ and $I\in \Sigma_n$. Notice that  $f^\ba_I(B(0, R))$ is an ellipsoid of semi-axes
$$
R\alpha_1(T_{I})\geq \ldots\geq R\alpha_d(T_{I}),
$$
so it can be covered by $2^\ell \prod_{i=1}^\ell \frac{\alpha_i(T_{I})}{\alpha_{\ell+1}(T_{I})}$ balls of radius $2R\sqrt{d}\alpha_{\ell+1}(T_{I})$. Since $\pi^\ba([I])$ is contained in $f^\ba_I(B(0, R))$,  it follows that there exists a nonnegative  integer $L$ satisfying
\begin{equation}
\label{e-JL}
L\leq 2^\ell \prod_{i=1}^\ell \frac{\alpha_i(T_{I})}{\alpha_{\ell+1}(T_{I})}=2^\ell N_{\ell}(I)
\end{equation}
 such that $\pi^\ba(\Lambda_n\cap [I])$ can be covered by $L$ balls of radius $2R\sqrt{d}\alpha_{\ell+1}(T_{I})$, say, $B_1,\ldots, B_L$. We may assume that $\pi^\ba(\Lambda_n\cap [I])\cap B_i\neq \emptyset$ for each $1\leq i\leq L$. Hence for each $i$, we may pick $x^{(i)}\in \Lambda_n \cap [I]$ such that $\pi^\ba x^{(i)} \in B_i$. Clearly
\begin{equation}
\label{e-Bi}
B_i\subset B\left(\pi^\ba x^{(i)}, 4R\sqrt{d}\alpha_{\ell+1}(T_{I})\right)=B\left(\pi^\ba x^{(i)}, c\alpha_{\ell+1}(T_{I})\right).
\end{equation}
Since $x^{(i)}\in \Lambda_n \cap [I]$, by the definition of $\Lambda_n$ we obtain
\begin{equation}
\label{e-pi}
\pi^\ba_*\mu \left(B\left(\pi^\ba x^{(i)}, c\alpha_{\ell+1}(T_{I})\right)\right)< (1-\epsilon)^n\frac{\mu([I])}{N_{\ell}(I)}.
\end{equation}
It follows that
\begin{eqnarray*}
\mu(\Lambda_n\cap [I])&\leq& \mu\circ (\pi^\ba)^{-1}(\pi^\ba(\Lambda_n\cap [I]))\\
&\leq & \pi^\ba_*\mu\left(\bigcup_{i=1}^LB_i\right)\\
&\leq & \pi^\ba_*\mu\left(\bigcup_{i=1}^LB\left(\pi^\ba x^{(i)}, c\alpha_{\ell+1}(T_{I})\right)\right)\qquad \mbox{(by \eqref{e-Bi})}\\
&\leq & L (1-\epsilon)^n\frac{\mu([I])}{N_{\ell}(I))}\qquad \mbox{(by \eqref{e-pi})}\\
&\leq & 2^\ell(1-\epsilon)^n\mu([I])\qquad \mbox{(by \eqref{e-JL})}.
\end{eqnarray*}
Summing over $I\in \Sigma_n$ yields that $\mu(\Lambda_n)\leq 2^\ell (1-\epsilon)^n$, which implies \eqref{e-teta}.
\end{proof}

\begin{proof}[Proof of Theorem \ref{thm-lower}(i)]
Let $\ba\in \R^{md}$ and $\mu\in \mathcal P(\Sigma)$. We need to show that  for $\mu$-a.e.~$x\in \Sigma$,
\begin{equation*}
\label{e-q1}
\underline{\dim}_{\rm loc}(\pi^\ba_*\mu, \pi^\ba x)\leq \min\{S(\mu,x),d\}.
\end{equation*}
For this purpose, it is enough to show that for every $\delta>0$,
\begin{equation}
\label{e-q1'}
\underline{\dim}_{\rm loc}(\pi^\ba_*\mu, \pi^\ba x)\leq \min\{S(\mu,x),d\}+\delta \quad \mbox{ for $\mu$-a.e.~$x\in \Sigma$}.
\end{equation}
To this end, let $\delta>0$. Pick $\epsilon \in (0,1)$ such that
\begin{equation}
\label{e-to2}
\frac{\log (1-\epsilon)}{\log \alpha_+}<\delta,
\end{equation} where $\alpha_+:=\max\{\|T_i\|:\; i=1,\ldots, m\}$.

Set $$A_k:=\{x\in \Sigma:\; k\leq S(\mu,x)<k+1\},  \quad k=0,\ldots, d-1.
$$ Since $\underline{\dim}_{\rm loc}(\pi^\ba_*\mu, \pi^\ba x)\leq d$ for $\mu$-a.e.~$x\in \Sigma$,  it suffices to show that \eqref{e-q1'} holds for $\mu$-a.e.~$x\in \bigcup_{k=0}^{d-1}A_k$.

Fix $k\in \{0,\ldots, d-1\}$. By Lemma \ref{LemJ}, there exists $A'_k\subset A_k$ with $\mu(A_k\backslash A_k')=0$ such that for each $x\in A_k'$,
\begin{equation}
\label{e-lemJ'}
\pi^\ba_*\mu \left( B(\pi^\ba x, c \alpha_{k+1}(T_{x|n})) \right) \ge (1-\epsilon)^n \dfrac{ \mu([x|n])}{N_{k}(x|n)}\quad  \mbox{ for large enough $n$},
\end{equation}
where $N_{k}(x|n)$ is defined as in \eqref{e-Nxn}.

Now let $x\in A_k'$. Let $\gamma\in (0, k+1-S(\mu,x))$. Then there exists a subsequence $(n_j)$ of natural numbers such that
\begin{equation}
\label{e-to1}
k\leq S_{n_j}(\mu,x)+\gamma<k+1\quad \mbox{ and }\quad  \lim_{j\to \infty} S_{n_j}(\mu,x)=S(\mu,x).
\end{equation}
Observe that
\begin{align*}
\dfrac{ \mu([x|n_j])}{N_{k}(x|n_j)}=\dfrac{ \phi^{S_{n_j}(\mu,x)}(T_{x|n_j})}{\phi^k(T_{x|n_j})\alpha_{k+1}^{-k}(T_{x|n_j})}
\geq \dfrac{ \phi^{S_{n_j}(\mu,x)+\gamma}(T_{x|n_j})}{\phi^k(T_{x|n_j})\alpha_{k+1}^{-k}(T_{x|n_j})}
=\alpha_{k+1}^{S_{n_j}(\mu,x)+\gamma}(T_{x|n_j}),
\end{align*}
where in the last equality we have used \eqref{e-to1}. Hence by \eqref{e-lemJ'},
\begin{align*}
\underline{\dim}_{\rm loc}(\pi^\ba_*\mu, \pi^\ba x)&\leq \liminf_{j\to \infty} \frac{\log \pi^\ba_*\mu \left( B(\pi^\ba x, c \alpha_{k+1}(T_{x|n_j})) \right)}{\log  \alpha_{k+1}(T_{x|n_j}) }\\
& \leq \liminf_{j\to \infty} \frac{\log \left((1-\epsilon)^{n_j}\dfrac{ \mu([x|n_j])}{N_{k}(x|n_j)}\right)}{\log  \alpha_{k+1}(T_{x|n_j}) }\\
&\leq \liminf_{j\to \infty} \left(S_{n_j}(\mu,x)+\gamma+ \frac{n_j\log (1-\epsilon)}{\log  \alpha_{k+1}(T_{x|n_j}) }\right)\\
&\leq S(\mu,x)+\gamma+\delta \qquad \mbox{ (use $\alpha_{k+1}(T_{x|n_j})\leq (\alpha_+)^{n_j}$ and \eqref{e-to2})}.
\end{align*}
Since $\gamma$ is arbitrarily taken in $(0, k+1-S(\mu,x))$, we get
$$\underline{\dim}_{\rm loc}(\pi^\ba_*\mu, \pi^\ba x)\leq S(\mu,x)+\delta.$$
That is,  \eqref{e-q1'} holds for every $x\in A_k'$,  so it holds  for $\mu$-a.e.~$x\in A_k$, as desired.
\end{proof}

Next we turn to the proof of part (ii) of Theorem \ref{thm-lower}. We need several lemmas.
\begin{lem}[\cite{SY97}]
\label{lem-SY}
Let $\nu\in \mathcal P(\R^d)$ with compact support and $x\in \R^d$. Then
$$
\underline{\dim}_{\rm loc}(\nu,x)=\sup\left\{s\geq 0:\; \int |x-y|^{-s} d\nu(y)<\infty\right\}.
$$
\end{lem}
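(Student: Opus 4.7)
The plan is to prove this classical Frostman-type identity via the layer-cake representation
$$\int |x-y|^{-s}\,d\nu(y) \;=\; s\int_0^{\infty} \nu(B(x,r))\,r^{-s-1}\,dr \qquad (s>0),$$
which follows from the substitution $r = t^{-1/s}$ applied to $\int_0^\infty \nu(\{y:|x-y|^{-s}>t\})\,dt$. Since $\nu$ has compact support, $\nu(B(x,r))=1$ for all sufficiently large $r$, so the tail of the right-hand integral is finite for every $s>0$, and convergence is governed solely by the small-$r$ behaviour of $\nu(B(x,r))$. Write $\alpha:=\underline{\dim}_{\rm loc}(\nu,x)$ and let $\beta$ denote the supremum on the right of the claimed identity.

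For the inequality $\alpha\le\beta$, I would fix $0<s<\alpha$ and pick $\epsilon>0$ with $s+\epsilon<\alpha$. By the definition of the lower local dimension there is $r_0>0$ such that $\nu(B(x,r))\le r^{s+\epsilon}$ for every $r<r_0$. Plugging this into the layer-cake identity gives
$$\int_0^{r_0} \nu(B(x,r))\,r^{-s-1}\,dr \;\le\; \int_0^{r_0} r^{\epsilon-1}\,dr \;<\;\infty,$$
and combined with the finite tail this shows $\int|x-y|^{-s}\,d\nu(y)<\infty$, so $s\le\beta$; letting $s\nearrow\alpha$ yields $\alpha\le\beta$.

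For the reverse inequality $\beta\le\alpha$, I would argue by contrapositive and assume $\alpha<s$. The definition of $\liminf$ produces $\epsilon>0$ and a sequence $r_n\to 0$ with $\nu(B(x,r_n))\ge r_n^{s-2\epsilon}$. After thinning so that $r_{n+1}\le r_n/3$, the intervals $[r_n,2r_n]$ are pairwise disjoint, and on each of them the monotonicity of $r\mapsto\nu(B(x,r))$ gives $\nu(B(x,r))\,r^{-s-1}\ge 2^{-s-1}\,r_n^{-2\epsilon-1}$; therefore
$$\int_0^{\infty}\nu(B(x,r))\,r^{-s-1}\,dr \;\ge\; \sum_n \int_{r_n}^{2r_n}\nu(B(x,r))\,r^{-s-1}\,dr \;\ge\; 2^{-s-1}\sum_n r_n^{-2\epsilon} \;=\;\infty,$$
since the terms $r_n^{-2\epsilon}$ do not even tend to zero. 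Hence the potential integral diverges, so $s\ge\beta$ and $\beta\le\alpha$.

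Being a standard potential-theoretic statement, the proof is essentially routine; the only minor subtlety is the thinning step in the second half, which guarantees that contributions from distinct $r_n$ cannot overlap and so must add rather than cancel. Everything else is a direct application of the layer-cake identity together with the compact-support reduction that confines the analysis to the small-$r$ regime.
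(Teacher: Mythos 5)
Your proof is correct and self-contained; note, however, that the paper does not actually supply a proof of Lemma~\ref{lem-SY}, but simply attributes the observation to \cite{SY97} and refers to \cite[Theorem 3.4.2]{BishopPeres17} for an implicit argument. Your layer-cake computation $\int|x-y|^{-s}\,d\nu(y)=s\int_0^\infty \nu(B(x,r))\,r^{-s-1}\,dr$ is exactly the standard route, and your two halves are sound: the compact-support hypothesis is what makes the tail $s\int_{r_0}^\infty \nu(B(x,r))\,r^{-s-1}\,dr$ finite, and the thinning to $r_{n+1}\le r_n/3$ correctly disjointifies the intervals $[r_n,2r_n]$. Two very small points you might make explicit: in the second half, the conclusion $\beta\le\alpha$ comes most cleanly from observing that the divergence argument applies to every $s>\alpha$, so every such $s$ lies outside the set whose supremum defines $\beta$ (alternatively one can invoke that the set of exponents with finite energy is downward-closed); and the boundary cases $\alpha=0$ (including atoms at $x$) and $\alpha=\infty$ (when $x$ is off the support) are trivially consistent with the identity but fall outside the range $0<s<\alpha$ or $s>\alpha$ used in the two halves. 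Neither is a real gap.
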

\begin{proof}
The equality was first observed in \cite{SY97}. The reader is referred to \cite[Theorem 3.4.2]{BishopPeres17} for an implicit proof.
\end{proof}

\begin{lem}
\label{lem-Smu}
Let $\mu\in \mathcal P(\Sigma)$ and $x\in \Sigma$. Then
$$
S(\mu,x)=\sup\left\{s\geq 0:\; \int \frac{1}{\phi^s(T_{x\wedge y})} d\mu(y)<\infty\right\}.
$$
\end{lem}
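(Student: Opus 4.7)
The idea is to chop the integral according to the length of the common prefix $x \wedge y$ and compare everything to $\phi^{S_n(\mu,x)}(T_{x|n}) = \mu([x|n])$. Write $\alpha_+ = \max_i \|T_i\|<1$, and note that each singular value satisfies $\alpha_k(T_{x|n}) \leq \|T_{x|n}\| \leq \alpha_+^n$, which, since every $\alpha_k<1$, makes $\phi^t(T_{x|n})$ strictly decreasing and continuous in $t\in[0,\infty)$. The map $t\mapsto S_n(\mu,x)$ is therefore well defined by \eqref{e-2.2}.

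\textbf{Step 1 (decomposition).} Assume for simplicity that $\mu(\{x\})=0$ (otherwise $S_n(\mu,x)\to 0$, so $S(\mu,x)=0$, and the integral diverges for every $s>0$, matching the claim). Partition $\Sigma\setminus\{x\}$ into the level sets $E_n=[x|n]\setminus[x|(n+1)]$, $n\ge 0$, on which $T_{x\wedge y}=T_{x|n}$. Then
\begin{equation*}
\int \frac{1}{\phi^s(T_{x\wedge y})}\,d\mu(y) \;=\; \sum_{n=0}^\infty \frac{\mu(E_n)}{\phi^s(T_{x|n})}.
\end{equation*}
A routine computation with the singular value function (splitting by integer parts of $s,s'$) gives, for all $0\le s<s'$ and all $n$,
\begin{equation*}
\alpha_+^{n(s'-s)}\;\ge\;\frac{\phi^{s'}(T_{x|n})}{\phi^s(T_{x|n})}\;\ge\;\alpha_+^{-n(s-s')}\quad\text{(lower bound used with roles swapped)},
\end{equation*}
i.e.\ $\phi^{s'}/\phi^s \le \alpha_+^{n(s'-s)}$ and $\phi^{s}/\phi^{s'}\ge \alpha_+^{n(s'-s)}$. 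This is the only analytic ingredient we need.

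\textbf{Step 2 (convergence for $s<S(\mu,x)$).} Pick $s<s'<S(\mu,x)$. By the $\liminf$ definition, $S_n(\mu,x)>s'$ for all large $n$, so $\mu([x|n])=\phi^{S_n(\mu,x)}(T_{x|n})\le \phi^{s'}(T_{x|n})$, since $\phi^t$ is decreasing in $t$. Using $\mu(E_n)\le\mu([x|n])$ and the upper bound from Step 1,
\begin{equation*}
\sum_{n=0}^\infty \frac{\mu(E_n)}{\phi^s(T_{x|n})}\;\le\;\sum_{n=0}^\infty \frac{\phi^{s'}(T_{x|n})}{\phi^s(T_{x|n})}\;\le\;C+\sum_{n\ge N}\alpha_+^{n(s'-s)}<\infty.
\end{equation*}
Hence $s$ lies in the set whose supremum is taken on the right-hand side.

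\textbf{Step 3 (divergence for $s>S(\mu,x)$).} For any $n$, every $y\in[x|n]$ satisfies $|x\wedge y|\ge n$, hence $\phi^s(T_{x\wedge y})\le\phi^s(T_{x|n})$ (further contractions only decrease $\phi^s$). Therefore
\begin{equation*}
\int \frac{1}{\phi^s(T_{x\wedge y})}\,d\mu(y)\;\ge\;\frac{\mu([x|n])}{\phi^s(T_{x|n})}.
\end{equation*}
Now choose $s>s'>S(\mu,x)$; by definition of $\liminf$ there is a subsequence $n_j$ with $S_{n_j}(\mu,x)<s'$, so $\mu([x|n_j])\ge \phi^{s'}(T_{x|n_j})$. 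Combined with the lower bound from Step 1,
\begin{equation*}
\frac{\mu([x|n_j])}{\phi^s(T_{x|n_j})}\;\ge\;\frac{\phi^{s'}(T_{x|n_j})}{\phi^s(T_{x|n_j})}\;\ge\;\alpha_+^{-n_j(s-s')}\longrightarrow\infty,
\end{equation*}
so the integral is infinite. Combining Steps~2 and~3 gives the equality asserted in the lemma.

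\textbf{Main obstacle.} None of the steps is deep; the only delicate point is keeping the ratio estimate $\phi^{s'}(T_{x|n})/\phi^s(T_{x|n})$ uniform across all $s,s'\in[0,\infty)$, in particular handling the boundary where the integer parts $[s]$ and $[s']$ differ and the regime $s,s'\ge d$ (where $\phi^s=\det(\cdot)^{s/d}$). Both cases reduce to the universal bound $\alpha_k(T_{x|n})\le\alpha_+^n$, so one just needs to verify the exponent algebra carefully in each regime.
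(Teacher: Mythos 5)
Your proof is correct and follows essentially the same route as the paper's: decompose the integral over the level sets $E_n=[x|n]\setminus[x|(n+1)]$ where $T_{x\wedge y}=T_{x|n}$, compare $\mu(E_n)\leq\mu([x|n])=\phi^{S_n(\mu,x)}(T_{x|n})$ to $\phi^s(T_{x|n})$, and use the monotonicity/ratio estimate for $\phi^t$ in $t$ controlled by $\alpha_+^n$. One small slip in Step 1: since $-n(s-s')=n(s'-s)$ for $s<s'$, the displayed chain $\alpha_+^{n(s'-s)}\ge \phi^{s'}/\phi^s\ge \alpha_+^{-n(s-s')}$ literally asserts equality and is not what you mean; the only inequality actually used (and the only one that is true in the needed form) is $\phi^{s'}(T_{x|n})/\phi^{s}(T_{x|n})\le \alpha_+^{n(s'-s)}$ for $s<s'$, applied once directly in Step 2 and once with the roles of $s,s'$ swapped in Step 3, which you in fact do correctly.
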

\begin{proof}
We first show that if $s>S(\mu,x)$, then $\int \frac{1}{\phi^s(T_{x\wedge y})} d\mu(y)=\infty$.  Choose $\delta>0$ so that $s-\delta>S(\mu,x)$. Then there exists a subsequence $(n_j)$ of natural numbers such that $S_{n_j}(\mu,x)<s-\delta$, which implies that
$$
\mu([x|n_j])=\phi^{S_{n_j}(\mu,x)}(T_{x|n_j})\geq \phi^{s-\delta}(T_{x|n_j})\geq \phi^{s}(T_{x|n_j})(1/\alpha_+)^{n_j\delta},
$$
where $\alpha_+=\max\{\|T_i\|:\; i=1,\ldots, m\}$. It follows that
$$\int \frac{1}{\phi^s(T_{x\wedge y})} d\mu(y)\geq \int_{[x|n_j]} \frac{1}{\phi^s(T_{x|n_j})} d\mu(y)= \frac{\mu([x|n_j])}{\phi^s(T_{x|n_j})} \geq (1/\alpha_+)^{n_j\delta}.
$$
Letting $j\to \infty$ we have $\int \frac{1}{\phi^s(T_{x\wedge y})} d\mu(y)=\infty$.

Next we show that $\int \frac{1}{\phi^s(T_{x\wedge y})} d\mu(y)<\infty$ for $0\leq s<S(\mu,x)$. Choose $\delta>0$ such that $s+\delta<S(\mu,x)$. Then there exists $n_0$ such that $S_n(\mu,x)>s+\delta$ for all $n\geq n_0$. It follows that for $n\geq n_0$,
$$\mu([x|n])=\phi^{S_{n}(\mu,x)}(T_{x|n})<\phi^{s+\delta}(T_{x|n})\leq \phi^{s}(T_{x|n}) \alpha_+^{n\delta},$$
which implies, in particular, that $\mu(\{x\})=0$. Hence
\begin{align*}
\int \frac{1}{\phi^s(T_{x\wedge y})} d\mu(y)&=\sum_{n=0}^\infty \frac{1}{\phi^s(T_{x|n})}(\mu([x|n])-\mu([x|n+1]))\\
&\leq \sum_{n=0}^\infty \frac{\mu([x|n])}{\phi^s(T_{x|n})}\\
&\leq \sum_{n=0}^{n_0-1} \frac{\mu([x|n])}{\phi^s(T_{x|n})}+\sum_{n=n_0}^\infty\alpha_+^{n\delta}<\infty.
\end{align*}
This completes the proof.
\end{proof}

\begin{lem} [{\cite[Lemma 3.1]{Fal88}, \cite[Proposition 3.1]{Sol98}}]
\label{lem-2.5}
Let $\rho>0$. If $s$ is non-integral with $0<s<d$ and $\|T_i\|<1/2$ for $1\leq i\leq m$, then there exists a number $c=c(\rho, T_1,\ldots, T_m)>0$ such that
\begin{equation}
\label{e-Falconer}
\int_{B_\rho}\frac{d\ba}{|\pi^\ba x-\pi^\ba y|^s}\leq \frac{c}{\phi^s(T_{x\wedge y})}
\end{equation}
 for all distinct $x,y\in \Sigma$, where $B_\rho$ denotes the closed ball in $\R^{md}$  of radius $\rho$ centred at the origin.
\end{lem}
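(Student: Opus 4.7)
The starting point is the factorization $\pi^\ba x - \pi^\ba y = T_{x\wedge y}\bigl(\pi^\ba \sigma^n x - \pi^\ba \sigma^n y\bigr)$, where $n = |x\wedge y|$, obtained by iterating $\pi^\ba \bi = T_{i_1}\pi^\ba\sigma\bi + a_{i_1}$. Writing $x' = \sigma^n x$, $y' = \sigma^n y$ (so that $x'_1 \neq y'_1$) and $v(\ba) := \pi^\ba x' - \pi^\ba y'$, the integral to estimate becomes
$$ J(x,y)\;:=\;\int_{B_\rho}\frac{d\ba}{|T_{x\wedge y}\,v(\ba)|^s}. $$

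The analytic core is a transversality / change-of-variables estimate. The translation coordinates $a_{x'_1}$ and $a_{y'_1}$ enter $v(\ba)$ with leading terms $+\mathrm{Id}$ and $-\mathrm{Id}$ respectively, while their higher-order occurrences (through terms $T_{x'_1\cdots x'_{k-1}}a_{x'_k}$ and their $y'$-counterparts) contribute corrections of operator norm at most $\sum_{k\geq 1}\|T_i\|^k < 1$ when $\|T_i\|<1/2$. The plan is to change variables in $\R^{md}$, trading a $d$-dimensional slice of $\ba$-coordinates (built from $a_{x'_1}$ and $a_{y'_1}$) for $v$, with Jacobian bounded above and below, thereby reducing the estimate to
$$ J(x,y) \;\leq\; C_1 \int_{B_R}\frac{dw}{|T_{x\wedge y}\,w|^s} $$
for constants $C_1, R$ depending only on $\rho$ and $T_1,\ldots,T_m$. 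Achieving this reduction is the main obstacle: the naive Jacobian $\partial v/\partial a_{x'_1}$, being the difference of two operators of the form $\mathrm{Id} + O(1)$, need not be invertible, and it is Solomyak's refinement \cite{Sol98} of Falconer's original $\|T_i\|<1/3$ bound \cite{Fal88} that exploits a careful pairing of $a_{x'_1}$ with $a_{y'_1}$ to push the threshold to $1/2$.

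Once the transversality reduction is granted, the remainder of the proof is the purely geometric estimate
$$ \int_{B_R}\frac{dw}{|T_{x\wedge y}\,w|^s} \;\leq\; \frac{C_2}{\phi^s(T_{x\wedge y})}. $$
Applying the singular-value decomposition $T_{x\wedge y} = U\Sigma V^{\!\top}$ with $\Sigma = \mathrm{diag}(\alpha_1,\ldots,\alpha_d)$ and the orthogonal substitution $z = V^{\!\top}w$ followed by $u = \Sigma z$, this becomes (up to a constant) $(\alpha_1\cdots\alpha_d)^{-1}\int_E |u|^{-s}\,du$ over an ellipsoid $E$ with semi-axes proportional to $\alpha_k$. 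The layer-cake identity together with Fubini then yields
$$ \int_E |u|^{-s}\,du \;\lesssim\; \int_0^\infty t^{-s-1}\prod_{k=1}^d \min(t,\alpha_k)\,dt, $$
and splitting the $t$-integral at the break-points $\alpha_1\geq\cdots\geq\alpha_d$ with $k=[s]$ produces a dominant contribution on the interval $(\alpha_{k+1},\alpha_k]$ of order $\alpha_{k+1}^{k+1-s}\alpha_{k+2}\cdots\alpha_d = (\alpha_1\cdots\alpha_d)\,\phi^s(T_{x\wedge y})^{-1}$. The non-integrality of $s$ is used exactly so that the sub-integrals $\int t^{\ell-s-1}\,dt$ for $\ell\neq k+1$ do not diverge logarithmically at the break-points, completing the bound.
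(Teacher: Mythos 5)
The paper does not prove this lemma; it simply cites \cite{Fal88} and \cite{Sol98}, so there is no in-paper proof to compare against. Your outline correctly reconstructs the architecture of those cited arguments: the shift-out factorization $\pi^\ba x - \pi^\ba y = T_{x\wedge y}\bigl(\pi^\ba\sigma^n x - \pi^\ba\sigma^n y\bigr)$, a bi-Lipschitz change of variables on a $d$-dimensional slice of translation coordinates reducing to $\int_{B_R}\lvert T_{x\wedge y}w\rvert^{-s}\,dw$, and the SVD/layer-cake computation yielding the singular-value-function bound. The geometric estimate is worked out correctly, including the correct identification of the dominant interval $(\alpha_{k+1},\alpha_k]$ with $k=[s]$ and the fact that integrality of $s$ would introduce a logarithmic factor that spoils the comparison across break-points. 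The one piece you explicitly grant rather than prove --- the bi-Lipschitz reduction --- is precisely the content of Solomyak's Proposition 3.1; your identification that one must pair $a_{x'_1}$ with $a_{y'_1}$ (rather than changing variables in $a_{x'_1}$ alone, which only gives $\alpha < 1/3$) is the right intuition, since the substitution in $b = a_{x'_1}-a_{y'_1}$ yields a Jacobian $\tfrac12(A_{x'_1}-A_{y'_1}) = \mathrm{Id} + E$ with $\|E\|\le \alpha/(1-\alpha) < 1$ when $\alpha<1/2$. One minor imprecision: $\partial v/\partial a_{x'_1}$ is not literally the difference of two operators of the form $\mathrm{Id}+O(1)$ --- it is $\mathrm{Id}$ plus a correction of norm at most $2\alpha/(1-\alpha)$ --- but the conclusion you draw (invertibility may fail for $\alpha\in[1/3,1/2)$) is correct. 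In short, the proposal matches the standard approach, with the hardest step deferred exactly as the paper itself defers it.
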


Now we are ready to prove part (ii) of Theorem \ref{thm-lower}.

\begin{proof}[Proof of Theorem \ref{thm-lower}(ii)]
According to part (i) of the theorem, we only need to show that for $\mathcal L^{md}$-a.e.~$\ba\in \R^{md}$,
$$
 \underline{\dim}_{\rm loc}(\pi^\ba_*\mu, \pi^\ba x)\geq  \min\{S(\mu,x),d\}\quad \mbox{ for $\mu$-a.e.~$x\in \Sigma$}.
 $$

 To this end, we adapt the arguments in the proof of \cite[Theorem 4.1]{HK97}.  Let $\rho>0$.  For a given non-integral $s\in (0,d)$ and a positive integer $N$, let $\Lambda_N$ be the set of $x$ for which
 $$
 \int_\Sigma\frac{1}{\phi^s(T_{x\wedge y})} d\mu(y)<N.
 $$
 Notice that by Lemma \ref{lem-Smu}, the set of all $x$ for which $S(\mu,x)>s$ is contained in the union of $\Lambda_N$ for $N\geq 1$.  Appying Fubini's theorem,
  \begin{align*}
 \int_{B_\rho}\int_{\Lambda_N} \int_{\R^d} \frac{d\pi^\ba_*\mu(z)}{|\pi^\ba x-z|^s} d\mu(x) d\ba&= \int_{B_\rho}\int_{\Lambda_N} \int_{\Sigma} \frac{1}{|\pi^\ba x-\pi^\ba y|^s} d\mu(y)d\mu(x) d\ba\\
&=\int_{\Lambda_N}  \int_{\Sigma} \int_{B_\rho} \frac{1}{|\pi^\ba x-\pi^\ba y|^s}d\ba
 d\mu(y)d\mu(x) \\
 &\leq \int_{\Lambda_N}  \int_{\Sigma}  \frac{c}{\phi^s(T_{x\wedge y})}d\mu(y)d\mu(x) \quad\mbox{(by \eqref{e-Falconer})}\\
 &\leq cN.
  \end{align*}
 It follows that for $\mathcal L^{md}$-a.e.~$\ba\in B_\rho$,
 $\displaystyle \int_{\Lambda_N} \int_{\R^d} \frac{d\pi^\ba_*\mu(z)}{|\pi^\ba x-z|^s} d\mu(x)<\infty$ and hence
 $$
 \int_{\R^d} \frac{d\pi^\ba_*\mu(z)}{|\pi^\ba x-z|^s}<\infty \quad\mbox{ for $\mu$-a.e.~$x\in \Lambda_N$}.
 $$
 Taking the union over $N$, we have for $\mathcal L^{md}$-a.e.~$\ba\in B_\rho$,
 $$
 \int_{\R^d} \frac{d\pi^\ba_*\mu(z)}{|\pi^\ba x-z|^s}<\infty \quad\mbox{ for $\mu$-a.e.~$x$ with $S(\mu,x)>s$}.
 $$
It follows from  Lemma \ref{lem-SY} that  for $\mathcal L^{md}$-a.e.~$\ba\in B_\rho$,
 $$
 \underline{\dim}_{\rm loc}(\pi^\ba_*\mu, \pi^\ba x)\geq s \quad\mbox{ for $\mu$-a.e.~$x$ with $S(\mu,x)>s$}.
 $$
Thus we have shown that for all non-integral $s\in (0,d)$,
$$
\mu\left(\{x\in \Sigma:\; S(\mu,x)>s> \underline{\dim}_{\rm loc}(\pi^\ba_*\mu, \pi^\ba x)\}\right)=0
$$
for $\mathcal L^{md}$-a.e.~$\ba\in B_\rho$.  Taking the union over all non-integral rational $s$ in $(0,d)$, we conclude that
for $\mathcal L^{md}$-a.e.~$\ba\in B_\rho$,
$$\mu\left(\{x\in \Sigma:\; \min\{S(\mu,x),d\}> \underline{\dim}_{\rm loc}(\pi^\ba_*\mu, \pi^\ba x)\}\right)=0,$$
for if $\min\{S(\mu,x),d\}> \underline{\dim}_{\rm loc}(\pi^\ba_*\mu, \pi^\ba x)$, then there is a non-integral rational $s$ in $(0,d)$ such that $S(\mu,x)>s> \underline{\dim}_{\rm loc}(\pi^\ba_*\mu, \pi^\ba x)$.
\end{proof}

\section{Upper local dimensions of projected measures}
\label{S3}

In this section we investigate the upper local dimensions of the projections of Borel measures under the coding map $\pi^\ba$. Recall that we have defined the quantities $Z_{x\wedge y}(r)$, $G_\mu(x,r)$, $D(\mu,x)$, $\underline{D}(\mu)$ and $\overline{D}(\mu)$ for $\mu\in \mathcal P(\Sigma)$,   $x,y\in \Sigma$ and $r>0$ in Section \ref{S1}; see \eqref{ef-4.2}-\eqref{e-D1'}.

The main result of this section is the following.

 \begin{thm}
 \label{thm-packing}
 Let $\mu\in \mathcal P(\Sigma)$. Then the following properties hold.
 \begin{itemize}
 \item[(i)] For every  $\ba\in \R^{md}$,
 $$\overline{\dim}_{\rm loc}(\pi^\ba_*\mu, \pi^\ba x)\leq D(\mu,x) \mbox{  for $\mu$-a.e.~$x\in \Sigma$},
 $$
   consequently,
 $\ldim{P}(\pi^\ba_*\mu)\leq \underline{D}(\mu)$  and $\udim{P}(\pi^\ba_*\mu)\leq \overline{D}(\mu).$
 \item[(ii)] Assume that $\|T_i\|<1/2$ for $1\leq i\leq m$.  Then for $\mathcal L^{md}$-a.e.~$\ba\in \R^{md}$,
 $$\overline{\dim}_{\rm loc}(\pi^\ba_*\mu, \pi^\ba x)= D(\mu,x) \mbox{  for $\mu$-a.e.~$x\in \Sigma$},
 $$
consequently,   $\ldim{ P}(\pi^\ba_*\mu)=\underline{D}(\mu)$ and $\udim{P}(\pi^\ba_*\mu)=\overline{D}(\mu)$.
 \end{itemize}
 \end{thm}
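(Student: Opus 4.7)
The plan is to prove parts (i) and (ii) by establishing two complementary inequalities relating $\pi^\ba_*\mu(B(\pi^\ba x,r))$ to the quantity $G_\mu(x,r)$, in parallel with the structure of Theorem \ref{thm-lower}. The passage from local dimensions to the measure-on-measure statements $\ldim{P}\pi^\ba_*\mu\leq\underline{D}(\mu)$ and $\udim{P}\pi^\ba_*\mu\leq\overline{D}(\mu)$ (and the corresponding equalities in part (ii)) is then immediate from the essential infimum/supremum characterization of lower and upper packing dimensions of measures.

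For part (i), I would aim at a deterministic pointwise lower bound of the form
$$
\pi^\ba_*\mu\bigl(B(\pi^\ba x, C r)\bigr)\;\geq\; c\, G_\mu(x,r)
$$
for every $\ba\in\R^{md}$ and every $x\in\Sigma$, with constants $c,C>0$ depending only on $\ba$ and the $T_i$'s. Once this is in hand, taking logarithms, dividing by $\log r<0$, and passing to the $\limsup$ as $r\to 0$ yields $\overline{\dim}_{\rm loc}(\pi^\ba_*\mu,\pi^\ba x)\leq D(\mu,x)$. The proof of the pointwise inequality is the heart of the matter: for each $y\in\Sigma$ with $x\wedge y=x|n$, the difference $\pi^\ba y-\pi^\ba x=T_{x|n}(\pi^\ba(\sigma^n y)-\pi^\ba(\sigma^n x))$ is constrained to an ellipsoid whose $k$-th semi-axis is proportional to $\alpha_k(T_{x|n})$, and $Z_{x|n}(r)$ is precisely the normalized volume of the portion of this ellipsoid that lies in $B(0,r)$. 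A careful geometric covering argument, combining contributions across all scales $n$, should convert this ellipsoidal picture into the stated lower bound; this is where I expect the new machinery (Lemma \ref{lem-1.6} and Proposition \ref{key Prop}) advertised in the introduction to enter.

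For part (ii), under the assumption $\|T_i\|<1/2$, the complementary direction is an expected upper bound
$$
\int_{B_\rho}\pi^\ba_*\mu\bigl(B(\pi^\ba x,r)\bigr)\,d\ba\;\leq\; C\,G_\mu(x,r),
$$
obtained from Fubini combined with the indicator-function analog of the Falconer--Solomyak inequality \eqref{e-Falconer}, namely
$$
\mathcal L^{md}\{\ba\in B_\rho:|\pi^\ba x-\pi^\ba y|\leq r\}\;\leq\; C\,Z_{x\wedge y}(r)
$$
for distinct $x,y\in\Sigma$. This bound is geometrically natural because $\ba\mapsto\pi^\ba x-\pi^\ba y$ is affine in $\ba$ with linear part factoring through $T_{x\wedge y}$, so the set in question is an approximate preimage of $B(0,r)$ whose Lebesgue volume is controlled by an ellipsoid with axes $r/\alpha_k(T_{x\wedge y})$; its proof should follow the same transversality machinery that underlies \eqref{e-Falconer}. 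A Borel--Cantelli argument along a countable sequence $r_n\to 0$ realizing the $\limsup$ in the definition of $D(\mu,x)$ then yields, for $\mathcal L^{md}$-a.e.~$\ba$ and $\mu$-a.e.~$x$, a pointwise bound $\pi^\ba_*\mu(B(\pi^\ba x,r_n))\leq C(\ba,x)\,G_\mu(x,r_n)$ along that sequence, which gives $\overline{\dim}_{\rm loc}(\pi^\ba_*\mu,\pi^\ba x)\geq D(\mu,x)$. Combined with part (i), this produces the required equality almost surely.

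The main obstacle is the deterministic lower bound in part (i): it must be valid for every $\ba$ and must be tight enough to recover the full integral $G_\mu(x,r)$ rather than cruder quantities such as $\mu([x|n(r)])$. The naive estimate $\pi^\ba_*\mu(B(\pi^\ba x, Cr))\geq\mu([x|n(r)])$ from containment of a single cylinder image in the ball is strictly weaker than what is needed, so the geometric argument must combine contributions from many cylinders across many scales in a quantitatively controlled way.
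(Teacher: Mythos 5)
Your plan for part (ii) is essentially the paper's: one proves the transversality estimate $\mathcal L^{md}\{\ba\in B_\rho:|\pi^\ba x-\pi^\ba y|\le r\}\le C\,Z_{x\wedge y}(r)$ (Lemma \ref{lem-tran}), integrates by Fubini to obtain $\int_{B_\rho}\pi^\ba_*\mu(B_r(\pi^\ba x))\,d\ba\le C\,G_\mu(x,r)$, and then --- instead of Borel--Cantelli along a sequence --- applies Fatou's lemma to $\liminf_{r\to0}r^{-t}\pi^\ba_*\mu(B_r(\pi^\ba x))$: for any $t<D(\mu,x)$ the integral vanishes, giving $\overline{\dim}_{\rm loc}(\pi^\ba_*\mu,\pi^\ba x)\ge t$ for a.e.~$\ba$ (Proposition \ref{pro-1.3}), followed by a Fubini swap to interchange the $\ba$ and $x$ quantifiers. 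Your Borel--Cantelli variant could be made to work (e.g.\ along $r_n=2^{-n}$), but one must take care since the sequence realizing the limsup depends on $x$.

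There is, however, a genuine gap in your part (i). The deterministic pointwise bound $\pi^\ba_*\mu(B(\pi^\ba x, Cr))\ge c\,G_\mu(x,r)$ that you aim for, valid for every $\ba$ and \emph{every} $x\in\Sigma$ with constants independent of $r$, is false. Take $d=1$, $m=2$, $T_1=T_2=1/2$, $\ba=(0,1/2)$, $\mu=\delta_y$ with $y=222\cdots$, and $x=111\cdots$. Then $\pi^\ba x=0$, $\pi^\ba y=1$, $x\wedge y=\varepsilon$, so $G_\mu(x,r)=Z_\varepsilon(r)=\min\{r,1\}=r$ for small $r$, while $\pi^\ba_*\mu(B(0,Cr))=\delta_1([-Cr,Cr])=0$ once $Cr<1$. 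This is exactly what your volume-fraction heuristic overlooks: $Z_{x|n}(r)$ measures the fraction of the ellipsoid $\pi^\ba([x|n])$ meeting $B(\pi^\ba x,r)$, but $\mu|_{[x|n]}$ may concentrate in a distant part of that ellipsoid, so none of the mass $\mu([x|n])Z_{x|n}(r)$ need fall in the ball. The paper's Lemma \ref{lem-1.6} therefore proves the lower bound only for $\mu$-a.e.~$x$, only for $n\ge n(x)$, and with a vanishing loss factor $\gamma^{n\epsilon}$: one covers each $f^\ba_\bi(\pi^\ba(\Sigma))$, $\bi\in\Sigma_\ell$, by at most $C'/Z_\bi(\gamma^n)$ cubes of side $\gamma^n/\sqrt d$ (Lemma \ref{lem-ae}), bounds the total $\pi^\ba_*\mu$-mass of the ``light'' cubes by $C'\gamma^{n\epsilon}\mu([\bi])$, and concludes by Borel--Cantelli over $n,\ell$; combined with the finite-sum formula for $G_\mu(x,r)$ (Lemma \ref{lem-gmu}) this gives $\pi^\ba_*\mu(B_{\gamma^n}(\pi^\ba x))\ge c(n)\gamma^{n\epsilon}G_\mu(x,\gamma^n)$ with $c(n)$ decaying polynomially (Corollary \ref{lem-1.7}), which is negligible on the logarithmic scale. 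Finally, note that Proposition \ref{key Prop} plays no role here: it is part of the box-counting argument of Section \ref{S6}.
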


\subsection{Proof of part (i) of Theorem \ref{thm-packing} }

 The proof  is based on several lemmas.

 For $ \rho >0$, let  $\mathcal{D_{\rho}}$ be the partition of $\R^d$ defined by
	\begin{equation}
\label{e-drho}
  \mathcal{D_{\rho}}= \left\{  \prod_{i=1}^{d} [k_i\rho,(k_{i}+1)\rho): \; k_i \in \Z\right\}.
  \end{equation}
  We begin with a simple geometric observation.

 \begin{lem}
 \label{lem-ae}
 Let $\ba\in \R^{md}$. Then there exists $C'=C'(\ba)>0$  such that  for all $r>0$ and  $\bi\in \Sigma_*:=\bigcup_{n=0}^\infty \Sigma_n$,
 $$
 \# \left \{Q: \; Q\in  \D_{r/\sqrt{d}},\; Q\cap f^\ba_{\bi} (\pi^\ba(\Sigma))\neq \emptyset \right\} \leq  \frac{C'}{Z_{\bi}(r)},
 $$
 where $Z_{\bi}(r)$ is defined as in \eqref{ef-4.2}.
 \end{lem}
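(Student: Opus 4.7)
The plan is to observe that $f^\ba_\bi(\pi^\ba(\Sigma))$ sits inside an ellipsoid whose semi-axes are exactly $R\alpha_k(T_\bi)$, and then to invoke a standard estimate on the number of grid cubes meeting an ellipsoid. First I would fix a radius $R = R(\ba, T_1,\ldots,T_m)$ large enough that $f^\ba_i(B(0,R))\subset B(0,R)$ for every $i$; such an $R$ exists because each $f^\ba_i$ is a strict contraction, and this immediately gives $\pi^\ba(\Sigma)\subset B(0,R)$ by the usual fixed-point argument for attractors. Consequently, for every word $\bi \in \Sigma_*$,
\[
f^\ba_\bi(\pi^\ba(\Sigma))\subset f^\ba_\bi(B(0,R)) = T_\bi B(0,R) + f^\ba_\bi(0),
\]
which is a translate of the ellipsoid $T_\bi B(0,R)$, whose semi-axes are $R\alpha_1(T_\bi)\geq \cdots\geq R\alpha_d(T_\bi)$.

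Next, I would use the elementary geometric fact that the number of cubes in the grid $\D_s$ that meet an ellipsoid $E\subset\R^d$ with semi-axes $\beta_1\geq\cdots\geq\beta_d$ is at most $c_d \prod_{k=1}^d \max\{\beta_k,s\}/s$, where $c_d$ depends only on $d$. One quick way to get this is to rotate to the ellipsoid's principal axes, cover $E$ there by a product of intervals, and observe that under an orthogonal change of basis each interval $[-\beta_k,\beta_k]$ can be covered by $O(\max\{\beta_k,s\}/s)$ grid cubes of side $s$ after re-gridding (the rotation costs only a dimension-dependent constant). Applying this with $s=r/\sqrt{d}$ and $\beta_k=R\alpha_k(T_\bi)$ yields
\[
\#\{Q\in\D_{r/\sqrt{d}}:Q\cap f^\ba_\bi(\pi^\ba(\Sigma))\neq\emptyset\}\leq c_d\prod_{k=1}^d \frac{\max\{r/\sqrt{d},R\alpha_k(T_\bi)\}}{r/\sqrt{d}}.
\]
Finally, using the elementary bound $\max\{r,R\sqrt{d}\,\alpha_k(T_\bi)\}\leq (1+R\sqrt{d})\max\{r,\alpha_k(T_\bi)\}$ (checked by splitting on whether $\alpha_k(T_\bi)\geq r$ or not) together with the dual formula \eqref{e-Zre} gives the desired bound with $C'=c_d(1+R\sqrt{d})^d$. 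The empty-word case is subsumed since then $T_\varepsilon = I$ and $Z_\varepsilon(r)=(\min\{r,1\})^d$, matching the trivial covering count for the ball $B(0,R)$.

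There is no real obstacle here: the argument is essentially a bookkeeping exercise once the right geometric picture is set up. The only mild point of care is to keep the constant $C'$ independent of $\bi$ and $r$ while allowing it to depend on $\ba$ through $R$. Writing the cube-counting estimate in terms of $\max\{r,\alpha_k(T_\bi)\}/r$ rather than in terms of $R\alpha_k(T_\bi)/(r/\sqrt{d})$ is precisely what makes the statement match $1/Z_\bi(r)$ cleanly.
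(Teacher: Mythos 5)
Your proof is correct and follows essentially the same route as the paper: contain $f^\ba_\bi(\pi^\ba(\Sigma))$ in a region of side lengths proportional to the singular values $\alpha_k(T_\bi)$, cover it by $O\bigl(\prod_k \max\{\alpha_k(T_\bi),r\}/r\bigr)$ small cubes, and absorb the rotation mismatch between those cubes and the fixed grid $\D_{r/\sqrt d}$ into a dimension-dependent constant, recognizing the product as $1/Z_\bi(r)$ via \eqref{e-Zre}. The paper bounds the image by a rectangular parallelepiped of side $u\alpha_k(T_\bi)$ with $u=2\max\{1,\diam\pi^\ba(\Sigma)\}$ rather than an ellipsoid $T_\bi B(0,R)$, but the two are interchangeable bookkeeping choices leading to the same bound with a different explicit constant.
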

 \begin{proof}
 Let $\ba\in \R^{md}$, $r>0$ and $\bi\in \Sigma_*$.  Write for brevity $$u:= 2 \max\{1, \; {\rm diam}(\pi^\ba(\Sigma))\}.$$
 Notice that $f^\ba_{\bi} (\pi^\ba(\Sigma))$ is contained in a rectangular parallelepiped of side lengths $u\alpha_1(T_{\bi})$, $\ldots$, $u \alpha_d(T_{\bi})$.  We can divide such a parallelepiped
 into at most
 $$
  \prod_{k=1}^d \frac{2\max\{u\alpha_k(T_{\bi}),\; r\}} {r}\leq \prod_{k=1}^d \frac{2u\max\{\alpha_k(T_{\bi}),\; r\}} {r}= \frac{(2u)^d}{Z_{\bi}(r)}
 $$
 cubes of side length $r$, here we have used \eqref{e-Zre} in the last equality.  Take $C'=(2u)^d (d+1)^d$.  Clearly $C'$ is independent of $r$ and $\bi$. The lemma then follows from the fact that  each cube of side length $r$ intersects at most $(d+1)^d$ elements in $\D_{r/\sqrt{d}}$.
 \end{proof}

Recall the definition of  $G_{\mu}(x,r)$ for $\mu\in \mathcal P(\Sigma)$, $x\in \Sigma$ and $r>0$; see \eqref{e-Gmur}.
\begin{lem}
\label{lem-gmu}
Let $\alpha_+:=\max\{\|T_i\|:\; i=1,\ldots, m\}$ and $r\in (0,1)$. Let $\ell=\ell(r)$ be the smallest integer so that $\alpha_+^\ell\leq r$, that is, $\ell=\lceil \log r/\log \alpha_+ \rceil$. Then for every $x\in \Sigma$,
\begin{equation}
\label{e-gmu}
G_\mu(x,r)= \left(\sum_{j=0}^{\ell-1} Z_{x|j}(r)\left(\mu([x|j])-\mu([x|j+1])\right)\right)+Z_{x|\ell}(r)\mu([x|\ell]).
\end{equation}
Consequently, the mapping $x\mapsto G_\mu(x,r)$ is continuous on $\Sigma$.
\end{lem}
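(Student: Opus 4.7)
The plan is to decompose the integral defining $G_\mu(x,r)$ according to the length $j=|x\wedge y|$ of the common initial segment of $x$ and $y$, and then to observe that once $j$ is large enough, the factor $Z_{x|j}(r)$ collapses to $1$.

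First I would partition $\Sigma$ as the disjoint union
\[
\Sigma=\{x\}\;\sqcup\;\bigsqcup_{j=0}^{\infty}\bigl([x|j]\setminus[x|j+1]\bigr),
\]
where the $j$-th piece consists of those $y\in\Sigma$ with $|x\wedge y|=j$ exactly, so that $T_{x\wedge y}=T_{x|j}$ and hence $Z_{x\wedge y}(r)=Z_{x|j}(r)$ on this piece. Splitting the integral in \eqref{e-Gmur} accordingly gives
\[
G_\mu(x,r)=\sum_{j=0}^{\infty}Z_{x|j}(r)\bigl(\mu([x|j])-\mu([x|j+1])\bigr)+\mu(\{x\}),
\]
using $Z_{x\wedge x}(r)=1$ for the singleton $\{x\}$.

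Next I would exploit the choice of $\ell$. For every $j\ge\ell$ and every $k\in\{1,\dots,d\}$ we have
\[
\alpha_k(T_{x|j})\le\alpha_1(T_{x|j})=\|T_{x|j}\|\le\alpha_+^{\,j}\le\alpha_+^{\,\ell}\le r,
\]
so $\min\{r,\alpha_k(T_{x|j})\}=\alpha_k(T_{x|j})$ and hence $Z_{x|j}(r)=1$. Therefore the tail $\sum_{j\ge\ell}(\mu([x|j])-\mu([x|j+1]))+\mu(\{x\})$ telescopes to $\mu([x|\ell])$, which in turn equals $Z_{x|\ell}(r)\mu([x|\ell])$ since $Z_{x|\ell}(r)=1$. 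Combining with the finite sum over $0\le j\le \ell-1$ yields the claimed identity \eqref{e-gmu}.

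For the continuity statement I would simply note that the right-hand side of \eqref{e-gmu} depends on $x$ only through the word $x|\ell\in\Sigma_\ell$: the coefficients $Z_{x|j}(r)$ are determined by $T_{x|j}$ (hence by $x|j$ for $j\le\ell$), and the $\mu$-masses appearing are $\mu([x|j])$ for $0\le j\le\ell$. Consequently $x\mapsto G_\mu(x,r)$ is constant on each cylinder $[I]$ with $I\in\Sigma_\ell$; since such cylinders form a finite clopen partition of $\Sigma$, the function is continuous. No step is a genuine obstacle—the only mild point to verify carefully is that the tail bound $Z_{x|j}(r)=1$ kicks in exactly at $j=\ell$ given by $\lceil\log r/\log\alpha_+\rceil$, which is immediate from $\|T_{x|j}\|\le\alpha_+^{\,j}$.
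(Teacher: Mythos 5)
Your argument is correct and follows essentially the same route as the paper's proof: decompose the integral $G_\mu(x,r)=\int Z_{x\wedge y}(r)\,d\mu(y)$ over the sets $\{y:|x\wedge y|=j\}$ and the singleton $\{x\}$, observe that $Z_{x|j}(r)=1$ for $j\geq\ell$ since $\alpha_k(T_{x|j})\leq\|T_{x|j}\|\leq\alpha_+^{\,j}\leq r$, and telescope the tail to $\mu([x|\ell])$. Your explicit justification of continuity (local constancy of $G_\mu(\cdot,r)$ on cylinders of length $\ell$) spells out what the paper leaves as ``follows immediately,'' but the underlying reasoning is the same.
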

\begin{proof}
Let $x\in \Sigma$.  We first show that
 \begin{equation}
 \label{ef-4.11}
 Z_{x|j}(r)=1 \quad \mbox{ for every }j\geq \ell.
  \end{equation}
  To see this, observe that  for  $j\geq \ell$, $\|T_{x|j}\|\leq \alpha_+^j\leq \alpha_+^\ell\leq r$, so $\alpha_k(T_{x|j})\leq r$ for $1\leq k\leq d$, which implies that
 $$
 Z_{x|j}(r)=\prod_{k=1}^d \frac{\min\{r, \alpha_k(T_{x|j})\}}
 {\alpha_k(T_{x|j})}=1.
 $$
This proves \eqref{ef-4.11}. Now by definition,
\begin{eqnarray*}
  G_\mu(x, r)&=& \int Z_{x\wedge y}(r) \; d\mu(y)\\
  &= & \left( \sum_{j=0}^{\infty} Z_{x|j}(r) \mu\{y:\; |x\wedge y|=j\}\right)+\mu(\{x\})\\
  &=&\left(\sum_{j=0}^{\ell-1} Z_{x|j}(r) \mu\{y:\; |x\wedge y|=j\}\right) +\left(\sum_{j=\ell}^{\infty} \mu\{y: |x\wedge y|=j\}\right)\\
  &\mbox{}&\qquad\qquad\qquad +\mu(\{x\}) \qquad\qquad\qquad\mbox{(by \eqref{ef-4.11})} \\
  &=&\left(\sum_{j=0}^{\ell-1} Z_{x|j}(r) \mu\{y:\; |x\wedge y|=j\}\right) +\mu([x|\ell])\\
  &= &\left(\sum_{j=0}^{\ell-1} Z_{x|j}(r) \mu\{y:\; |x\wedge y|=j\} \right) + Z_{x|\ell}(r)  \mu([x|\ell])\qquad\mbox{(by \eqref{ef-4.11})}\\
  &= &\left(\sum_{j=0}^{\ell-1} Z_{x|j}(r)\left(\mu([x|j])-\mu([x|j+1])\right)\right)+Z_{x|\ell}(r)\mu([x|\ell]).
 \end{eqnarray*}
 This proves \eqref{e-gmu}. The continuity of $x\mapsto G_\mu(x,r)$ follows immediately.
\end{proof}

 The following result plays a key  role in our proof of Theorem \ref{thm-packing}(i).

 \begin{lem}
 \label{lem-1.6}
 Let $\ba\in \R^{md},  \gamma\in (0,1)$ and  $\epsilon>0$. Then for $\mu$-a.e. $x\in \Sigma$, there exists $n(x)\in \N$ such that
 \begin{equation}
 \label{e-muba}
 \pi^\ba_*\mu\left(B_{\gamma^n}(\pi^\ba x)\right)\geq \gamma^{n\epsilon} \mu([x|\ell]) Z_{x|\ell}(\gamma^n) \quad \mbox{ for all }n\geq n(x) \mbox { and } \ell\geq 0. \end{equation}
 \end{lem}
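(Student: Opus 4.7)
My plan is to apply the Borel--Cantelli lemma to the collection of ``bad'' sets
\[
B_{n,\ell} := \left\{x \in \Sigma : \pi^\ba_*\mu\bigl(B_{\gamma^n}(\pi^\ba x)\bigr) < \gamma^{n\epsilon}\mu([x|\ell])Z_{x|\ell}(\gamma^n)\right\}, \qquad B_n := \bigcup_{\ell \geq 0} B_{n,\ell}.
\]
Once I show $\sum_{n} \mu(B_n) < \infty$, it follows that $\mu$-a.e.~$x$ lies in only finitely many $B_n$'s, which yields the required $n(x)$ such that for all $n \geq n(x)$ the inequality \eqref{e-muba} holds simultaneously for every $\ell \geq 0$.

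The heart of the estimate is a cylinder-by-cylinder cell count. Fix $n$, $\ell$ and $J \in \Sigma_\ell$. Apply Lemma \ref{lem-ae} with $r = \gamma^n$ and $\bi = J$: the image $\pi^\ba([J]) = f^\ba_J(\pi^\ba(\Sigma))$ meets at most $C'/Z_J(\gamma^n)$ cells of the partition $\mathcal{D}_{\gamma^n/\sqrt{d}}$, and each such cell has diameter $\gamma^n$. Hence for every $x \in [J]$ the cell $Q_x \in \mathcal{D}_{\gamma^n/\sqrt{d}}$ containing $\pi^\ba x$ satisfies $Q_x \subseteq B_{\gamma^n}(\pi^\ba x)$, and if moreover $x \in [J] \cap B_{n,\ell}$ then $\pi^\ba_*\mu(Q_x) \leq \pi^\ba_*\mu(B_{\gamma^n}(\pi^\ba x)) < \gamma^{n\epsilon}\mu([J])Z_J(\gamma^n)$. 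Bounding $\mu([J] \cap B_{n,\ell})$ by the total $\pi^\ba_*\mu$-mass of those cells $Q \in \mathcal{D}_{\gamma^n/\sqrt{d}}$ meeting $\pi^\ba([J])$ for which $\pi^\ba_*\mu(Q) < \gamma^{n\epsilon}\mu([J])Z_J(\gamma^n)$, and using the cell-count bound, gives
\[
\mu([J] \cap B_{n,\ell}) \leq \frac{C'}{Z_J(\gamma^n)} \cdot \gamma^{n\epsilon}\mu([J])Z_J(\gamma^n) = C'\gamma^{n\epsilon}\mu([J]).
\]
Summing over $J \in \Sigma_\ell$ yields $\mu(B_{n,\ell}) \leq C'\gamma^{n\epsilon}$, a bound independent of $\ell$.

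The main obstacle is that this uniform-in-$\ell$ bound cannot be summed over all $\ell \geq 0$; I will overcome it by a deterministic truncation of the quantifier over $\ell$. Set $\alpha_+ := \max_i \|T_i\| < 1$ and $L_n := \lceil n \log\gamma / \log\alpha_+ \rceil$, so that $\alpha_+^{L_n} \leq \gamma^n$. Then for every $x$ and every $\ell \geq L_n$ one has $\alpha_k(T_{x|\ell}) \leq \alpha_+^\ell \leq \gamma^n$ for all $k$, whence $Z_{x|\ell}(\gamma^n) = 1$. If $x \in B_{n,\ell}$ for some $\ell > L_n$, then using $\mu([x|\ell]) \leq \mu([x|L_n])$ and $Z_{x|L_n}(\gamma^n) = 1$,
\[
\pi^\ba_*\mu\bigl(B_{\gamma^n}(\pi^\ba x)\bigr) < \gamma^{n\epsilon}\mu([x|\ell]) \leq \gamma^{n\epsilon}\mu([x|L_n])Z_{x|L_n}(\gamma^n),
\]
so $x \in B_{n,L_n}$ as well. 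Consequently $B_n \subseteq \bigcup_{\ell=0}^{L_n} B_{n,\ell}$ and $\mu(B_n) \leq (L_n + 1)C'\gamma^{n\epsilon}$, which is summable in $n$ since $L_n$ grows linearly in $n$ and $\gamma^\epsilon < 1$. Borel--Cantelli then completes the proof.
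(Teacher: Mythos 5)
Your proof is correct and follows essentially the same approach as the paper: the same cylinder-by-cylinder cell count via Lemma \ref{lem-ae} giving the uniform bound $\mu(B_{n,\ell})\leq C'\gamma^{n\epsilon}$, the same reduction of the $\ell$-union to $O(n)$ terms, and Borel--Cantelli. The only (minor, cosmetic) difference is in the truncation step: the paper shows the tail sets $A_{n,\ell}$ are actually empty for $\ell$ past the cutoff (via $\pi^\ba([x|\ell])\subset B_{\gamma^n}(\pi^\ba x)$ once $\mathrm{diam}(\pi^\ba([x|\ell]))\leq\gamma^n$), whereas you show $B_{n,\ell}\subseteq B_{n,L_n}$ for $\ell>L_n$ using $Z_{x|\ell}(\gamma^n)=1$ together with monotonicity of $\mu([x|\ell])$; both give the same final estimate.
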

 \begin{proof}
Let $\ba\in \R^{md},  \gamma\in (0,1)$ and  $\epsilon>0$.  Let  $n\in \N$. For $\ell\geq 0$, set
$$ A_{n,\ell}=\{x\in \Sigma: \; \pi^\ba_*\mu\left(B_{\gamma^n}(\pi^\ba x)\right)< \gamma^{n\epsilon} \mu([x|\ell]) Z_{x|\ell}(\gamma^n)\}.$$

We first prove that
\begin{equation}
\label{e-4.7}
 \mu(A_{n,\ell})\leq C'\gamma^{n\epsilon},  \quad  \ell=0, 1,\ldots,
 \end{equation}
where $C'$ is the constant in Lemma \ref{lem-ae}.

To this end, let $\ell\geq0$.  For $\bi\in \Sigma_\ell$,  we let  ${\mathcal F}_{\bi}$  denote the collection of  $Q\in \D_{\gamma^n/\sqrt{d}}$ satisfying the following two conditions:
 \begin{itemize}
 \item[(i)]
 $Q\cap f^\ba_{\bi}(\pi^\ba(\Sigma))\neq \emptyset$.
 \item[(ii)]
 $\pi^\ba_*\mu(Q)<  \gamma^{n\epsilon} \mu([\bi]) Z_{\bi}(\gamma^n).$
 \end{itemize}
 By Lemma \ref{lem-ae}, $\# {\mathcal F}_{\bi}\leq C'/Z_{\bi}(\gamma^n)$. It follows that for $\bi\in \Sigma_\ell$,
 \begin{equation}
 \label{e-4.5}
 \begin{split}
 \pi^\ba_*\mu\left(\bigcup_{Q\in{\mathcal F}_{\bi}}Q \right)& \leq
 \sum_{Q\in{\mathcal F}_{\bi} }  \pi^\ba_*\mu (Q)\\
 &<
  \frac{C'}{Z_{\bi}(\gamma^n)}\cdot \gamma^{n\epsilon} \mu([\bi])Z_{\bi}(\gamma^n)\\
  &=C'\gamma^{n\epsilon} \mu([\bi]).
 \end{split}
 \end{equation}

 We claim that
 \begin{equation}
 \label{e-4.6}
 \pi^\ba(A_{n,\ell}\cap [\bi])\subset \bigcup_{Q\in{\mathcal F}_{\bi}}Q\quad \mbox{ for every } \bi\in \Sigma_\ell.
 \end{equation}
  To see this, let $\bi\in \Sigma_\ell$ and $x\in A_{n,\ell}\cap [\bi]$. Let
$Q(x)$ be the unique element in $\D_{\gamma^n/\sqrt{d}}$ which contains the point $\pi^\ba x$.
  Clearly
 $\pi^\ba x\in Q(x)\cap f^\ba_{\bi}(\pi^\ba(\Sigma))$ since $x\in [\bi]$.  Moreover since ${\rm diam}(Q(x))\leq \gamma^n$, it follows that $Q(x)\subset B_{\gamma^n}(\pi^\ba x)$. Hence
 $$
 \pi^\ba_*\mu(Q(x))\leq \pi^\ba_*\mu(B_{\gamma^n}(\pi^\ba x))< \gamma^{n\epsilon} \mu([\bi]) Z_{\bi}(\gamma^n),$$
 where we used the assumption $x\in A_{n,\ell}$  in the second inequality.
 Therefore $Q(x)\in {\mathcal F}_{\bi}$, and so $\pi^\ba x\in \bigcup_{Q\in{\mathcal F}_{\bi}}Q$ by using the fact that $\pi^\ba x\in Q(x)$. This completes  the proof of \eqref{e-4.6}.

 By \eqref{e-4.6} and  \eqref{e-4.5},  for every $\bi\in \Sigma_\ell$,
 $$
 \mu(A_{n,\ell}\cap [\bi])\leq \pi^\ba_*\mu \left(\pi^\ba(A_{n,\ell}\cap [\bi])\right) \leq \pi^\ba_*\mu\left(\bigcup_{Q\in{\mathcal F}_{\bi}}Q \right)\leq  C'\gamma^{n\epsilon} \mu([\bi]).
 $$
 Summing   $\bi$ over $\Sigma_\ell$ yields that
 $\mu(A_{n,\ell})\leq C'\gamma^{n\epsilon}$. This proves \eqref{e-4.7}.

 Next we prove that
 \begin{equation}
 \label{e-4.8}
 A_{n,\ell}=\emptyset  \quad \mbox{  for all  } \; \ell>\frac{n\log\gamma}{\log \alpha_+}-\frac{\log {\rm diam} (\pi^\ba(\Sigma))}{\log \alpha_+},
 \end{equation}
 where   \begin{equation}
 \label{eta}
 \alpha_+:=\max\{\|T_i\|:\; i=1,\ldots, m\}.
 \end{equation}
For this purpose, let  $\ell>\frac{n\log\gamma}{\log \alpha_+}-\frac{\log {\rm diam} (\pi^\ba(\Sigma))}{\log \alpha_+}$. Then  $\alpha_+^\ell {\rm diam}(\pi^\ba(\Sigma))\leq \gamma^n$.  Let $x\in \Sigma$.   Notice that
 $$
 {\rm diam} (\pi^\ba ([x|\ell]))={\rm diam}(f^\ba_{x|\ell}(\pi^\ba(\Sigma)))\leq \alpha_+^\ell {\rm diam}(\pi^\ba(\Sigma))\leq \gamma^n.
 $$
It follows that  $\pi^\ba([x|\ell])\subset B_{\gamma^n}(\pi^\ba x)$, so
\begin{equation}
\label{ef-4.10}
\mu([x|\ell])\leq \pi^\ba_*\mu\left(\pi^\ba([x|\ell])\right) \leq \pi^\ba_*\mu \left(B_{\gamma^n}(\pi^\ba x)\right).
\end{equation}
 This implies that  $x\not\in A_{n,\ell}$; since if  $x\in A_{n,\ell}$ then by the definition of
 $A_{n,\ell}$,
 $$\pi^\ba_*\mu\left(B_{\gamma^n}(\pi^\ba(x)\right)< \gamma^{n\epsilon} \mu([x|\ell]) Z_{x|\ell}(\gamma^n)\leq \mu([x|\ell]),$$
 which contradicts \eqref{ef-4.10}.
  Since $x$ is arbitrarily taken from $\Sigma$,  it follows that $A_{n,\ell} =\emptyset$. This proves \eqref{e-4.8}.

  Finally,  let $A_n=\bigcup_{\ell=0}^\infty A_{n,\ell}$. Then by \eqref{e-4.7} and \eqref{e-4.8},   $$\mu(A_n)\leq C'\gamma^{n\epsilon} \cdot  \left(\frac{n\log\gamma}{\log \alpha_+}-\frac{\log {\rm diam} (\pi^\ba(\Sigma))}{\log \alpha_+}+1\right). $$
  Therefore $\sum_{n=1}^\infty \mu(A_n)<\infty$. By the Borel-Cantelli lemma,   $$\mu\left(\bigcap_{k=1}^\infty\bigcup_{n=k}^\infty A_n\right)=0.$$
Hence for $\mu$-a.e.~$x\in \Sigma$, there exists $n(x)$ such that $x\not\in A_n$ for every $n\geq n(x)$, from which
\eqref{e-muba} follows.  \end{proof}

 As an application of Lemma \ref{lem-1.6}, we have the following.

  \begin{cor}
  \label{lem-1.7}
  Let $\ba\in \R^{md}$,  $\gamma\in (0,1)$ and  $\epsilon>0$. Then for $\mu$-a.e. $x\in \Sigma$,
  $$
  \pi^\ba_*\mu\left(B_{\gamma^n}(\pi^\ba x)\right)\geq \gamma^{n\epsilon} n^{-1}\left(\frac{\log \gamma}{\log \alpha_+}+2\right)^{-1} G_\mu(x, {\gamma^n}),
  $$
  when $n$ is large enough.
    \end{cor}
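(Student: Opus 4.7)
The plan is to combine the pointwise lower bound from Lemma~\ref{lem-1.6} with the finite expansion of $G_\mu(x,\gamma^n)$ provided by Lemma~\ref{lem-gmu}. Fix $\ba\in\R^{md}$, $\gamma\in(0,1)$ and $\epsilon>0$, and let $\ell=\ell(\gamma^n)=\lceil n\log\gamma/\log\alpha_+\rceil$ be the threshold index appearing in Lemma~\ref{lem-gmu}. By Lemma~\ref{lem-1.6}, for $\mu$-a.e.\ $x\in\Sigma$ there is $n(x)\in\N$ such that for every $n\geq n(x)$ and every $j\geq 0$,
\begin{equation*}
\mu([x|j])\,Z_{x|j}(\gamma^n)\leq \gamma^{-n\epsilon}\,\pi^\ba_*\mu\!\left(B_{\gamma^n}(\pi^\ba x)\right).
\end{equation*}
I would work with one such $x$ and one such $n$ throughout.

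Next I would apply Lemma~\ref{lem-gmu} with $r=\gamma^n$ to write
\begin{equation*}
G_\mu(x,\gamma^n)=\sum_{j=0}^{\ell-1}Z_{x|j}(\gamma^n)\bigl(\mu([x|j])-\mu([x|j+1])\bigr)+Z_{x|\ell}(\gamma^n)\mu([x|\ell]).
\end{equation*}
Since $\mu([x|j])-\mu([x|j+1])\leq\mu([x|j])$, the displayed inequality above bounds each of the first $\ell$ summands by $\gamma^{-n\epsilon}\pi^\ba_*\mu(B_{\gamma^n}(\pi^\ba x))$, and it bounds the final summand $Z_{x|\ell}(\gamma^n)\mu([x|\ell])$ by the same quantity. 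Summing the $\ell+1$ terms therefore yields
\begin{equation*}
G_\mu(x,\gamma^n)\leq (\ell+1)\,\gamma^{-n\epsilon}\,\pi^\ba_*\mu\!\left(B_{\gamma^n}(\pi^\ba x)\right).
\end{equation*}

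Finally, I would estimate $\ell+1$: from $\ell\leq n\log\gamma/\log\alpha_+ +1$ it follows that $\ell+1\leq n(\log\gamma/\log\alpha_+ +2)$ for every $n\geq 1$, since $\log\gamma/\log\alpha_+>0$. Substituting this into the previous display and rearranging gives precisely the desired inequality. There is no real obstacle here: all the work has been done in Lemmas~\ref{lem-1.6} and~\ref{lem-gmu}, and the corollary is essentially a bookkeeping step that repackages the telescoping expression for $G_\mu$ as a sum of $O(n)$ terms, each controlled uniformly in $j$ by the single ball mass $\pi^\ba_*\mu(B_{\gamma^n}(\pi^\ba x))$.
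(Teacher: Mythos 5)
Your proof is correct and follows essentially the same route as the paper: apply Lemma~\ref{lem-1.6} to obtain the uniform bound $\mu([x|j])Z_{x|j}(\gamma^n)\leq\gamma^{-n\epsilon}\pi^\ba_*\mu(B_{\gamma^n}(\pi^\ba x))$, then use the finite expansion of Lemma~\ref{lem-gmu} (telescoping in $j$, dropping the negative pieces) to bound $G_\mu(x,\gamma^n)$ by $\ell+1$ copies of that quantity, and finally absorb $\ell+1$ into $n(\log\gamma/\log\alpha_++2)$. This is exactly the argument given in the paper.
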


 \begin{proof}
   Let $\Gamma$ denote the set of point $x\in \Sigma$ such that there exists $n(x)\in \N$ so that
 \begin{equation}
 \label{ef-4}
 \pi^\ba_*\mu\left(B_{\gamma^n}(\pi^\ba x)\right)\geq \gamma^{n\epsilon} \mu([x|\ell]) Z_{x|\ell}(\gamma^n)\quad
 \mbox{ for all } n\geq n(x)\mbox{ and }\ell\geq 0.
 \end{equation}
 Then  by Lemma \ref{lem-1.6}, $\mu(\Gamma)=1$.

 Let $\alpha_+$ be defined as in \eqref{eta}. Fix $x\in \Gamma$ and let $n\geq n(x)$. Let $\ell$ be the smallest integer so that
 $\alpha_+^\ell\leq \gamma^n$.  Then
\begin{equation}
\label{ef-4.11'}
 \ell\leq \frac{n\log \gamma}{\log \alpha_+}+1.
 \end{equation}
 By Lemma \ref{lem-gmu} (in which we take $r=\gamma^n$),
\begin{eqnarray*}
  G_\mu(x, {\gamma^n})    &\leq &\sum_{j=0}^{\ell} Z_{x|j}(\gamma^n) \mu([x|j]) \\
  &\leq & (\ell+1)  \gamma^{-n\epsilon}
 \pi^\ba_*\mu\left(B_{\gamma^n}(\pi^\ba x)\right) \qquad(\mbox{by \eqref{ef-4}})\\
 &\leq & n\left(\frac{\log \gamma}{\log \alpha_+}+2\right) \gamma^{-\epsilon n}  \pi^\ba_*\mu\left(B_{\gamma^n}(\pi^\ba x)\right)\qquad(\mbox{by \eqref{ef-4.11'}}),
\end{eqnarray*}
 which yields the desired inequality.
 \end{proof}

 \begin{proof}[Proof of Theorem \ref{thm-packing}(i)]
  It follows directly from Corollary \ref{lem-1.7}.
 \end{proof}

\subsection{Proof of part (ii) of Theorem \ref{thm-packing}}

Let us begin with the following.
\begin{lem}
\label{lem-zxn}
Let $x\in \Sigma$, $r>0$ and $n\geq 0$. Then
\begin{align}
 Z_{x|n}(r)&=\min\left\{\frac{r^k}{\phi^k(T_{x|n})}:\; k=0,1,\ldots, d\right\}\quad  \mbox{ and } \label{e-aa1} \\
 Z_{x|n}(r)&\leq \frac{r^t}{\phi^t(T_{x|n})}\quad \mbox{ for all }t\in [0,d]. \label{e-aa2}
\end{align}
\end{lem}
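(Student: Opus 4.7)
The plan is to reduce both assertions to a single observation about the discrete sequence $a_k := r^k/\phi^k(T_{x|n})$ for $k=0,1,\ldots,d$, and then handle non-integral $t$ by a piecewise-linearity argument for the logarithm.

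First I would prove \eqref{e-aa1} by direct computation from the definition \eqref{ef-4.2}. Write $\alpha_k := \alpha_k(T_{x|n})$. Since $\alpha_1\geq\cdots\geq\alpha_d$, there exists a unique $j\in\{0,1,\ldots,d\}$ (depending on $r$ and $n$) such that $\alpha_i\geq r$ for $1\leq i\leq j$ and $\alpha_i<r$ for $j<i\leq d$. Splitting the product in \eqref{ef-4.2} at $j$ gives
\[
Z_{x|n}(r)=\prod_{k=1}^j\frac{r}{\alpha_k}\cdot\prod_{k=j+1}^d 1=\frac{r^j}{\alpha_1\cdots\alpha_j}=\frac{r^j}{\phi^j(T_{x|n})}=a_j,
\]
with the convention $\phi^0\equiv 1$ and the empty product equal to $1$. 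It remains to see that $a_j=\min_{0\leq k\leq d}a_k$. Observe that $a_{k+1}/a_k=r/\alpha_{k+1}$, which is $\leq 1$ precisely when $\alpha_{k+1}\geq r$, i.e.\ when $k+1\leq j$, and $>1$ otherwise. Hence the sequence $(a_k)_{k=0}^d$ is non-increasing on $\{0,\ldots,j\}$ and strictly increasing on $\{j,\ldots,d\}$, so its minimum is indeed $a_j$. This gives \eqref{e-aa1}.

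Next I would deduce \eqref{e-aa2} from \eqref{e-aa1} by a piecewise-linearity argument. For $t\in[0,d]$ define
\[
F(t):=\log\!\frac{r^t}{\phi^t(T_{x|n})}=t\log r-\log\phi^t(T_{x|n}).
\]
On each interval $[k,k+1]$ with $0\leq k\leq d-1$, the definition \eqref{e-singular} gives $\log\phi^t(T_{x|n})=\sum_{i=1}^{k}\log\alpha_i+(t-k)\log\alpha_{k+1}$, so $F$ is affine in $t$ on $[k,k+1]$ with slope $\log(r/\alpha_{k+1})$. Therefore $F$ is continuous and piecewise affine on $[0,d]$, and its minimum is attained at one of the integer nodes $0,1,\ldots,d$. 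Combined with \eqref{e-aa1} this yields
\[
\frac{r^t}{\phi^t(T_{x|n})}\geq \min_{0\leq k\leq d}\frac{r^k}{\phi^k(T_{x|n})}=Z_{x|n}(r),\qquad t\in[0,d],
\]
which is \eqref{e-aa2}.

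There is really no conceptual obstacle here; the only delicate points are the bookkeeping at the endpoints ($k=0$ and $k=d$, including the empty product convention for $\phi^0$ and the agreement of the two branches of \eqref{e-singular} at $s=d$) and a careful treatment of the boundary case in which some $\alpha_k$ equals $r$ exactly, where the minimum in \eqref{e-aa1} is attained at more than one index but the identity still holds.
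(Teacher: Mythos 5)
Your proof is correct and takes essentially the same route as the paper: you make the case distinction on where $r$ falls among the singular values (identifying the transition index $j$), and you handle non-integer $t$ by exploiting the piecewise log-affine structure, which is the same interpolation the paper carries out by writing $Z_{x|n}(r)\leq a_k^p a_{k+1}^{1-p}$ for $t=pk+(1-p)(k+1)$. The only difference is presentational: you spell out the ``routine check'' of \eqref{e-aa1} via the unimodality of $(a_k)$, and you phrase the interpolation step in terms of the function $F$ attaining its minimum at an integer node rather than as a direct weighted geometric mean.
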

\begin{proof}
Write for brevity that $\alpha_k=\alpha_k(T_{x|n})$ for $k=1,\ldots,d$.  By \eqref{ef-4.2},
\begin{align*}
Z_{x|n}(r)=\left\{
\begin{array}{ll}
1 & \mbox{ if } r\geq \alpha_1,\\
\frac{r^k}{\alpha_1\cdots \alpha_k }&  \mbox{  if  }\alpha_{k+1}\leq r<\alpha_k \mbox { for some }k\in \{1,\ldots, d-1\},\\
\frac{r^d}{\alpha_1\cdots \alpha_d }&  \mbox{  if  } r<\alpha_d.
\end{array}
\right.
\end{align*}
Now \eqref{e-aa1} follows from the above equality by a routine check.  To see \eqref{e-aa2}, we only need to consider the case when $t$ is a non-integer in $(0,d)$. Let $k$ be the unique integer so that $t\in (k,k+1)$. Clearly $0\leq k\leq d-1$. Now $t=pk+(1-p)(k+1)$ for some  $p\in (0,1)$.  By \eqref{e-aa1},
$$
Z_{x|n}(r)\leq \left(\frac{r^k}{\alpha_1\cdots \alpha_k }\right)^p\left(\frac{r^{k+1}}{\alpha_1\cdots \alpha_{k+1} }\right)^{1-p}=\frac{r^t}{\alpha_1\cdots \alpha_k\alpha_{k+1}^{1-p}}=\frac{r^t}{\phi^t(T_{x|n})},
$$
where we use the fact that $1-p=t-k$ in the last equality.
\end{proof}

 Recall that for $\rho>0$,  $B_\rho$ denotes the closed ball in $\R^{md}$ of radius $\rho$ centred at the origin.
 To prove Theorem \ref{thm-packing}(ii), we need  the following.
 \begin{lem} [{\cite[Lemma 5.2]{JPS07}}]
 \label{lem-tran}
 Assume that $\|T_i\|<1/2$ for $1\leq i\leq m$. Let $\rho>0$.    There exists $C =C(\rho, T_1,\ldots, T_m)> 0$
such that for all $x,y\in \Sigma$ and $r>0$,
 \begin{equation*}
 \label{e-trans}
 \mathcal L^{md} \{\ba\in B_\rho:\; |\pi^\ba x-\pi^\ba y |\leq r\}\leq C\cdot Z_{x \wedge y}(r).
 \end{equation*}
 \end{lem}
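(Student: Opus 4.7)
My strategy is to decompose $\pi^\ba x-\pi^\ba y=T_w\,\Psi(\ba)$, where $w:=x\wedge y$ and the linear map $\Psi:\R^{md}\to\R^d$ is defined by $\Psi(\ba):=\pi^\ba x'-\pi^\ba y'$ with $x':=\sigma^{|w|}x$, $y':=\sigma^{|w|}y$; by maximality of $x\wedge y$, $x'_1=i\neq j=y'_1$. Writing $\Phi(\ba):=\pi^\ba x-\pi^\ba y=T_w\Psi(\ba)$, the plan is first to prove that the smallest singular value $\sigma_d(\Psi)$ is bounded below by a constant $c_0>0$ depending only on the matrices $T_\ell$. This will give $\Phi\Phi^*\succeq c_0^2 T_w T_w^*$, hence $\sigma_k(\Phi)\geq c_0\,\alpha_k(T_w)$ for $k=1,\dots,d$ by Weyl's inequality, after which the measure estimate follows by routine SVD bookkeeping. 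The trivial case $x=y$ is handled separately since $Z_{x\wedge y}(r)=1$ and the set in question equals $B_\rho$.

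To prove the singular-value bound, note that $\sigma_d(\Psi)=\inf_{|v|=1}\sup_{|e|=1}\langle\Psi(e),v\rangle$, so it suffices to exhibit, for each unit $v\in\R^d$, a unit $e\in\R^{md}$ with $\langle\Psi(e),v\rangle\geq c_0$. Expand $\Psi(\ba)=\sum_\ell (N^\ell_x-N^\ell_y)\,a_\ell$, where $N^\ell_x:=\sum_{k\geq 1:\,x'_k=\ell}T_{x'|_{k-1}}$ (analogously for $y'$). Since $x'_1=i$ and $y'_1=j$, the $k=1$ contributions give $N^i_x=I+M^i_x$ and $N^j_y=I+M^j_y$, where $M^\ell_x:=\sum_{k\geq 2:\,x'_k=\ell}T_{x'|_{k-1}}$, while $N^j_x=M^j_x$ and $N^i_y=M^i_y$. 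Taking the test vector $e$ supported on slots $i,j$ with $e_i:=v/\sqrt 2$, $e_j:=-v/\sqrt 2$ (so $|e|=1$), a direct computation gives
\[
\langle\Psi(e),v\rangle=\sqrt 2+\tfrac{1}{\sqrt 2}\langle Mv,v\rangle,\qquad M:=M^i_x-M^i_y-M^j_x+M^j_y.
\]
The hypothesis $c_+:=\max_\ell\|T_\ell\|<1/2$ then yields the crucial tail estimate $\sum_\ell\|M^\ell_x\|\leq\sum_{k\geq 2}\|T_{x'|_{k-1}}\|\leq\sum_{k\geq 2}c_+^{k-1}=c_+/(1-c_+)<1$, and similarly for $y'$, so $\|M\|\leq 2c_+/(1-c_+)$ and $\langle\Psi(e),v\rangle\geq c_0:=\sqrt 2\,(1-2c_+)/(1-c_+)>0$ uniformly in $x,y$.

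Given $\sigma_k(\Phi)\geq c_0\alpha_k(T_w)$, the measure estimate proceeds via the SVD $\Phi=U[\Sigma,0]V^T$ with $\Sigma=\mathrm{diag}(\sigma_1,\dots,\sigma_d)$. The orthogonal change of variables $\tilde\ba:=V^T\ba$ preserves Lebesgue measure and $B_\rho$, and the condition $|\Phi(\ba)|\leq r$ becomes $(\tilde a_1,\dots,\tilde a_d)\in E_r$, the ellipsoid $\{z\in\R^d:\sum_k\sigma_k^2 z_k^2\leq r^2\}$ with semi-axes $r/\sigma_k$. Integrating the remaining $md-d$ coordinates over a ball of radius at most $\rho$, using $\sigma_k\geq c_0\alpha_k(T_w)$ together with the elementary inequality $\min\{r/(c_0\alpha),\rho\}\leq\max(1/c_0,\rho)\min\{r/\alpha,1\}$ valid for all $\alpha,r>0$, we obtain
\[
\mathcal L^{md}\{\ba\in B_\rho:|\Phi(\ba)|\leq r\}\leq C\prod_{k=1}^d\min\{r/\alpha_k(T_w),1\}=C\,Z_{x\wedge y}(r),
\]
with $C$ depending only on $\rho$, $T_1,\dots,T_m$, $d$.

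The main obstacle is the uniform singular-value bound: why $\Psi$ cannot degenerate as $(x',y')$ varies over all pairs with distinct first letters. The key insight is that the $k=1$ terms produce an explicit $2I$ in $\langle\Psi(e),v\rangle$ aligned with $v$, while the strict contraction $c_+<1/2$ guarantees that the combined perturbation from the $k\geq 2$ tails has operator norm strictly below $2$, preserving strict positivity independently of $x,y$.
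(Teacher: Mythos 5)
Your proof is correct. Note that the paper itself does not prove this lemma---it simply cites \cite[Lemma~5.2]{JPS07}---so there is no ``paper proof'' against which to match line-by-line. Still, the comparison with the standard argument in the literature is worth recording. The usual route (due to Falconer and Solomyak, as adapted in \cite{JPS07}) writes $\pi^\ba x-\pi^\ba y=T_w\bigl(a_i-a_j+R(\ba)\bigr)$ and changes variables in the single block $a_i$ (or the pair $a_i,a_j$), using $\|T_\ell\|<1/2$ to make $a_i\mapsto a_i-a_j+R(\ba)$ a bi-Lipschitz bijection with controlled Jacobian, from which the ellipsoidal volume bound follows. Your argument instead isolates the transversality phenomenon as a uniform lower bound $\sigma_d(\Psi)\geq c_0=\sqrt2\,(1-2c_+)/(1-c_+)>0$ on the smallest singular value of the full linear map $\ba\mapsto\pi^\ba x'-\pi^\ba y'$, obtained by testing against the vector concentrated on slots $i\neq j$, and then reduces the measure estimate to routine singular-value-decomposition bookkeeping after noting $\Phi\Phi^*\succeq c_0^2 T_wT_w^*$. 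The two routes are morally the same (both exploit that the first-letter contributions give an explicit $2I$ dominating the geometric tail of norm $\leq 2c_+/(1-c_+)<1$), but your packaging via operator theory avoids the change-of-variables bookkeeping and makes the role of the $1/2$ threshold especially transparent. All the steps check out: the verification that $\sigma_d(\Psi)=\inf_{|v|=1}\sup_{|e|=1}\langle\Psi e,v\rangle$ is standard, the congruence argument giving $\sigma_k(\Phi)\geq c_0\,\alpha_k(T_w)$ via eigenvalue monotonicity is correct, the elementary inequality $\min\{r/(c_0\alpha),\rho\}\leq\max(1/c_0,\rho)\min\{r/\alpha,1\}$ holds in both regimes $r\lessgtr\alpha$, and the degenerate case $x=y$ is handled correctly since $Z_{x\wedge y}(r)=1$ there.
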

 \medskip

 \begin{pro}
 \label{pro-1.3}
  Assume that $\|T_i\|<1/2$ for $1\leq i\leq m$. Let $x\in \Sigma$ and $\rho>0$. Then
 $$
  \overline{\dim}_{\rm loc}(\pi^\ba_*\mu, \pi^\ba x)\geq D(\mu,x)
  $$  for $\mathcal L^{md}$-a.e.~$\ba\in B_\rho$.
\end{pro}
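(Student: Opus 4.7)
The plan is a standard first-moment argument based on Lemma \ref{lem-tran}, in the spirit of \cite{JPS07, FaHo97}. The aim is to complement the upper bound already proved in part (i): for a fixed $x \in \Sigma$, we must show that for $\mathcal L^{md}$-a.e.\ $\ba \in B_\rho$ and every $s < D(\mu, x)$ one has $\overline{\dim}_{\rm loc}(\pi^\ba_*\mu, \pi^\ba x) \ge s$. The key identity on which everything rests is that, by Fubini and Lemma \ref{lem-tran},
\begin{equation*}
\int_{B_\rho} \pi^\ba_*\mu\bigl(B(\pi^\ba x, r)\bigr)\, d\ba
= \int_\Sigma \mathcal L^{md}\bigl\{\ba \in B_\rho : |\pi^\ba x - \pi^\ba y| \le r\bigr\}\, d\mu(y)
\le C\, G_\mu(x, r)
\end{equation*}
for every $r > 0$, where $C = C(\rho, T_1, \dots, T_m)$ is the constant in Lemma \ref{lem-tran}.

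Now fix $s \in (0, D(\mu, x))$ and $\epsilon \in (0, s)$. Since $D(\mu, x) = \limsup_{r \to 0} \log G_\mu(x, r)/\log r$, one can select a sequence $r_n \downarrow 0$ with $G_\mu(x, r_n) \le r_n^s$ and, after further thinning, with $r_n \le 2^{-n/\epsilon}$. Markov's inequality applied at $r = r_n$ gives
\begin{equation*}
\mathcal L^{md}\bigl\{\ba \in B_\rho : \pi^\ba_*\mu(B(\pi^\ba x, r_n)) > r_n^{\,s-\epsilon}\bigr\}
\le C\, r_n^{-(s-\epsilon)}\, G_\mu(x, r_n)
\le C\, r_n^\epsilon \le C\, 2^{-n},
\end{equation*}
which is summable. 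The first Borel--Cantelli lemma then ensures that for $\mathcal L^{md}$-a.e.\ $\ba \in B_\rho$ one has $\pi^\ba_*\mu(B(\pi^\ba x, r_n)) \le r_n^{\,s-\epsilon}$ for all large $n$, and since $r_n \to 0$ this forces $\overline{\dim}_{\rm loc}(\pi^\ba_*\mu, \pi^\ba x) \ge s - \epsilon$ on a set of full $\mathcal L^{md}$-measure in $B_\rho$. Intersecting these full-measure sets over rational $s \in (0, D(\mu, x))$ and $\epsilon = 1/j$ with $j \in \N$ produces a single conull subset of $B_\rho$ on which the desired inequality holds.

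There is no substantial obstacle once Lemma \ref{lem-tran} is available: the argument is the standard transversality/Markov pattern. The one technical nuance worth noting is that $D(\mu, x)$ is defined as a \emph{limsup}, not a liminf, so one does not have $G_\mu(x, r) \le r^s$ for every small $r$, only along some subsequence. This is precisely why the sequence $r_n$ must be pre-thinned to make the Borel--Cantelli bound $\sum_n r_n^\epsilon$ converge, and why the conclusion is a lower bound on the limsup (rather than the liminf) of $\log \pi^\ba_*\mu(B(\pi^\ba x, r))/\log r$; but a lower bound on the limsup is exactly what is needed to bound $\overline{\dim}_{\rm loc}$ from below.
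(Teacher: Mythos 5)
Your proof is correct and rests on the same engine as the paper's: the Fubini argument combined with Lemma \ref{lem-tran} to get $\int_{B_\rho}\pi^\ba_*\mu(B_r(\pi^\ba x))\,d\ba \le C\,G_\mu(x,r)$. Where you diverge is in how you convert this $L^1$ bound over $\ba$ into an almost-everywhere statement. The paper applies Fatou's lemma to $\int_{B_\rho}\liminf_{r\to 0} r^{-t}\pi^\ba_*\mu(B_r(\pi^\ba x))\,d\ba$, observes that for $t < D(\mu,x)$ the right side $\liminf_{r\to 0} r^{-t}G_\mu(x,r)$ vanishes (precisely because $D$ is a limsup, so there is a subsequence along which $r^{-t}G_\mu(x,r)\to 0$), and concludes $\liminf_{r\to 0} r^{-t}\pi^\ba_*\mu(B_r(\pi^\ba x)) = 0$ a.e., hence $\overline{\dim}_{\rm loc}\ge t$ a.e.; there is no need to choose a sequence by hand. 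You instead extract an explicit sequence $r_n\downarrow 0$ along which $G_\mu(x,r_n)\le r_n^s$, thin it so that $r_n^\epsilon$ is summable, apply Markov and Borel--Cantelli. Both routes are sound; yours is a bit more hands-on (you must arrange the decay to make Borel--Cantelli applicable), while Fatou buys the paper a one-line conclusion without any thinning. Your remark on why the limsup in the definition of $D(\mu,x)$ matters is exactly right, and the final intersection over rational $s$ and $\epsilon = 1/j$ correctly handles the $\ba$-dependence.
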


 \begin{proof}
We adopt an idea from the proof of \cite[Lemma 4(a)]{FaHo97}.
Let $x\in \Sigma$ and $r>0$. Applying Fubini's theorem and Lemma \ref{lem-tran},
\begin{eqnarray*}
\int_{B_\rho}\pi^\ba_*\mu\left(B_r (\pi^\ba x)\right) d\ba&=&\int_{B_\rho} \int_{\R^d} {\bf 1}_{\{z:\; |z-\pi^\ba x |\leq r\}}\; d\pi^\ba_*\mu (z) d\ba\\
& =&
 \int_{B_\rho}\int_{\Sigma} {\bf 1}_{\{y:\;|\pi^\ba y-\pi^\ba x |\leq r\}}\; d\mu(y)  d\ba \\
 & =&
 \int_\Sigma \int_{B_\rho} {\bf 1}_{\{\ba:\;|\pi^\ba y-\pi^\ba x |\leq r\}}\;  d\ba d\mu(y)  \\
 &=& \int_\Sigma  \mathcal L^{md} \{\ba\in {B_\rho}:\; |\pi^\ba x-\pi^\ba y |\leq r\} \; d\mu(y)\\
& \leq & C\int_\Sigma   Z_{x\wedge y}(r)\; d\mu(y)\\
&=& C\cdot G_\mu(x,r),
\end{eqnarray*}
 where $C$ is the constant given in Lemma \ref{lem-tran}.
 Hence by Fatou's lemma,  for any $t\in \R$,
 \begin{eqnarray}
 \int_{B_\rho} \liminf_{r\to 0} r^{-t} \pi^\ba_*\mu\left(B_r (\pi^\ba x)\right) \;d\ba& \leq & \liminf_{r\to 0} \int_{B_\rho} r^{-t} \pi^\ba_*\mu\left(B_r (\pi^\ba x)\right)\; d\ba \nonumber
 \\
 &\leq& C \liminf_{r\to 0} r^{-t} G_\mu(x,r).  \label{e-D2}
 \end{eqnarray}

  Next we assume that $t<D(\mu,x)$. By the definition of $D(\mu,x)$ (see \eqref{e-D1}), we get $\liminf_{r\to 0} r^{-t} G_\mu(x,r)=0$. Combining this with \eqref{e-D2} yields that
  $$
  \int_{B_\rho} \liminf_{r\to 0} r^{-t} \pi^\ba_*\mu\left(B_r (\pi^\ba x)\right) \;d\ba=0,
  $$
 which implies that
  $
  \liminf_{r\to 0} r^{-t} \pi^\ba_*\mu\left(B_r (\pi^\ba x)\right)=0$  for $\mathcal L^{md}$-a.e.~$\ba\in {B_\rho}$.
   Hence
  $$
  \overline{\dim}_{\rm loc}(\pi^\ba_*\mu, \pi^\ba x)\geq t
  $$
  for $\mathcal L^{md}$-a.e.~$\ba\in {B_\rho}$.  This concludes the proposition by letting $t\to D(\mu,x)$.
  \end{proof}

 \begin{proof}[Proof of Theorem \ref{thm-packing}(ii)]
 Notice that  the mapping $(\ba, x)\mapsto \pi^\ba x$ is continuous. It follows that for every $r>0$, the mapping
 $(\ba,x)\mapsto \pi^\ba_*\mu\left(B_r(\pi^\ba x)\right)$ is upper semi-continuous. Meanwhile by Lemma \ref{lem-gmu}
 the mapping $x\mapsto G_\mu(x,r)$
 is continuous.  Since
 $$
 \overline{\dim}_{\rm loc}(\pi^\ba_*\mu, \pi^\ba (x))=\limsup_{n\to +\infty} \frac{\log \pi^\ba_*\mu\left(B_{2^{-n}}(\pi^\ba x)\right)}{-n\log 2},\quad
 D(\mu,x)=\limsup_{n\to +\infty} \frac{\log G_\mu(x, 2^{-n})}{-n\log 2},
 $$
 it follows that the mappings
 $$
 (\ba, x)\mapsto \overline{\dim}_{\rm loc}(\pi^\ba_*\mu, \pi^\ba x), \quad x\mapsto D(\mu,x)
 $$
 are both Borel measurable.  Hence for any  $\rho>0$,
 the set
 $$
 \Theta_\rho:=\{(\ba, x)\in B_\rho\times \Sigma:\;   \overline{\dim}_{\rm loc}(\pi^\ba_*\mu, \pi^\ba x)\geq D(\mu,x)\}
 $$
 is Borel measurable.  By  Fubini's theorem and Proposition \ref{pro-1.3},
 \begin{align*}
 (\mathcal L^{md}\times \mu) (\Theta_\rho)&=\int_{\Sigma} \mathcal L^{md} \{\ba\in B_\rho:\;\overline{\dim}_{\rm loc}(\pi^\ba_*\mu, \pi^\ba x)\geq D(\mu,x)\}\;d\mu(x)\\
 &= \int_\Sigma\mathcal L^{md}(B_\rho)\;d\mu(x)\\
 &=\mathcal L^{md}(B_\rho).
 \end{align*}
 So applying  Fubini's theorem again,  $$
 \mathcal L^{md}(B_\rho)=(\mathcal L^{md}\times \mu) (\Theta_\rho)=\int_{B_\rho} \mu \left\{x\in \Sigma:\;\overline{\dim}_{\rm loc}(\pi^\ba_*\mu, \pi^\ba x)\geq D(\mu,x)\right\}\;d\ba,
 $$
 which implies that for $\mathcal L^{md}$-a.e.~$\ba\in B_\rho$,
 $$
  \overline{\dim}_{\rm loc}(\pi^\ba_*\mu, \pi^\ba x)\geq D(\mu,x) \quad \mbox{ for $\mu$-a.e.~$x\in \Sigma$}.
 $$
 Combining this with  part (i) of the theorem yields that  for $\mathcal L^{md}$-a.e.~$\ba\in B_\rho$,  $$
  \overline{\dim}_{\rm loc}(\pi^\ba_*\mu, \pi^\ba x)= D(\mu,x) \quad \mbox{ for $\mu$-a.e.~$x\in \Sigma$},
 $$
 from which part (ii) follows.
 \end{proof}

 \section{Exact dimensionality of projected measures}
 \label{S4}

 In this section, we prove Theorems \ref{thm-5.1}-\ref{thm-5.2}. We also construct an example (see Example \ref{ex-1}) to show that for every integer $d\geq 2$ and  $s\in \{1,\ldots, d-1\}$,   \eqref{e-equv} does not always imply \eqref{e-equv1} in the general affine setting.

 We begin with the proof of Theorem \ref{thm-5.1}.

 \begin{proof}[Proof of Theorem \ref{thm-5.1}]
By Theorem \ref{thm-lower-upper},
\begin{equation}
\label{e-tt1}
\begin{split}
\underline{\dim}_{\rm H}\pi^\ba_*\mu&=\min\{\underline{S}(\mu), d\},\quad \overline{\dim}_{\rm H}\pi^\ba_*\mu=\min\{\overline{S}(\mu), d\},\\
\underline{\dim}_{\rm P}\pi^\ba_*\mu&=\underline{D}(\mu),\qquad\quad\quad \;\;\overline{\dim}_{\rm P}\pi^\ba_*\mu=\overline{D}(\mu).
\end{split}
\end{equation}
for $\mathcal L^{md}$-a.e.~$\ba\in \R^{md}$.  Hence $\pi^\ba_*\mu$ is exact dimensional  for $\mathcal L^{md}$-a.e.~$\ba\in \R^{md}$ if and only if
\begin{equation}
\label{e-gammad}
\min\{\underline{S}(\mu), d\}=\overline{D}(\mu).
\end{equation}

Notice that  the upper packing dimension of a measure on $\R^d$ is greater than or equal to its lower Hausdorff dimension, but does not exceed $d$, so  by \eqref{e-tt1},
$$\min\{\underline{S}(\mu), d\}\leq \overline{D}(\mu)\leq d.$$  Thus \eqref{e-gammad} holds if and only if one of the following conditions is satisfied:
\begin{itemize}
\item[(a)] $\underline{S}(\mu)\geq d$;
\item[(b)] $\underline{S}(\mu)=\overline{D}(\mu)=s$ for some $s\in [0,d)$.
\end{itemize}
Clearly, the condition (a) is equivalent to (i). Meanwhile, (b) is equivalent to (ii). To see it, notice that (ii) clearly implies (b). For the converse part,  from \eqref{e-tt1} we see that $\min\{\underline{S}(\mu), d\}\leq \min\{\overline{S}(\mu), d\}\leq \overline{D}(\mu)$. So (b) implies $\underline{S}(\mu)=\overline{S}(\mu)=\overline{D}(\mu)=s$, which is equivalent to (ii).  \end{proof}

Next we turn to the proof of Theorem \ref{thm-5.2}. As it is
 quite lengthy and delicate,  the reader may skip it in a first reading.

 We first prove several lemmas.
 \begin{lem}
 \label{lem-5.4}
 Let $\mu$ be a Borel probability measure on $\Sigma$ and $\delta\in (0,1)$. Then
  \begin{equation}
 \label{e-covering}
 \limsup_{n\to \infty}\frac{1}{n}  \log \left(
 \frac{\mu([x_1\ldots x_{\lceil(1-\delta)n\rceil}]) }{\mu([x_1\ldots x_n ])}  \right)\leq \delta \log m \quad \mbox{ for $\mu$-a.e. $x\in \Sigma$, }
 \end{equation}
 where $\lceil a \rceil$ represents the least integer not smaller than $a$.
 \end{lem}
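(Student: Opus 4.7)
My plan is to use a direct Borel-Cantelli argument, exploiting the simple combinatorial fact that a cylinder $[x|k]$ of length $k$ contains exactly $m^{n-k}$ sub-cylinders of length $n$.

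Fix $\epsilon > 0$ and let $k = k(n) = \lceil (1-\delta)n \rceil$, so that $n - k \le \delta n$. Define the ``bad'' set
\[
E_n(\epsilon) = \Bigl\{ x \in \Sigma :\; \frac{\mu([x|k])}{\mu([x|n])} > m^{(\delta + \epsilon)n}\Bigr\}.
\]
The key estimate is that $\mu(E_n(\epsilon))$ decays geometrically in $n$. To see this, I would partition $E_n(\epsilon)$ according to the length-$k$ prefix: for each $I \in \Sigma_k$, the portion of $E_n(\epsilon)$ inside $[I]$ is a union of length-$n$ cylinders $[J]$ with $J$ extending $I$ and $\mu([J]) < \mu([I]) / m^{(\delta + \epsilon)n}$. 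Since there are only $m^{n-k}$ such extensions $J$, the total $\mu$-mass of such bad sub-cylinders inside $[I]$ is at most $m^{n-k} \cdot \mu([I]) / m^{(\delta+\epsilon)n}$. Summing over $I \in \Sigma_k$ and using $n - k \le \delta n + 1$ yields
\[
\mu(E_n(\epsilon)) \le m^{\,n-k - (\delta + \epsilon) n} \le m^{\,1 - \epsilon n}.
\]

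This bound is summable in $n$, so by the Borel-Cantelli lemma, $\mu$-a.e.\ $x \in \Sigma$ belongs to at most finitely many of the sets $E_n(\epsilon)$. Consequently, for $\mu$-a.e.\ $x$,
\[
\limsup_{n \to \infty} \frac{1}{n} \log \frac{\mu([x|k])}{\mu([x|n])} \le (\delta + \epsilon) \log m.
\]
Taking the intersection over a countable sequence $\epsilon_j \downarrow 0$ of values of $\epsilon$ gives \eqref{e-covering}.

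I do not anticipate any serious obstacle; the whole argument is pleasantly elementary, with the only small point to watch being the off-by-one created by the ceiling function in $k = \lceil (1-\delta)n\rceil$, which only affects the bound by a harmless multiplicative constant $m$.
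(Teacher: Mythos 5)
Your proof is correct and takes essentially the same route as the paper's: define the bad set at scale $n$ via a threshold of the form $m^{(\delta+\epsilon)n}$ (the paper writes it as $e^{-n\epsilon}$ with $\epsilon>\delta\log m$, an equivalent parametrization), partition by the length-$\lceil(1-\delta)n\rceil$ prefix, bound using the count of at most $m^{n-k}$ extensions, and conclude by Borel--Cantelli. The arithmetic and the final intersection over $\epsilon_j\downarrow 0$ are all in order.
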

 \begin{proof}
 Let $\epsilon>\delta \log m$. For $n\in \N$, let $\Theta_n$ denote the collection of $I\in \Sigma_n$ so that
 \begin{equation*}
 \label{e-covering1}
 \mu([I])\leq e^{-n\epsilon} \mu([I|\lceil (1-\delta)n\rceil]).
 \end{equation*}
 Summing over $I\in \Theta_n$ and noticing that for each $J\in \Sigma_{\lceil (1-\delta)n\rceil}$ there are at most
 $m^{\delta n}$ many elements $I\in \Sigma_n$ with $I|\lceil (1-\delta)n\rceil=J$,  we have
 $$
 \mu\left( \bigcup_{I\in \Theta_n} [I] \right)\leq e^{-n\epsilon} \sum_{I\in \Theta_n}\mu\Big(\big[I|\lceil (1-\delta)n\rceil\big]\Big) \leq  e^{-n\epsilon} m^{\delta n}=e^{-n (\epsilon-\delta \log m)}.
 $$
 Hence
 $$
 \sum_{n=1}^\infty \mu\left( \bigcup_{I\in \Theta_n} [I] \right)<\infty.
 $$
 By the Borel-Cantelli lemma,
 $
 \mu\left(\bigcap_{n=1}^\infty\bigcup_{k=n}^\infty \bigcup_{I\in \Theta_k} [I]  \right)=0.
 $
It follows that  $$
  \limsup_{n\to \infty}\frac{1}{n}  \log \left(
 \frac{\mu([x_1\ldots x_{\lceil(1-\delta)n\rceil}]) }{\mu([x_1\ldots x_n ])}  \right)\leq \epsilon\quad \mbox{ for $\mu$-a.e. $x\in \Sigma$.}
 $$
 Letting $\epsilon\to \delta\log m$ gives   \eqref{e-covering}.
  \end{proof}

\begin{lem}
\label{lem-5.6}
Write
\begin{equation}
\label{e-eta}
\alpha_+=\max\{\|T_i\|:\; i=1,\ldots, m\},\quad \alpha_-=\min\{\alpha_d(T_i):\; i=1,\ldots, m\}.
\end{equation}
 Then for all $I\in \Sigma_*$, $i\in \{1,\ldots, m\}$ and $k\in \{1,\ldots, d\}$,
\begin{equation}
\label{e-geta}
\alpha_-\leq \frac{\alpha_k(T_{Ii})}{\alpha_k(T_I)}\leq \alpha_+.
\end{equation}
\end{lem}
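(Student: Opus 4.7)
Since $T_{Ii}=T_IT_i$, the lemma reduces to establishing
$$
\alpha_-\,\alpha_k(T_I) \;\le\; \alpha_k(T_IT_i) \;\le\; \alpha_+\,\alpha_k(T_I).
$$
My plan is to derive both inequalities from the standard submultiplicative singular-value estimate $\alpha_k(AB)\le \alpha_k(A)\,\|B\|$, which holds for arbitrary $d\times d$ real matrices $A,B$ and every $k\in\{1,\ldots,d\}$.

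The upper bound is then immediate by taking $A=T_I$ and $B=T_i$: one obtains
$$
\alpha_k(T_{Ii}) \;\le\; \alpha_k(T_I)\,\|T_i\| \;\le\; \alpha_+\,\alpha_k(T_I).
$$
For the lower bound I would exploit the invertibility of $T_i$ and apply the same estimate to the factorisation $T_I=(T_IT_i)\,T_i^{-1}$, which gives
$$
\alpha_k(T_I) \;\le\; \alpha_k(T_{Ii})\,\|T_i^{-1}\| \;=\; \alpha_k(T_{Ii})/\alpha_d(T_i) \;\le\; \alpha_k(T_{Ii})/\alpha_-,
$$
and rearranging yields $\alpha_k(T_{Ii}) \ge \alpha_-\,\alpha_k(T_I)$.

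The only piece of real content is the submultiplicative estimate itself. I would derive it in one line from the Courant--Fischer characterisation
$$
\alpha_k(M) \;=\; \min_{\dim V=d-k+1}\; \max_{x\in V,\;\|x\|=1}\|Mx\|,
$$
by picking $V$ optimal for $A$ and testing $AB$ on $B^{-1}(V)$: since $B$ is invertible in our setting, $B^{-1}(V)$ has dimension $d-k+1$, and for $x\in B^{-1}(V)$ with $\|x\|=1$ one has $\|ABx\|\le \alpha_k(A)\,\|Bx\|\le \alpha_k(A)\,\|B\|$. There is no genuine obstacle here; the whole lemma is a short, routine application of well-known singular-value inequalities, and the only mildly delicate point is to remember that the lower bound requires invertibility (which the hypotheses supply).
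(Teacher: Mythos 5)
Your proof is correct and matches the paper's argument: both derive the two-sided bound from the submultiplicative singular-value estimate, with the lower bound obtained via exactly the same factorisation trick $T_I=(T_I T_i)T_i^{-1}$ together with $\|T_i^{-1}\|=1/\alpha_d(T_i)$. The only cosmetic difference is that you supply a short Courant--Fischer proof of $\alpha_k(AB)\le\alpha_k(A)\|B\|$, whereas the paper simply cites Horn and Johnson for that inequality.
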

\begin{proof}
This follows from the general fact that
$$
\alpha_k(A)\alpha_d(B)\leq \alpha_k(AB)\leq \alpha_k(A)\alpha_1(B)
$$
for all $A, B\in {\rm GL}_d(\R)$ and $k\in \{1,\ldots, d\}$. The reader is referred to \cite[Theorem 3.3.16(d)]{HoJo91} for the second inequality. To see the first inequality, simply notice that
$$
\alpha_k(A)=\alpha_k(ABB^{-1})\leq \alpha_k(AB)\alpha_1(B^{-1})=\alpha_k(AB)/\alpha_d(B)
$$
by using the second inequality and the identity $\alpha_1(B^{-1})=1/\alpha_d(B)$.
\end{proof}

Recall that the quantities $S_n(\mu,x)$ and $G_\mu(x,r)$ are defined in \eqref{e-2.2} and \eqref{e-Gmur}, respectively.
\begin{lem}
\label{lem-5.5}
Let $\mu\in \mathcal P(\Sigma)$ and $x\in \Sigma$. Then the following statements hold.
\begin{itemize}
\item[(i)] If $\limsup_{n\to \infty} S_n(\mu,x)=t<d$, then
$$
\limsup_{r\to 0} \frac{\log G_\mu(x,r)}{\log r}\leq t.
$$
\item[(ii)] If $\liminf_{n\to \infty}S_n(\mu,x)=s<d$, then
$$
\liminf_{r\to 0} \frac{\log G_\mu(x,r)}{\log r}\geq s.
$$
\end{itemize}
\end{lem}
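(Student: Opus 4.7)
The plan is to prove (i) and (ii) separately using the Abel-type identity for $G_\mu(x,r)$ from Lemma \ref{lem-gmu} together with the singular-value inequality $Z_{x|n}(r)\leq r^t/\phi^t(T_{x|n})$ from Lemma \ref{lem-zxn}. In both cases the hypothesis on $S_n(\mu,x)$ is first converted, via $\mu([x|n])=\phi^{S_n(\mu,x)}(T_{x|n})$ and the monotonicity of $\phi^s$ in $s$, into a one-sided comparison between $\mu([x|n])$ and $\phi^{s\pm\epsilon}(T_{x|n})$ valid for all large $n$, and then transported to $G_\mu(x,r)$ through a careful choice of $n$ in terms of $r$.

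Part (ii) is the easier direction. Fix $\epsilon>0$ and pick $N$ so that $\mu([x|n])\leq\phi^{s-\epsilon}(T_{x|n})$ for every $n\geq N$, which follows from $\liminf_{n\to\infty}S_n(\mu,x)=s$. Lemma \ref{lem-gmu} immediately yields $G_\mu(x,r)\leq\sum_{j=0}^{\ell}Z_{x|j}(r)\mu([x|j])$. Splitting the sum at $j=N$, Lemma \ref{lem-zxn} gives $Z_{x|j}(r)\mu([x|j])\leq Z_{x|j}(r)\phi^{s-\epsilon}(T_{x|j})\leq r^{s-\epsilon}$ for $j\geq N$, while the finitely many $j<N$ terms contribute at most $O(r^d)=O(r^{s-\epsilon})$ since $s-\epsilon<d$. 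Because $\ell\leq C|\log r|$, this gives $G_\mu(x,r)\leq C'|\log r|\,r^{s-\epsilon}$; dividing logarithmically and sending $\epsilon\to 0$ concludes (ii).

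For part (i) I would argue dually but more delicately. Fix any non-integral $t'\in(t,d)$, set $k_0=\lfloor t'\rfloor$, $\alpha_{-}=\min_{i}\alpha_d(T_i)>0$, and choose $N$ with $\mu([x|n])\geq\phi^{t'}(T_{x|n})$ for $n\geq N$. For each small $r$, let $n(r)$ be the smallest integer with $\alpha_{k_0+1}(T_{x|n(r)})<r$. The standard singular-value inequality $\alpha_j(AB)\geq\alpha_d(B)\alpha_j(A)$ then forces $\alpha_{k_0+1}(T_{x|n(r)})\in[\alpha_{-}r,\,r)$, and $n(r)\to\infty$ as $r\to 0$ so that $n(r)\geq N$ eventually. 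The elementary lower bound $G_\mu(x,r)\geq Z_{x|n(r)}(r)\mu([x|n(r)])$, which holds because $Z_{x\wedge y}(r)\geq Z_{x|n(r)}(r)$ for every $y\in[x|n(r)]$, reduces the task to the uniform estimate
\[
Z_{x|n(r)}(r)\,\phi^{t'}(T_{x|n(r)})\;\geq\; \alpha_{-}^{t'}\, r^{t'}.
\]
To prove it, let $j^{*}$ be the largest index with $\alpha_{j^{*}}(T_{x|n(r)})\geq r$; then $j^{*}\leq k_0$, and the monotonicity of singular values combined with $\alpha_{k_0+1}\geq\alpha_{-}r$ forces $\alpha_j(T_{x|n(r)})\in[\alpha_{-}r,\,r)$ for every $j\in\{j^{*}+1,\ldots,k_0+1\}$. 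Substituting $Z_{x|n(r)}(r)=r^{j^{*}}/\phi^{j^{*}}(T_{x|n(r)})$ and $\phi^{t'}=\phi^{k_0}\cdot\alpha_{k_0+1}^{\,t'-k_0}$ reduces the product to $r^{j^{*}}\alpha_{j^{*}+1}\cdots\alpha_{k_0}\alpha_{k_0+1}^{\,t'-k_0}\geq r^{j^{*}}(\alpha_{-}r)^{t'-j^{*}}=\alpha_{-}^{t'-j^{*}}r^{t'}\geq\alpha_{-}^{t'}r^{t'}$. Passing to logarithms and letting $t'\downarrow t$ through non-integer values completes (i). The main obstacle is precisely this uniform lower bound: pinning $\alpha_{k_0+1}$ at scale $r$ is what simultaneously controls all the intermediate singular values in the band $[\alpha_{-}r,\,r)$, and the non-integrality of $t'$ keeps $\phi^{t'}$ away from its boundary formulas so that the substitution collapses cleanly.
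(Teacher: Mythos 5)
Your proof is correct for both parts. For part (i) your argument is in essence the paper's: you combine the lower bound $G_\mu(x,r)\geq Z_{x|n}(r)\mu([x|n])$ with $\mu([x|n])\geq\phi^{t'}(T_{x|n})$ and exploit the cancellation that happens when $\alpha_{k_0+1}(T_{x|n})\sim r$. The paper realizes this by evaluating along the subsequence $r=\alpha_{k+1}(T_{x|n})$ (where $Z_{x|n}(r)\phi^{t+\epsilon}(T_{x|n})$ collapses exactly to $r^{t+\epsilon}$) and then interpolating using the boundedness of consecutive ratios from Lemma \ref{lem-5.6}; you instead choose $n(r)$ for every $r$, pin $\alpha_{k_0+1}(T_{x|n(r)})$ into the band $[\alpha_- r,r)$, and verify by hand the uniform bound $Z_{x|n(r)}(r)\phi^{t'}(T_{x|n(r)})\geq\alpha_-^{t'}r^{t'}$. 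Your version is a careful rephrasing, with correctly handled edge cases (including $j^*=0$); the two buy the same thing.

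For part (ii) your route is genuinely simpler than the paper's. The paper picks $\epsilon$ small enough that $s-\epsilon$ and $s-2\epsilon$ lie in the same interval $(k,k+1)$, uses $Z_{x|n}(r)\leq r^{s-2\epsilon}/\phi^{s-2\epsilon}(T_{x|n})$, and obtains a convergent series via $\phi^{s-\epsilon}/\phi^{s-2\epsilon}=\alpha_{k+1}^\epsilon\leq\alpha_+^{n\epsilon}$, yielding $G_\mu(x,r)\leq Cr^{s-2\epsilon}$ with no logarithmic factor. You instead truncate the sum at $\ell\leq C|\log r|$ using the identity in Lemma \ref{lem-gmu}, apply the crude termwise bound $Z_{x|j}(r)\phi^{s-\epsilon}(T_{x|j})\leq r^{s-\epsilon}$ from Lemma \ref{lem-zxn}, and absorb the factor $\ell$ logarithmically. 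This avoids the delicate choice of $\epsilon$ relative to $\lfloor s-\epsilon\rfloor$ entirely, at the price of a $|\log r|$ loss that is harmless for the $\log/\log$ limit. Both reach the same conclusion; yours is a bit more elementary. One small omission: for $s=0$ the claim is vacuous (since $G_\mu\leq 1$ and $r<1$ force $\log G_\mu(x,r)/\log r\geq 0$), and your choice of $\epsilon<s$ implicitly assumes $s>0$; the paper flags this explicitly.
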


\begin{proof}
We first prove (i). Suppose that $\limsup_{n\to \infty} S_n(\mu,x)=t<d$. Let $k=\lfloor t \rfloor$. Let $\epsilon>0$ be small enough so that $\lfloor t +\epsilon\rfloor=k$. Observe that for $r>0$,
\begin{align*}
G_\mu(x,r)&=\int Z_{x\wedge y}d\mu(y)\\
&=\left(\sum_{j=0}^\infty Z_{x|j}(r) (\mu([x|j])-\mu([x|j+1]))\right)+\mu(\{x\}).
\end{align*}
Since $Z_{x|j}(r)\leq Z_{x|j+1}(r)$ and $Z_{x|j}(r)\leq 1$ for every $j\geq 0$,  it follows that for  $n\geq 0$,
\begin{equation}\label{e-GZ}
\begin{split}
G_\mu(x,r)&\geq \left(\sum_{j=n}^\infty Z_{x|j}(r) (\mu([x|j])-\mu([x|j+1]))\right)+\mu(\{x\})\\
&\geq \left(\sum_{j=n}^\infty Z_{x|n}(r) (\mu([x|j])-\mu([x|j+1]))\right)+Z_{x|n}(r)\mu(\{x\})\\
&= Z_{x|n}(r)\mu([x|n]).
\end{split}
\end{equation}

Since $\limsup_{n\to \infty} S_n(\mu,x)=t$, there exists $n_0\in \N$ such that $S_n(\mu,x)<t+\epsilon$ for all $n\geq n_0$, which implies
\begin{equation}
\label{e-GZ1}
\mu([x|n])\geq \phi^{t+\epsilon}(T_{x|n})\quad \mbox{ for all }\;n\geq n_0.
\end{equation}
For $n\geq n_0$, taking $r=\alpha_{k+1}(T_{x|n})$ in \eqref{e-GZ} gives
\begin{align*}
G_\mu(x,\alpha_{k+1}(T_{x|n}))&\geq Z_{x|n}(\alpha_{k+1}(T_{x|n}))\mu([x|n])\\
&\geq Z_{x|n}(\alpha_{k+1}(T_{x|n}))\phi^{t+\epsilon}(T_{x|n})\qquad\mbox{ (by \eqref{e-GZ1})}\\
&=\frac{\alpha_{k+1}(T_{x|n})^k}{\phi^k(T_{x|n})}\cdot \phi^{t+\epsilon}(T_{x|n})\\
&=\alpha_{k+1}(T_{x|n})^{t+\epsilon},
\end{align*}
so
$$\frac{\log G_\mu(x,\alpha_{k+1}(T_{x|n})) } {\log \alpha_{k+1}(T_{x|n})}\leq t+\epsilon.
$$
Since the sequence $\left(\alpha_{k+1}(T_{x|n})\right)_{n=n_0}^\infty$ is monotone decreasing with
$$
\frac{1}{\alpha_+}\leq \frac{\alpha_{k+1}(T_{x|n})}{\alpha_{k+1}(T_{x|n+1})}\leq \frac{1}{\alpha_-}
$$
(see \eqref{e-geta}), it follows that
$$
\limsup_{r\to 0}\frac{\log G_\mu(x,r)}{\log r}=\limsup_{n\to \infty} \frac{\log  G_\mu(x,\alpha_{k+1}(T_{x|n})) } {\log \alpha_{k+1}(T_{x|n})}\leq t+\epsilon.
$$
Letting $\epsilon\to 0$  proves (i).

Now we turn to the proof of (ii).  Suppose that $\liminf_{n\to \infty}S_n(\mu,x)=s<d$. To avoid the triviality we may assume that $s>0$.  Let $\epsilon>0$ be small enough so that
$$\lfloor s-\epsilon\rfloor<s-2\epsilon.$$
Set $k=\lfloor s-\epsilon\rfloor$. Since  $\liminf_{n\to \infty}S_n(\mu,x)=s$, there exists $n_0$ such that
\begin{equation}\label{e-GZ2}
\mu([x|n])=\phi^{S_n(\mu,x)}(T_{x|n})<\phi^{s-\epsilon}(T_{x|n})\quad\mbox{ for all }n\geq n_0.
\end{equation}
This implies, in particular, that $\mu(\{x\})=\lim_{n\to \infty}\mu([x|n])=0$. Hence
\begin{align*}
G_\mu(x,r)&=\sum_{n=0}^\infty Z_{x|n}(r)(\mu([x|n])-\mu([x|n+1]))\\
&\leq \sum_{n=0}^{n_0} Z_{x|n}(r)(\mu([x|n])-\mu([x|n+1]))+\sum_{n=n_0+1}^\infty Z_{x|n}(r)\mu([x|n])\\
&\leq \sum_{n=0}^{n_0} Z_{x|n_0}(r)(\mu([x|n])-\mu([x|n+1]))+\sum_{n=n_0+1}^\infty Z_{x|n}(r)\mu([x|n])\\
&\leq  Z_{x|n_0}(r)+\sum_{n=n_0+1}^\infty Z_{x|n}(r)\mu([x|n])\\
&\leq  \frac{r^d}{\phi^{d}(T_{x|n_0})}+\sum_{n=n_0+1}^\infty \frac{r^{s-2\epsilon}}{\phi^{s-2\epsilon}(T_{x|n})}\cdot\phi^{s-\epsilon}(T_{x|n})\qquad\mbox{(by \eqref{e-aa2}, \eqref{e-GZ2})}\\
&=\frac{r^d}{\phi^{d}(T_{x|n_0})}+r^{s-2\epsilon} \sum_{n=n_0+1}^\infty \alpha_{k+1}(T_{x|n})^\epsilon \\
&\leq C r^{s-2\epsilon}
\end{align*}
for  some $C>0$ independent of $r$. It follows that $$\liminf_{r\to 0}\frac{\log G_\mu(x,r)}{\log r}\geq s-2\epsilon.$$ Letting $\epsilon\to 0$ gives $\liminf_{r\to 0}\log G_\mu(x,r)/\log r\geq s$.
\end{proof}

Now we are ready to prove Theorem \ref{thm-5.2}.

\begin{proof}[Proof of Theorem \ref{thm-5.2}] We first prove (i).
For $s\in [0,d)$ and $x\in \Sigma$, by Lemma \ref{lem-5.5} we have the implication
$$
\lim_{n\to \infty} S_n(\mu,x)=s \Longrightarrow \lim_{r\to 0}\frac{\log G_\mu(x,r)}{\log r}=s.$$
Hence we have the implication \eqref{e-equv1} $\Longrightarrow$ \eqref{e-equv} whenever $s\in [0,d)$.

Next we prove (ii). According to (i) we only need to prove the  direction \eqref{e-equv} $\Longrightarrow$ \eqref{e-equv1} whenever $s\in [0,d)\backslash \N$.  To this end, we fix $s\in [0,d)\backslash \N$. Suppose \eqref{e-equv} holds. Below we prove \eqref{e-equv1} by contradiction.

Suppose on the contrary that \eqref{e-equv1} does not hold. Then there exists a Borel set $A\subset \Sigma$ with $\mu(A)>0$ and $\tau\in (0,1)$ such that
\begin{equation}
\label{e-lar}
\limsup_{n\to \infty}S_n(\mu,x)>s+\tau \quad \mbox{ for all }x\in A.
\end{equation}
By \eqref{e-equv} and Lemma \ref{lem-5.4},  removing a subset of zero $\mu$-measure from $A$ if necessary, we may assume that
\begin{equation}
\label{e-deltan}
\limsup_{n\to \infty} \frac{1}{n}\log \left( \frac{\mu\big([x|\lceil(1-q)n\rceil]\big)}{\mu([x|n])}\right)\leq q \log m \quad \mbox{ for all $x\in A$ and $q\in {\Bbb Q}\cap (0,1)$},
\end{equation}
and
\begin{equation}
\label{e-5.12}
\liminf_{n\to \infty} S_n(\mu,x)=s=\limsup_{r\to 0} \frac{\log G_\mu(x,r)}{\log r}\quad  \mbox{ for all $x\in A$}.
\end{equation}

Now fix a point $x\in A$. In what follows we derive a contradiction by considering the cases $s\in (0,d)\backslash \N$ and $s=0$ separately.

{\sl \bf Case 1}. $s\in (0,d)\backslash \N$.

Let $k=\lfloor s \rfloor$. Then $0\leq k\leq d-1$ and $k<s<k+1$. Replacing $\tau$ by a smaller positive number if necessary, we may assume that $k<s\pm\tau<k+1$. Choose $\delta\in {\Bbb Q}\cap (0,1)$ so that
\begin{equation}
\label{e-delta}
\delta\leq \frac{\tau \log (1/\alpha_+)}{4\log m+ 2k \log (1/\alpha_-)},
\end{equation}
where $\alpha_+$ and $\alpha_-$ are defined as in \eqref{e-eta}.  Then we pick  $\epsilon>0$ small enough so that
\begin{equation}
\label{e-epsilon}
\epsilon<\min \left\{\frac{\tau}{2},\; \frac{(\tau-\epsilon)\delta\log (1/\alpha_+)}{2\log (1/\alpha_-)} \right\}.
\end{equation}
Below we will show that
\begin{equation}
\label{e-claim}
\limsup_{r\to \infty} \frac{\log G_\mu(x,r)}{\log r}\geq s+\frac{\epsilon}{2},
\end{equation}
which clearly contradicts \eqref{e-5.12}.

By \eqref{e-5.12} and \eqref{e-deltan}, there exists $n_0\in \N$ such that
\begin{equation}
\label{e-5.16}
\mu([x|n])\leq \phi^{s-\epsilon}(T_{x|n})\quad \mbox{ for all }n\geq n_0
\end{equation}
and
\begin{equation}
\label{e-5.16'}
\mu\big([x| (1-\delta)n]\big)\leq m^{2\delta n}\mu([x|n])\quad \mbox{ for all }n\geq n_0.
\end{equation}
In a customary abuse of notation, we write $(1-\delta)n$ instead of $\lceil (1-\delta)n \rceil$ in \eqref{e-5.16'}.

Similarly by \eqref{e-lar}, there exists an arbitrarily large $N\in \N$ such that
\begin{equation}
\label{e-5.17}
\mu([x|N])\leq \phi^{s+\tau}(T_{x|N}).
\end{equation}
Set
$$
r_N=\alpha_{k+1}(T_{x|N}).
$$
We may require that $N$ is so large that $(1-\delta)N > n_0$ and $(1/\alpha_+)^{N\epsilon/2}>N$, where the second inequality implies that
\begin{equation}
\label{e-5.19'}
N\leq r_N^{-\epsilon/2}.
\end{equation}

By \eqref{e-epsilon}, $\alpha_+^{\delta(\tau-\epsilon)}\leq (\alpha_-)^{2\epsilon}$. This together with \eqref{e-geta} yields that
\begin{equation}
\label{e-5.18}
\alpha_+^{\delta(\tau-\epsilon)N}\leq (\alpha_-)^{2\epsilon N}\leq \alpha_{k+1}(T_{x|N})^{2\epsilon}=r_N^{2\epsilon}.
\end{equation}
Similarly by \eqref{e-delta}, $(\alpha_-)^{-k\delta} m^{2\delta}\leq \alpha_+^{-\tau/2}$. This together with \eqref{e-geta} yields that
\begin{equation}
\label{e-5.18'}
(\alpha_-)^{-k\delta N} m^{2\delta N}\leq \alpha_+^{-N \tau /2}\leq r_N^{-\tau/2}.
\end{equation}

Now let us estimate $G_\mu(x, r_N)$. By \eqref{e-5.16}, $$\mu(\{x\})=\lim_{n\to 0} \mu([x|n])=0.$$
 It follows that
\begin{align*}
G_\mu(x, r_N)&=\sum_{n=0}^\infty Z_{x|n}(r_N)(\mu([x|n])-\mu([x|n+1]))\\
&\leq \sum_{n=0}^\infty Z_{x|n}(r_N)\mu([x|n])\\
&\leq \sum_{n=0}^{n_0}Z_{x|n}(r_N)\mu([x|n])+\sum_{n=n_0}^{ (1-\delta)N}Z_{x|n}(r_N)\mu([x|n])\\
&\qquad +\sum_{n=(1-\delta)N}^{(1+\delta)N}Z_{x|n}(r_N)\mu([x|n])+\sum_{n=(1+\delta)N}^\infty Z_{x|n}(r_N)\mu([x|n])\\
&=: (I)+(II)+(III)+(IV),
\end{align*}
where, and in what follows,  we write $(1\pm \delta)N$ instead of $\lceil (1\pm \delta)N \rceil$ in an abuse of notation. Below we estimate these sub-sums separately.

We start from the estimation of (I).
\begin{align*}
(I):&=\sum_{n=0}^{n_0}Z_{x|n}(r_N)\mu([x|n])\\
&\leq (n_0+1) Z_{x|n_0}(r_N)\leq (n_0+1) \frac{r_N^d}{\phi^d(T_{x|n_0})} \leq \frac{(n_0+1) }{\phi^d(T_{x|n_0})}\cdot r_N^{s+\epsilon}.
\end{align*}

Next we estimate (II).
\begin{align*}
(II):&=\sum_{n=n_0}^{ (1-\delta)N}Z_{x|n}(r_N)\mu([x|n])\\
&\leq \sum_{n=n_0}^{(1-\delta)N} \frac{ r_N^{s+\tau}}{\phi^{s+\tau}(T_{x|n})}\cdot \phi^{s-\epsilon}(T_{x|n}) \quad \mbox{ (by \eqref{e-aa2} and \eqref{e-5.16})}\\
&=\sum_{n=n_0}^{ (1-\delta)N} r_N^{s+\tau}\alpha_{k+1}(T_{x|n})^{-\tau-\epsilon}
\qquad \mbox{(since $k<s+\tau,s-\epsilon<k+1$)}\\
&=\sum_{n=n_0}^{(1-\delta)N }r_N^{s-\epsilon}\left(\frac{\alpha_{k+1}(T_{x|N})}{\alpha_{k+1}(T_{x|n})}\right)^{\tau+\epsilon}\\
&\leq Nr_N^{s-\epsilon} (\alpha_+^{\delta N-1})^{\tau+\epsilon}\qquad \mbox{ (by \eqref{e-geta})}\\
&\leq N\alpha_+^{-\tau-\epsilon} r_N^{s-\epsilon} \alpha_+^{\delta N(\tau-\epsilon)}\\
&\leq \alpha_+^{-\tau-\epsilon} r_N^{s+\epsilon/2}\qquad \qquad \mbox{ (by \eqref{e-5.19'} and \eqref{e-5.18})}.
\end{align*}

Then we turn to the estimation of (III).
\begin{align*}
(III):&=\sum_{n= (1-\delta)N}^{ (1+\delta)N}Z_{x|n}(r_N)\mu([x|n])\\
&\leq (1+\delta)N Z_{x| (1+\delta)N}(r_N)\mu\big([x|(1-\delta)N]\big)\\
&\leq (1+\delta)N\cdot \frac{r_N^k}{\phi^k(T_{x|(1+\delta)N})}\cdot m^{2\delta N}\mu([x|N]) \quad \mbox{ (by \eqref{e-aa2} and \eqref{e-5.16'})}\\
&\leq (1+\delta)N\cdot \frac{r_N^k}{\phi^k(T_{x|(1+\delta)N})}\cdot m^{2\delta N}\phi^{s+\tau}(T_{x|N})
\qquad\mbox{ (by \eqref{e-5.17})}\\
&\leq (1+\delta)N\cdot \frac{r_N^k}{\phi^k(T_{x|N})}\cdot(\alpha_-)^{-k(\delta N+1)}\cdot m^{2\delta N}\phi^{s+\tau}(T_{x|N})\qquad \mbox{ (by \eqref{e-geta})}\\
&= (1+\delta)N\cdot r_N^{s+\tau}\cdot(\alpha_-)^{-k(\delta N+1)}\cdot m^{2\delta N}\\
&\leq (1+\delta)N(\alpha_-)^{-k} r_N^{s+\tau/2} \qquad \mbox{ (by \eqref{e-5.18'})}\\
&\leq (1+\delta)(\alpha_-)^{-k} r_N^{s+\epsilon/2}\qquad \mbox{ (by using $\epsilon\leq \tau/2$ and \eqref{e-5.19'})}.
\end{align*}

Finally we estimate (IV).
\begin{align*}
(IV):&=\sum_{n= (1+\delta)N}^{\infty}Z_{x|n}(r_N)\mu([x|n])\\
&\leq \sum_{n= (1+\delta)N}^{\infty} \frac{r_N^{s-\tau}}{\phi^{s-\tau}(T_{x|n})}\cdot \phi^{s-\epsilon}(T_{x|n}) \quad \mbox{ (by \eqref{e-aa2} and \eqref{e-5.16})}\\
&= \sum_{n=(1+\delta)N}^{\infty} r_N^{s-\tau}\alpha_{k+1}(T_{x|n})^{\tau-\epsilon}\qquad \mbox{(since $k<s-\tau,s-\epsilon<k+1$)}\\
&= \sum_{n= (1+\delta)N}^{\infty} r_N^{s-\epsilon}\left(\frac{\alpha_{k+1}(T_{x|n})}{\alpha_{k+1}(T_{x|N})}\right)^{\tau-\epsilon}\\
&= \sum_{n= (1+\delta)N}^{\infty} r_N^{s-\epsilon} \alpha_+^{(n-N)(\tau-\epsilon)} \quad \mbox{ (by  \eqref{e-geta})}\\
&\leq r_N^{s-\epsilon} \alpha_+^{\delta N (\tau-\epsilon)} (1-\alpha_+^{\tau-\epsilon})^{-1}\\
&\leq (1-\alpha_+^{\tau-\epsilon})^{-1} r_N^{s+\epsilon} \quad \mbox{ (by  \eqref{e-5.18})}.
\end{align*}

Summing up these estimates, we see that
$$
G_{\mu}(x, r_N)\leq C r_N^{s+\epsilon/2},
$$
where $C>0$ is a constant independent of $N$. Since $N$ can be taken arbitrarily large, we obtain \eqref{e-claim}.
\medskip

{\sl\bf Case 2}. $s=0$.

In this case, we choose $\delta\in {\Bbb Q}\cap (0,1)$ such that
\begin{equation}
\label{e-delta1}
\delta<\frac{\tau\log (1/\alpha_+)}{4 \log m},
\end{equation}
and take a small $\epsilon>0$ such that
\begin{equation}
\label{e-epsilon1}
\epsilon<\min\left\{\frac{\tau}{2}, \; \frac{\delta\log \alpha_+}{\log \alpha_-}\right\}.
\end{equation}
In what follows we will show that
\begin{equation}
\label{e-claim1}
\limsup_{r\to 0}\frac{\log G_\mu(x,r)}{\log r}\geq \epsilon/2,
\end{equation}
which contradicts \eqref{e-5.12}.

By \eqref{e-lar} and \eqref{e-deltan} (in which $s=0$), we may find arbitrarily large $N\in \N$ such that
\begin{equation}
\label{e-5.25}
\mu([x|N])\leq \phi^\tau(T_{x|N})=\alpha_1(T_{x|N})^\tau,
\end{equation}
and
\begin{equation}
\label{e-5.26}
\mu([x|(1-\delta)N])\leq m^{2\delta N}\mu([x|N]).
\end{equation}

By \eqref{e-delta1} and \eqref{e-geta},
$m^{2\delta N}\leq \alpha_+^{-N \tau/2}\leq \alpha_1(T_{x|N})^{-\tau/2}$. So according to \eqref{e-5.25} and \eqref{e-5.26},
\begin{equation}
\label{e-5.27}
\mu\big([x|(1-\delta)N]\big)\leq m^{2\delta N}\mu([x|N])\leq \alpha_1(T_{x|N})^{-\tau/2}\cdot \alpha_1(T_{x|N})^\tau=\alpha_1(T_{x|N})^{\tau/2}.
\end{equation}
Meanwhile by \eqref{e-epsilon1}, $\alpha_+^\delta\leq (\alpha_-)^\epsilon$, which together with \eqref{e-geta} yields
\begin{equation}
\label{e-5.28}
\alpha_+^{\delta N}\leq (\alpha_-)^{N\epsilon}\leq \alpha_1(T_{x|N})^\epsilon.
\end{equation}

Let $r_N=\alpha_1(T_{x|N})$. We may require that $N$ is so large that $(1/\alpha_+)^{N\epsilon/2}>N$, which implies that
\begin{equation}
\label{e-5.19''}
N\leq r_N^{-\epsilon/2}.
\end{equation}
Below we estimate $G_\mu(x, r_N)$. Since $\mu(\{x\})=0$ by \eqref{e-lar}, we have
\begin{align*}
G_\mu(x, r_N)&=\sum_{n=0}^\infty Z_{x|n}(r_N)(\mu([x|n])-\mu([x|n+1]))\\
&\leq \sum_{n=0}^{(1-\delta)N} Z_{x|n}(r_N) + \sum_{n=(1-\delta)N}^\infty  (\mu([x|n])-\mu([x|n+1]))\\
&\leq N Z_{x|(1-\delta)N}(r_N) + \mu([x|(1-\delta)N])\\
&\leq N \frac{r_N}{\alpha_1(T_{x|(1-\delta)N})}+\alpha_1(T_{x|N})^{\tau/2} \quad \mbox{ (by \eqref{e-aa2} and \eqref{e-5.27})} \\
&=N\frac{\alpha_1(T_{x|N})}{\alpha_1(T_{x|(1-\delta)N})}+r_N^{\tau/2}\\
&\leq N \alpha_+^{\delta N} +r_N^{\tau/2}\\
&\leq N r_N^\epsilon +r_N^{\tau/2} \quad \mbox{ (by  \eqref{e-5.28})}\\
&\leq 2 r_N^{\epsilon/2}  \quad \mbox{ (using $\epsilon<\tau/2$ and  \eqref{e-5.19''})}.
\end{align*}
This implies \eqref{e-claim1} since $N$ can be taken arbitrarily large. So we have completed the proof of part (ii).

Finally we prove (iii). Here  $T_i$ ($i=1,\ldots, m$) are assumed to be scalar multiples of orthogonal matrices. By (ii) it suffices to show that for $s\in (0,d)$, \eqref{e-equv} $\Longrightarrow$ \eqref{e-equv1}. For this purpose we only need to modify the proof of part (ii) (for Case 1) slightly.  More precisely, we only need to replace `$k<s<k+1$' by `$k\leq s<k+1$' in the first paragraph in the reasoning of Case 1, and remove the explanations `since $k<s\pm\tau,s-\epsilon<k+1$' in the estimations of the sums (II) and (IV). Indeed since $T_i$ are scalar multiples of orthogonal matrices, it follows that
$$
\alpha_1(T_{x|n})=\cdots=\alpha_d(T_{x|n}) \qquad \mbox{ for all }x\in \Sigma \mbox{ and }n\in \N,
$$
hence the equality $\phi^{s-\epsilon}(T_{x|n})/\phi^{s\pm \tau}(T_{x|n})=\alpha_{k+1}(T_{x|n})^{\mp \tau-\epsilon}$ holds unconditionally.
\end{proof}	

In the remainder of this section, we construct an example to show that for every integer $d\geq 2$ and  $s\in \{1,\ldots, d-1\}$,  the conditions \eqref{e-equv} and \eqref{e-equv1} are not equivalent.

\begin{ex}
\label{ex-1}
{\rm
Let $d\geq 2$ and $k\in \{1,\ldots, d-1\}$. Set $m=3^{2k}$. Define
$$
T_1=\cdots=T_m={\rm diag}(\rho_1,\ldots, \rho_d),
$$
where $\rho_1=\cdots=\rho_k=1/3$ and $\rho_{k+1}=\cdots=\rho_d=1/9$. Define
$$M_i=8^i,\qquad i=1,2,\ldots.$$
Write
$$
\mathcal A_1=\{1,\ldots, 3^k\},\quad \mathcal A_2=\{1,\ldots, 3^{2k}\} \;\mbox{ and }\;\mathcal A_3=\{1\}.
$$
Define for $j\geq 1$,
$$
\mathcal B_j=\left\{
\begin{array}{ll}
\mathcal A_2, & \quad  \mbox{ if }\; j\in [M_i+1, (9/8)M_i] \;  \mbox{ for some }i,\\
\mathcal A_3, & \quad\mbox{ if }\;j\in [(9/8)M_i+1, (5/4) M_i]\; \mbox{ for some }i,\\
\mathcal A_1, & \quad\mbox{ otherwise}.
\end{array}
 \right.
$$
Set $\Sigma=\{1,\ldots, m\}^\N$.
 Construct a compact subset $X$ of $\Sigma$ by
$$
X=\prod_{j=1}^\infty \mathcal B_j:=\left\{(x_j)_{j=1}^\infty\in \Sigma:\; x_j\in \mathcal B_j\right\}.
$$
Then we define a product probability measure $\mu$ on $X$ by
$$
\mu=\prod_{j=1}^\infty {\bf p}_j,
$$
where for each $j$, ${\bf p}_j$ is the equal-weighted probability vector in $\R^{\# \mathcal B_j}$, i.e.,  $$
{\bf p}_j=\left\{\frac{1}{\# \mathcal B_j},\ldots, \frac{1}{\# \mathcal B_j} \right\}.
$$
We will show that
\begin{equation}
\label{e-j1}
\liminf_{n\to \infty} S_n(\mu,x)=\limsup_{r\to 0} \frac{\log G_\mu(x,r)}{\log r}=k \quad \mbox{ for all $x\in X$},
\end{equation}
and
\begin{equation}
\label{e-j2}
\limsup_{n\to \infty} S_n(\mu,x)>k\quad \mbox{ for all $x\in X$},
\end{equation}
which indicate that \eqref{e-equv} does not imply \eqref{e-equv1}.
}
\begin{proof}[Justification of \eqref{e-j1} and \eqref{e-j2}]
By the definitions of $\phi^s$ and $T_1,\ldots, T_m$, we have for $x\in X$, $n\in \N$, $s\in [0,d]$ and $0<r<1$,
\begin{equation}
\label{e-phis}
\phi^s(T_{x|n})=\left\{
\begin{array}{ll}
3^{-ns} & \mbox{ if }s\in [0,k],\\
3^{-nk} 9^{-n(s-k)} &\mbox{ if } s\in (k,d],
\end{array}
\right.
\end{equation}
and
\begin{equation}
\label{e-zxn}
Z_{x|n}(r)=\left\{
\begin{array}{ll}
1 &\mbox{ if }n\geq \frac{\log(1/r)}{\log 3},\\
3^{kn} r^k  &\mbox{ if } \frac{\log (1/r)}{\log 9}\leq n\leq \frac{\log(1/r)}{\log 3},\\
3^{kn}9^{(d-k)n}r^d &\mbox{ if } n\leq \frac{\log (1/r)}{\log 9}.
\end{array}
\right.
\end{equation}

Meanwhile by the constructions of $X$ and $\mu$, it is readily checked that for every $x\in X$ and $j\in \N$,
\begin{equation*}
\mu([x|j])=\left\{
\begin{array}{ll}
3^{-M_ik} 3^{-2(j-M_i)k} &  \mbox{ if }\; j\in [M_i+1, (9/8)M_i] \;  \mbox{ for some }i,\\
3^{-(5/4)M_ik} & \mbox{ if }\;j\in [(9/8)M_i+1, (5/4) M_i]\; \mbox{ for some }i,\\
3^{-jk} & \mbox{ otherwise}.
\end{array}
 \right.
\end{equation*}
It follows that
\begin{align}
&\mu([x|n])\leq 3^{-nk} \mbox{ for all }n\in \N, \label{e-T1}\\
&\mu([x|n])= 3^{-nk} \mbox{ for all }n\in \N\Big\backslash \bigcup_{i=1}^\infty\left [M_i+1, \frac{5}{4}M_i\right],\label{e-T2}\\
&\mu([x|n])-\mu([x|n+1])\geq  3^{-nk-1} \mbox{ for all }n\in \N\Big\backslash \bigcup_{i=1}^\infty\left [M_i+1, \frac{5}{4}M_i\right],\label{e-T2'} \\
&\mu([x|n])= 3^{-(5/4)M_i k} \mbox{ if } n=\frac{9}{8}M_i \mbox{ for }i\in \N.\label{e-T3}
\end{align}
By the definition of $S_n(\mu,x)$ (see \eqref{e-2.2}), \eqref{e-phis} and \eqref{e-T1}-\eqref{e-T3},  we obtain for every $x\in X$,
$$
\left\{
\begin{array}{ll}
S_n(\mu,x)\geq k & \mbox{ for all }n\in \N,\\
S_n(\mu,x)=k &\mbox{ for all }n\in \N\Big\backslash \bigcup_{i=1}^\infty\left [M_i+1, \frac{5}{4}M_i\right],\\
S_n(\mu,x)=\frac{19}{18}k &\mbox{ if } n=\frac{9}{8}M_i \mbox{ for }i\in \N,
\end{array}
\right.
$$
which implies that
\begin{equation}
\label{e-5.37}
\liminf_{n\to \infty} S_n(\mu,x)=k <\limsup_{n\to \infty} S_n(\mu,x)\quad \mbox{ for all }x\in X.
\end{equation}

In what follows we show that
\begin{equation*}
\label{e-5.38}
\limsup_{r\to 0}\frac{\log G_\mu(x,r)}{\log r}=k \quad \mbox{ for all }x\in X.
\end{equation*}
 By \eqref{e-5.37} and Lemma \ref{lem-5.5}(ii),  we have $\limsup_{r\to 0}\frac{\log G_\mu(x,r)}{\log r}\geq k$. So it is enough to show $\limsup_{r\to 0}\frac{\log G_\mu(x,r)}{\log r}\leq k$. To this end, it is sufficient to prove that
 \begin{equation}
 \label{e-5.40'}
 G_\mu(x, 3^{-N})\geq  3^{-kN-1} \mbox{ for all $x\in X$ and enough large $N\in \N$}.
 \end{equation}

 Let $x\in X$ and let $N>M_2$ be any given integer.  Let $i$ be the unique integer so that $$M_i<N\leq M_{i+1}.$$
 Then either $ \frac{5}{4}M_i<N\leq M_{i+1}$, or $ \frac{5}{4}M_{i-1}<\lceil N/2\rceil \leq M_{i}$.  So by  \eqref{e-T2'},
 \begin{equation}
 \label{e-5.40}
 \begin{split}
& \mbox{ either }\; \mu([x|N])-\mu([x|N+1])\geq 3^{-kN-1}\\
 & \mbox{ or } \; \mu([x| \lceil N/2\rceil ])-\mu([x| \lceil N/2\rceil +1])\geq 3^{-k \lceil N/2\rceil -1}.
 \end{split}
 \end{equation}
 Taking $r=3^{-N}$ in
 \eqref{e-zxn} yields that
\begin{equation}
\label{e-5.41}
Z_{x|n}(3^{-N})=3^{k(n-N)} \quad \mbox{ for all } N/2\leq n\leq N.
 \end{equation}
 Hence
 \begin{align*}
 G_\mu(x, 3^{-N})&\geq \sum_{n=0}^\infty Z_{x|n}(3^{-N})\Big(\mu\big([x|n]\big)-\mu\big([x|n+1]\big)\Big)\\
 &\geq  Z_{x|\lceil N/2\rceil}(3^{-N})\Big(\mu\big([x| \lceil N/2\rceil ]\big)-\mu\big([x| \lceil N/2\rceil +1]\big)\Big)\\
 &\qquad\quad +Z_{x|N}(3^{-N})\Big(\mu\big([x|N]\big)-\mu\big([x|N+1]\big)\Big)\\
 &= 3^{k(\lceil N/2\rceil-N)}\Big(\mu\big([x| \lceil N/2\rceil ]\big)-\mu\big([x| \lceil N/2\rceil +1]\big)\Big)\\
 &\qquad\quad +\Big(\mu\big([x|N]\big)-\mu\big([x|N+1]\big)\Big)\qquad \mbox{(by \eqref{e-5.41})}\\
 &\geq 3^{-kN-1}\qquad \mbox{(by \eqref{e-5.40})}.
 \end{align*}
 This proves \eqref{e-5.40'}.
\end{proof}
\end{ex}

\section{Hausdorff and packing dimensions of projected sets}
\label{S5}

In this section we investigate the Hausdorff and packing dimensions of projected sets on typical self-affine sets.

\subsection{Hausdorff dimension}

For $E\subset \Sigma$ and $n\in \N$, we call $\mathcal C\subset \Sigma_*$ {\it a cover of $E$ with order $n$} if
$$
\bigcup_{\bi\in \mathcal C}[\bi]\supset E\quad \mbox{ and } \quad |\bi|\geq n\mbox{ for  all } \bi\in \mathcal C.
$$
Following Falconer \cite{Fal88} we define, for each $s \ge 0$, a net measure $\M^s$ of Hausdorff type on $\Sigma$ by
\[
\M^s(E) = \lim_{n\to \infty} \M^s_n(E),\qquad E\subset \Sigma,
\]
where $$\M^s_n(E) = \inf \left\{ \sum_{\mathbf{i}\in \mathcal C} \phi^s(T_{\mathbf{i}}) : \; \mathcal C \mbox{ is a cover of $E$ with order $n$}\right\}.$$
Then $\M^s$ is an outer measure which restricts to a measure on the Borel $\sigma$-algebra of $\Sigma$.  We further define
\begin{equation}
\label{e-dimM}
\dim_{\M} E = \inf \{ s\ge 0: \M^s(E) < \infty \}
 = \sup \{ s \ge 0 : \M^s(E) =  \infty\}.
\end{equation}
It is known \cite{Fal88} that  $\dim_{\M} \Sigma$ is equal to  the affinity dimension $\dim_{\rm AFF}(T_1,\ldots, T_m)$  defined in \eqref{e-aff}.

The main result in this subsection is the following, which slightly generalises the results obtained in \cite{Fal88, Sol98, JPS07, KaVi10, JJKKSS14}.

 \begin{thm} \label{main2}
Let $E \subset \Sigma$ be an analytic  set. Then the following properties hold.
\begin{itemize}
\item[(i)] For every $\ba\in \R^{md}$, $\dim_{\mathrm{H}} \pi^\ba(E) \leq  \min \{ \dim_{\M} E, d\}$.
\item[(ii)] Assume that $\|T_i\|<1/2$ for $1\leq i\leq m$. Then for ${\mathcal L}^{md}$-a.e.~$\ba\in \R^{md}$,
\[
\dim_{\mathrm{H}} \pi^\ba(E) = \min \{ \dim_{\M} E, d\}.
\]
Moreover if $\dim_{\M} E>d$, then ${\mathcal L}^d(\pi^\ba(E))>0$ for ${\mathcal L}^{md}$-a.e.~$\ba\in \R^{md}$.
\end{itemize}
\end{thm}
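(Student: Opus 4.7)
The plan is to obtain the upper bound in (i) by a direct covering argument, the Hausdorff dimension equality in (ii) by a Frostman-type construction on $\Sigma$ combined with Theorem \ref{thm-lower-upper}(ii), and the positive Lebesgue measure assertion by showing that the associated Frostman measure has $\pi^\ba_*\mu \ll \mathcal L^d$ via the transversality bound of Lemma \ref{lem-tran}.

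For part (i), fix $s > \dim_{\mathcal M} E$ with $s < d$ (the bound by $d$ is automatic). For any $\epsilon > 0$ take a cover $\mathcal C$ of $E$ of arbitrarily high order with $\sum_{\bi \in \mathcal C} \phi^s(T_\bi) \leq \epsilon$. Each $\pi^\ba([\bi])$ lies in an ellipsoid of semi-axes proportional to $\alpha_1(T_\bi),\dots,\alpha_d(T_\bi)$, so with $k=\lfloor s\rfloor$ it can be covered by at most a constant multiple of $\prod_{j=1}^{k}\alpha_j(T_\bi)/\alpha_{k+1}(T_\bi)$ balls of radius comparable to $\alpha_{k+1}(T_\bi)$, whose total $s$-Hausdorff weight is a constant times $\phi^s(T_\bi)$. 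Summing gives $\mathcal H^s(\pi^\ba(E))\leq C\epsilon$; letting $\epsilon\to 0$ and then $s\to\dim_{\mathcal M}E$ yields (i), after combining with the trivial bound $\dim_H\pi^\ba(E)\leq d$.

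For the Hausdorff equality in (ii) the upper bound is (i). For the lower bound, fix a rational $s\in(0,\min\{\dim_{\mathcal M}E,d\})$. Since $E$ is analytic and $\mathcal M^s$ is a Borel-regular outer measure on $\Sigma$, Choquet capacitability produces a compact $E_s\subset E$ with $\mathcal M^s(E_s)>0$. A Frostman-type lemma adapted to the net measure $\mathcal M^s$ (as in \cite{Fal88, KaVi10}) then yields a Borel probability measure $\mu_s$ with ${\rm spt}(\mu_s)\subset E_s$ and $\mu_s([\bi])\leq C\phi^s(T_\bi)$ for every $\bi\in\Sigma_*$. By the definition \eqref{e-2.2}, this forces $S_n(\mu_s,x)\geq s$ for all $n$ and all $x\in{\rm spt}(\mu_s)$, hence $\underline S(\mu_s)\geq s$. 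Theorem \ref{thm-lower-upper}(ii) then gives $\underline{\dim}_{\rm H}\pi^\ba_*\mu_s\geq\min\{s,d\}=s$ for $\mathcal L^{md}$-a.e.~$\ba$. Since $\pi^\ba_*\mu_s(\pi^\ba(E))\geq\mu_s(E_s)=1$, the set-theoretic characterization of $\underline{\dim}_{\rm H}$ yields $\dim_{\rm H}\pi^\ba(E)\geq s$. Intersecting these full-measure sets of $\ba$ over rationals $s\uparrow\min\{\dim_{\mathcal M}E,d\}$ completes the lower bound.

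For the positive Lebesgue measure claim, suppose $\dim_{\mathcal M}E>d$, pick $s\in(d,\dim_{\mathcal M}E)$, and obtain as above a Frostman probability measure $\mu$ on a compact $E'\subset E$ with $\mu([\bi])\leq C\phi^s(T_\bi)$. It suffices to show $\pi^\ba_*\mu\ll\mathcal L^d$ for $\mathcal L^{md}$-a.e.~$\ba$, since then its nontrivial $L^1$ density is positive on a Lebesgue-positive set, forcing $\mathcal L^d(\pi^\ba(E'))>0$. By the Lebesgue differentiation theorem it is enough to verify $\liminf_{r\to 0}r^{-d}\pi^\ba_*\mu(B_r(z))<\infty$ for $\pi^\ba_*\mu$-a.e.~$z$. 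Integrating over $\ba\in B_\rho$, applying Fatou and Fubini, and using Lemma \ref{lem-tran} together with the bound $Z_{x\wedge y}(r)\leq r^d/\phi^d(T_{x\wedge y})$ from Lemma \ref{lem-zxn},
\begin{align*}
\int_{B_\rho}\int \liminf_{r\to 0}\frac{\pi^\ba_*\mu(B_r(z))}{r^d}\,d\pi^\ba_*\mu(z)\,d\ba
\leq C\iint\frac{d\mu(x)\,d\mu(y)}{\phi^d(T_{x\wedge y})}.
\end{align*}
Expanding the right-hand side level by level and using $\mu([x|n])\leq C\phi^s(T_{x|n})$ with $\phi^s(T_{x|n})/\phi^d(T_{x|n})\leq \alpha_+^{n(s-d)}$ gives a convergent geometric bound, so the integrand is finite for a.e.~$\ba$. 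The main obstacle I anticipate is the careful setup of the Frostman-type lemma on $\Sigma$ producing a probability measure carried by the prescribed compact subset of $E$ with the sharp control $\mu([\bi])\leq C\phi^s(T_\bi)$, together with verifying Choquet capacitability for $\mathcal M^s$ so that the analytic hypothesis on $E$ reduces to the compact case.
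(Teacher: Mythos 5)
Your part~(i) argument is the same covering argument as the paper's. For part~(ii) you take a genuinely different route. The paper, after extracting via Rogers's capacitability theorem a compact $F\subset E$ with $0<\M^s(F)<\infty$ and then (via a modification of Falconer's Frostman argument) a measure $\mu$ with $\mu([\bi])\leq C\phi^s(T_\bi)$, simply invokes Proposition~\ref{proJPS} (the Jordan--Pollicott--Simon estimate): part~(1) for $s\leq d$ gives $\ldim{H}\pi^\ba_*\mu\geq s$, and part~(2) for $s>d$ gives $\pi^\ba_*\mu\ll\Leb^d$. You instead route the Hausdorff lower bound through Theorem~\ref{thm-lower-upper}(ii): from the Frostman bound you extract $\underline S(\mu_s)\geq s$ and conclude $\underline{\dim}_{\rm H}\pi^\ba_*\mu_s=\min\{\underline S(\mu_s),d\}\geq s$ almost surely. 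Since Theorem~\ref{thm-lower-upper} is established independently of Theorem~\ref{main2}, this is not circular, and it buys a more self-contained proof using the paper's own machinery rather than quoting JPS. (Indeed the paper acknowledges exactly this link in Remark~\ref{rem-5.3}.) Similarly, for the Lebesgue-positivity claim you re-derive the absolute continuity $\pi^\ba_*\mu\ll\Leb^d$ from scratch via Fatou, Fubini, Lemma~\ref{lem-tran}, and the bound $Z_{x\wedge y}(r)\leq r^d/\phi^d(T_{x\wedge y})$, together with the density criterion that a finite Borel measure on $\R^d$ with $\liminf_{r\to 0}r^{-d}\nu(B_r(z))<\infty$ a.e.\ is absolutely continuous; this replaces the paper's citation of Proposition~\ref{proJPS}(2) by an explicit computation, and the convergent geometric estimate you indicate is correct since $\phi^s(T_{x|n})/\phi^d(T_{x|n})=\det(T_{x|n})^{(s-d)/d}\leq\alpha_+^{n(s-d)}$ with $s>d$.

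One small slip: from $\mu_s([x|n])\leq C\phi^s(T_{x|n})$ you cannot deduce $S_n(\mu_s,x)\geq s$ for \emph{all} $n$, because the constant $C$ intervenes; what you get is $S_n(\mu_s,x)\geq s-\frac{\log C}{n\log(1/\alpha_+)}$ and hence $S(\mu_s,x)=\liminf_n S_n(\mu_s,x)\geq s$, which is exactly what $\underline S(\mu_s)\geq s$ requires. The conclusion is unaffected, but the intermediate claim should be weakened to a statement about $\liminf$.
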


This result follows from the arguments of \cite{Fal88, JPS07} in a straightforward manner.  For the reader's convenience, below  we provide  some details.

\begin{pro} \label{proJPS}
Assume that $\|T_i\|<1/2$ for $1\leq i\leq m$. Let $\mu$ be a Borel probability measure on $\Sigma$ and $s\geq 0$.  Assume that there is $C>0$  such that
\begin{equation*}
\mu([\bi]) \le C \phi^s(T_\bi) \quad \mbox{ for all }\; \bi \in \Sigma^*,  \label{assumptionInProp3.1}
\end{equation*}
where $\phi^s(\cdot)$ is defined as in \eqref{e-singular}.
Then  for $\Leb^{md}$-a.e.  $\ba \in \R^{md}$,
\begin{enumerate}
\item
If $s \le d$, then $\ldim{H} \pi^\ba_*\mu \ge s$;
\item
If $s >d $, then $ \pi^\ba_*\mu \ll \Leb^d$.
\end{enumerate}
\end{pro}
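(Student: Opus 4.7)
The plan is to reduce both parts to a single estimate on the symbolic energy
\[
J_t(\mu) := \int\int \frac{d\mu(x)\,d\mu(y)}{\phi^t(T_{x\wedge y})}.
\]
Expanding the inner $y$-integral over the nested cylinders containing $x$ and using the hypothesis,
\[
\int \frac{d\mu(y)}{\phi^t(T_{x\wedge y})} = \sum_{n=0}^\infty \frac{\mu([x|n])-\mu([x|n+1])}{\phi^t(T_{x|n})} \le C\sum_{n=0}^\infty \frac{\phi^s(T_{x|n})}{\phi^t(T_{x|n})}.
\]
A direct inspection of the singular value function shows $\phi^s(T_{x|n})/\phi^t(T_{x|n}) \le \alpha_+^{n(s-t)}$ whenever $0\le t<s$ with both in $[0,d]$, and $\phi^s(T_{x|n})/\phi^d(T_{x|n}) = \det(T_{x|n})^{(s-d)/d} \le \alpha_+^{n(s-d)}$ when $s>d$, where $\alpha_+ := \max_i \|T_i\| < 1/2$. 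Hence $J_t(\mu) < \infty$ for every $t \in [0,d]$ with $t < s$, and also $J_d(\mu) < \infty$ when $s > d$.

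For part (1) (assuming $s>0$; otherwise the claim is trivial), I would fix a non-integral $t$ with $\max\{0,s-\epsilon\} < t < s$ and $t < d$. By Lemma~\ref{lem-2.5} applied pointwise and Fubini,
\[
\int_{B_\rho} \int\int |\pi^\ba x - \pi^\ba y|^{-t}\,d\mu(x)\,d\mu(y)\,d\ba \;\le\; c\,J_t(\mu) \;<\; \infty.
\]
So for $\mathcal L^{md}$-a.e.\ $\ba \in B_\rho$ the $t$-energy of $\pi^\ba_*\mu$ is finite; another application of Fubini gives $\int |z-w|^{-t}\,d\pi^\ba_*\mu(w) < \infty$ at $\pi^\ba_*\mu$-a.e.\ $z$. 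Lemma~\ref{lem-SY} then yields $\underline{\dim}_{\rm loc}(\pi^\ba_*\mu,z) \ge t$ almost everywhere, whence $\ldim{H}\pi^\ba_*\mu \ge t$ by the essinf characterisation of lower Hausdorff dimension. Taking the union of exceptional null sets over a sequence of non-integral rationals $t_k \nearrow s$ and letting $\rho \to \infty$ gives $\ldim{H}\pi^\ba_*\mu \ge s$ for $\mathcal L^{md}$-a.e.\ $\ba \in \R^{md}$.

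For part (2), let $h_\delta := \mathbf{1}_{B(0,\delta)}/\mathcal L^d(B(0,\delta))$ and set $f_\delta^\ba := \pi^\ba_*\mu * h_\delta$, which is an $L^1$ function. A direct computation using $f_\delta^\ba(z) = \pi^\ba_*\mu(B(z,\delta))/\mathcal L^d(B(0,\delta))$ and Fubini yields
\[
\|f_\delta^\ba\|_{L^2}^2 \;\le\; \frac{C}{\delta^d} \int\int \mathbf{1}_{\{|\pi^\ba x - \pi^\ba y| \le 2\delta\}}\,d\mu(x)\,d\mu(y).
\]
Integrating over $\ba \in B_\rho$, applying Lemma~\ref{lem-tran} at scale $r=2\delta$, and using the pointwise bound $Z_{x\wedge y}(2\delta) \le (2\delta)^d/\phi^d(T_{x\wedge y})$ from Lemma~\ref{lem-zxn}, one obtains
\[
\int_{B_\rho} \|f_\delta^\ba\|_{L^2}^2\,d\ba \;\le\; C'\,J_d(\mu),
\]
with a bound independent of $\delta$. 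Since $s > d$, the opening observation gives $J_d(\mu) < \infty$, so Fatou's lemma produces $\liminf_{\delta \to 0}\|f_\delta^\ba\|_{L^2}^2 < \infty$ for $\mathcal L^{md}$-a.e.\ $\ba \in B_\rho$. For such $\ba$, Banach--Alaoglu extracts a subsequence $f_{\delta_k}^\ba$ converging weakly in $L^2$ to some $g$; since $f_\delta^\ba\,d\mathcal L^d \to \pi^\ba_*\mu$ weakly as measures (by standard mollifier properties), testing against $C_c(\R^d) \subset L^2$ forces $\pi^\ba_*\mu = g\,d\mathcal L^d$, so $\pi^\ba_*\mu \ll \mathcal L^d$. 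Letting $\rho \to \infty$ concludes.

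The delicate points are: in part~(1), Lemma~\ref{lem-2.5} requires a non-integral exponent, so the proof is set up to approximate $s$ from below by non-integral $t$ (a genuine obstruction when $s$ is an integer); in part~(2), one must correctly identify the weak-$L^2$ subsequential limit of the mollifications with $\pi^\ba_*\mu$ itself, which is what converts $L^2$-boundedness of $\{f_\delta^\ba\}$ into absolute continuity of the pushforward.
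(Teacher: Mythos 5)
Your proposal is correct and reconstructs the standard argument that the paper itself only cites rather than reproduces (Proposition~2 and Lemma~7 of \cite{JPS07}, together with \cite[Theorem~5.3]{Fal88}): a transversality-plus-energy estimate via Lemma~\ref{lem-2.5} and Lemma~\ref{lem-SY} for part~(1), and an $L^2$-density bound via Lemma~\ref{lem-tran} for part~(2). All the steps check out, including the singular-value ratio bound $\phi^s(T_{x|n})/\phi^t(T_{x|n})\le\alpha_+^{n(s-t)}$ and the identification of the weak-$L^2$ subsequential limit of the mollifications with $\pi^\ba_*\mu$; the only cosmetic point is that Fatou should be applied along a fixed sequence $\delta_k\downarrow 0$, which you implicitly do when invoking Banach--Alaoglu.
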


\begin{proof}
This is a combination of Proposition 2 and Lemma 7 in \cite{JPS07}, whilst part (i) was also implicitly proved in \cite[Theorem 5.3]{Fal88}.
\end{proof}

Recall that for $s\geq 0$ and $A\subset \R^d$, the $s$-dimensional Hausdorff measure $\mathcal H^s(A)$ is defined by $\mathcal H^s(A)=\lim_{\delta\to 0}\mathcal H^s_\delta(A)$, where $$\mathcal H^s_\delta(A)=\inf\left\{\sum_{i=1}^\infty {\rm diam}(A_i)^s:\; \{A_i\} \mbox{ is a $\delta$-cover of $A$}\right\}.$$ See e.g.~\cite{Fal03} for the details.

\begin{proof}[Proof of Theorem \ref{main2}]
We first prove part (i) by following the argument  of \cite{Fal88}.   Fix $\ba\in \R^{md}$. We may assume that $\dim_{\M} E<d$,  otherwise there is nothing left to prove.    Let $\dim_{\M} E<s<d$. By definition, $\mathcal M^s(E)=0$.  Let  $\ell = \lfloor s \rfloor$ be the integral part of $s$.

Let $B$ be a closed ball of diameter at least $1$ and large enough such that $ \bigcup_{i=1}^m f_i(B) \subset B$.  Then $\pi^\ba(\Sigma)\subset B$.  Given $\delta>0$, choose $n \in \N$ so large that
\[
\diam (f_{\bi}(B)) < \delta \quad \text{ for all } \bi\in \Sigma_* \mbox{  with } |\bi | \ge n.
\]
Let $\mathcal{C}\subset \Sigma_*$ such that $\bigcup_{\bi\in \mathcal C} [\bi]\supset E$ and  $|\bi| \ge n$ for all $\bi\in \mathcal{C}$.    Then $$\pi^\ba(E) \subset \bigcup_{\bi \in \mathcal{C}} \pi^\ba([\bi]) =\bigcup_{\bi \in \mathcal{C}} f_{\bi}^\ba(\pi^\ba(\Sigma))\subset \bigcup_{\bi \in \mathcal{C}} f_{\bi}^\ba(B)$$ and each ellipsoid $f_{\bi} ^\ba (B)$ is contained in at most
\[
\beta_{\ell}(\bi):=
\left[ 4\diam(B)\frac{\alpha_1(T_\bi)}{\alpha_{\ell+1}(T_\bi)} \right]\cdots \left[ 4\diam(B)\frac{\alpha_{\ell}(T_\bi)}{\alpha_{\ell+1}(T_\bi)} \right](4\diam(B))^{d-\ell}
\]
cubes of side length $\alpha_{\ell+1}(T_\bi)$. Clearly  these cubes are  of diameter $\sqrt{d}\alpha_{\ell+1}(T_\bi) \le \sqrt{d} \delta$.  It follows that
\begin{align*}
\mathcal{H}^s_{\sqrt{d}\delta} (\pi^\ba(E))
& \le \sum_{\bi \in \mathcal{C}} \beta_{\ell}(\bi) \left(\sqrt{d} \alpha_{\ell+1}(T_\bi)\right)^s \\
& \le \sum_{\bi \in \mathcal{C}} (4\diam(B))^d \alpha_1(T_\bi) \cdots \alpha_{\ell}(T_\bi) \alpha_{\ell+1}^{-\ell}(T_\bi)  \left(\sqrt{d} \alpha_{\ell+1}(T_\bi)\right)^s \\
& \le (4\sqrt{d}\diam(B))^d \sum_{\bi \in \mathcal{C}} \phi^s(T_\bi).
\end{align*}
This holds for all $\mathcal{C}\subset \Sigma_*$ such that $\bigcup_{\bi\in \mathcal C} [\bi]\supset E$ and  $|\bi| \ge n$ for  $\bi\in \mathcal{C}$. Hence
\[
\mathcal{H}^s_{\sqrt{d}\delta} (\pi^\ba(E)) \le (4 \sqrt{d} \diam(B))^d \M^s_n(E).
\]
Letting  $n \to \infty$ and then $\delta\to 0$ gives
\[
\mathcal{H}^s(\pi^\ba(E)) \le (4 \sqrt{d} \diam(B))^d \M^s(E)=0.
\]
So $\dim_{\mathrm{H}} \pi^\ba(E) \leq s$. Since  $s\in (\dim_{\M} E,\,d)$ is arbitrary, it follows that $\dim_{\mathrm{H}}  \pi^\ba(E)\leq \dim_{\M} E$. This proves (i).

Next we prove (ii).
To avoid triviality we  assume that $\dim_{\M}E>0$. Let $0 < s < \dim_{\M}E$.  Then $\M^s(E) =  \infty$.  By \cite[Theorem 55]{Rogers70} there exists a compact subset $F$ of $E$ such that $0<\M^s(F)<\infty$.  A slight modification of  the proof of \cite[Sect. II, Theorem 1]{Car67} or \cite[Theorem 5.4]{Fal86} shows that there exist $\mu\in \mathcal P(\Sigma)$ and $c>0$ such that $\mu$ is supported on $F$ and
\begin{equation}
\label{e-teq}
\mu ( [\bi]) \le c \phi^s(T_{\bi})\quad  \mbox{ for all }\bi\in \Sigma_*.
\end{equation}
 For the reader's convenience, below we give a sketched proof of this fact by adapting the arguments of \cite[Sect. II, Theorem 1]{Car67}.

Define $f\colon \Sigma_*\to (0,\infty)$ by $f(\bi)=\phi^s(T_{\bi})$, and set
$$\M^s_\infty(F):=\inf\left\{\sum_{n=1}^\infty f(\bi_n):\; \bi_n\in \Sigma_*,\, \bigcup_{n=1}^\infty [\bi_n]\supset F\right\}.$$
Since $0<\M^s(F)<\infty$, it follows that $0<\M^s_\infty(F)<\infty$. For the construction of $\mu$, let $\xi$ denote the unique Bore probability measure on $\Sigma$ such that $\xi([\bi])=m^{-|\bi|}$ for every $\bi\in \Sigma_*$. Fix $n$ and construct a finite Borel measure $\eta_n$ on $\Sigma$ such that $\eta_n([\bi])=f(\bi)$ for all $\bi\in \Sigma_n$ for which $[\bi]\cap F\neq \emptyset$. We let $\eta_n$ have constant density (with respect to $\xi$) on $[\bi]$ for each $\bi\in \Sigma_n$. If for some $\bj\in \Sigma_{n-1}$,
$$\eta_n([\bj])>f(\bj),$$
we reduce the density of $\eta_n$ on the corresponding cylinders $[\bi]$ with $\bi\in \Sigma_n$ and $[\bi]\subset [\bj]$, such that the mass on $[\bj]$ becomes $f(\bj)$. The resulting set function on $\Sigma_{n}$ is called $\eta_n^{(n-1)}$. We treat $\eta_n^{(n-1)}$ in a similar way and after $n$ steps we obtain a set function $\eta_n^{(0)}$. This function has the property that
$$
\eta_n^{(0)}([\bj])\leq f(\bj)\quad  \mbox{ for all }\bj\in \bigcup_{k=0}^n\Sigma_k.
$$
Moreover, for each $x\in F$ there exists $k=k(x)\in \{0,1,\ldots, n\}$ such that
$$
\eta_n^{(0)}([x|k])= f(x|k);
$$
and this implies that $\eta_n^{(0)}(\Sigma)\geq \M^s_\infty(F)$.

Let $n\to \infty$ and choose a weakly convergent subsequence $\eta_{n_j}^{(0)}\to \eta$, as $j\to \infty$.
Then $\eta$ is supported on $F$,  $\eta([\bi])\leq f(\bi)$  for all $\bi\in \Sigma_*$ and $\eta(\Sigma)\geq \M^s_\infty(F)$. Define $\mu=\frac{1}{\eta(\Sigma)}\eta$ and let $c=1/\M^s_\infty(F)$. Clearly $\mu$ is supported on $F$ and \eqref{e-teq} holds.

 If $\dim_{\mathcal M}E>d$, we may require that $s>d$ and then apply Proposition \ref{proJPS} to obtain that for $\mathcal L^{md}$-a.e.~$\ba\in \R^{md}$, $\pi^\ba_*\mu \ll \Leb^d$, implying that   $\mathcal L^d(\pi^\ba(E))>0$ since $\pi^\ba_*\mu$ is supported on $\pi^\ba(E)$.  In what follows we assume that  $\dim_{\mathcal M}E\leq d$.
 by Proposition \ref{proJPS}, $\underline{\dim}_{\rm H}\pi^\ba_*\mu\geq s$ for $\mathcal L^{md}$-a.e.~$\ba\in \R^{md}$. Since $\mu$ is supported on $E$, it follows that ${\dim}_{\rm H}\pi^\ba(E)\geq s$ for $\mathcal L^{md}$-a.e.~$\ba$.  Letting $s\to  \dim_{\M} E$ yields that  ${\dim}_{\rm H}\pi^\ba(E)\geq  \dim_{\M} E$  for $\mathcal L^{md}$-a.e.~$\ba$.   Since  ${\dim}_{\rm H}\pi^\ba(E)\leq   \dim_{\M} E$ for each $\ba$ by part (i), we obtain the equality for $\mathcal L^{md}$-a.e.~$\ba$. This proves part (ii).  \end{proof}

\begin{rem}
\label{rem-5.3} For each analytic subset $E$ of $\Sigma$,
$$\dim_\M E=\sup_{\mu\in \mathcal P(\Sigma):\; {\rm spt}\mu\subset E}\underline{S}(\mu)=\sup_{\mu\in \mathcal P(\Sigma):\; {\rm spt}\mu\subset E}\overline{S}(\mu).$$
Indeed the inequality $\dim_\M E\leq \sup_{\mu\in \mathcal P(\Sigma):\; {\rm spt}\mu\subset E}\underline{S}(\mu)$ was implicitly proved in the proof of Theorem \ref{main2}(ii). To see  $\dim_\M E\geq \sup_{\mu\in \mathcal P(\Sigma):\; {\rm spt}\mu\subset E}\overline{S}(\mu)$, suppose that $\overline{S}(\mu)>s$ for some $\mu\in \mathcal P(\Sigma)$ with ${\rm spt}\mu\subset E$. Then there exist $E'\subset E$ of positive measure and $n_0$ such that  $\mu([x|n])\leq \phi^s(T_{x|n})$ for all $x\in E'$ and $n\geq n_0$, which implies that $\M^s(E')\geq \mu(E')>0$, hence
$\dim_\M E\geq \dim_\M E'\geq s$, as desired.
\end{rem}


\subsection{Packing dimension}

In this subsection we prove the following.
\begin{thm}
 \label{thm-packing-sets}
 Let $E$ be an analytic subset of $\Sigma$. Then
 \begin{itemize}
 \item[(i)] For every  $\ba\in \R^{md}$,
  $$\dim_{\rm P}(\pi^\ba(E))\leq \sup_{\mu\in {\mathcal P}(\Sigma):\; {\rm spt}(\mu)\subset  E}\underline{D}(\mu).$$
   \item[(ii)] Assume that $\|T_i\|<1/2$ for $1\leq i\leq m$. Then for $\mathcal L^{md}$-a.e.~$\ba\in \R^{md}$,
   $$\dim_{\rm P}(\pi^\ba(E))= \sup_{\mu\in {\mathcal P}(\Sigma):\; {\rm spt}(\mu)\subset  E}\underline{D}(\mu)=\sup_{\mu\in {\mathcal P}(\Sigma):\; {\rm spt}(\mu)\subset  E}\overline{D}(\mu).$$
 \end{itemize}
 \end{thm}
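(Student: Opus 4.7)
The proof of Theorem \ref{thm-packing-sets} rests on combining Theorem \ref{thm-packing} with the Joyce--Preiss characterization of packing dimension for analytic sets, which for every analytic $A \subset \R^d$ yields
\[
\dim_{\mathrm{P}} A = \sup\{\ldim{P}\eta : \eta \in \mathcal{P}(\R^d),\; \mathrm{spt}(\eta) \subset A\}.
\]

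\textbf{Part (i).} Fix $\ba \in \R^{md}$ and any $s < \dim_{\mathrm{P}}\pi^\ba(E)$. Since $\pi^\ba$ is continuous and $E$ is analytic, the image $\pi^\ba(E)$ is analytic in $\R^d$, so the above characterization supplies $\eta \in \mathcal{P}(\R^d)$ with compact support $L \subset \pi^\ba(E)$ and $\ldim{P}\eta \ge s$. I then lift $\eta$ back to $\Sigma$: because $E$ is analytic, the Jankov--von Neumann uniformization theorem provides a universally measurable section $\sigma : L \to E$ of $\pi^\ba$, i.e., $\pi^\ba \circ \sigma = \mathrm{id}_L$. By Lusin's theorem I choose compact $K_n \subset L$ with $\eta(K_n) \uparrow 1$ on which $\sigma|_{K_n}$ is continuous, so each $\sigma(K_n)$ is compact and contained in $E$. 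The probability measures $\mu_n := \sigma_*(\eta|_{K_n})/\eta(K_n)$ then satisfy $\mathrm{spt}(\mu_n) \subset \sigma(K_n) \subset E$ together with $\pi^\ba_*\mu_n = \eta|_{K_n}/\eta(K_n)$, and the set-theoretic description of $\ldim{P}$ shows that restricting $\eta$ to a set of positive $\eta$-measure cannot decrease its lower packing dimension, so $\ldim{P}\pi^\ba_*\mu_n \ge \ldim{P}\eta \ge s$. Theorem \ref{thm-packing}(i) now gives $\underline{D}(\mu_n) \ge s$, and letting $s \uparrow \dim_{\mathrm{P}}\pi^\ba(E)$ completes part~(i).

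\textbf{Part (ii).} Write $s^* := \sup_\mu \overline{D}(\mu)$, the supremum being over $\mu \in \mathcal{P}(\Sigma)$ with $\mathrm{spt}(\mu) \subset E$, and pick a sequence $(\mu_n)$ with $\overline{D}(\mu_n) \to s^*$. For each $n$, Theorem \ref{thm-packing}(ii) provides an $\mathcal{L}^{md}$-null set $N_n \subset \R^{md}$ outside of which $\udim{P}\pi^\ba_*\mu_n = \overline{D}(\mu_n)$. Setting $N := \bigcup_n N_n$, for every $\ba \notin N$ the inclusion $\mathrm{spt}(\pi^\ba_*\mu_n) \subset \pi^\ba(\mathrm{spt}(\mu_n)) \subset \pi^\ba(E)$ yields
\[
\dim_{\mathrm{P}} \pi^\ba(E) \ge \dim_{\mathrm{P}} \mathrm{spt}(\pi^\ba_*\mu_n) \ge \udim{P} \pi^\ba_*\mu_n = \overline{D}(\mu_n),
\]
so that $n \to \infty$ gives $\dim_{\mathrm{P}} \pi^\ba(E) \ge s^*$. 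Combined with the trivial $\sup_\mu \underline{D}(\mu) \le \sup_\mu \overline{D}(\mu) = s^*$ and the upper bound from part~(i), this forces
\[
s^* \le \dim_{\mathrm{P}} \pi^\ba(E) \le \sup_\mu \underline{D}(\mu) \le \sup_\mu \overline{D}(\mu) = s^*
\]
for every $\ba \notin N$, so all four quantities coincide; the identity $\sup_\mu \underline{D}(\mu) = \sup_\mu \overline{D}(\mu)$ thus drops out as a byproduct.

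\textbf{Main obstacle.} The delicate step is the lift in part (i): one must guarantee that the closed support of $\mu$ sits inside the (possibly non-closed) analytic set $E$, not merely its closure. A bare measurable selector concentrates $\mu$ on $E$ but its support can escape; replacing the single lift by the Lusin-approximated sequence $(\mu_n)$ repairs this without affecting the supremum. Part (ii) then hinges on the familiar countable-approximation trick, needed because the $\mathcal{L}^{md}$-null exceptional set in Theorem \ref{thm-packing}(ii) depends on the measure and an uncountable union of null sets may fail to be null.
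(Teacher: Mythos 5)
Your proof is correct, and part~(ii) follows essentially the same countable-approximation argument as the paper (the paper just leaves that final step implicit). Part~(i), however, takes a genuinely different route to the key lifting step. The paper first produces a \emph{compact} $K\subset E$ with $\dim_{\mathrm P}\pi^\ba(K)$ close to $\dim_{\mathrm P}\pi^\ba(E)$: it writes $E=f(\mathcal N)$ as a continuous image of Baire space, invokes Haase's theorem to find compact $C\subset\mathcal N$ with $\dim_{\mathrm P}\bigl((\pi^\ba\circ f)(C)\bigr)\geq s$, and sets $K=f(C)$. Only then does it apply the Joyce--Preiss characterization to the compact set $\pi^\ba(K)$ and lift the resulting $\nu$ to a measure $\mu$ on $K$ via Mattila's Theorem~1.20, which handles lifting along a continuous surjection between compact spaces with no measurable-selection machinery. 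You instead apply Joyce--Preiss directly to the analytic set $\pi^\ba(E)$, and then manufacture the lift yourself via Jankov--von Neumann uniformization followed by Lusin's theorem, using the sequence $\mu_n=\sigma_*(\eta|_{K_n})/\eta(K_n)$ to force the closed support inside $E$; the observation that restriction to a positive-measure set cannot decrease $\ldim{P}$ then saves the dimension bound. Both routes are sound. The paper's avoids measurable selection entirely by reducing to the compact case first (at the price of quoting Haase's somewhat specialized compact-subset theorem for packing measure), whereas yours imports standard descriptive-set-theoretic uniformization but must then repair the support issue by the Lusin approximation --- which, as you correctly flag, is the one genuinely delicate point in your version.
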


Theorem \ref{thm-packing-sets} is based on Theorem \ref{thm-packing} and the  following  well known result (see e.g. \cite[Proposition 2.8]{Fal-technique}).

\begin{lem}
\label{lem-4.10} Let $F$ be an analytic subset of $\R^d$. Then
$$
\dim_{\rm P} F=\sup_{\mu\in \mathcal P(\R^d): \;{\rm spt}(\mu)\subset F}\underline{\dim}_{\rm P}\mu.
$$
\end{lem}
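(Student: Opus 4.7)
The two inequalities are established separately.

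For the inequality $\sup_{\mu}\ldim{P}\mu \le \Dim{P}F$, let $\mu\in \mathcal P(\R^d)$ with ${\rm spt}(\mu)\subset F$. Since $F$ is analytic and hence universally measurable, one can find a Borel set $B\subset F$ with $\mu(B)=1$. The set-theoretic characterisation of the lower packing dimension recalled in Section~\ref{S1},
$$\ldim{P}\mu=\inf\{\Dim{P}A:\; A\text{ Borel}, \ \mu(A)>0\},$$
then forces $\ldim{P}\mu\le \Dim{P}B\le \Dim{P}F$. Taking the supremum over $\mu$ yields this direction.

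For the reverse inequality, my plan is: given $s<\Dim{P}F$, construct $\mu\in \mathcal P(\R^d)$ with ${\rm spt}(\mu)\subset F$ and $\ldim{P}\mu\ge s$, then let $s\uparrow \Dim{P}F$. Pick $s'\in(s,\Dim{P}F)$. By the very definition of packing dimension, the classical $s'$-dimensional packing measure $\mathcal P^{s'}(F)$ is infinite. Invoke the Joyce--Preiss subset theorem---any analytic set of infinite $s'$-packing measure contains a compact subset of positive finite $s'$-packing measure---to obtain compact $E\subset F$ with $0<\mathcal P^{s'}(E)<\infty$. Normalise the restriction and set $\mu:=\mathcal P^{s'}|_E / \mathcal P^{s'}(E)$, so that ${\rm spt}(\mu)\subset E\subset F$.

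To conclude, apply the Taylor--Tricot density theorem for packing measure: for $\mu$-a.e.\ $x$ one has
$$c_1\le \liminf_{r\to 0}\frac{\mu(B(x,r))}{(2r)^{s'}}\le c_2$$
for constants $0<c_1\le c_2<\infty$ (independent of $x$). Along a sequence $r_n\to 0$ realising the $\liminf$, this gives $\mu(B(x,r_n))\le 2c_2(2r_n)^{s'}$, whence $\log\mu(B(x,r_n))/\log r_n \ge s'+ O(1/\log r_n)\to s'$ as $n\to \infty$. Therefore $\overline{\dim}_{\rm loc}(\mu,x)\ge s'$ for $\mu$-a.e.\ $x$, and the identity $\ldim{P}\mu=\essinf_{x\in {\rm spt}(\mu)}\overline{\dim}_{\rm loc}(\mu,x)$ yields $\ldim{P}\mu\ge s'>s$, as required.

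The substantive ingredient---and the genuine obstacle in a from-scratch proof---is the Joyce--Preiss subset theorem, whose justification requires a delicate transfinite/tree construction and is used here as a black box. Given it, together with the standard density theorem for packing measure, the remaining logarithmic manipulation is routine.
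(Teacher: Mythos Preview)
The paper does not give its own proof of this lemma; it is quoted as a well-known result with a reference to \cite[Proposition 2.8]{Fal-technique}. Your argument is a correct self-contained proof along one of the standard routes.

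Two minor remarks. In the easy direction you need not invoke universal measurability: ${\rm spt}(\mu)$ is already a closed (hence Borel) set contained in $F$ carrying full $\mu$-measure, so the set-theoretic characterisation of $\ldim{P}\mu$ gives $\ldim{P}\mu\le\Dim{P}{\rm spt}(\mu)\le\Dim{P}F$ directly. In the hard direction your statement of the Taylor--Tricot density theorem asserts a two-sided bound $c_1\le\liminf\le c_2$, but only the upper bound is used (and indeed the standard packing density theorem guarantees $\liminf_{r\to 0}\mathcal P^{s'}(E\cap B(x,r))/(2r)^{s'}\le 1$ for $\mathcal P^{s'}$-a.e.\ $x\in E$ when $\mathcal P^{s'}(E)<\infty$); the lower bound plays no role and can be dropped. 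With these cosmetic adjustments, Joyce--Preiss plus the packing density theorem yields the lemma cleanly.
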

\begin{proof}[Proof of Theorem \ref{thm-packing-sets}] Let  $E$ be an analytic subset of $\Sigma$. To prove part (i), let $\ba\in \R^{md}$ and $0\leq t<\dim_{\rm P}(\pi^\ba(E))$. Let $\mathcal N=\N^\N$ be the Baire space (with product topology, $\N$ being discrete). Since $E$ is analytic, there is a continuous surjective map $f:\;\mathcal N \to E$. Define $g=\pi^\ba\circ f$.  Then  $\pi^\ba(E)=g(\mathcal N)$. Choose $s>0$ such that $t<s<\dim_{\rm P}(\pi^\ba(E))$. Since $\pi^\ba(E)=g(\mathcal N)$,  it follows from  Theorem 2 of \cite{Haase1986} that there exists a compact $C\subset \mathcal N$ such that $\dim_{\rm P}(g(C))\geq s$. Write $K=f(C)$. Then $K$ is a compact subset of $E$ with $\dim_{\rm P}(\pi^\ba(K))= \dim_{\rm P}(g(C))\geq  s>t$. By Lemma \ref{lem-4.10}, there exists a Borel probability measure $\nu$ on $\pi^\ba(K)$ such that $\underline{\dim}_{\rm P}\nu\geq t$. Now by Theorem 1.20 of \cite{Mattila1995}, there exists a Borel probability measure $\mu$ on $K$ such that $\nu=\pi^\ba_*\mu$.  Clearly $\mu\in \mathcal P(\Sigma)$ with ${\rm spt}(\mu)\subset E$ and $\underline{\dim}_{\rm P}(\pi^\ba_*\mu)\geq t$.  By Theorem \ref{thm-packing}(i), $\underline{D}(\mu)\geq \underline{\dim}_{\rm P}(\pi^\ba_*\mu)$. Thus $\underline{D}(\mu)\geq t$. This proves part (i) of the theorem.

Next we prove part (ii). It suffices to show that for any $t\geq 0$ with
$$t<\sup_{\mu\in {\mathcal P}(\Sigma): \;{\rm spt}(\mu)\subset E} \overline{D}(\mu),$$
one has $\dim_{\rm P}(\pi^\ba(E))\geq t$ for $\mathcal L^{md}$-a.e.~$\ba\in \R^{md}$.  To this end, let $t$ be such a number. Choose
$\mu\in {\mathcal P}(\Sigma)$ such that ${\rm spt}(\mu)\subset E$ and $\overline{D}(\mu)>t$.  By Theorem \ref{thm-packing}(ii), $\overline{\dim}_{\rm P}(\pi^\ba_*\mu)=\overline{D}(\mu)$ for $\mathcal L^{md}$-a.e.~$\ba\in \R^{md}$.
Since $\pi^\ba_*\mu$ is supported on $\pi^\ba(E)$, it follows that
$\dim_{\rm P}(\pi^\ba(E))\geq \overline{D}(\mu)>t$  for $\mathcal L^{md}$-a.e.~$\ba\in \R^{md}$.
\end{proof}

\section{Box-counting dimensions of projected sets}
\label{S6}

In this section, we investigate the lower and upper box-counting dimensions of projected sets on typical self-affine sets. By adapting arguments  from Falconer \cite{Fal21} (in which he gave a capacity approach to the box-counting dimensions of  orthogonal projections of sets), we establish a projection theorem (see Theorem \ref{thm-box}) for the box-counting dimensions
in the typical self-affine setting.

We first introduce the following two definitions, which are the variants of  the corresponding definitions in \cite{Fal21} in the setting of orthogonal projections.

\begin{de}  For a nonempty compact set $E\subset \Sigma$ and $r>0$, we write
	\begin{equation}
	\label{e-Cr}
	C_r(E)=\left(\inf_{\mu\in \mathcal{P}(E)}\iint Z_{x\land y}(r)\;d\mu(x)d\mu(y)\right)^{-1},
	\end{equation}
and call it the {\it $r$-capacity} of $E$, where $\mathcal P(E)$ is the collection of all Borel probability measures supported on $E$.
\end{de}
	
	\begin{de}
		Let $E$ be  a non-empty compact subset of $\Sigma$.  Define
		\begin{equation}\label{def prof}
		\overline{\dim}_CE= \limsup_{r \to 0}\frac{\log C_r(E)}{-\log r},  \quad 	\underline{\dim}_CE= \liminf_{r \to 0}\frac{\log C_r(E)}{-\log r}.
		\end{equation}
 We call them the lower and upper capacity dimensions of $E$.
			\end{de}
	
The main result of this section is the following.

\begin{thm}
\label{thm-box}
		Let $E$ be  a non-empty subset of $\Sigma$.  Then the following properties hold.
		\begin{itemize}
		\item[(i)] For all $\ba \in \R^{m d}$, $\underline{\dim}_{\rm B} \mathcal\pi^\ba(E)\leq \underline{	\dim}_C\overline{E}$ and $\overline{\dim}_{\rm B} \mathcal\pi^\ba(E)\leq \overline{	\dim}_C\overline{E}$.
		\medskip
		\item[(ii)] Suppose that $\|T_i\|<1/2$ for $1\leq i\leq m$. Then for $\mathcal{L}^{md}$-a.e.~$\ba\in \R^{m d}$,  $\overline{\dim}_{\rm B} \mathcal\pi^\ba(E)= \overline{	\dim}_C\overline{E}$ and $\overline{\dim}_{\rm B} \mathcal\pi^\ba(E)= \overline{	\dim}_C\overline{E}$.
		\end{itemize}
		\end{thm}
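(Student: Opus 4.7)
My plan is to adapt the capacity approach of Falconer \cite{Fal21} (originally developed for orthogonal projections) to the typical self-affine setting, using Lemma \ref{lem-ae} for the deterministic upper bound in part (i) and Lemma \ref{lem-tran} together with an averaging/Borel--Cantelli scheme for the almost-sure lower bound in part (ii). As a preliminary reduction, both box-counting dimensions are invariant under closure, and $\pi^\ba(\overline E) = \overline{\pi^\ba(E)}$ by continuity of $\pi^\ba$, so I may replace $E$ by $\overline E$ and assume throughout that $E$ is nonempty compact.

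For part (i), I would fix $\ba$ and, for each $r\in(0,1)$, introduce the ``stopping antichain''
\[
\mathcal W_r=\bigl\{\bi\in\Sigma_*:\ \alpha_1(T_\bi)\le r<\alpha_1(T_{\bi|(|\bi|-1)}),\ [\bi]\cap E\ne\emptyset\bigr\}
\]
with the convention $\alpha_1(T_\varepsilon)=1$. Since $E\subset\bigcup_{\bi\in\mathcal W_r}[\bi]$ and $Z_\bi(r)=1$ whenever $\alpha_1(T_\bi)\le r$, applying Lemma \ref{lem-ae} cylinder by cylinder yields the deterministic covering bound $N_r(\pi^\ba(E))\le C'(\ba)\cdot|\mathcal W_r|$. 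The second ingredient is a purely combinatorial comparison $|\mathcal W_r|\le K\cdot C_r(E)$ for some $K=K(T_1,\ldots,T_m)$, which I would prove by testing the definition of $C_r(E)$ with the uniform atomic measure $\mu_r=|\mathcal W_r|^{-1}\sum_{\bi}\delta_{x_\bi}$ on chosen representatives $x_\bi\in[\bi]\cap E$. The diagonal contributes exactly $1/|\mathcal W_r|$ to $\iint Z_{x\wedge y}(r)\,d\mu_r\,d\mu_r$, while for distinct $\bi,\bj$ the antichain property forces $\alpha_1(T_{\bi\wedge\bj})>r$, giving the pointwise estimate $Z_{\bi\wedge\bj}(r)\le r/\alpha_1(T_{\bi\wedge\bj})<1$. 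Grouping off-diagonal pairs by their common prefix and using a tree-counting estimate on the descendants of each ancestor surviving in $\mathcal W_r$ should bound the off-diagonal contribution by $O(1/|\mathcal W_r|)$. Taking $\log(\cdot)/\log(1/r)$ and passing to $\limsup/\liminf$ then yields both inequalities in part (i).

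For part (ii), I would establish the matching lower bounds $\underline{\dim}_{\rm B}(\pi^\ba(E))\ge\underline{\dim}_C E$ and $\overline{\dim}_{\rm B}(\pi^\ba(E))\ge\overline{\dim}_C E$ for $\mathcal L^{md}$-a.e.\ $\ba\in B_\rho$ and let $\rho\to\infty$. For any $\mu\in\mathcal P(E)$, partitioning $\pi^\ba(E)$ into $N_r(\pi^\ba(E))$ pieces of diameter $\le 2r$ and applying Cauchy--Schwarz gives the deterministic inequality
\[
\iint\mathbf{1}_{\{|\pi^\ba x-\pi^\ba y|\le 2r\}}\,d\mu(x)\,d\mu(y)\ge\frac{1}{N_r(\pi^\ba(E))}.
\]
Integrating over $\ba\in B_\rho$ and using Fubini with Lemma \ref{lem-tran} yields $\int_{B_\rho}d\ba/N_r(\pi^\ba(E))\le C\iint Z_{x\wedge y}(2r)\,d\mu\,d\mu$, and selecting $\mu$ to be a near-minimizer of the capacity energy bounds the right-hand side by $2C/C_{2r}(E)$. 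A Markov/Borel--Cantelli argument along $r_n=2^{-n}$ --- applied along the full sequence for any $\gamma<\underline{\dim}_C E$ and along a $\limsup$-realising subsequence for any $\gamma<\overline{\dim}_C E$ --- shows that for a.e.\ $\ba\in B_\rho$, $N_{r_n}(\pi^\ba(E))\ge r_n^{-(\gamma-\epsilon)}$ respectively eventually or infinitely often. Sending $\epsilon\downarrow 0$, $\gamma$ to the relevant capacity dimension, and $\rho\to\infty$ completes part (ii).

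The principal obstacle will be the combinatorial estimate $|\mathcal W_r|\le K\cdot C_r(E)$ used in part (i), namely the uniform bound $\sum_{\bi\ne\bj\in\mathcal W_r}Z_{\bi\wedge\bj}(r)\lesssim|\mathcal W_r|$. While the pointwise inequality $Z_w(r)\le r/\alpha_1(T_w)$ is elementary for each fixed ancestor $w$, turning this into a uniform control requires a careful inductive or counting argument on the branching of $\mathcal W_r$ beneath each $w$; this is likely the content of Proposition \ref{key Prop} advertised in the introduction, and constitutes the main technical novelty in the box-dimension part of the paper.
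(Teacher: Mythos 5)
Your plan for part (ii) matches the paper's argument in spirit and would go through: Fubini with Lemma \ref{lem-tran}, near-minimizers for the capacity energy, and a Markov/Borel--Cantelli selection along dyadic radii (the paper packages your Cauchy--Schwarz step as Lemma \ref{from F}, i.e.\ \cite[Lemma 2.2]{Fal21}, but the content is the same). The reduction to $E$ compact via closure is also correct.

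Part (i), however, has a genuine gap, and it is not merely the technical lemma you flag at the end. You factor the bound as $N_r(\pi^\ba(E)) \le C'(\ba)\,|\mathcal W_r|$ and then try to prove the purely symbolic inequality $|\mathcal W_r| \lesssim C_r(E)$ by testing the capacity with the uniform measure $\mu_r$ on the stopping antichain. This intermediate inequality is \emph{false} in general. Take $d=1$, $T_1=\cdots=T_m=\lambda$ with $m\lambda>1$ (affinity dimension exceeding $1$), and $E=\Sigma$. Then $\mathcal W_r = \Sigma_n$ with $\lambda^n\approx r$, so $|\mathcal W_r|=m^n$. For distinct $\bi,\bj\in\Sigma_n$ with $|\bi\wedge\bj|=k<n$ one has $Z_{\bi\wedge\bj}(r)=r/\lambda^k$, and summing over $k$ (with $\sim m^{2n-k}$ ordered pairs at depth $k$) gives
\begin{equation*}
\sum_{\bi\ne\bj\in\mathcal W_r} Z_{\bi\wedge\bj}(r)\;\approx\; m^n\sum_{j=1}^{n}(m\lambda)^j\;\approx\; m^n(m\lambda)^n \;\gg\; m^n = |\mathcal W_r|,
\end{equation*}
so the off-diagonal energy of $\mu_r$ is $\gg 1/|\mathcal W_r|$ and the test measure only yields $C_r(E)\gtrsim m^n/(m\lambda)^n \approx r^{-1}\ll |\mathcal W_r|$. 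Indeed $\overline{\dim}_C E = d$ always, while $|\mathcal W_r|$ grows like $r^{-\dim_{\rm AFF}}$, so $|\mathcal W_r|\lesssim C_r(E)$ cannot hold once $\dim_{\rm AFF}>d$. The symbolic count $|\mathcal W_r|$ simply does not see the overlap that $\pi^\ba$ creates.

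The paper's Proposition \ref{key Prop} avoids this by never passing through $|\mathcal W_r|$. It proves a bound directly on $N_r(\pi^\ba(E))$: take the equilibrium measure $\mu_0$ (Lemma \ref{lem-potential}) rather than a uniform atomic measure, pick one representative $z(Q)\in Q\cap\pi^\ba(E)$ per hit cube $Q\in\mathcal D_{r/\sqrt d}$, and for each level $n=0,\ldots,\ell$ spread the $\mu_0$-mass of each cylinder $[I]$, $I\in\Sigma_n$, uniformly over the representatives lying in $\pi^\ba([I]\cap E)$. Summing the resulting probability measures $\nu_0,\ldots,\nu_\ell$ gives total mass $\ell+1$, while Lemma \ref{lem-ae} plus the equilibrium property $\int Z_{x\wedge y}(r)\,d\mu_0(y)\ge 1/C_r(E)$ forces every representative to carry mass at least $1/(C' C_r(E))$; hence $\#A\le(\ell+1)C'\,C_r(E)$ with $\ell\approx\log r/\log\alpha_+$. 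Because the mass is distributed among \emph{image} cubes (not cylinders), the overlap of $\pi^\ba$ is automatically accounted for. You should replace the symbolic covering step by this construction; the rest of your outline for part (i) (passing to $\limsup/\liminf$ of $\log N_r/\log(1/r)$ and absorbing the logarithmic factor) is fine.
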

	
	To prove the above result, let us recall the definitions of lower and upper box-counting dimensions. For a non-empty bounded set $ F\subset \R^d $, let $N_r(F)$ be the minimum number of sets of diameter $r$ that can cover $F$.   Then the lower and upper  box-counting dimensions of $F$ are defined by
	$$ \underline{\dim}_{\rm B}F= \liminf_{r \to 0}\frac{\log N_r(F)}{-\log r}, \quad  \overline{\dim}_{\rm B}F= \limsup_{r \to 0}\frac{\log N_r(F)}{-\log r}.$$
	
The following result states that for a given non-empty compact set $E\subset \Sigma$ and $\ba\in \R^{md}$,  the covering number $ N_r(\pi^{\textbf{a}}(E)) $ is nearly controlled by the $r$-capacity $  C_r(E) $ when $ r $ is sufficiently small. The formulation  of this result is inspired by  \cite[Corollary 2.4]{Fal21}.

	\begin{pro}
	\label{key Prop}
	Let $\ba\in \R^{md}$ and  $E$  a non-empty compact subset of $\Sigma$. Then
		\begin{equation*}
		N_r(\pi^\ba(E))\leq \left(\frac{\log r}{\log \alpha_+}+2\right)C'\cdot C_r(E) \quad \mbox{ for  all }  \;  0<r<\alpha_+,
		\end{equation*}
where $ \alpha_+$ is defined as in \eqref{e-eta}, and $C'$ is the constant given in Lemma \ref{lem-ae}.
	\end{pro}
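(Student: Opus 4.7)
The plan is to bound $N_r(\pi^\ba(E))$ via a near-maximal $r$-separated subset of $\pi^\ba(E)$, convert it into a probability measure on $E$, and then apply the definition of the $r$-capacity. Since the covering number $N_r(\pi^\ba(E))$ and the $r$-packing number $N$ agree up to a dimension-dependent constant (which I will absorb into $C'$, already containing the generous factor $(d+1)^d$ from Lemma~\ref{lem-ae}), it suffices to produce the desired inequality with $N$ in place of $N_r(\pi^\ba(E))$. Given such a maximal $r$-separated set $\{y_1,\ldots,y_N\}\subset\pi^\ba(E)$, I would lift each $y_i$ to some $x_i\in E$ with $\pi^\ba x_i=y_i$ and set $\mu:=N^{-1}\sum_{i=1}^N\delta_{x_i}\in\mathcal{P}(E)$. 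The definition of the $r$-capacity then immediately yields
\[
\frac{1}{C_r(E)} \;\leq\; \iint Z_{x\wedge y}(r)\,d\mu(x)\,d\mu(y) \;=\; \frac{1}{N^2}\sum_{i,j=1}^N Z_{x_i\wedge x_j}(r).
\]

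Next I would split the double sum into the diagonal, which contributes $N/N^2=1/N$ since $Z_{x_i\wedge x_i}(r)=1$, and the off-diagonal, organized by the length $k=|x_i\wedge x_j|$. The geometric constraint is that $\pi^\ba x_i,\pi^\ba x_j\in f^\ba_{x_i\wedge x_j}(\pi^\ba(\Sigma))$, a set of diameter at most $\alpha_+^k\cdot\diam(\pi^\ba(\Sigma))$; the $r$-separation $|y_i-y_j|\geq r$ therefore forces $k$ into the range $\{0,1,\ldots,\ell_0\}$, where $\ell_0$ is essentially $\lceil\log r/\log\alpha_+\rceil$ up to an additive constant coming from $\diam(\pi^\ba(\Sigma))$ that I would absorb into $C'$.

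The key combinatorial step invokes Lemma~\ref{lem-ae}: for each $\bi\in\Sigma_k$, the set $f^\ba_\bi(\pi^\ba(\Sigma))$ is covered by at most $C'/Z_\bi(r)$ elements of $\mathcal{D}_{r/\sqrt{d}}$, and since each such cell has diameter $r$ it can contain at most one point of an $r$-separated set. This gives the pointwise bound $N_\bi:=\#\{i:x_i\in[\bi]\}\leq C'/Z_\bi(r)$, equivalently $Z_\bi(r)N_\bi\leq C'$. Multiplying by $N_\bi$ and summing over $\bi\in\Sigma_k$ yields
\[
\sum_{\bi\in\Sigma_k} Z_\bi(r) N_\bi^2 \;\leq\; C'\sum_{\bi\in\Sigma_k} N_\bi \;=\; C' N,
\]
using $\sum_{\bi\in\Sigma_k} N_\bi=N$. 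Since the number of pairs $(i,j)$ with $i\neq j$ and $x_i\wedge x_j=\bi$ is at most $N_\bi^2$, summing over the at most $\log r/\log\alpha_+ + 1$ admissible values of $k$ shows that the off-diagonal contribution to $\iint Z\,d\mu^2$ is at most $(\log r/\log\alpha_+ + 1)C'/N$. Combining with the diagonal $1/N\leq C'/N$ (using $C'\geq 1$) gives $\iint Z\,d\mu^2\leq (\log r/\log\alpha_+ + 2)C'/N$, and rearranging produces the claimed inequality.

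The principal obstacle is the careful bookkeeping of constants: calibrating the integer range of $k$ so that $\ell_0+2$ matches exactly $\log r/\log\alpha_+ + 2$, absorbing both the shift coming from $\diam(\pi^\ba(\Sigma))$ and the dimension-dependent factor converting $r$-packing into $r$-covering into $C'$, and verifying that the diagonal term is safely controlled by the hypothesis $C'\geq 1$. The structural argument is elementary once the energy estimate above is in place, but the exact leading constant in the statement requires that all these elementary bookkeeping steps fit inside the generous definition of $C'$ used in Lemma~\ref{lem-ae}.
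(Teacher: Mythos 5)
Your strategy—upper-bounding the energy $\iint Z_{x\wedge y}(r)\,d\mu\,d\mu$ of an explicit $\mu\in\mathcal P(E)$ built from a separated net, and playing it off against $C_r(E)^{-1}\leq\iint Z\,d\mu\,d\mu$—is genuinely different from the paper's, which instead constructs a family of measures $\{\nu_n\}_{n\le\ell}$ on a set $A$ of cube representatives, weighted by the equilibrium measure $\mu_0$ of Lemma \ref{lem-potential}, and extracts the pointwise lower bound $\nu(\{z\})\geq 1/(C'C_r(E))$ for each $z\in A$. Both routes can reach the conclusion, but as written your proof has two genuine gaps, both traceable to the decision to start from a maximal $r$-separated set rather than from one representative per dyadic cube.

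First, the claim that $N_r(\pi^\ba(E))$ and the $r$-separated cardinality $N$ "agree up to a dimension-dependent constant which I will absorb into $C'$" is not available to you: $C'$ is the \emph{specific} constant of Lemma \ref{lem-ae}, and the Proposition's conclusion uses exactly that $C'$. A maximal $r$-separated set gives $\pi^\ba(E)\subset\bigcup_i B(y_i,r)$, hence $N_{2r}\le N$; converting to $N_r$ costs a genuine multiplicative $c_d>1$ that then appears in the final inequality and spoils the stated constant. The paper avoids the conversion entirely by picking one point $z(Q)$ per cube $Q\in\mathcal D_{r/\sqrt d}$ meeting $\pi^\ba(E)$, so $N_r(\pi^\ba(E))\le\#A$ with no loss, and the representatives are automatically in distinct cells (which is what your bound $N_\bi\le C'/Z_\bi(r)$ actually uses). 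Second, your control of the range of $k=|x_i\wedge x_j|$ is off. You claim that $r$-separation confines $k$ to $\{0,\dots,\ell_0\}$ with $\ell_0\approx\lceil\log r/\log\alpha_+\rceil$ up to "an additive constant coming from $\diam(\pi^\ba(\Sigma))$" to be absorbed into $C'$. But that extra term depends on $\ba$ and lands in the prefactor $(\log r/\log\alpha_+ +2)$, not inside $C'$; when $\diam(\pi^\ba(\Sigma))>1$ the admissible range of $k$ really is strictly longer than $\lceil\log r/\log\alpha_+\rceil$ and your bound degrades. The correct observation—encoded in Lemma \ref{lem-gmu}—is that $Z_{x|j}(r)=1$ for all $j\geq\ell:=\lceil\log r/\log\alpha_+\rceil$, so all pairs with $|x_i\wedge x_j|\ge\ell$ (together with the diagonal) should be grouped by their length-$\ell$ cylinder $\bi\in\Sigma_\ell$ and bounded by $\sum_{\bi\in\Sigma_\ell}M_\bi^2\le C'\sum_\bi M_\bi=C'N$, one term independent of $\diam(\pi^\ba(\Sigma))$. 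With cube representatives and this grouping, your energy computation does give $\iint Z\,d\mu\,d\mu\le(\ell+1)C'/N\le(\log r/\log\alpha_+ +2)C'/N$ and hence the stated bound; as written, it does not.
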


	The proof of Proposition \ref{key Prop} is based on the following energy-minimising property, which is standard in potential theory (see e.g. \cite[Theorem 2.4]{Fuglede1960} or \cite[Lemma 2.1]{Fal21}), and only uses the fact that the mapping $(x,y)\mapsto Z_{x\wedge y}(r)$ is positive, symmetric and continuous on $\Sigma\times \Sigma$ for each $r>0$.
	
		\begin{lem}\label{lem-potential}
		Let $E\subset \Sigma$ be a non-empty compact set  and $r>0$. Then the infimum in \eqref{e-Cr} is attained by a measure $\mu_0 \in \mathcal{P}(E)$. Moreover
		\begin{equation*}\label{e-inq}
		\int Z_{x\land y}(r)\;d\mu_0(y)\ge  \frac{1}{C_r(E)} \quad \mbox{ for all }\; x\in E,
		\end{equation*}
		 with equality for $\mu_0$-a.e.~$x\in E$.
	\end{lem}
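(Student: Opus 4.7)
The plan is to verify this statement by the standard variational argument from potential theory, using compactness of $\mathcal{P}(E)$ and continuity of the kernel $(x,y)\mapsto Z_{x\wedge y}(r)$. I will first establish that the infimum is attained, then obtain the pointwise lower bound by a first-order variational perturbation, and finally deduce the $\mu_0$-a.e.\ equality by integrating against $\mu_0$.

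For the existence of the minimiser, I would first check that for fixed $r>0$ the kernel $K(x,y):=Z_{x\wedge y}(r)$ is continuous on $\Sigma\times\Sigma$ and takes values in $(0,1]$. Off the diagonal, $|x'\wedge y'|$ is locally constant, so $K$ is locally constant. On the diagonal, if $(x_n,y_n)\to (x,x)$ with $x_n\neq y_n$, then $|x_n\wedge y_n|\to\infty$, so $\|T_{x_n\wedge y_n}\|\to 0$, hence all $\alpha_k(T_{x_n\wedge y_n})$ are eventually $<r$, giving $K(x_n,y_n)\to 1=K(x,x)$. Since $K$ is bounded and continuous, the energy functional $I(\mu):=\iint K\,d\mu\,d\mu$ is continuous on $\mathcal{P}(E)$ in the weak-$*$ topology (by a standard application of the portmanteau/dominated convergence argument to the product measures). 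As $E\subset\Sigma$ is compact, $\mathcal{P}(E)$ is weak-$*$ compact, so $I$ attains its infimum at some $\mu_0\in\mathcal{P}(E)$, giving $I(\mu_0)=1/C_r(E)$.

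For the pointwise lower bound, fix an arbitrary $x_0\in E$ and consider the one-parameter family $\mu_t:=(1-t)\mu_0+t\delta_{x_0}\in \mathcal{P}(E)$ for $t\in[0,1]$. Expanding
\begin{equation*}
I(\mu_t)=(1-t)^2 I(\mu_0)+2t(1-t)\int K(x_0,y)\,d\mu_0(y)+t^2 K(x_0,x_0)
\end{equation*}
and using that $t=0$ minimises this quadratic over $[0,1]$, the right-derivative at $0$ must be nonnegative, giving
\begin{equation*}
\int Z_{x_0\wedge y}(r)\,d\mu_0(y)\geq I(\mu_0)=\frac{1}{C_r(E)}.
\end{equation*}
Since $x_0\in E$ was arbitrary, the required pointwise lower bound holds on all of $E$.

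For the $\mu_0$-a.e.\ equality, set $\phi(x):=\int Z_{x\wedge y}(r)\,d\mu_0(y)$. By Fubini (justified by boundedness of $K$), $\int_E \phi\,d\mu_0=I(\mu_0)=1/C_r(E)$. Combined with $\phi\geq 1/C_r(E)$ on $E$ from the previous step, this forces $\phi=1/C_r(E)$ for $\mu_0$-a.e.\ $x\in E$. The only mild subtlety to watch in writing this up is the continuity of $K$ on the diagonal, which I addressed above; everything else is a mechanical quadratic expansion, so I do not expect any genuine obstacle.
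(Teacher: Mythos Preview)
Your argument is correct and is exactly the standard potential-theoretic proof the paper has in mind: the paper does not give a detailed proof but simply notes that the result is standard (citing Fuglede and Falconer) once one knows that $(x,y)\mapsto Z_{x\wedge y}(r)$ is positive, symmetric and continuous on $\Sigma\times\Sigma$, which is precisely what you verified before running the compactness/variation argument.
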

	
	For $ \rho >0$, let  $\mathcal{D_{\rho}}$ be the partition of $\R^d$ defined as in \eqref{e-drho}.

\begin{proof}[Proof of  Proposition \ref{key Prop}]
Let $\ba\in \R^{md}$ and  $ E\subset \Sigma $ a non-empty compact set. 	
 Let  $ 0<r< \alpha_+$. To estimate $N_r(\pi^\ba(E))$, we write
 $$
 \mathcal{D}_{r/\sqrt{d}}(E)=\left\{Q\in \mathcal{D}_{r/\sqrt{d}}:\; Q\cap \pi^{\mathbf{a}}(E)\neq \emptyset\right\}.
 $$
 Clearly $\mathcal{D}_{r/\sqrt{d}}(E)$ is a cover of $\pi^\ba(E)$. Since each cube in $\mathcal{D}_{r/\sqrt{d}}(E)$ is of diameter $r$, it follows that $N_r(\pi^\ba(E))\leq \#\mathcal{D}_{r/\sqrt{d}}(E)$.

 Now for each $ Q \in  \mathcal{D}_{r/\sqrt{d}}(E)$, we pick a point  $z(Q)\in Q\cap \pi^{\mathbf{a}}(E) $. Set
	\begin{equation*}\label{def A}
	A=\left \{z(Q):\;  Q \in  \mathcal{D}_{r/\sqrt{d}}(E)\right\}.
	\end{equation*}
 Clearly $A\subset \pi^{\mathbf{a}}(E)$ and $\#A=\#\mathcal{D}_{r/\sqrt{d}}(E)$. So
 \begin{equation}
 \label{e-Ain}
 \#A\geq N_r(\pi^\ba(E)).
 \end{equation}
 In what follows we will give an upper bound of $\#A$.

 Let $\ell$ be the smallest integer so that $\alpha_+^\ell\leq r$. That is,
  $ \ell=\left\lceil\frac{\log r}{\log \alpha_+}\right\rceil$.
Let $\mu_0\in \mathcal P(E)$ such that the infimum in \eqref{e-Cr} is attained at $\mu_0$. The existence of $\mu_0$ follows from Lemma \ref{lem-potential}. Moreover by Lemma \ref{lem-potential},
\begin{equation}\label{e-inq'}
		\int Z_{x\land y}(r)\;d\mu_0(y)\ge  \frac{1}{C_r(E)} \quad \mbox{ for all }\; x\in E.
		\end{equation}

Now we construct a finite sequence  $ \{\nu_n\}_{n=0 } ^{\ell} $ of probability measures on $A$ as follows:
		\begin{equation}\label{def mea}
	\nu_n=\sum_{I\in \Sigma_{n}}\;\;\sum_{Q \in \mathcal{D}_{r/\sqrt{d}}(E):\; Q\cap \pi^{\mathbf{a}}([I]\cap E)\neq \emptyset}\frac{\mu_0([I])}{N_I}\cdot \delta_{z(Q)}, \quad n=0,\ldots, \ell,
	\end{equation}
	where $$N_I:=\#\left\{Q\in \mathcal{D}_{r/\sqrt{d}}(E):\; Q\cap\pi^{\mathbf{a}}([I]\cap E)\neq \emptyset \right\}$$ and
	 $ \delta_{y} $ stands for  the Dirac measure at the point $y$. Here we take the convention that $\frac{0}{0}=0$.
Clearly, for each $n$ the total mass of $\nu_n$ is equal to
	 $$
	 \sum_{I\in \Sigma_{n}}\;\;\sum_{Q \in \mathcal{D}_{r/\sqrt{d}}(E):\; Q\cap \pi^{\mathbf{a}}([I]\cap E)\neq \emptyset}\frac{\mu_0([I])}{N_I}=\sum_{I\in \Sigma_{n}}\mu_0([I])=1.
	 $$
So $ \nu_n\in   \mathcal{P}(A)$ for $n=0,\ldots, \ell$.

Set $ \nu= \nu_0+\cdots +\nu_\ell $. Then  $\nu(A)=\ell+1$.   The following inequality is a key point in our proof:
			\begin{equation}
\label{e-kin}
		\nu(\{z\})\geq \frac{1}{C'\cdot C_r(E)} \quad \mbox { for all }  z\in A,
		\end{equation}
where $C'>0$ is the constant given in Lemma \ref{lem-ae}. 	
To prove \eqref{e-kin}, let  $Q\in \mathcal D_{r/\sqrt{d}}(E)$.  By \eqref{def mea}, for every $0\leq n\leq \ell$,
		$$  \nu_n(\{z(Q)\})=\sum_{I\in \Sigma_{n}}\frac{\mu_0([I])}{\#\left\{\widetilde{Q}\in \mathcal{D}_{r/\sqrt{d}}(E):\; \widetilde{Q}\cap\pi^{\mathbf{a}}([I]\cap E)\neq \emptyset \right\}}. $$
		Since $z(Q)\in Q\cap \pi^\ba(E)$, there exists $x \in E $ such that  $  \pi^{\mathbf{a}}(x) =z(Q)$.  It follows  that for every $0\leq n\leq \ell$,
		\begin{align*}
			\begin{split}
				\nu_n(\{z(Q)\}) &\geq  \frac{\mu_0([x |n])}{\#\left\{\widetilde{Q}\in \mathcal{D}_{r\sqrt{d}}(E): \widetilde{Q}\cap\pi^{\mathbf{a}}([x |n]\cap E)\neq \emptyset \right\}}\\
				&\geq \frac{1}{C'}\mu_0([x |n])Z_{x |n}(r)\qquad\mbox{(by Lemma \ref{lem-ae})}.
			\end{split}
		\end{align*}
Summing over $n$ yields that
\begin{align*}
\nu(\{z(Q)\})
&\geq \frac{1}{C'}\sum_{n=0}^\ell\mu_0([x |n])Z_{x |n}(r)\\
&\geq \frac{1}{C'} G_{\mu_0}(x, r)  \qquad\mbox{(by Lemma \ref{lem-gmu})}\\
&=\frac{1}{C'}\int Z_{x\wedge y}(r)\; d\mu_0(y)\\
&\geq \frac{1}{C'\cdot C_r(E)}  \qquad\mbox{(by \eqref{e-inq'})}.
\end{align*}
This proves \eqref{e-kin}.

Since $\nu(A)=\ell+1$, by  \eqref{e-kin} we obtain that
$$\#A\leq (\ell+1)C' \cdot C_r(E)=\left(\left\lceil\frac{\log r}{\log \alpha_+}\right\rceil+1\right)C'\cdot C_r(E)\leq \left(\frac{\log r}{\log \alpha_+}+2\right)C'\cdot C_r(E).
$$
Combining this with \eqref{e-Ain} completes the proof of the proposition.
\end{proof}

Now we are ready to prove Theorem \ref{thm-box}.

\begin{proof}[Proof of Theorem \ref{thm-box}(i)]
 Let $\ba\in \R^{md}$ and $E$ a non-empty subset of $\Sigma$. Observe that
\begin{equation}
\label{e-eqB}
\overline{\dim}_B\pi^\ba(E)=\overline{\dim}_B\pi^\ba(\overline{E}), \quad \underline{\dim}_B\pi^\ba(E)=\underline{\dim}_B\pi^\ba(\overline{E}).
\end{equation}
By Proposition \ref{key Prop}, for all $0<r<\alpha_+$,
		$$
  N_r(\pi^{\ba}(\overline{E}))\leq \left(\frac{\log r}{\log \alpha_+}+2\right)C'\cdot C_r(\overline{E}),$$
  		so
		\[  \frac{\log N_r(\pi^{\textbf{a}}(\overline{E}))}{-\log r} \leq \frac{\log C_r(\overline{E})}{-\log r}+ \frac{\log (C'((\log r/ \log \alpha_+)+2))}{-\log r}.\]
		Taking lower and upper limits as $ r \to 0 $ yields the desired upper bounds for the lower and upper box-counting dimensions of $\pi^\ba(\overline{E})$.
\end{proof}

To prove Theorem \ref{thm-box}(ii), we still need the following.	
	\begin{lem}[{\cite[Lemma 2.2]{Fal21}}]
\label{from F}
		Let $ F\subseteq \R^d$ be non-empty and compact and let $ r > 0 $. Suppose that there
		is a measure $ \nu \in \mathcal{P}(F)  $ such that for some $\beta> 0$
		$$(\nu \times \nu)\{(x, y):|x-y| \leq r\} \leq \beta.$$
		Then
		\begin{equation*}
		N_{r}(F) \geq \frac{c_{d}}{ \beta},
		\end{equation*}
		where $ c_{d}$ depends only on $ d $.
	\end{lem}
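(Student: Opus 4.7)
The plan is to combine a disjointification of an efficient cover with the Cauchy--Schwarz inequality, exploiting the identity
$$(\nu\times\nu)\{(x,y):|x-y|\le r\}=\int \nu(B(x,r))\,d\nu(x).$$

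First I would fix a near-optimal cover $E_1,\ldots,E_N$ of $F$ with $N=N_r(F)$ and $\diam(E_i)\le r$. Replacing $E_i$ by $E_i\setminus\bigcup_{j<i}E_j$ preserves both the diameter bound and the covering property while making the sets pairwise disjoint; since $\nu$ is supported on $F$, this yields $\sum_{i=1}^N\nu(E_i)=1$. For any $x\in E_i$, the diameter condition gives $E_i\subseteq B(x,r)$, and hence $\nu(E_i)\le \nu(B(x,r))$ for every such $x$.

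Integrating this pointwise bound against $\nu$ on each $E_i$ and summing then gives
$$\beta\ge \int \nu(B(x,r))\,d\nu(x)\ge \sum_{i=1}^N \int_{E_i}\nu(B(x,r))\,d\nu(x)\ge \sum_{i=1}^N \nu(E_i)^2.$$
By Cauchy--Schwarz,
$$1=\Bigl(\sum_{i=1}^N\nu(E_i)\Bigr)^2\le N\sum_{i=1}^N\nu(E_i)^2\le N\beta,$$
so $N_r(F)\ge 1/\beta$. A dimension-dependent constant $c_d$ appears only if one uses a different convention for $N_r$ (for instance, closed balls of radius $r$ or maximal $r$-separated subsets), in which case the conversion factor depends solely on $d$.

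I do not anticipate a substantive obstacle here: once the cover is disjointified, the argument reduces to a two-line application of Cauchy--Schwarz. The only step requiring any care is the opening disjointification, since the overlaps of an arbitrary efficient cover would inflate $\sum_i\nu(E_i)$ beyond $1$ and destroy the final chain of inequalities --- which is precisely what replacing $E_i$ by $E_i\setminus\bigcup_{j<i}E_j$ fixes without spoiling the bound $\diam(E_i)\le r$.
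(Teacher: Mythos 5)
Your proof is correct. The paper does not give its own argument for this lemma---it is quoted directly from \cite[Lemma 2.2]{Fal21}---so there is no in-paper proof to compare against, but your argument is essentially Falconer's: disjointify an optimal cover into sets $E_1,\ldots,E_N$ of diameter at most $r$, use the Fubini identity $(\nu\times\nu)\{|x-y|\le r\}=\int\nu(B(x,r))\,d\nu(x)$ together with $E_i\subseteq B(x,r)$ for $x\in E_i$ to get $\beta\ge\sum_i\nu(E_i)^2$, and finish with Cauchy--Schwarz. One small refinement worth noting: with the paper's convention that $N_r(F)$ counts covers by sets of diameter exactly $r$, such a cover is in particular a cover by sets of diameter at most $r$, and disjointification only decreases diameters, so your chain really does give $N_r(F)\ge N\ge 1/\beta$; in other words you obtain the sharp constant $c_d=1$, slightly better than the stated bound (Falconer allows a dimensional constant because his formulation runs the estimate through a lattice of cubes rather than an arbitrary diameter-$\le r$ cover).
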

	
	\begin{proof}[Proof of Theorem \ref{thm-box}(ii)]
	Our arguments are mainly adapted from the proof of Theorem 1.1 in \cite{Fal21}.
		
		Due to \eqref{e-eqB} we may assume that $E\subset \Sigma$ is non-empty and  compact. Let $\rho>0$. It is enough to show that $$
\overline{\dim}_{\rm B}\mathcal\pi^\ba(E)\geq \overline{	\dim}_CE,\quad  \underline{\dim}_{\rm B}\mathcal\pi^\ba(E)\geq \underline{	\dim}_CE
$$
for $\mathcal L^{md}$-a.e.~$\ba\in B_\rho$.

 By Lemma \ref{lem-tran}, there exists $C=C(\rho, m, d)>0$ such that
		\begin{equation}\label{Leb Z}
		\mathcal{L}^{md}\left\{\ba \in B_{\rho}:\;\left|\pi^{\ba}\bi-\pi^{\ba}\bj \right | \leq r\right\} \leq C\cdot Z_{\bi \wedge \bj}(r)
		\end{equation}
for all $\bi, \bj\in \Sigma$ and $r>0$.
		
Let $ \mu \in\mathcal{ P}(E) $. Using
		Fubini’s theorem and \eqref{Leb Z},
				\begin{align}
		\int_{B_\rho}  \left( \mu\times \mu\right) & \{(\bi,\bj):\; \left|\pi^{\ba}\bi-\pi^{\ba}\bj \right | \leq r\}\;d \ba \nonumber\\
		={}&\iint \mathcal{L}^{md}\{\ba\in B_{\rho}:\; \left|\pi^{\ba}\bi-\pi^{\ba}\bj \right | \leq r\}\;d\mu(x)d\mu(y) \nonumber\\
		\leq{}& \iint C\cdot Z_{\bi\wedge \bj}(r)\;d\mu(\bi)d\mu(\bj). \label{apply}
		\end{align}
			If $ 	\overline{\dim}_CE>t'>t>0$, then there exists a non-increasing sequence $  \{ r_k\}_{k=1}^{\infty} $ with $ r_{k} \to 0 $ and $ 0<r_k<2^{-k} $, such that $C_{r_k}(E)\geq r_k^{-t'}$. Thus by Lemma \ref{lem-potential}, for each $k$ there exists $\mu_k \in \mathcal{P}(E)$ such that
		\[ \iint Z_{\bi\wedge \bj}(r_k)\;d\mu_{k}(\bi)d\mu_{k}(\bj)=\frac{1}{C_{r_k}(E)}\leq r_{k}^{t'}. \]\
		
Applying \eqref{apply} to each $\mu_k$ and summing over $k$,
		\begin{align*}
			\int_{B_\rho} & \left(\sum_{k=1}^{\infty} r_{k}^{-t}  (\pi^\ba_*\mu_k\times \pi^\ba_*\mu_k) \{(x,y): \left|x-y \right | \leq r_k\}\right)\;d \ba \\
			&\mbox{}\qquad= \sum_{k=1}^{\infty} r_{k}^{-t}\int_{B_\rho} \left( \mu_k\times \mu_k\right) \{(\bi,\bj): \left|\pi^{\ba}\bi-\pi^{\ba}\bj \right | \leq r_k\}\;d \ba\\
			&\mbox{}\qquad\leq C\sum_{k=1}^{\infty} r_{k}^{-t} \iint Z_{\bi\wedge \bj}(r_k)\;d\mu_k(\bi)d\mu_k(\bj)   \\
			&\mbox{}\qquad \leq  C\sum_{k=1}^{\infty}r_{k}^{t'-t}\leq  C\sum_{k=1}^{\infty}2^{-k(t'-t)}<\infty.
		\end{align*}
		Hence for $ \mathcal{L}^{m d}$-a.e.~$\ba\in B_\rho$ there is   $ M_{\ba} < \infty$  such that
		$$ (\pi^\ba_*\mu_k\times \pi^\ba_*\mu_k)\{(x,y): \left|x-y \right| \leq r_k\} \leq M_{\ba} r_{k}^{t},  \quad \text{for  } k\in \N.$$
	 For such $ \textbf{a} ,$  applying  Lemma \ref{from F}  to the set $ \pi^\ba(E) $ yields that 	
		$$ N_{r_k}(\pi^{\ba}(E))\geq c_d (M_{\textbf{a}})^{-1} r_{k}^{-t},  \quad \text{for   } k\in \N.$$
 It follows that   $$\overline{\dim}_{\rm B}(\pi^\ba(E))=\limsup_{r \to 0} \frac{\log N_r(\pi^\ba (E))} {-\log r}\geq t.$$ This holds for all $ t < 	\overline{\dim}_CE $, thus $ \overline{\dim}_{\rm B}\mathcal\pi^\ba(E) \geq \overline{\dim}_CE $ for $ \mathcal{L}^{md}$-a.e.~$ \ba \in B_\rho$.
		
		The inequality for the lower box-counting dimension for almost all $\ba\in B_\rho$ follows in a similar manner, 		noting that it is enough to take $ r = 2^{-k} $, $ k \in  N $, when considering the limits as $  r \to 0 $ in the definitions of lower box dimension and lower  capacity dimension.
		Thus we have proved Theorem \ref{thm-box}(ii).
				\end {proof}
\section{The proofs of Theorems \ref{thm-lower-upper} and \ref{thm-main2}}

\begin{proof}[Proof of Theorem \ref{thm-lower-upper}] It follows directly by combining Theorems  \ref{thm-lower} and  \ref{thm-packing}.
\end{proof}

\begin{proof}[Proof of Theorem \ref{thm-main2}]
This follows directly by combining Theorems \ref{main2}, \ref{thm-packing-sets} and \ref{thm-box}.
\end{proof}

\section{An analogous result for orthogonal projections}
\label{S8}

Let $n,m$ be two positive integers with $n> m$. We denote the Grassmann manifold of all $m$-dimensional linear subspaces of $\R^n$ by $G_{n,m}$ and the natural invariant probability measure on $G_{n,m}$ by $\gamma_{n,m}$. For $V\in G_{n,m}$, let $P_V:\R^n\to V$ denote the orthogonal projection onto $V$. For $\mu\in \mathcal P(\R^n)$, let $\mu_V$ denote the projection of $\mu$ under $P_V$, that is, $\mu_V=\mu\circ P_V^{-1}$.

In this section, we prove the following result, which is an analogue of Theorem \ref{thm-5.2} for the orthogonal projections.

\begin{thm}
\label{thm-8.1}
Let $\mu$ be a Borel probability measure on $\R^n$ with compact support. Then $\mu_V$ is exact dimensional for $\gamma_{n,m}$-a.e.~$V\in G_{n,m}$ if and only if one of the following two conditions fulfils:
\begin{itemize}
\item[(i)]  $\underline{\dim}_{\rm H} \mu\geq m$;
\item[(ii)] $\mu$ is exact dimensional with dimension smaller than $m$.
\end{itemize}
\end{thm}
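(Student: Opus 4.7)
The theorem is the orthogonal-projection analogue of Theorem~\ref{thm-5.1}, and I would prove it by mirroring the arguments of Theorems~\ref{thm-5.1} and \ref{thm-5.2}, with the classical projection theorems taking the role of Theorems~\ref{thm-lower} and \ref{thm-packing}. Introduce the kernel
$$F_m(\mu,x,r):=\int\min\{1,(r/|x-y|)^m\}\,d\mu(y),$$
which plays the role of $G_\mu(x,r)$ in the orthogonal setting: by the classical Grassmannian identity $\int_{G_{n,m}}\mu_V(B(P_V x,r))\,d\gamma_{n,m}(V)\asymp F_m(\mu,x,r)$ together with covering estimates in the style of Lemma~\ref{lem-1.6}, one can show that for $\gamma_{n,m}$-a.e.~$V$,
\begin{align*}
\underline{\dim}_{\rm loc}(\mu_V,P_V x)&=\min\{m,\underline{\dim}_{\rm loc}(\mu,x)\},\\
\overline{\dim}_{\rm loc}(\mu_V,P_V x)&=D_m(\mu,x):=\limsup_{r\to0}\frac{\log F_m(\mu,x,r)}{\log r}
\end{align*}
for $\mu$-a.e.~$x$. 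Passing to essential $\inf/\sup$ yields $\underline{\dim}_{\rm H}\mu_V=\min\{m,\underline{\dim}_{\rm H}\mu\}$ and $\overline{\dim}_{\rm P}\mu_V=\overline{D}_m(\mu):=\esssup_x D_m(\mu,x)$ for $\gamma_{n,m}$-a.e.~$V$.

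With these at hand, exact dimensionality of $\mu_V$ for a.e.~$V$ is equivalent to $\underline{\dim}_{\rm H}\mu_V=\overline{\dim}_{\rm P}\mu_V$ a.s., i.e., $\min\{m,\underline{\dim}_{\rm H}\mu\}=\overline{D}_m(\mu)=:c$, just as in Theorem~\ref{thm-5.1}. If $c=m$, then $\underline{\dim}_{\rm H}\mu\ge m$, which is (i); conversely, (i) combined with $\overline{D}_m(\mu)\le m$ and the trivial bound $\overline{D}_m(\mu)\ge\underline{\dim}_{\rm H}\mu_V=m$ forces equality, making $\mu_V$ exact dimensional with dim $m$ a.s. When $c<m$, one has $\underline{\dim}_{\rm H}\mu=c$ and $\overline{D}_m(\mu)=c$. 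From the layer-cake expression $F_m(\mu,x,r)=mr^m\int_r^{\infty}s^{-m-1}\mu(B(x,s))\,ds$ I would derive, as in Lemma~\ref{lem-5.5}, the pointwise two-sided bound
$$\min\{m,\underline{\dim}_{\rm loc}(\mu,x)\}\le D_m(\mu,x)\le\min\{m,\overline{\dim}_{\rm loc}(\mu,x)\},$$
which together with the constraints on $\underline{\dim}_{\rm H}\mu$ and $\overline{D}_m(\mu)$ forces $\underline{\dim}_{\rm loc}(\mu,x)=c$ for $\mu$-a.e.~$x$.

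The main step is then to show that also $\overline{\dim}_{\rm loc}(\mu,x)=c$ for $\mu$-a.e.~$x$, which yields condition (ii). This is the orthogonal-projection analogue of Theorem~\ref{thm-5.2}(iii): since orthogonal projections are ``isotropic''---all singular values of $P_V$ collapse to a single value, just as under the hypothesis of Theorem~\ref{thm-5.2}(iii)---the non-integrality obstruction that appeared in Theorem~\ref{thm-5.2}(ii) disappears, and the Case~1 computation of the proof of Theorem~\ref{thm-5.2} can be adapted to show that if $\overline{\dim}_{\rm loc}(\mu,x)>c$ on a set of positive $\mu$-measure then $D_m(\mu,x)>c$ there, contradicting $\overline{D}_m(\mu)=c$. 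The converse direction---that (ii) implies exact dimensionality of $\mu_V$ for a.e.~$V$---is immediate from the pointwise formulas above. The hardest parts will be verifying carefully the pointwise identity $\overline{\dim}_{\rm loc}(\mu_V,P_V x)=D_m(\mu,x)$ a.s.~(requiring a Falconer--Howroyd-type Fubini-plus-covering argument adapted from the proof of Theorem~\ref{thm-packing}(ii)) and carrying out the isotropic adaptation of the Case~1 calculation in the proof of Theorem~\ref{thm-5.2}.
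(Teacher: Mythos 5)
Your plan mirrors the paper's proof: both invoke the Hunt--Kaloshin and Falconer--O'Neil pointwise formulas for $\underline{\dim}_{\rm loc}(\mu_V,P_Vx)$ and $\overline{\dim}_{\rm loc}(\mu_V,P_Vx)$ (the paper simply cites them as Theorem~\ref{thm-8.2}; you propose to re-derive the second via a Falconer--Howroyd covering argument, which would work but is extra labour), reduce exact dimensionality of $\mu_V$ a.s.~to an identity between a.s.~constant quantities, dispose of the easy direction via $\mu(B(x,r))\leq F_\mu^m(x,r)$ together with the two-sided bound on $D_m(\mu,x)$, and close the nontrivial direction (that $\underline{\dim}_{\rm loc}(\mu,x)=D_m(\mu,x)=s<m$ a.e.~forces $\overline{\dim}_{\rm loc}(\mu,x)=s$ a.e.) by adapting the Case~1 contradiction argument from the proof of Theorem~\ref{thm-5.2}. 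Your observation that orthogonal projections are ``isotropic,'' so that the non-integrality restriction in Theorem~\ref{thm-5.2}(ii) disappears and only the Theorem~\ref{thm-5.2}(iii)-style argument is needed, is also correct and is exactly how the paper thinks about it.

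The one genuine gap is in the step you flag as ``carrying out the isotropic adaptation of the Case~1 calculation.'' That calculation in the proof of Theorem~\ref{thm-5.2} hinges on \eqref{e-deltan}, which comes from Lemma~\ref{lem-5.4}. Lemma~\ref{lem-5.4} is proved by a Borel--Cantelli argument over cylinders that exploits the finite alphabet: each cylinder of length $\lceil(1-\delta)n\rceil$ has at most $m^{\delta n}$ descendants of length $n$, which gives the crude bound $\mu(\bigcup_{I\in\Theta_n}[I])\le e^{-n\epsilon}m^{\delta n}$. This combinatorial structure has no analogue for balls in $\R^n$, so the adaptation is not routine and ``adapt'' cannot mean ``copy.'' The ingredient you are missing is the Falconer--Mattila lemma \cite[Corollary~2.3]{FaMa96} (quoted in the paper as Lemma~\ref{lem-FaMa} and reformulated as Corollary~\ref{cor-FM}): for $\mu$-a.e.~$x$, $\limsup_{r\to0}\log\bigl(\mu(B(x,r^{1-\delta}))/\mu(B(x,r))\bigr)/\log(1/r)\leq\delta n$. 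This is precisely the continuous analogue of Lemma~\ref{lem-5.4}, but it is a substantive result with its own non-symbolic proof, and without it the middle term of the layer-cake split of $F_\mu^m(x,r_1)$ (the piece over $r_1^{1-\delta}<|y-x|<r_0$, estimated by $\mu(B(x,r_1^{1-\delta}))$) cannot be controlled. Once this lemma is supplied, your outline goes through exactly as the paper's proof does.
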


The proof of Theorem \ref{thm-8.1} is based on the following known result, in which \eqref{e-8.1} is due to Hunt and Kaloshin \cite[Theorem 4.1]{HK97} and \eqref{e-8.2} is due to Falconer and O'Neil \cite[Proposition 3.2]{FaON99}.

\begin{thm}[\cite{HK97, FaON99}]
\label{thm-8.2}
Let $\mu$ be a Borel probability measure on $\R^n$ with compact support. Then for $\mu$-a.e.~$x\in \R^n$ and $\gamma_{n,m}$-a.e.~$V\in G_{n,m}$,
\begin{equation}
\label{e-8.1}
\underline{\dim}_{\rm loc}(\mu_V, P_Vx)=\min\{m, \underline{\dim}_{\rm loc}(\mu, x)\}
\end{equation}
and
\begin{equation}
\label{e-8.2}
\overline{\dim}_{\rm loc}(\mu_V, P_Vx)=\limsup_{r\to 0} \frac{\log F_\mu^m(x,r)}{\log r},
\end{equation}
where
\begin{equation}
\label{e-FM}
F_\mu^m(x,r):=\int_{\R^n}\min\left\{1,\;\frac{r^m}{|y-x|^m}\right\}\; d\mu(y).
\end{equation}
\end{thm}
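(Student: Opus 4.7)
The plan is to mirror the strategy that the authors have developed in Sections \ref{S2} and \ref{S3} for affine projections, replacing the singular-value-based quantities by their Euclidean analogues. Specifically, the role of $\phi^s(T_{x\wedge y})$ is played by $|x-y|^s$, and the role of $Z_{x\wedge y}(r)$ is played by $\min\{1, r^m/|x-y|^m\}$. The two main potential-theoretic inputs, replacing Lemma \ref{lem-2.5} and Lemma \ref{lem-tran}, are the classical Kaufman--Mattila inequalities
\begin{equation*}
\int_{G_{n,m}}\frac{d\gamma_{n,m}(V)}{|P_V u|^s}\leq \frac{c(n,m,s)}{|u|^s}\qquad (0<s<m),
\end{equation*}
and the transversality estimate
\begin{equation*}
\gamma_{n,m}\bigl\{V\in G_{n,m}:\;|P_V u|\leq r\bigr\}\leq C(n,m)\min\bigl\{1,\, r^m/|u|^m\bigr\},
\end{equation*}
valid for every $u\in \R^n\setminus\{0\}$ and $r>0$.

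For the identity \eqref{e-8.1}, the upper bound is essentially free: since $P_V$ is $1$-Lipschitz one has $B(x,r)\subset P_V^{-1}(B(P_Vx,r))$ and hence $\mu_V(B(P_Vx,r))\geq \mu(B(x,r))$, giving $\underline{\dim}_{\rm loc}(\mu_V,P_Vx)\leq \underline{\dim}_{\rm loc}(\mu,x)$; the bound by $m$ is automatic since $\mu_V$ lives on an $m$-dimensional subspace. The reverse inequality I would prove by copying the proof of Theorem \ref{thm-lower}(ii) verbatim: fix a non-integer $s\in (0,m)$, set $\Lambda_N=\{x:\int |x-y|^{-s}d\mu(y)<N\}$ (which, by Lemma \ref{lem-SY}, exhausts the set $\{x:\underline{\dim}_{\rm loc}(\mu,x)>s\}$), apply Fubini with the Kaufman--Mattila inequality to obtain
\begin{equation*}
\int_{G_{n,m}}\int_{\Lambda_N}\!\int \frac{d\mu_V(z)}{|P_Vx-z|^s}\,d\mu(x)\,d\gamma_{n,m}(V)\leq c\int_{\Lambda_N}\!\int\frac{d\mu(y)}{|x-y|^s}\,d\mu(x)\leq cN,
\end{equation*}
and conclude via Lemma \ref{lem-SY} again. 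Taking a union over non-integer rational $s\in (0,m)$ closes this step.

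For the identity \eqref{e-8.2}, the inequality $\overline{\dim}_{\rm loc}(\mu_V,P_Vx)\geq \limsup_{r\to 0}\log F_\mu^m(x,r)/\log r$ I would obtain by exactly the Fubini/Fatou scheme of Proposition \ref{pro-1.3}. The transversality estimate yields
\begin{equation*}
\int_{G_{n,m}}\mu_V(B(P_Vx,r))\,d\gamma_{n,m}(V)=\int_{\R^n}\gamma_{n,m}\{V:|P_V(y-x)|\leq r\}\,d\mu(y)\leq C\,F_\mu^m(x,r),
\end{equation*}
and then for any $t<\limsup\log F_\mu^m(x,r)/\log r$ Fatou's lemma forces $\liminf_{r\to 0}r^{-t}\mu_V(B(P_Vx,r))=0$ for $\gamma_{n,m}$-a.e.~$V$, giving $\overline{\dim}_{\rm loc}(\mu_V,P_Vx)\geq t$. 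A joint Borel-measurability argument as in the proof of Theorem \ref{thm-packing}(ii) upgrades this to a statement holding for $\mu$-a.e.~$x$ and $\gamma_{n,m}$-a.e.~$V$ simultaneously.

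The main obstacle, exactly as in the affine case, is the matching upper bound $\overline{\dim}_{\rm loc}(\mu_V,P_Vx)\leq \limsup \log F_\mu^m(x,r)/\log r$, which has to be established pointwise (for every $V$ and $\mu$-a.e.~$x$) rather than on average. Here I would adapt Lemma \ref{lem-1.6} and Corollary \ref{lem-1.7}: fix $V$, partition $V$ into half-open cubes of side $\gamma^n/\sqrt{m}$, and for each dyadic annulus $A_k=\{y:|y-x|\in [\gamma^{n-k-1},\gamma^{n-k})\}$ use the elementary fact that $P_V(A_k)$ has cross-section area at most a constant multiple of $\min\{1,(\gamma^n)^m/\gamma^{m(n-k)}\}$ times the ambient volume scale, so that the number of cubes meeting $P_V(A_k)$ is controlled by $\min\{1,(\gamma^n/|y-x|)^m\}^{-1}$ times a constant. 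A Borel--Cantelli argument parallel to Lemma \ref{lem-1.6}, applied annulus by annulus, then yields
\begin{equation*}
\mu_V(B(P_Vx,\gamma^n))\;\geq\; C\,n^{-1}\gamma^{n\varepsilon}\sum_{k\geq 0}\min\bigl\{1,(\gamma^n/\gamma^{n-k})^m\bigr\}\,\mu(A_k)
\;\gtrsim\; n^{-1}\gamma^{n\varepsilon}F_\mu^m(x,\gamma^n)
\end{equation*}
for all $n\geq n(x)$, where the last step is a dyadic reassembly of the integral defining $F_\mu^m$. Taking logs and letting $\varepsilon\to 0$ delivers the remaining inequality and completes the proof.
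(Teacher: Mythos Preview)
The paper does not prove Theorem~\ref{thm-8.2}; it is quoted as a known result, with \eqref{e-8.1} attributed to Hunt--Kaloshin \cite[Theorem~4.1]{HK97} and \eqref{e-8.2} to Falconer--O'Neil \cite[Proposition~3.2]{FaON99}. Your plan to rederive it by transplanting the arguments of Sections~\ref{S2}--\ref{S3} into the orthogonal-projection setting is natural, and for \eqref{e-8.1} and the inequality $\geq$ in \eqref{e-8.2} it works exactly as you describe: the Kaufman--Mattila energy bound and the transversality estimate $\gamma_{n,m}\{V:|P_Vu|\le r\}\le C\min\{1,r^m/|u|^m\}$ substitute directly for Lemmas~\ref{lem-2.5} and~\ref{lem-tran}, and the proofs of Theorem~\ref{thm-lower}(ii) and Proposition~\ref{pro-1.3} carry over verbatim.

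The one place the transplant is not automatic is the inequality $\leq$ in \eqref{e-8.2}. The proof of Lemma~\ref{lem-1.6} relies essentially on the fact that for each fixed level $\ell$ the cylinders $\{[I]:I\in\Sigma_\ell\}$ \emph{partition} $\Sigma$: one bounds $\mu(A_{n,\ell}\cap[I])$ for every $I$ and then sums over $I$ without overcounting (this is how \eqref{e-4.7} is obtained). Your annuli $A_k$ are centred at the moving point $x$, so they do not form a fixed decomposition of $\R^n$ and the summation step has no direct analogue; for $x,x'$ in the same small region the masses $\mu(A_k(x))$ and $\mu(A_k(x'))$ need not be comparable, whereas $\mu([x|\ell])$ is literally constant on $[I]$. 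A clean repair is to run the argument with a dyadic grid on $\R^n$ in place of the cylinder structure: the $\gamma^j$-cubes $Q\in\mathcal D_{\gamma^j}$ genuinely partition $\R^n$, each $P_V(Q)$ is covered by $C\max\{1,(\gamma^j/\gamma^n)^m\}$ cubes of side $\gamma^n/\sqrt m$ in $V$, and the proof of Lemma~\ref{lem-1.6} then yields $\mu_V(B(P_Vx,\gamma^n))\ge\gamma^{n\epsilon}\mu(Q_j(x))\min\{1,\gamma^{(n-j)m}\}$ for all relevant $j$. To reassemble $F_\mu^m(x,\gamma^n)$ one still has to pass from the cube masses $\mu(Q_j(x))$ to the ball masses $\mu(B(x,\gamma^j))$, which requires absorbing a boundary effect via finitely many shifted grids or a Besicovitch covering argument. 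This is routine but is a genuine extra step not present in the symbolic case; Falconer and O'Neil's original proof in \cite{FaON99} avoids it by a direct density-point computation rather than a Borel--Cantelli scheme.
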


Indeed, according to Theorem \ref{thm-8.2}, $\mu_V$ is exact dimensional for $\gamma_{n,m}$-a.e.~$V\in G_{n,m}$ if and only if either $\underline{\dim}_{\rm H}\mu\geq m$, or there exists $s\in [0,m)$ such that
\begin{equation}
\label{e-8equi}
\underline{\dim}_{\rm loc}(\mu, x)=\limsup_{r\to 0} \frac{\log F_\mu^m(x,r)}{\log r}=s \quad \mbox{ for $\mu$-a.e.~$x\in \R^n$}.
\end{equation}
Hence to prove Theorem \ref{thm-8.1}, it remains to show that for each $s\in [0,m)$, \eqref{e-8equi} is equivalent to the following.
\begin{equation}
\label{e-8equi'}
\underline{\dim}_{\rm loc}(\mu, x)=\overline{\dim}_{\rm loc}(\mu, x)=s \quad \mbox{ for $\mu$-a.e.~$x\in \R^n$}.
\end{equation}

To prove the equivalence of \eqref{e-8equi} and \eqref{e-8equi'} for $s\in [0,m)$, we use an approach similar to that of the proof of Theorem \ref{thm-5.2}.

Let us first present some required auxiliary  results. For $x\in \R^n$ and $r>0$, below we use $B(x,r)$ instead of $B_r(x)$ to denote the closed ball of radius $r$  centered at $x$.

\begin{lem}
[{\cite[Corollary 2.3]{FaMa96}}]
\label{lem-FaMa}
Let $0<a<1$ and $\epsilon>0$. For every Borel probability measure on $\R^n$ the following holds at $\mu$-a.e.~$x$: if $\rho>0$ is sufficiently small, then
$$
\mu(B(x,r))\leq \left(\frac{4r}{\rho}\right)^{n(1+\epsilon)}\mu(B(x,\rho))
$$
for all $r$ with $\rho^a\leq r\leq 1$.
\end{lem}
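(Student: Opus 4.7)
The plan is to discretize to dyadic scales and apply Borel--Cantelli after bounding a family of exceptional sets. For integers with $0 \le i \le aj$, I define
\[
E_{i,j} = \bigl\{x \in \R^n : \mu(B(x, 2^{-i})) > 2^{(j-i)n(1+\epsilon/2)}\, \mu(B(x, 2^{-j}))\bigr\}.
\]
The key technical choice is the intermediate exponent $n(1+\epsilon/2)$, strictly between $n$ and the target $n(1+\epsilon)$: this small slack will yield the decay factor $2^{-(j-i)n\epsilon/2}$ needed for summability while still leaving room to reach the exponent $n(1+\epsilon)$ at the end. I note that a naive argument based solely on the pointwise bound $\overline{\dim}_{\rm loc}(\mu,x) \le n$ (which holds for $\mu$-a.e.~$x$) succeeds only when $\epsilon > a/(1-a)$, so a covering argument is genuinely needed for the full range of parameters.

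To bound $\mu(E_{i,j})$, I apply the $5r$-Vitali covering lemma to the family $\{B(x, 2^{-j}/5) : x \in E_{i,j}\}$, yielding a countable disjoint sub-collection $\{B(x_k, 2^{-j}/5)\}$ with $x_k \in E_{i,j}$ and $E_{i,j} \subset \bigcup_k B(x_k, 2^{-j})$. Subadditivity and the defining inequality give $\mu(E_{i,j}) \le 2^{-(j-i)n(1+\epsilon/2)} \sum_k \mu(B(x_k, 2^{-i}))$. A volume-packing comparison now shows that the pointwise multiplicity of $\{B(x_k, 2^{-i})\}$ is at most $10^n\, 2^{(j-i)n}$ (since any $x_k$ with $|x_k - y| \le 2^{-i}$ has $B(x_k, 2^{-j}/5) \subset B(y, 2 \cdot 2^{-i})$ and these small balls are disjoint). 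Integrating against $\mu$ gives $\sum_k \mu(B(x_k, 2^{-i})) \le 10^n\, 2^{(j-i)n}$, and so $\mu(E_{i,j}) \le 10^n\, 2^{-(j-i)n\epsilon/2}$. Summing, $\sum_{j}\sum_{i \le aj} \mu(E_{i,j}) \le C_n \sum_{j} 2^{-(1-a)jn\epsilon/2} < \infty$, so Borel--Cantelli gives, for $\mu$-a.e.~$x$, a threshold $J(x)$ beyond which $x$ escapes every $E_{i,j}$ with $j \ge J(x)$.

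Finally, given small $\rho < 2^{-J(x)}$ and $r \in [\rho^a, 1]$, I take $i = \lfloor \log_2(1/r)\rfloor$ and $j = \lceil \log_2(1/\rho)\rceil$, so that $r \le 2^{-i} < 2r$, $\rho/2 < 2^{-j} \le \rho$, and $r \ge \rho^a$ forces $i \le aj$. The dyadic bound together with $2^{j-i} = 2^{-i}/2^{-j} < 4r/\rho$ then yields
\[
\mu(B(x,r)) \le \mu(B(x, 2^{-i})) \le 2^{(j-i)n(1+\epsilon/2)}\mu(B(x, 2^{-j})) \le (4r/\rho)^{n(1+\epsilon/2)}\mu(B(x,\rho)),
\]
and since $4r/\rho > 1$ the exponent may be raised to $n(1+\epsilon)$, completing the proof. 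The hardest step is the volume-packing overlap bound that converts the smallness of $\mu(B(x_k,2^{-j}))$ into the smallness of $\mu(E_{i,j})$; once that is in place the rest is routine bookkeeping.
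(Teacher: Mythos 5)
The paper does not give its own proof of this lemma; it is cited directly from Falconer and Mattila~\cite{FaMa96} (their Corollary~2.3), so there is no in-paper argument to compare against. Your proof is nonetheless correct and follows the standard density/covering strategy (and is close in spirit to the Vitali-covering argument underlying Lemma~2.2 of~\cite{FaMa96}). The three pillars all check out: the Vitali $5r$-lemma applied to $\{B(x,2^{-j}/5):x\in E_{i,j}\}$ gives the disjoint subfamily covering $E_{i,j}$ at scale $2^{-j}$; the packing bound is right, since for fixed $y$ the disjoint balls $B(x_k,2^{-j}/5)$ with $|x_k-y|\le 2^{-i}$ all sit inside $B(y,2\cdot 2^{-i})$ (using $2^{-j}<2^{-i}$), yielding multiplicity at most $10^n 2^{(j-i)n}$ and hence $\mu(E_{i,j})\le 10^n 2^{-(j-i)n\epsilon/2}$; and the summability holds because for each $j$ the inner sum over $i\le aj$ is a geometric tail $\sum_{k\ge (1-a)j}2^{-kn\epsilon/2}\le C_{n,\epsilon}2^{-(1-a)jn\epsilon/2}$ (this geometric absorption is why you do not in fact need the cruder $(j+1)$ count of terms, as one might worry at first glance). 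The final dyadic bookkeeping, with $i=\lfloor\log_2(1/r)\rfloor$, $j=\lceil\log_2(1/\rho)\rceil$, the verification $i\le aj$ from $r\ge\rho^a$, the replacement $2^{j-i}<4r/\rho$, and the upgrade of the exponent from $n(1+\epsilon/2)$ to $n(1+\epsilon)$ using $4r/\rho>1$, is also correct. One small point of hygiene you might add: $E_{i,j}$ is Borel because $x\mapsto\mu(B(x,r))$ is upper semicontinuous for closed balls, so the Borel--Cantelli step is licit.
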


\begin{cor}
\label{cor-FM}
Let $\mu\in \mathcal P(\R^n)$ and $\delta\in (0,1)$. Then
$$
\limsup_{r\to 0} \frac{\log \left(\mu(B(x, r^{1-\delta}))/\mu(B(x,r))\right) }{\log (1/r)}\leq \delta n\quad \mbox{ for $\mu$-a.e.~$x\in \R^n$}.
$$
\end{cor}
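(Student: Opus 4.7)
The proof will be a direct application of Lemma \ref{lem-FaMa}, with the main task being to match up the variables in the two statements correctly and then letting the error parameter tend to zero via a countable intersection.

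First I would fix $\delta \in (0,1)$ and apply Lemma \ref{lem-FaMa} with $a := 1-\delta \in (0,1)$ and an arbitrary $\epsilon > 0$. This provides a Borel set $E_\epsilon \subset \R^n$ with $\mu(E_\epsilon) = 1$ such that for every $x \in E_\epsilon$ there exists $\rho_0(x,\epsilon) > 0$ with the property that
\begin{equation*}
\mu(B(x,t)) \leq \left(\frac{4t}{\rho}\right)^{n(1+\epsilon)} \mu(B(x,\rho))
\end{equation*}
whenever $0 < \rho \leq \rho_0(x,\epsilon)$ and $\rho^{1-\delta} \leq t \leq 1$. The next step is the key substitution: for each $x \in E_\epsilon$ and each sufficiently small $r > 0$ (so that $r \leq \rho_0(x,\epsilon)$ and $r^{1-\delta} \leq 1$), I take $\rho := r$ and $t := r^{1-\delta}$ in the lemma. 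The admissibility condition $\rho^{1-\delta} \leq t$ becomes $r^{1-\delta} \leq r^{1-\delta}$, which is satisfied, and the lemma yields
\begin{equation*}
\mu\!\left(B(x, r^{1-\delta})\right) \leq \left(\frac{4 r^{1-\delta}}{r}\right)^{n(1+\epsilon)} \mu(B(x,r)) = 4^{n(1+\epsilon)} r^{-\delta n(1+\epsilon)} \mu(B(x,r)).
\end{equation*}

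Taking logarithms, dividing by $\log(1/r)$ and passing to the $\limsup$ as $r \to 0$ then gives, for every $x \in E_\epsilon$,
\begin{equation*}
\limsup_{r \to 0} \frac{\log\!\left(\mu(B(x, r^{1-\delta}))/\mu(B(x,r))\right)}{\log(1/r)} \leq \delta n (1+\epsilon).
\end{equation*}

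Finally, I would upgrade this to the sharp bound $\delta n$ by a standard countable intersection argument: choose a sequence $\epsilon_k \downarrow 0$ and set $E := \bigcap_{k=1}^\infty E_{\epsilon_k}$. Since each $E_{\epsilon_k}$ has full $\mu$-measure, so does $E$. For every $x \in E$ the displayed $\limsup$ bound holds with $\epsilon$ replaced by $\epsilon_k$ for each $k$, and letting $k \to \infty$ yields the desired inequality $\leq \delta n$. There is no genuine obstacle here — the only point requiring mild care is the matching of variables (the corollary's $r$ plays the role of the lemma's $\rho$, while the corollary's enlarged radius $r^{1-\delta}$ plays the role of the lemma's $t$) and the countable intersection that lets $\epsilon$ tend to zero outside a single $\mu$-null set.
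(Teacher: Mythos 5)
Your proof is correct and matches the paper's proof exactly: apply Lemma~\ref{lem-FaMa} with $a = 1-\delta$, specialize to $t = \rho^{1-\delta}$ (the paper calls the lemma's radius $r$ rather than $t$, but it is the same substitution), take logarithms, and let $\epsilon \to 0$. The explicit countable intersection over $\epsilon_k \downarrow 0$ is the right way to make the final ``let $\epsilon \to 0$'' rigorous, which the paper leaves implicit.
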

\begin{proof}
Let $\epsilon>0$. Applying Lemma \ref{lem-FaMa} to $a=1-\delta$ and $r=\rho^{1-\delta}$ yields that for $\mu$-a.e.~$x\in \R^n$ and for sufficiently small $\rho>0$,
$$
\frac{\mu(B(x, \rho^{1-\delta}))}{\mu(B(x,\rho))}\leq (4\rho^{-\delta})^{n(1+\epsilon)},
$$
which implies
$$
\limsup_{\rho\to 0}  \frac{\log \left(\mu(B(x, \rho^{1-\delta}))/\mu(B(x,\rho))\right) }{\log (1/\rho)}\leq \delta n (1+\epsilon).
$$
Letting $\epsilon\to 0$ gives the desired inequality.
\end{proof}

\begin{proof}[Proof of Theorem \ref{thm-8.1}] According to   the remark after Theorem \ref{thm-8.2},  it remains to  prove the equivalence of \eqref{e-8equi} and \eqref{e-8equi'} for $s\in [0,m)$.

By \eqref{e-FM},
$$F_\mu^m(x,r)=\int_{\R^n}\min\left\{1,\;\frac{r^m}{|y-x|^m}\right\}\; d\mu(y)\geq \mu(B(x,r)),$$
it follows that
$$
\limsup_{r\to 0} \frac{\log F_\mu^m(x,r)}{\log r}\leq \overline{\dim}_{\rm loc}(\mu, x)
$$
for each $x\in \R^d$, leading to the direct implication \eqref{e-8equi'}$ \Longrightarrow$ \eqref{e-8equi} for $s\in [0,m)$.  In what follows we prove the reverse direction \eqref{e-8equi}$ \Longrightarrow$ \eqref{e-8equi'}.

Fix $s\in [0, m)$. Suppose that \eqref{e-8equi} holds. We need to show that $\overline{\dim}_{\rm loc}(\mu, x)\leq s$ for $\mu$-a.e.~$x\in \R^n$. To this end, we use contradiction. Suppose on the contrary that there exist a Borel set $A\subset \R^n$ with $\mu(A)>0$ and a positive number $\tau$ such that
\begin{equation}
\label{e-8.6}
\overline{\dim}_{\rm loc}(\mu, x)> s+\tau \quad \mbox{  for all $x\in A$}.
\end{equation}
By \eqref{e-8equi} and Corollary \ref{cor-FM}, removing a subset with $\mu$ measure zero from $A$ if necessary, we may assume that for all $x\in A$,
\begin{equation}
\label{e-8.7}
\underline{\dim}_{\rm loc}(\mu, x)=\limsup_{r\to 0} \frac{\log F_\mu^m(x,r)}{\log r}=s
\end{equation}
and
 \begin{equation}
 \label{e-8.8}
 \limsup_{r\to 0} \frac{\log \left(\mu(B(x, r^{1-\delta}))/\mu(B(x,r))\right) }{\log (1/r)}\leq \delta n\quad \mbox{ for all $\delta\in \Bbb Q\cap (0,1)$}.
\end{equation}

To derive a contradiction, we fix $x\in A$ and  $\delta\in \Bbb Q\cap (0,1)$ such that
\begin{equation}
\label{e-8delta}
0<\delta<\frac{\tau}{4n}.
\end{equation}
 Next we continue our argument by considering the cases $s\in (0,m)$ and $s=0$ separately.

 {\bf Case 1}: $s\in (0,m)$.

 In this case we take a small $\epsilon\in (0,s)$ so that
 \begin{equation}
 \label{e-8.10}
 \delta m+(1-\delta)(s-\epsilon)>s.
 \end{equation}
By \eqref{e-8.7}, there exists $r_0>0$ such that
\begin{equation}
\label{e-8se}
\mu(B(x,r))\leq r^{s-\epsilon}\quad \mbox{ for all } 0<r\leq r_0.
\end{equation}

By \eqref{e-8.6} and \eqref{e-8.8}, there exists an arbitrarily small $r_1\in (0,r_0)$ such that $$\mu(B(x, r_1))\leq r_1^{s+\tau}$$ and
\begin{equation}
\label{e-8.11}
\mu(B(x, r_1^{1-\delta}))\leq r_1^{-2\delta n}\mu(B(x, r_1))\leq r_1^{s+\tau-2\delta n}\leq r_1^{s+\tau/2},
\end{equation}
where we used \eqref{e-8delta} in the last inequality.

Observe that
\begin{align}
F_\mu^m(x,r_1)&=\int_{\R^n} \min\left\{1,\;\frac{r_1^m}{|y-x|^m}\right\}\; d\mu(y)\nonumber\\
&\leq \int_{B\left(x, r_1^{1-\delta}\right)}1\; d\mu(y)+\int_{\{y:\; |y-x|>r_0\}}\frac{r_1^m}{|y-x|^m}\;d\mu(y)\nonumber\\
&\qquad+\int_{\left\{y:\; r_1^{1-\delta}<|y-x|<r_0\right\}}\frac{r_1^m}{|y-x|^m}\;d\mu(y)\nonumber\\
&\leq r_1^{s+\tau/2}+(r_1/r_0)^m+  r_1^m T \qquad \mbox{ (by \eqref{e-8.11})},  \label{e-Fmu}
\end{align}
where $$T:=\int_{\left\{y:\; r_1^{1-\delta}<|y-x|<r_0\right\}}\frac{1}{|y-x|^m}\;d\mu(y).$$
To estimate $T$, write $E:=\left\{y:\; r_1^{1-\delta}<|y-x|<r_0\right\}$. Using \cite[Theorem 1.15]{Mattila1995} by a change of variable,
\begin{equation}
\label{e-8.14}
\begin{split}
T&=\int \frac{{\bf 1}_E(y)}{|y-x|^m}\; d\mu(y)=\int_{0}^\infty \mu(\{y\in E:\; |x-y|^{-m}\geq u\})\; du\\
&=\int_{0}^\infty \mu(E\cap B(x, u^{-1/m})) \; du=m\int_{0}^\infty \mu(E\cap B(x, u)) u^{-m-1} \; du\\
&=m\int_{r_1^{1-\delta}}^\infty \mu(E\cap B(x, u)) u^{-m-1} \; du,
\end{split}
\end{equation}
where in the last equality we use the fact that $E\cap B(x, u)=\emptyset$ if $u<r_1^{1-\delta}$. Hence
\begin{align*}
T&=m\int_{r_1^{1-\delta}}^\infty \mu(E\cap B(x, u)) u^{-m-1} \; du\\
&\leq \int_{r_1^{1-\delta}}^{r_0} m\mu( B(x, u)) u^{-m-1} \; du+\int_{r_0}^\infty m u^{-m-1}\; du\\
&\leq \int_{r_1^{1-\delta}}^{r_0} m u^{s-\epsilon-m-1} \; du+\int_{r_0}^\infty mu^{-m-1}\; du\qquad \mbox{(by \eqref{e-8se})}\\
&\leq \frac{m}{m+\epsilon-s}r_1^{-m+\delta m+(1-\delta)(s-\epsilon)}+r_0^{-m}.
\end{align*}
Combining this with \eqref{e-Fmu} yields
$$F_\mu^m(x,r_1)\leq C \left( r_1^{s+\tau/2}+r_1^m+ r_1^{\delta m+(1-\delta)(s-\epsilon)}\right),
$$
where $C$ only depends $m, s$ and $r_0$.  Since $r_1$ can be taken arbitrarily small, by \eqref{e-8.10} it follows that
$$\limsup_{r\to 0} \frac{\log F_\mu^m(x,r)}{\log r}\geq \min\{s+\tau/2, \;m, \;\delta m+(1-\delta)(s-\epsilon)\}>s,
$$
which contradicts \eqref{e-8.7}.

{\bf Case 2}. $s=0$.

In this case, we simply take $r_0=1$ and don't need to introduce the quantity $\epsilon$. We follow the identical argument starting from the second paragraph in Case 1 until the end of \eqref{e-8.14}. Then by \eqref{e-8.14},
\begin{align*}
T&=m\int_{r_1^{1-\delta}}^\infty \mu(E\cap B(x, u)) u^{-m-1} \; du
\leq m\int_{r_1^{1-\delta}}^\infty  u^{-m-1} \; du
=r_1^{-(1-\delta)m}.
\end{align*}
Combining this with \eqref{e-Fmu} (in which $s=0$) yields that
$$F_\mu^m(x,r_1)\leq   r_1^{\tau/2}+r_1^m+ r_1^{\delta m}.
$$
Since $r_1$ can be taken arbitrarily small, it follows that
$$\limsup_{r\to 0} \frac{\log F_\mu^m(x,r)}{\log r}\geq \min\{\tau/2, \;\delta m\}>0=s,
$$
which contradicts \eqref{e-8.7} again.
\end{proof}

\section{Hausdorff dimensions of exceptional sets}
\label{S9}
For each $\ba=(a_1,\ldots, a_m)\in \R^{md}$, let $\pi^\ba:\Sigma\to \R^d$ denote the coding map associated with the affine IFS
$\{T_ix+a_i\}$ in $\R^d$; see \eqref{e-pia}.  Recall that we have defined the quantities $S(\mu,x)$, $\overline{S}(\mu)$, $\underline{S}(\mu)$, $D(\mu,x)$, $\underline{D}(\mu)$ and $\overline{D}(\mu)$ for $\mu\in \mathcal P(\Sigma)$ and $x\in \Sigma$ in \eqref{e-Smux}, \eqref{e-Gammamu},  \eqref{e-D1} and \eqref{e-D1'}, and the quantities $\dim_{\mathcal M}E$, $\underline{\dim}_CE$, $\overline{\dim}_CE$ in  \eqref{e-dimM} and  \eqref{def prof}.

The main results of this section are the following, where Theorem \ref{thm-9.2} is an improvement of Theorem \ref{thm-9.1}(vii) in the case when $\dim_{\mathcal M}E<d$.

\begin{thm}
\label{thm-9.1}
Let $\mu\in \mathcal P(\Sigma)$ and let $E$ be an analytic subset of $\Sigma$. Suppose that $\|T_i\|<1/2$ for all $1\leq i\leq m$. Then for each $0<\delta<d$, the following properties hold:
\begin{itemize}
\item[(i)]
$\dim_{\rm H} \left\{\ba\in \R^{md}:\; \mu\left(\{x\in \Sigma:\; \underline{\dim}_{\rm loc}(\pi^\ba_*\mu, \pi^\ba x)<\min\{ d, S(\mu,x)\}-\delta\}\right)>0\right\}\leq dm-\delta.$
\item[(ii)]
$\dim_{\rm H} \left\{\ba\in \R^{md}:\; \mu\left(\{x\in \Sigma:\; \overline{\dim}_{\rm loc}(\pi^\ba_*\mu, \pi^\ba x)<D(\mu,x)-\delta\}\right)>0\right\}\leq dm-\delta.$
\item[(iii)]
$\dim_{\rm H} \left\{\ba\in \R^{md}:\; \underline{\dim}_{\rm H} \pi^\ba_*\mu<\min\{ d, \underline{S}(\mu)\}-\delta\right\}\leq dm-\delta.$
\item[(iv)]
$\dim_{\rm H} \left\{\ba\in \R^{md}:\; \overline{\dim}_{\rm H} \pi^\ba_*\mu<\min\{ d, \overline{S}(\mu)\}-\delta\right\}\leq dm-\delta.$
\item[(v)]
$\dim_{\rm H} \left\{\ba\in \R^{md}:\; \underline{\dim}_{\rm P} \pi^\ba_*\mu< \underline{D}(\mu)-\delta\right\}\leq dm-\delta.$
\item[(vi)]
$\dim_{\rm H} \left\{\ba\in \R^{md}:\; \overline{\dim}_{\rm P} \pi^\ba_*\mu<\overline{D}(\mu)-\delta\right\}\leq dm-\delta.$
\item[(vii)]
$\dim_{\rm H} \left\{\ba\in \R^{md}:\; \dim_{\rm H} \pi^\ba(E)<\min\{ d, \dim_{\mathcal M}(E)\}-\delta\right\}\leq dm-\delta.$
\item[(viii)]
$\dim_{\rm H} \left\{\ba\in \R^{md}:\; \dim_{\rm P} \pi^\ba(E)<\sup_{\eta\in {\mathcal P}(\Sigma):\; {\rm spt}(\eta)\subset  E}\underline{D}(\eta)-\delta\right\}\leq dm-\delta.$
\item[(ix)]
$\dim_{\rm H} \left\{\ba\in \R^{md}:\; \underline{\dim}_{\rm B} \pi^\ba(E)<\underline{\dim}_CE-\delta\right\}\leq dm-\delta.$
\item[(x)]
$\dim_{\rm H} \left\{\ba\in \R^{md}:\; \overline{\dim}_{\rm B} \pi^\ba(E)<\overline{\dim}_CE-\delta\right\}\leq dm-\delta.$

\end{itemize}
\end{thm}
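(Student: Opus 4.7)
The proof follows the Frostman-measure strategy developed by Falconer and Miao~\cite{Fami08} for estimating exceptional sets. The key observation is that if a Borel subset $A\subset \R^{md}$ has $\dim_{\rm H} A>md-\delta$, then by Frostman's lemma there exists a nonzero compactly supported Borel measure $\nu$ on $\R^{md}$ satisfying $\nu(B(\ba, r))\leq c_\nu r^{md-\delta}$ for all $\ba\in\R^{md}$ and $r>0$. Thus, to prove each of (i)--(x), it suffices to assume such a $\nu$ is supported on the corresponding exceptional set and derive a contradiction.

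The plan is to first establish Frostman-weighted analogues of Lemma~\ref{lem-2.5} and Lemma~\ref{lem-tran}, namely: for any such $\nu$, any non-integer $s\in(0,d-\delta)$, any distinct $x,y\in\Sigma$, and any $r>0$,
\begin{equation*}
\int\frac{d\nu(\ba)}{|\pi^\ba x-\pi^\ba y|^s}\leq \frac{C}{\phi^{s+\delta}(T_{x\wedge y})}
\quad\text{and}\quad
\nu\{\ba:\,|\pi^\ba x-\pi^\ba y|\leq r\}\leq C' r^{\delta} \widetilde{Z}_{x\wedge y}(r),
\end{equation*}
where $\widetilde{Z}$ is a singular-value-weighted variant of $Z$ arising from a dyadic decomposition of $\R^{md}$. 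Both estimates follow from their Lebesgue counterparts by a standard scale-by-scale decomposition: one partitions $\R^{md}$ dyadically, applies Lemma~\ref{lem-2.5} or Lemma~\ref{lem-tran} on each scale, and exchanges the Lebesgue ball volume $r^{md}$ for the Frostman bound $c_\nu r^{md-\delta}$, producing a factor $r^{\delta}$ per scale that is then summed in a geometric series.

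Given these refined estimates, each part of the theorem is obtained by reproducing the proof of the corresponding almost-sure statement from Sections~\ref{S2}--\ref{S6} with $\mathcal L^{md}$ replaced by $\nu$ and $s$ replaced by $s+\delta$ at the critical step. More precisely, parts (i), (iii), (iv) follow by repeating the proof of Theorem~\ref{thm-lower}(ii); parts (ii), (v), (vi) by repeating Proposition~\ref{pro-1.3} and Theorem~\ref{thm-packing}(ii) using the refined transversality bound; parts (vii), (viii) are deduced from (iii) and (v)--(vi) via the Frostman-type measure construction in the proof of Theorem~\ref{main2}(ii), together with Remark~\ref{rem-5.3} and Lemma~\ref{lem-4.10}; and parts (ix), (x) follow from the capacity argument of Theorem~\ref{thm-box}(ii) with the refined transversality estimate in place of Lemma~\ref{lem-tran}.

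The main obstacle is the careful geometric and singular-value bookkeeping required to establish the refined Frostman-weighted estimates in the display above; the restriction $s+\delta<d$ ensures that no integer singularity of $\phi^s$ is crossed, and standard limiting arguments (taking $s$ increasing toward $d-\delta$ along non-integers, and letting the auxiliary parameter in the dyadic sum tend to its limit) recover the full range claimed in (i)--(x).
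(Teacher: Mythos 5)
Your high-level plan — Frostman's lemma to produce a measure $\nu$ with $\nu(B(\ba,r))\leq c_\nu r^{md-\delta}$, establish Frostman-weighted analogues of the transversality bounds, and then replay each almost-sure argument with $\mathcal L^{md}$ replaced by $\nu$ — is precisely the strategy the paper uses, and your first refined estimate $\int d\nu(\ba)/|\pi^\ba x - \pi^\ba y|^s \leq C/\phi^{s+\delta}(T_{x\wedge y})$ agrees exactly with the specialization of the Falconer--Miao bound \eqref{e-9.1} to $q = md-\delta$. The reduction scheme you outline ((iii)--(vi) from (i)--(ii), (vii)--(viii) via the measure construction from Theorem~\ref{main2} and Remark~\ref{rem-5.3}, (ix)--(x) via the capacity argument of Theorem~\ref{thm-box}) is also correct in outline.

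However, the second refined estimate as you stated it has the \emph{wrong sign} on the exponent, and your heuristic justification shows the sign error is not a typo. You write
$\nu\{\ba:\,|\pi^\ba x-\pi^\ba y|\leq r\}\leq C' r^{\delta}\,\widetilde Z_{x\wedge y}(r)$
and explain that replacing $r^{md}$ by $r^{md-\delta}$ ``produces a factor $r^{\delta}$ per scale.'' But the Frostman condition $\nu(B(\ba,r))\leq c_\nu r^{md-\delta}$ is a \emph{weaker} bound than the Lebesgue one for $r<1$; a Frostman measure can concentrate much more mass on a thin slab than Lebesgue does, so the analogue of Lemma~\ref{lem-tran} must get \emph{larger}, not smaller. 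The correct statement (the paper's \eqref{e-9.2'}) is
$\nu\{\ba\in B(0,\rho):\,|\pi^\ba x-\pi^\ba y|\leq r\}\leq c\, Z_{x\wedge y}(r)\, r^{-\delta}$
for $0<r\leq 1$, that is, the factor is $r^{-\delta}$, consistent with the exponent shift $\phi^s\to\phi^{s+\delta}$ in your first estimate (which makes the bound larger, as it must). Moreover, this estimate is not a routine ``scale-by-scale dyadic decomposition'' of Lemma~\ref{lem-tran}: the paper proves it structurally, by pushing $\nu$ forward under the linear map $L_{x,y}(\ba)=\pi^\ba(\sigma^{|x\wedge y|}x)-\pi^\ba(\sigma^{|x\wedge y|}y)$, invoking Falconer--Miao's result that $\nu_0=\nu\circ L_{x,y}^{-1}$ satisfies a Frostman condition of exponent $(1-m)d+q$ on $\R^d$, and then estimating $\nu_0(T_{x\wedge y}^{-1}B_{\R^d}(0,r))$ by a covering argument on the affine preimage of the ball. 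Without this structure you cannot justify the singular-value dependence of the bound. Once the sign is corrected and the estimate is established by this route, the rest of your downstream argument (substituting $r^{-\delta}$ for the Lebesgue normalization in Proposition~\ref{pro-1.3}, Theorem~\ref{thm-packing}(ii), and the proof of Theorem~\ref{thm-box}(ii)) does go through and reproduces parts (ii), (v), (vi), (ix), (x) as claimed.
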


\begin{thm}
\label{thm-9.2}
 Suppose that $\|T_i\|<1/2$ for all $1\leq i\leq m$. Let $E$ be an analytic subset of $\Sigma$ with  $\dim_{\mathcal M}E<d$. Set $\tau=\min\{\log \alpha_1(T_i)/\log \alpha_d(T_i):\; i=1,\ldots, m\}$. Then for each $0<\delta<d$,
\begin{equation}
\label{e-9.1'}
\begin{split}
\dim_{\rm H}& \left\{\ba\in \R^{md}:\; \dim_{\rm H} \pi^\ba(E)<\min\{ d, \dim_{\mathcal M}(E)\}-\delta\right\}\\
&\leq \max\left\{dm-\frac{\delta}{1-\tau},\; dm+\dim_{\mathcal M}E-d-\delta\right\}.
\end{split}
\end{equation}
\end{thm}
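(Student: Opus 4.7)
The plan is to adapt the potential-theoretic strategy of Falconer--Miao \cite{Fami08}, refining the argument underlying Theorem \ref{thm-9.1}(vii) by exploiting both the codimension $d - \dim_{\mathcal M}E > 0$ and the non-conformality parameter $\tau$. Fix $\delta \in (0,d)$, write $B = \{\ba : \dim_{\rm H}\pi^\ba(E) < \dim_{\mathcal M}E - \delta\}$ (using $\dim_{\mathcal M}E < d$), and assume for contradiction that $\dim_{\rm H} B > \beta$ for some $\beta$ strictly above the right-hand side of \eqref{e-9.1'}. Frostman's lemma supplies a probability measure $\nu$ supported on a compact subset of $B \cap B_\rho$ with $\nu(B(\ba_0,r)) \leq Cr^\beta$. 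For $s$ slightly below $\dim_{\mathcal M}E$, the proof of Theorem \ref{main2}(ii) yields $\mu \in \mathcal P(\Sigma)$ supported on a compact subset of $E$ with $\mu([\bi]) \leq C\phi^s(T_\bi)$. Choosing a non-integer $t \in (0,d)$ just above $\dim_{\mathcal M}E - \delta$ ensures that for every $\ba \in B$ the energy $\int\int |x-y|^{-t}\,d\pi^\ba_*\mu(x)\,d\pi^\ba_*\mu(y) = \infty$, so it suffices to show that
\begin{equation*}
J := \int\int\int |\pi^\ba\bi - \pi^\ba\bj|^{-t}\, d\mu(\bi)\,d\mu(\bj)\,d\nu(\ba) < \infty,
\end{equation*}
since this, by Fubini, contradicts $\nu(B) > 0$.

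The central new ingredient will be a refinement of Lemma \ref{lem-tran} in which $\mathcal L^{md}$ is replaced by the Frostman measure $\nu$ of exponent $\beta$. The set $\{\ba \in B_\rho : |\pi^\ba\bi - \pi^\ba\bj| \leq r\}$ is, up to the linear map induced by $T_{\bi \wedge \bj}$, essentially a product of a ball of radius $\rho$ in a $(md-d)$-dimensional subspace and an ellipsoid with semi-axes $r/\alpha_i(T_{\bi \wedge \bj})$ ($i=1,\ldots,d$) in the transversal direction. Covering this anisotropic slab by balls of radius $\min(\rho, r/\alpha_k(T_{\bi \wedge \bj}))$ for a judicious choice of $k$ and applying $\nu(B(\cdot,r')) \leq C(r')^\beta$, I expect to obtain, for $\gamma := md - \beta \in (0,d)\setminus \N$ and $r$ sufficiently small,
\begin{equation*}
\nu\{\ba \in B_\rho : |\pi^\ba\bi - \pi^\ba\bj| \leq r\} \leq C\rho^{md-d}\,r^{d-\gamma}\,\phi^\gamma(T_{\bi\wedge\bj})/\phi^d(T_{\bi\wedge\bj}).
\end{equation*}
Combined with Mattila's identity $\int d\nu(\ba)/|\pi^\ba\bi - \pi^\ba\bj|^t = t\int_0^\infty r^{-t-1}\nu\{|\cdot|\leq r\}\,dr$ and a complementary bound for the range $r \geq \rho\alpha_d(T_{\bi \wedge \bj})$, this will control the inner integral of $J$ in terms of $\phi^\gamma, \phi^d$, and $\alpha_d$ evaluated at $T_{\bi \wedge \bj}$.

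Performing the outer integration against $\mu \times \mu$ via $\mu([\bi]) \leq C\phi^s(T_\bi)$ and a summation-by-parts argument of the form in Lemma \ref{lem-Smu} will reduce the finiteness of $J$ to the positivity of an effective exponent on $\alpha_d(T_{\bi|n})$. Estimating this exponent two different ways will yield the two terms in \eqref{e-9.1'}: first, exploiting $\alpha_1(T_{\bi|n}) \leq \alpha_d(T_{\bi|n})^\tau$ (which follows from the definition of $\tau$ together with the submultiplicativity of the largest singular value and the supermultiplicativity of the smallest) gives the condition $\beta > md - \delta/(1-\tau)$; second, using $\phi^d(T_{\bi|n}) \geq \phi^{\dim_{\mathcal M}E}(T_{\bi|n})\cdot\alpha_d(T_{\bi|n})^{d - \dim_{\mathcal M}E}$ directly yields $\beta > md + \dim_{\mathcal M}E - d - \delta$. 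Letting $s \nearrow \dim_{\mathcal M}E$ and $t \searrow \dim_{\mathcal M}E - \delta$ and taking the better of the two conditions will produce the desired contradiction. I expect the main technical obstacle to be the slab estimate in the intermediate regime where some $r/\alpha_i(T_{\bi\wedge\bj})$ exceed $\rho$ while others do not, requiring careful bookkeeping of the optimal covering radius and a case analysis according to how many transversal semi-axes are truncated by the $\rho$-cap.
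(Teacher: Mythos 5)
Your proposal follows essentially the same potential-theoretic route as the paper's proof: Frostman's lemma produces a measure $\nu$ on the exceptional set with exponent $\beta$; a measure $\mu$ supported on $E$ with $\mu([\bi])\le C\phi^s(T_\bi)$ is constructed from the net measure $\M^s$; the $t$-energy of $\pi^\ba_*\mu$ averaged against $\nu$ is controlled by a Falconer--Miao transversality estimate; and the two constraints on $\beta$ emerge from (a) the admissibility range of the exponent in that estimate and (b) the convergence of the resulting series. The only structural difference is that the paper quotes the Falconer--Miao bound \eqref{e-9.1} directly rather than re-deriving a distribution estimate for the slab.

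One concrete issue: the intermediate bound you expect to obtain is stronger than the covering argument actually delivers. Covering the ellipsoid $T_{\bi\wedge\bj}^{-1}B(0,r)$ by balls of radius comparable to $r/\alpha_1(T_{\bi\wedge\bj})$ and applying the Frostman estimate for $\nu$ gives a factor $\alpha_1(T_{\bi\wedge\bj})^{md-\beta}$ in place of your $\phi^{md-\beta}(T_{\bi\wedge\bj})$; since $\phi^\gamma\le\alpha_1^\gamma$, your proposed bound is strictly sharper than the geometry yields, and you should be suspicious that it would eliminate the role of $\tau$. Once corrected to $\alpha_1^{md-\beta}/\phi^{s+md-\beta}$, the substitution $\alpha_1(T_{\bi\wedge\bj})\le\alpha_d(T_{\bi\wedge\bj})^\tau$ that you correctly identify converts this to $1/\phi^{s+(1-\tau)(md-\beta)}$, and the two requirements $s+md-\beta<d$ and $s+(1-\tau)(md-\beta)<\dim_{\M}E$ are exactly the two terms of the $\max$; note they are both needed simultaneously (not alternatives to be optimised over), which is why the bound is a $\max$ and not a $\min$. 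With that adjustment, your plan reproduces the paper's argument.
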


The proofs of the above theorems are based on the following two lemmas, which are the generalizations of Lemmas \ref{lem-2.5} and \ref{lem-tran} respectively.

\begin{lem}[{\cite[Lemma 4.3 and (4.9)]{Fami08}}]
Let $\nu$ be a Borel probability measure with support in $B(0,\rho)\subset \R^{md}$ such that $\nu(B(\ba,r))\leq c_0 r^q$ for all $\ba\in \R^{md}$ and  $r>0$, where $(m-1)d<q\leq md$.   Let $0<s<d$ be a number such that $md-(d-s)<q\leq md$ with $q-s$ non-integral. Assume that $\|T_i\|<1/2$ for all $1\leq i\leq m$ and $\rho>0$. Then there is a number $c$ such that for all distinct $x,y\in \Sigma$ and $r>0$,
\begin{equation}
\label{e-9.1}
\int_{B(0,\rho)} \frac{d\nu(\ba)}{|\pi^\ba(x)-\pi^\ba(y)|^s}\leq \frac{c \alpha_1(T_{x\wedge y})^{md-q}}{\phi^{s+md-q}(T_{x\wedge y})}\leq \frac{c}{\phi^{s+md-q}(T_{x\wedge y})}.
\end{equation}
\end{lem}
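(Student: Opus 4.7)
The plan is to adapt Solomyak's transversality argument used in Lemma \ref{lem-2.5} to the Frostman measure $\nu$, replacing the Lebesgue bound on thin slabs by a Frostman-type covering bound. The first step is the layer-cake representation: for distinct $x,y \in \Sigma$,
\begin{equation*}
\int_{B(0,\rho)} \frac{d\nu(\ba)}{|\pi^\ba x-\pi^\ba y|^s}
= s\int_0^{R_0} r^{-s-1}\,\nu\!\left\{\ba\in B(0,\rho):|\pi^\ba x-\pi^\ba y|\le r\right\}\,dr \;+\; R_0^{-s},
\end{equation*}
where $R_0$ uniformly bounds $\diam\pi^\ba(\Sigma)$ for $\ba\in B(0,\rho)$. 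The problem reduces to estimating the sublevel-set $\nu$-measure for each $r\in (0,R_0)$.

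For the sublevel set, write $\bi=x\wedge y$. The map $\ba\mapsto\pi^\ba x-\pi^\ba y$ is affine in $\ba$, and a direct unfolding of the coding map shows that its linear part can be written as $T_\bi\circ \widetilde L_{x,y}$ for some $\widetilde L_{x,y}\colon\R^{md}\to\R^d$ that, under $\|T_i\|<1/2$, is quantitatively non-degenerate in $d$ of the coordinates of $\ba$ uniformly in $(x,y)$; this is exactly the transversality input behind Lemma \ref{lem-tran}. Consequently $\{\ba\in B(0,\rho):|\pi^\ba x-\pi^\ba y|\le r\}$ lies in a tube in $\R^{md}$ whose cross-section is a $d$-dimensional box with side-lengths $\min\{1,r/\alpha_k(T_\bi)\}$ ($k=1,\ldots,d$), and whose remaining $(m-1)d$ directions have length $\asymp\rho$.

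To estimate the $\nu$-mass of this tube, cover it by balls whose common radius equals its narrowest cross-section. The number of such balls is the product of the side-length ratios; the $\nu$-mass of each ball is at most $c_0(\text{radius})^q$. Summing --- and using the hypothesis $q>(m-1)d$, which guarantees that per-ball mass dominates the $(m-1)d$ long-direction count --- yields an estimate of the form $C\,\alpha_1(T_\bi)^{md-q}\,G(r,T_\bi)$, where $G(r,T_\bi)$ is piecewise a power of $r$ with exponents that change across $r=\alpha_k(T_\bi)$. Plugging this back into the layer-cake integral and evaluating piecewise on the intervals $[\alpha_{k+1}(T_\bi),\alpha_k(T_\bi)]$ reproduces the singular-value function $\phi^{s+md-q}(T_\bi)$ in the denominator, giving the first inequality of \eqref{e-9.1}; the non-integrality of $q-s$ is exactly what prevents logarithmic contributions at the crossings. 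The second inequality follows from $\alpha_1(T_\bi)\le \|T_\bi\|\le 1$ and $md-q\ge 0$.

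The main obstacle is the covering bookkeeping in the third step: correctly tracking how the surplus exponent $md-q$ combines with the singular-value geometry of $T_\bi$ so that, after radial integration, the denominator turns out to be exactly $\phi^{s+md-q}(T_\bi)$ rather than some looser expression. Both the transversality norm condition $\|T_i\|<1/2$ (needed to realize the factorization $T_\bi\circ\widetilde L_{x,y}$ with uniform non-degeneracy) and the arithmetic hypothesis that $q-s$ avoids integer values (needed to integrate across singular-value breakpoints without logarithmic loss) are essential to this balance.
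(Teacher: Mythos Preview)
The paper does not prove this lemma; it cites \cite[Lemma~4.3 and (4.9)]{Fami08} directly. Your outline --- layer-cake, factoring $\pi^\ba x-\pi^\ba y=T_{x\wedge y}\circ L_{x,y}(\ba)$, pushing $\nu$ forward by $L_{x,y}$ to obtain a Frostman measure on $\R^d$ with exponent $q-(m-1)d$, covering the resulting ellipsoid $T_{x\wedge y}^{-1}B(0,r)\cap B(0,4\rho)$ by balls of the smallest semi-axis, and integrating piecewise over the singular-value thresholds --- is exactly the Falconer--Miao argument behind that citation, and the paper's own proof of the companion inequality \eqref{e-9.2} (immediately following) makes the same moves explicit.

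One small correction of phrasing: the sublevel set $\{\ba:|\pi^\ba x-\pi^\ba y|\le r\}$ is not literally a tube in $\R^{md}$ with $(m-1)d$ ``long'' directions orthogonal to a $d$-dimensional cross-section, because $L_{x,y}$ depends on $x,y$ and need not be a coordinate projection. The correct (and cleaner) formulation is the one the paper uses just below: push $\nu$ forward to $\nu_0=\nu\circ L_{x,y}^{-1}$ on $\R^d$, invoke the uniform bound $\nu_0(B_{\R^d}(z,r))\le c_1 r^{q-(m-1)d}$ (this is \cite[Lemmas~4.1--4.2]{Fami08}, and is where both $\|T_i\|<1/2$ and $q>(m-1)d$ enter), and then do the ellipsoid covering in $\R^d$ rather than in $\R^{md}$. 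With that adjustment your sketch is complete.
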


\begin{lem}
Let $\nu$ be a Borel probability measure with support in $B(0,\rho)\subset \R^{md}$ such that $\nu(B(\ba,r))\leq c_0 r^q$ for all $\ba\in \R^{md}$ and $r>0$,  $(m-1)d<q\leq md$.   Assume that $\|T_i\|<1/2$ for all $1\leq i\leq m$ and $\rho>0$. Then there is a number $c$ such that
\begin{equation}
\label{e-9.2}
\nu\{\ba\in B(0,\rho):\; |\pi^\ba x-\pi^\ba y|\leq r\}\leq c Z_{x \wedge y}(r) \max\left\{1,\left(\frac{\alpha_1(T_{x\wedge y})}{r}\right)^{md-q}\right\}
\end{equation}
for all distinct $x,y\in \Sigma$ and $r>0$. Consequently,
\begin{equation}
\label{e-9.2'}
\nu\{\ba\in B(0,\rho):\; |\pi^\ba x-\pi^\ba y|\leq r\}\leq c Z_{x\wedge y}(r) r^{q-md}
\end{equation}
for all distinct $x,y\in \Sigma$ and $0<r\leq 1$.
\end{lem}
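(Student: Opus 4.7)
My plan is to adapt the geometric setup behind Lemma~\ref{lem-tran} (the Lebesgue case established in \cite[Lemma 5.2]{JPS07}) and replace its volume bookkeeping by a direct covering argument that invokes the $q$-dimensional ball condition $\nu(B(\cdot,r))\le c_0 r^q$. The inequality \eqref{e-9.2'} will then follow from \eqref{e-9.2} by an elementary manipulation.

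First I would set $I:=x\wedge y$ and denote by $\Phi:\R^{md}\to\R^d$ the linear map $\ba\mapsto\pi^\ba x-\pi^\ba y$. Writing $\Phi=T_I\Psi$ with $\Psi\ba=\pi^\ba\sigma^k x-\pi^\ba\sigma^k y$ and $k=|I|$, the transversality estimate behind \cite{Sol98, JPS07} (the same input underpinning Lemma~\ref{lem-2.5} and Lemma~\ref{lem-tran}) yields, under $\|T_i\|<1/2$, a uniform lower bound $\sigma_d(\Psi)\ge c>0$ depending only on $\rho$ and $T_1,\ldots,T_m$, and hence $\sigma_j(\Phi)\asymp\alpha_j(T_I)$ for every $j=1,\ldots,d$. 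In an orthonormal basis of $\R^{md}$ aligned with the right singular vectors of $\Phi$, the set $\Omega:=\{\ba\in B(0,\rho):|\Phi(\ba)|\le r\}$ is therefore contained in a ``tube'' whose cross-section (in $d$ coordinates) is an ellipsoid with semi-axes $\asymp r/\alpha_j(T_I)$, while its transverse $(md-d)$ coordinates are only constrained by the ambient ball $B(0,\rho)$.

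Next, if $r\ge\alpha_1(T_I)$ then both sides of \eqref{e-9.2} are comparable to constants and the claim reduces to $\nu(\Omega)\le\nu(B(0,\rho))\le c_0(2\rho)^q$. The interesting case is $r<\alpha_1(T_I)$, where I would cover $\Omega$ by balls of radius $s:=r/\alpha_1(T_I)$ (the smallest cross-sectional semi-axis). After absorbing the $\rho$-dependence into constants, the number $N$ of such balls needed is bounded by
\[
N\le C\prod_{j=1}^d\max\Bigl\{1,\,\tfrac{\min\{\rho,\,r/\alpha_j(T_I)\}}{s}\Bigr\}\,\Bigl(\tfrac{\rho}{s}\Bigr)^{md-d},
\]
and a short bookkeeping (splitting the product at the index $\ell$ with $\alpha_\ell(T_I)\ge r>\alpha_{\ell+1}(T_I)$, and using the identity $\prod_{j=1}^d\min\{1,r/\alpha_j(T_I)\}=Z_{x\wedge y}(r)$) collapses this to $N\le C'\,Z_{x\wedge y}(r)\,(\alpha_1(T_I)/r)^{md}$. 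Multiplying by the per-ball $\nu$-mass $c_0 s^q=c_0(r/\alpha_1(T_I))^q$ then yields \eqref{e-9.2}.

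Finally, since $\alpha_1(T_I)\le\|T_I\|\le 1$ and $0<r\le 1$, the inequality $\max\{1,(\alpha_1(T_I)/r)^{md-q}\}\le r^{q-md}$ holds, and \eqref{e-9.2'} follows from \eqref{e-9.2}. The main technical hurdle is the lower bound $\sigma_d(\Psi)\ge c>0$ used in the second paragraph: under the hypothesis $\|T_i\|<1/2$ (rather than the easier $\|T_i\|<1/3$, which admits a direct Neumann-series inversion of a single coordinate derivative), Solomyak's refinement \cite{Sol98} of Falconer's transversality argument is essential. Once this input is accepted---exactly as in the Lebesgue case of Lemma~\ref{lem-tran}---the passage to a general $q$-dimensional $\nu$ introduces no new difficulty beyond the covering step above.
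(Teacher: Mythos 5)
Your proposal is correct, but it takes a genuinely different route from the paper's. The paper pushes $\nu$ forward to $\nu_0 := \nu\circ L_{x,y}^{-1}$ on $\R^d$ (where $L_{x,y}(\ba)=\pi^\ba\sigma^n x-\pi^\ba\sigma^n y$), invokes Falconer--Miao's Frostman estimate $\nu_0(B_{\R^d}(z,s))\le c_1 s^{(1-m)d+q}$ as a black box, and then covers the $d$-dimensional rectangle $B_{\R^d}(0,4\rho)\cap T_{x\wedge y}^{-1}B_{\R^d}(0,r)$ by balls of radius $\asymp r/\alpha_1(T_{x\wedge y})$. You instead perform the covering directly in $\R^{md}$, covering $\{\ba\in B(0,\rho):|\pi^\ba x-\pi^\ba y|\le r\}$ by $\R^{md}$-balls of radius $s=r/\alpha_1(T_{x\wedge y})$ and applying the Frostman condition on $\nu$ itself. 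Both routes lead to the same arithmetic. What your route buys is self-containedness: you don't need Falconer--Miao's push-forward lemma. What it costs is the singular-value comparison $\sigma_j(\Phi)\asymp\alpha_j(T_{x\wedge y})$ for the full linear map $\Phi:\R^{md}\to\R^d$, which is exactly the content that the paper delegates to Falconer--Miao.

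One point deserves sharpening. You assert that Solomyak's transversality under $\|T_i\|<1/2$ ``yields a uniform lower bound $\sigma_d(\Psi)\ge c>0$''. This is not literally the statement proved in \cite{Sol98} or \cite{JPS07}; what those references establish is the Lebesgue estimate of Lemma~\ref{lem-tran}, namely $\mathcal L^{md}\{\ba\in B_\rho:|\pi^\ba x-\pi^\ba y|\le r\}\le C\,Z_{x\wedge y}(r)$. It does imply what you want, but the extraction is a short argument you should make explicit: letting $r\to0$ for fixed $x,y$ and using the coarea formula, Lemma~\ref{lem-tran} forces $\prod_{j=1}^d\sigma_j(\Phi)\ge c\,\rho^{(m-1)d}\prod_{j=1}^d\alpha_j(T_{x\wedge y})$; combined with the elementary upper bound $\sigma_j(\Phi)\le\|\Psi\|\,\alpha_j(T_{x\wedge y})\le 4\alpha_j(T_{x\wedge y})$ (from $\sum_n\|T_{z|n}\|\le 2$ under $\|T_i\|<1/2$) this yields $\sigma_j(\Phi)\asymp\alpha_j(T_{x\wedge y})$ for every $j$ with uniform constants, which is precisely what your covering step needs. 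With that made explicit, the argument for \eqref{e-9.2} is complete, and your passage to \eqref{e-9.2'} via $\alpha_1(T_{x\wedge y})\le 1$ and $r\le 1$ is correct and agrees with the paper.
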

\begin{proof}
 We will apply some estimations given in \cite{Fami08}. Let $x, y\in \Sigma$ with $x\neq y$.  Set $n=|x\wedge y|$,  $x'=\sigma^n x$ and $y'=\sigma^n y$.
Let $L_{x,y}:\;\R^{md}\to \R^d$ be the linear mapping defined by $\ba\mapsto \pi^\ba x'-\pi^\ba y'$.
Clearly
\begin{equation}
\label{e-lxy}
|L_{x,y}(\ba)|\leq |\pi^\ba x'|+|\pi^\ba y'|\leq \sum_{n=0}^\infty (\|T_{x'|n}\|+\|T_{y'|n}\|)|\ba|< \sum_{n=0}^\infty 2^{1-n}|\ba|= 4|\ba|.
\end{equation}
Let $\nu_0$ denote the push-forward of $\nu$ by  $L_{x,y}$, i.e., $\nu_0=\nu\circ L_{x,y}^{-1}$.  It was proved by Falconer and Miao (see \cite[Lemmas~4.1 and 4.2]{Fami08}) that there is a number $c_1$ (which is independent of $x,y$) such that
\begin{equation}
\label{e-9.4}
\nu_0(B_{\R^d}(z,r))\leq c_1 r^{(1-m)d+q}\quad \mbox{ for all }z\in \R^d \mbox{ and }r>0,
\end{equation}
where $B_{\R^d}(\cdot,\cdot)$ stands for a closed ball in $\R^d$.
Notice that
\begin{equation}
\label{e-9.5}
\begin{split}
&\nu\{\ba\in B(0,\rho):\; |\pi^\ba x-\pi^\ba y|\leq r\}\\
&\mbox{}\quad =\nu\{\ba\in B(0,\rho):\; T_{x\wedge y}(\pi^\ba x'-\pi^\ba y')\in  B_{\R^d}(0, r)\}\\
&\mbox{}\quad =\nu\{\ba\in B(0,\rho):\; \pi^\ba x'-\pi^\ba y'\in T_{x\wedge y}^{-1}B_{\R^d}(0, r)\}\\
&\mbox{}\quad =\nu_0\left(T_{x\wedge y}^{-1}B_{\R^d}(0, r)\right).
\end{split}
\end{equation}
Since $\nu$ is supported in $B(0,\rho)$,  it follows from  \eqref{e-lxy} that $\nu_0$ is supported in $B_{\R^d}(0, 4\rho)$. Hence
\begin{equation}
\label{e-9.6}
\nu_0\left(T_{x\wedge y}^{-1}B_{\R^d}(0, r)\right)=\nu_0\left(B_{\R^d}(0, 4\rho)\cap (T_{x\wedge y}^{-1}B_{\R^d}(0, r))\right).
\end{equation}
It is easy to see that the set $B_{\R^d}(0, 4\rho)\cap (T_{x\wedge y}^{-1}B_{\R^d}(0, r))$ is contained in a rectangle with side lengths $\ell_1\leq \cdots\leq \ell_d$, where
$$
\ell_i=\min\left\{8\rho,\; \frac{2r}{\alpha_i(T_{x\wedge y})}\right\}\leq c_2\min\left\{1, \; \frac{r}{\alpha_i(T_{x\wedge y})}\right\},\qquad i=1,\ldots, d,
$$
with $c_2:=\max\{8\rho, 2\}$. Hence if $r\leq \alpha_1(T_{x\wedge y})$,
then $B_{\R^d}(0, 4\rho)\cap \left(T_{x\wedge y}^{-1}B_{\R^d}(0, r)\right)$ can be covered by
\begin{align*}
2^d \prod_{i=1}^d\frac{\ell_i}{r/\alpha_1(T_{x\wedge y})}&\leq (2c_2)^d \alpha_1(T_{x\wedge y})^d r^{-d} \prod_{i=1}^d\min\left\{1,\; \frac{r}{\alpha_i(T_{x\wedge y})}\right\}\\
& = (2c_2)^d \alpha_1(T_{x\wedge y})^d r^{-d} Z_{x\wedge y}(r) \qquad \mbox{(by \eqref{ef-4.2})}
\end{align*}
 balls of  radius $2\sqrt{d} r/\alpha_1(T_{x\wedge y})$; so by \eqref{e-9.4},
 \begin{align*}
 \nu_0&\left(B_{\R^d}(0, 4\rho)\cap \left(T_{x\wedge y}^{-1}B_{\R^d}(0, r)\right)\right)\\
  &\leq (2c_2)^d \alpha_1(T_{x\wedge y})^d r^{-d} Z_{x\wedge y}(r) c_1 \left(\frac{2\sqrt{d} r}{\alpha_1(T_{x\wedge y})}\right)^{(1-m)d+q}\\
 &\leq c_3 Z_{x\wedge y}(r)\cdot \left(\frac{\alpha_1(T_{x\wedge y})}{r}\right)^{md-q},
 \end{align*}
 where $c_3:=c_1(2c_2)^d(2\sqrt{d})^d$. If $r>\alpha_1(T_{x\wedge y})$, by \eqref{ef-4.2} we have
 $Z_{x\wedge y}(r)=1$, so
 $$
 \nu_0\left(B_{\R^d}(0, 4\rho)\cap (T_{x\wedge y}^{-1}B_{\R^d}(0, r))\right)\leq 1= Z_{x\wedge y}(r).
 $$
Hence for all $r>0$,
$$\nu_0\left(B_{\R^d}(0, 4\rho)\cap (T_{x\wedge y}^{-1}B_{\R^d}(0, r))\right)\leq \max\{1,c_3\} Z_{x \wedge y}(r) \max\left\{1,\left(\frac{\alpha_1(T_{x\wedge y})}{r}\right)^{md-q}\right\}.
$$
Combining this with \eqref{e-9.5},  \eqref{e-9.6}, and setting $c=\max\{1, c_3\}$, we obtain \eqref{e-9.2}. The inequality \eqref{e-9.2'} follows directly from \eqref{e-9.2} since
$$\max\left\{1,\left(\frac{\alpha_1(T_{x\wedge y})}{r}\right)^{md-q}\right\}\leq r^{q-md}.
$$
whenever $0<r\leq 1$.
\end{proof}
Now we are ready to prove Theorem \ref{thm-9.1}.
\begin{proof}[Proof of Theorem \ref{thm-9.1}]
Notice that the mapping $(\ba, x,r)\mapsto \pi^\ba_*\mu(B(\pi^\ba x,r))$ is upper semi-continuous on $\R^{md}\times \Sigma\times (0,\infty)$. It follows that the mappings $(\ba,x)\mapsto \underline{\dim}_{\rm loc}(\pi^\ba_*\mu, \pi^\ba x)$, $(\ba,x)\mapsto \overline{\dim}_{\rm loc}(\pi^\ba_*\mu, \pi^\ba x)$ are Borel measurable. Hence by standard arguments, all the functions $\underline{\dim}_{\rm H} \pi^\ba_*\mu$, $\overline{\dim}_{\rm H} \pi^\ba_*\mu$, $\underline{\dim}_{\rm P} \pi^\ba_*\mu$, $\overline{\dim}_{\rm P} \pi^\ba_*\mu$ are Borel measurable in $\ba$.

We first prove (i). Let $F_1(\delta)$ denote the set of $\ba\in \R^{md}$ so that
$$\mu\{x\in \Sigma:\; \underline{\dim}_{\rm loc}(\pi^\ba_*\mu, \pi^\ba x)<\min\{d, S(\mu,x)\}-\delta\}>0. $$
Since $\ba\mapsto \mu\{x\in \Sigma:\; \underline{\dim}_{\rm loc}(\pi^\ba_*\mu, \pi^\ba x)<\min\{d, S(\mu,x)\}-\delta\}$ is Borel measurable (which follows from the Borel measurability of the mappings $\ba\mapsto \underline{\dim}_{\rm loc}(\pi^\ba_*\mu, \pi^\ba x)$ and $x\mapsto S(\mu,x)$), $F_1(\delta)$ is a Borel subset of $\R^{md}$. Suppose on the contrary that (i) does not hold, that is, $\dim_{\rm H} F_1(\delta)>dm-\delta$. By Frostman's lemma (see e.g.~\cite[Theorem 8.8]{Mattila1995}), there is a Borel probability measure $\nu$ supported on $F_1(\delta)$ such that $\nu(B(\ba,r))\leq c_0 r^{md-\delta}$ for all $\ba\in \R^{md}$ and $r>0$. We may further assume that $\nu$ is supported on $B_\rho:=B(0,\rho)$ for some $\rho>0$.
Next we show that for a given $s\in (0,d-\delta)$ with $s+\delta$ non-integral, for $\nu$-a.e.~$\ba\in B_{\rho}$,
\begin{equation}
\label{e-9.9F}
\underline{\dim}_{\rm loc}(\pi^\ba_*\mu, \pi^\ba x)\geq s \quad \mbox{ for $\mu$-a.e.~$x$ with } S(\mu,x)>s+\delta.
\end{equation}
The proof runs along similar lines as that of Theorem \ref{thm-lower}(ii). For each positive integer $N$, let $\Lambda_N$ be the set of $x$ for which
 $$
 \int_\Sigma\frac{1}{\phi^{s+\delta}(T_{x\wedge y})} d\mu(y)<N.
 $$
 Notice that by Lemma \ref{lem-Smu}, the set of all $x$ for which $S(\mu,x)>s+\delta$ is contained in the union of $\Lambda_N$ for $N\geq 1$.  Appying Fubini's theorem,
  \begin{align*}
 \int_{B_{\rho}}\int_{\Lambda_N} \int_{\R^d} \frac{d\pi^\ba_*\mu(z)}{|\pi^\ba x-z|^s} d\mu(x) d\nu(\ba)&= \int_{B_{\rho}}\int_{\Lambda_N} \int_{\Sigma} \frac{1}{|\pi^\ba x-\pi^\ba y|^s} d\mu(y)d\mu(x) d\nu(\ba)\\
&=\int_{\Lambda_N}  \int_{\Sigma} \int_{B_{\rho}} \frac{1}{|\pi^\ba x-\pi^\ba y|^s}d\nu(\ba)
 d\mu(y)d\mu(x) \\
 &\leq \int_{\Lambda_N}  \int_{\Sigma}  \frac{c}{\phi^{s+\delta}(T_{x\wedge y})}d\mu(y)d\mu(x)\\
 & \qquad\quad\mbox{(by \eqref{e-9.1} in which we take $q=md-\delta$)}\\
 &\leq cN.
  \end{align*}
 It follows that for $\nu$-a.e.~$\ba\in B_{\rho}$,
 $\displaystyle \int_{\Lambda_N} \int_{\R^d} \frac{d\pi^\ba_*\mu(z)}{|\pi^\ba x-z|^s} d\mu(x)<\infty$ and hence
 $$
 \int_{\R^d} \frac{d\pi^\ba_*\mu(z)}{|\pi^\ba x-z|^s}<\infty \quad\mbox{ for $\mu$-a.e.~$x\in \Lambda_N$}.
 $$
 Taking the union over $N$, we have for $\nu$-a.e.~$\ba\in B_{\rho}$,
 $$
 \int_{\R^d} \frac{d\pi^\ba_*\mu(z)}{|\pi^\ba x-z|^s}<\infty \quad\mbox{ for $\mu$-a.e.~$x$ with $S(\mu,x)>s+\delta$}.
 $$
It follows from  Lemma \ref{lem-SY} that  for $\nu$-a.e.~$\ba\in B_{\rho}$,
 $$
 \underline{\dim}_{\rm loc}(\pi^\ba_*\mu, \pi^\ba x)\geq s \quad\mbox{ for $\mu$-a.e.~$x$ with $S(\mu,x)>s+\delta$}.
 $$
This proves \eqref{e-9.9F}. Thus we have shown that for all  $s\in (0,d-\delta)$ with $s+\delta$ non-integral,
$$
\mu\left(\{x\in \Sigma:\; S(\mu,x)-\delta>s> \underline{\dim}_{\rm loc}(\pi^\ba_*\mu, \pi^\ba x)\}\right)=0
$$
for $\nu$-a.e.~$\ba\in B_{\rho}$.  Take the union over all  rational $s$ in $(0,d-\delta)$ with $s+\delta$ non-integral, we conclude that
for $\nu$-a.e.~$\ba\in B_{\rho}$,
$$\mu\left(\{x\in \Sigma:\; \min\{S(\mu,x),d\}-\delta> \underline{\dim}_{\rm loc}(\pi^\ba_*\mu, \pi^\ba x)\}\right)=0,$$
which contradicts that $\nu$ is supported on $F_1(\delta)$. This proves (i).

Next we prove (ii). Let $F_2(\delta)$ denote the set of $\ba\in \R^{md}$ so that
$$\mu\{x\in \Sigma:\; \overline{\dim}_{\rm loc}(\pi^\ba_*\mu, \pi^\ba x)<D(\mu,x)-\delta\}>0. $$
Then $F_2(\delta)$ is a Borel subset of $\R^{md}$. Suppose on the contrary that $\dim_{\rm H} F_2(\delta)>dm-\delta$. By Frostman's lemma  there is a Borel probability measure $\nu$ supported on $F_2(\delta)$ such that $\nu(B(\ba,r))\leq c_0 r^{md-\delta}$ for all $\ba\in \R^{md}$ and $r>0$. We may further assume that $\nu$ is supported on $B_\rho$ for some $\rho>0$. Next we show that for every $x\in \Sigma$,
\begin{equation}
\label{e-9.9}
  \overline{\dim}_{\rm loc}(\pi^\ba_*\mu, \pi^\ba x)\geq D(\mu,x)-\delta \quad \mbox{  for $\nu$-a.e.~$\ba\in B_\rho$}.
\end{equation}
To this end, we modify the proof of Proposition \ref{pro-1.3} slightly.
Let $x\in \Sigma$ and $0<r\leq 1$. Applying Fubini's theorem,
\begin{eqnarray*}
\int_{B_\rho}\pi^\ba_*\mu\left(B_r (\pi^\ba x)\right) d\nu(\ba)&=&
 \int_{B_\rho}\int_{\Sigma} {\bf 1}_{\{y:\;|\pi^\ba y-\pi^\ba x |\leq r\}}\; d\mu(y)  d\nu(\ba) \\
 & =&
 \int_\Sigma \int_{B_\rho} {\bf 1}_{\{\ba:\;|\pi^\ba y-\pi^\ba x |\leq r\}}\;  d\nu(\ba) d\mu(y)  \\
 &=& \int_\Sigma  \nu \{\ba\in {B_\rho}:\; |\pi^\ba x-\pi^\ba y |\leq r\} \; d\mu(y)\\
& \leq & c\int_\Sigma   Z_{x\wedge y}(r) r^{-\delta}\; d\mu(y)\\
&& \qquad\quad\mbox{(by \eqref{e-9.2'} in which we take $q=md-\delta$)}\\
&=& c r^{-\delta} G_\mu(x,r).
\end{eqnarray*}
  Hence by Fatou's lemma,  for any $t\in \R$,
 \begin{eqnarray}
 \int_{B_\rho} \liminf_{r\to 0} r^{-t} \pi^\ba_*\mu\left(B_r (\pi^\ba x)\right) \;d\nu(\ba)& \leq & \liminf_{r\to 0} \int_{B_\rho} r^{-t} \pi^\ba_*\mu\left(B_r (\pi^\ba x)\right)\; d\nu(\ba) \nonumber
 \\
 &\leq& c \liminf_{r\to 0} r^{-t-\delta} G_\mu(x,r).  \label{e-D2''}
 \end{eqnarray}
  Now we assume that $t<D(\mu,x)-\delta$. By the definition of $D(\mu,x)$ (see \eqref{e-D1}), we get $\liminf_{r\to 0} r^{-t-\delta} G_\mu(x,r)=0$. Combining this with \eqref{e-D2''} yields that
  $$
  \int_{B_\rho} \liminf_{r\to 0} r^{-t} \pi^\ba_*\mu\left(B_r (\pi^\ba x)\right) \;d\nu(\ba)=0,
  $$
 which implies that
  $
  \liminf_{r\to 0} r^{-t} \pi^\ba_*\mu\left(B_r (\pi^\ba x)\right)=0$  for $\nu$-a.e.~$\ba\in {B_\rho}$.
   Hence
  $$
  \overline{\dim}_{\rm loc}(\pi^\ba_*\mu, \pi^\ba x)\geq t
  $$
  for $\nu$-a.e.~$\ba\in {B_\rho}$.  Letting $t\nearrow D(\mu,x)-\delta$ yields \eqref{e-9.9}.  Now let $A$ denote the set of $(\ba,x)\in B_\rho\times \Sigma$ such that $$\overline{\dim}_{\rm loc}(\pi^\ba_*\mu, \pi^\ba x)\geq D(\mu,x)-\delta.$$
Then $A$ is a Borel subset of $B_\rho\times \Sigma$. By \eqref{e-9.9} and Fubini's theorem, $\nu\times \mu(A)=1$. Applying Fubini's theorem again, we have
$$
\mu\left\{x\in \Sigma: \; \overline{\dim}_{\rm loc}(\pi^\ba_*\mu, \pi^\ba x)\geq D(\mu,x)-\delta\right\}=1
$$
for $\nu$-a.e.~$\ba\in B_\rho$, which contradicts that $\nu$ is supported on $F_2(\delta)$. This proves (ii).

Now we claim that (iii) and (iv) follow directly from (i), and (v) and (vi) follow directly from (ii). Here we only prove the direction (i)$\Longrightarrow$(iii), since the other implications can be proved in a similar way. To prove (iii), by (i) it is enough to show that
\begin{equation}
\label{e-9.11}
\begin{split}
\{\ba &:\; \underline{\dim}_{\rm H} \pi^\ba_*\mu<\min\{ d, \underline{S}(\mu)\}-\delta\}\\
&\subset
\left\{\ba:\; \mu\left(\{x:\; \underline{\dim}_{\rm loc}(\pi^\ba_*\mu, \pi^\ba x)<\min\{ d, S(\mu,x)\}-\delta\}\right)>0\right\}.
\end{split}
\end{equation}
To see this, let $\ba\in \R^{md}$  such that $\underline{\dim}_{\rm H} \pi^\ba_*\mu<\min\{ d, \underline{S}(\mu)\}-\delta$. Then there is a number $s$ such that
 $$
 \underline{\dim}_{\rm H} \pi^\ba_*\mu<s<\min\{ d, \underline{S}(\mu)\}-\delta,
 $$
 which implies that
 $$
 \mu\{x: \;\underline{\dim}_{\rm loc}(\pi^\ba_*\mu, \pi^\ba x)<s\}>0\quad  \mbox{ and } \quad
 \mu\{x:\; s<\min\{ d, S(\mu,x)\}-\delta\}=1,
 $$
 hence
 $$
 \mu\{x: \;\underline{\dim}_{\rm loc}(\pi^\ba_*\mu, \pi^\ba x)<\min\{ d, S(\mu,x)\}-\delta\}>0.
 $$
 This proves \eqref{e-9.11}.

 Next we prove (v)$\Longrightarrow$(vii). Let $E$ be a non-empty analytic subset of $\Sigma$. By Remark \ref{rem-5.3}, we can pick a sequence $(\mu_n)\subset \mathcal P(\Sigma)$ with ${\rm spt}(\mu_n)\subset E$ such that $\underline{S}(\mu_n)\geq \dim_{\mathcal M}E-1/n$.  Since $\underline{\dim}_{\rm H}\pi^\ba_*\mu_n\leq \dim_{\rm H} \pi^\ba (E)$, it follows that
 \begin{equation*}
 \begin{split}
 \{\ba:&\; \dim_{\rm H} \pi^\ba(E)<\min\{ d, \dim_{\mathcal M}E\}-\delta\}\\
 &\subset
\left\{\ba:\; \underline{\dim}_{\rm H}\pi^\ba_*\mu_n<\min\{ d, \underline{S}(\mu_n)\}-\delta+\frac{1}{n}\right\}
\end{split}
\end{equation*}
for each $n\in \N$. Combining this with (v) yields (vii).

Using a similar argument, we can show that  (vi)$\Longrightarrow$(viii). We leave the details to the reader.

Finally it remains to prove (ix) and (x).  We only prove (ix) since the proof of (x) is similar.  We may assume that $E$ is compact.  Notice that for every $\epsilon>0$, $N_\epsilon(\pi^\ba(E))$ is upper semicontinuous in $\ba$. It follows that the mapping $\ba\to \overline{\dim}_{\rm B}\pi^\ba(E)$ is Borel measurable.  Let $F_3(\delta)$ denote the set of $\ba\in \R^{md}$ so that
$$\overline{\dim}_{\rm B}(\pi^\ba(E))<\overline{\dim}_CE-\delta. $$
Then $F_3(\delta)$ is a Borel subset of $\R^{md}$. Suppose that (ix) does not hold, i.e. $\dim_H F_3(\delta)>md-\delta$.  Below we derive a contradiction.

By Frostman's lemma  there is a Borel probability measure $\nu$ supported on $F_3(\delta)$ such that $\nu(B(\ba,r))\leq c_0 r^{md-\delta}$ for all $\ba\in \R^{md}$ and $r>0$. We may further assume that $\nu$ is supported on $B_\rho$ for some $\rho>0$.  Next we will show that
\begin{equation}
\label{e-9.12}
\overline{\dim}_{\rm B}(\pi^\ba(E))\geq \overline{\dim}_CE-\delta \quad \mbox{ for $\nu$-a.e.~$\ba\in B_\rho$,}
\end{equation}
which clearly contradicts that $\nu$ is supported on $F_3(\delta)$.  To prove \eqref{e-9.12}, we follow the lines of the proof of Theorem \ref{thm-box}(ii) with minor modifications.  Let $ \mu \in\mathcal{ P}(E) $. By
		Fubini’s theorem,
				\begin{align}
		\int_{B_\rho}  \left( \mu\times \mu\right) & \{(\bi,\bj):\; \left|\pi^{\ba}\bi-\pi^{\ba}\bj \right | \leq r\}\;d \nu(\ba) \nonumber\\
		={}&\iint \nu\{\ba\in B_{\rho}:\; \left|\pi^{\ba}\bi-\pi^{\ba}\bj \right | \leq r\}\;d\mu(\bi)d\mu(\bj) \nonumber\\
		\leq{}& \iint c\cdot Z_{\bi\wedge \bj}(r)r^{-\delta}\;d\mu(\bi)d\mu(\bj), \label{apply'}
		\end{align}
		by using \eqref{e-9.2'} in which we take $q=md-\delta$.
			If $ 	\overline{\dim}_CE>t'>t+\delta$, then there exists a non-increasing sequence $  \{ r_k\}_{k=1}^{\infty} $ with $ r_{k} \to 0 $ and $ 0<r_k<2^{-k} $, such that $C_{r_k}(E)\geq r_k^{-t'}$. Thus by Lemma \ref{lem-potential}, for each $k$ there exists $\mu_k \in \mathcal{P}(E)$ such that
		\[ \iint Z_{\bi\wedge \bj}(r_k)\;d\mu_{k}(\bi)d\mu_{k}(\bj)=\frac{1}{C_{r_k}(E)}\leq r_{k}^{t'}. \]\		
Applying \eqref{apply'} to each $\mu_k$ and summing over $k$,
		\begin{align*}
			\int_{B_\rho} & \left(\sum_{k=1}^{\infty} r_{k}^{-t}  (\pi^\ba_*\mu_k\times \pi^\ba_*\mu_k) \{(x,y): \left|x-y \right | \leq r_k\}\right)\;d \nu(\ba) \\
			&\mbox{}\qquad= \sum_{k=1}^{\infty} r_{k}^{-t}\int_{B_\rho} \left( \mu_k\times \mu_k\right) \{(\bi,\bj): \left|\pi^{\ba}\bi-\pi^{\ba}\bj \right | \leq r_k\}\;d \nu(\ba)\\
			&\mbox{}\qquad\leq c\sum_{k=1}^{\infty} r_{k}^{-t} \iint Z_{\bi\wedge \bj}(r_k)r_k^{-\delta}\;d\mu_k(\bi)d\mu_k(\bj)   \\
			&\mbox{}\qquad \leq  c\sum_{k=1}^{\infty}r_{k}^{t'-t-\delta}\leq  c\sum_{k=1}^{\infty}2^{-k(t'-t-\delta)}<\infty.
		\end{align*}
		Hence for $ \nu$-a.e.~$\ba\in B_\rho$ there is   $ M_{\ba} < \infty$  such that
		$$ (\pi^\ba_*\mu_k\times \pi^\ba_*\mu_k)\{(x,y): \left|x-y \right| \leq r_k\} \leq M_{\ba} r_{k}^{t},  \quad \text{for  } k\in \N.$$
	 For such $\ba$,  applying  Lemma \ref{from F}  to the set $ \pi^\ba(E) $ yields that 	
		$$ N_{r_k}(\pi^{\ba}(E))\geq c_d (M_{\ba})^{-1} r_{k}^{-t},  \quad \text{for   } k\in \N.$$
 It follows that   $$\overline{\dim}_{\rm B}(\pi^\ba(E))=\limsup_{r \to 0} \frac{\log N_r(\pi^\ba (E))} {-\log r}\geq t.$$ This holds for all $ t < 	\overline{\dim}_CE-\delta $, thus \eqref{e-9.12} holds.
\end{proof}

\bigskip
\begin{proof}[Proof of Theorem \ref{thm-9.2}] Here we follow the strategy of the proof of \cite[Theorem 4.9]{Fami08} with suitable modifications.

Write $q=\max\left\{dm-\frac{\delta}{1-\tau},\; dm+\dim_{\mathcal M}E-d-\delta\right\}$ and $$
F_4(\delta)=\left\{\ba\in \R^{md}:\; \dim_{\rm H} \pi^\ba(E) <  \dim_{\mathcal M}E-\delta\right\}. $$
According to a general result of Mattila and Mauldin (see \cite[Theorem 2.1]{MaMa97}), the mapping $\ba\mapsto \dim_{\rm H} \pi^\ba(E)$ is Borel measurable. Hence $F_4(\delta)$ is a Borel subset of $\R^{md}$. Suppose on the contrary that \eqref{e-9.1'} does not hold, i.e. $\dim_{\rm H} F_4(\delta)>q$. Then by Frostman's lemma, there is a Borel probability measure $\nu$ supported on $F_4(\delta)$ such that $\nu(B(\ba,r))\leq c_0 r^{q}$ for all $\ba\in \R^{md}$ and $r>0$. We may further assume that $\nu$ is supported on $B_\rho:=B(0,\rho)$ for some $\rho>0$.

Let $\epsilon>0$. As shown in the proof of Theorem \ref{main2}(ii), there exists $\mu\in \mathcal P(\Sigma)$ such that $\mu$ is supported on $E$ and
\begin{equation}
\label{e-9n1}
\mu([x|n])\leq c_1 \phi^{\dim_{\mathcal M}E-\epsilon}(T_{x|n})
\end{equation}
for all $x\in \Sigma$ and $n\in \N$. Let $s\in [0, \dim_{\mathcal M}E-\delta-\epsilon)$ so that $s+md-q$ is non-integral. Since $q=\max\{dm+\dim_{\mathcal M}E-d-\delta, md-\delta/(1-\tau)\}$, it follows that
\begin{equation}
\label{e-9n2}
s+md-q< d-\epsilon
\end{equation}
and
\begin{equation}
\label{e-9n3}
s+(1-\tau)(md-q)<\dim_{\mathcal M}E-\epsilon.
\end{equation}
By \eqref{e-9n1} and \eqref{e-9n3}, we obtain that
\begin{equation}
\label{e-9n4}
\frac{\mu([x|n])}{\phi^{s+(1-\tau)(md-q)}(T_{x|n})}\leq c_1 \alpha_1(T_{x|n})^\theta\leq c_1 \alpha_+^{n\theta},
\end{equation}
where $\theta:=\dim_{\mathcal M}E-\epsilon-(s+(1-\tau)(md-q))>0$ and $\alpha_+$ is defined as in \eqref{e-eta}.

Next we show that $\underline{\dim}_{\rm H}\pi^\ba_*\mu \geq s$ for $\nu$-a.e.~$\ba\in B_\rho$. By the potential theoretic characterization of the Hausdorff dimension (see e.g. \cite[Theorem 4.13]{Fal03}), it is enough to show that for $\nu$-a.e.~$\ba\in B_\rho$, $\pi^\ba_*\mu$ has finite $s$-energy:
$$
I_s(\pi^\ba_*\mu):=\iint \frac{d\pi^\ba_*\mu(z)d\pi^\ba_*\mu(w)}{|z-w|^s}<\infty.
$$
Integrating over $B_\rho$ with respect to $\nu$ and using Fubini’s theorem,
\begin{align*}
\int_{B_\rho} I_s(\pi^\ba_*\mu)d\nu(\ba)&=\int_{B_\rho}\iint \frac{d\pi^\ba_*\mu(z)d\pi^\ba_*\mu(w)}{|z-w|^s}d\nu(\ba)\\
&=\int_{B_\rho}\iint \frac{d\mu(x)d\mu(y)}{|\pi^\ba x-\pi^\ba y|^s}d\nu(\ba)\\
&=\iint \int_{B_\rho}\frac{d\nu(\ba)}{|\pi^\ba x-\pi^\ba y|^s}d\mu(x)d\mu(y)\\
&\leq \iint \frac{c\alpha_1(T_{x\wedge y})^{md-q}}{\phi^{s+md-q}(T_{x\wedge y})}d\mu(x)d\mu(y) \quad \mbox{ (by \eqref{e-9n2} and \eqref{e-9.1})}.
\end{align*}
Notice that
$$
\frac{\alpha_1(T_{x\wedge y})^{md-q}}{\phi^{s+md-q}(T_{x\wedge y})}\leq \frac{\alpha_d(T_{x\wedge y})^{\tau(md-q)}}{\phi^{s+md-q}(T_{x\wedge y})} \leq \frac{1}{\phi^{s+(1-\tau)(md-q)}(T_{x\wedge y})}.
$$
It follows that
\begin{align*}
\int_{B_\rho} I_s(\pi^\ba_*\mu)d\nu(\ba)&\leq  \iint \frac{c}{\phi^{s+(1-\tau)(md-q)}(T_{x\wedge y})}d\mu(x)d\mu(y) \\
&\leq c\int \sum_{n=0}^\infty \left(\phi^{s+(1-\tau)(md-q)}(T_{y|n})\right)^{-1}\mu([y|n]) d\mu(y)\\
&\leq cc_1\int \sum_{n=0}^\infty \alpha_+^{n\theta} d\mu(y) \quad \mbox{ (by \eqref{e-9n4})}\\
&<\infty.
\end{align*}
Hence $\underline{\dim}_{\rm H}\pi^\ba\mu\geq s$ for $\nu$-a.e.~$\ba\in B_\rho$. Since $\mu$ is supported on $E$, it follows that $\dim_{\rm H}\pi^\ba(E) \geq s$ for $\nu$-a.e.~$\ba\in B_\rho$. As $s$ is arbitrarily taken in $[0, \dim_{\mathcal M}E-\delta-\epsilon)$ with $s+md-q$  non-integral, letting $\epsilon\to 0$ we obtain that
$$\dim_{\rm H}\pi^\ba(E) \geq \dim_{\mathcal M}E-\delta$$ for $\nu$-a.e.~$\ba\in B_\rho$. This contradicts the fact that $\nu$ is supported on $F_4(\delta)$.
\end{proof}

\begin{rem} In the special case when $E=\Sigma$,  \eqref{e-9.1'} slightly improves the estimate (4.23) in \cite[Theorem 4.9]{Fami08}.
\end{rem}

\section{Final remarks and questions}
\label{S10}

In the section we give a few remarks and questions.

In our main theorems, the assumption that $\|T_i\|<1/2$ for $1\leq i\leq m$ can be weaken to $\max_{i\neq j} (\|T_i\|+\|T_j\|)<1$. Indeed the first assumption is only used to guarantee the self-affine transversality condition (see Lemma \ref{lem-tran} and Lemma \ref{lem-2.5}). As pointed in \cite[Proposition 9.4.1]{BSS2021},  the second assumption is sufficient for the self-affine transversality condition.

Under one of the above norm assumptions,  for a given Borel set $E\subset \Sigma$ and a Borel probability measure $\mu$ on $\Sigma$,  although we have provided the formal formulas for the various dimensions of the projections of $E$ and $\mu$ under $\pi^\ba$ for almost all $\ba$, however it looks difficult to calculate or estimate these dimensions directly. For instance, it seems hard to find an easily checked necessary and sufficient condition on $E$ such that $\dim_{\rm H} \pi^\ba(E)=\overline{\dim}_{\rm B}\pi^\ba(E)$ for almost all $\ba$, although we have the following theoretic criterion by Theorem \ref{thm-main2}(ii): either $\dim_{\mathcal M}E>d$ or $\dim_{\mathcal M}E=\overline{\dim}_C\overline{E}$.    Meanwhile since these formal formulas depend on the matrices $T_1,\ldots, T_m$,  it arise a natural question when they continuously depend on these matrices.  As a partial result,  the affinity dimension $\dim_{\mathcal M}\Sigma$ continuously depends on these matrices \cite{FengShmerkin2014, Morris2016}. It would be interesting to further investigate the above continuity problem in the more general case.

As mentioned in Section \ref{S1}, our dimensional results on projected sets and measures on typical self-affine sets are analogous to the theorems on dimensions of orthogonal projections and images under factional Brownian motions. The proofs of these results use similar transversality type ideas and potential theoretic approaches, although the technical details are quite different. We remark that these constancy results also extend to dimensions of a general projection scheme introduced by Peres and Schlag \cite{PeresSchlag2000} under certain regularity and transversality assumptions. Moreover, projection theorems can be established for dimensions of projected sets and measures on certain translational families of self-conformal sets under mild assumptions.  The details will be given subsequently.

Recently  Burrel, Falconer and Fraser \cite{BFF2021} proved the constancy result for the intermediate dimensions under orthogonal projections. By adapting and extending the arguments in \cite{BFF2021} and the present paper, Zhou Feng obtained an analogue of this result in the typical self-affine setting \cite{ZhouFeng2022}.
 %

\bigskip

{\noindent \bf Acknowledgements}.
 The authors would like to thank Thomas Jordan for his permission to include Lemma \ref{LemJ} and its proof. They are grateful to Junjie Miao for  early discussions, and  to Julien Barral, Changhao Chen, Kenneth Falconer and Zhou Feng for helpful comments. This research was partially supported by the General Research Fund grants (projects CUHK14301017, CUHK14303021) from the
Hong Kong Research Grant Council, and by a direct grant for research from the Chinese University
of Hong Kong.


\begin{thebibliography}{10}

\bibitem{BHR19}
Bal\'{a}zs B\'{a}r\'{a}ny, Michael Hochman, and Ariel Rapaport.
\newblock Hausdorff dimension of planar self-affine sets and measures.
\newblock {\em Invent. Math.}, 216(3):601--659, 2019.

\bibitem{BaKa17}
Bal\'{a}zs B\'{a}r\'{a}ny and Antti K\"{a}enm\"{a}ki.
\newblock Ledrappier-{Y}oung formula and exact dimensionality of self-affine
  measures.
\newblock {\em Adv. Math.}, 318:88--129, 2017.

\bibitem{BSS2021}
Bal\'{a}zs B\'{a}r\'{a}ny, K\'{a}roly Simon, and Boris Solomyak.
\newblock Self-similar and self-affine sets and measures.
\newblock 2022.
\newblock Book in preparation.

\bibitem{BaFe13}
Julien Barral and De-Jun Feng.
\newblock Multifractal formalism for almost all self-affine measures.
\newblock {\em Comm. Math. Phys.}, 318(2):473--504, 2013.

\bibitem{BishopPeres17}
Christopher~J. Bishop and Yuval Peres.
\newblock {\em Fractals in probability and analysis}, volume 162 of {\em
  Cambridge Studies in Advanced Mathematics}.
\newblock Cambridge University Press, Cambridge, 2017.

\bibitem{BFF2021}
Stuart~A. Burrell, Kenneth~J. Falconer, and Jonathan~M. Fraser.
\newblock Projection theorems for intermediate dimensions.
\newblock {\em J. Fractal Geom.}, 8(2):95--116, 2021.

\bibitem{Car67}
Lennart Carleson.
\newblock {\em Selected problems on exceptional sets.}
\newblock D. Van Nostrand Co., Inc., Princeton, N.J.-Toronto, Ont.-London, 1967.

\bibitem{Fal86}
Kenneth~J. Falconer.
\newblock {\em The geometry of fractal sets}, volume~85 of {\em Cambridge
  Tracts in Mathematics}.
\newblock Cambridge University Press, Cambridge, 1986.

\bibitem{Fal88}
Kenneth~J. Falconer.
\newblock The {H}ausdorff dimension of self-affine fractals.
\newblock {\em Math. Proc. Cambridge Philos. Soc.}, 103(2):339--350, 1988.

\bibitem{Fal-technique}
Kenneth~J. Falconer.
\newblock {\em Techniques in fractal geometry}.
\newblock John Wiley \& Sons, Ltd., Chichester, 1997.

\bibitem{Fal99}
Kenneth~J. Falconer.
\newblock Generalized dimensions of measures on self-affine sets.
\newblock {\em Nonlinearity}, 12(4):877--891, 1999.

\bibitem{Fal03}
Kenneth~J. Falconer.
\newblock {\em Fractal geometry}.
\newblock John Wiley \& Sons, Inc., Hoboken, NJ, second edition, 2003.

\bibitem{Fal10}
Kenneth~J. Falconer.
\newblock Generalized dimensions of measures on almost self-affine sets.
\newblock {\em Nonlinearity}, 23(5):1047--1069, 2010.

\bibitem{Falconer2020}
Kenneth~J. Falconer.
\newblock A capacity approach to box and packing dimensions of projections and
  other images.
\newblock In {\em Analysis, Probability and Mathematical Physics on Fractals},
  pages 11--20. World Scientific, 2020.

\bibitem{Fal21}
Kenneth~J. Falconer.
\newblock A capacity approach to box and packing dimensions of projections of
  sets and exceptional directions.
\newblock {\em J. Fractal Geom.}, 8(1):1--26, 2021.

\bibitem{FaHo97}
Kenneth~J. Falconer and J.~D. Howroyd.
\newblock Packing dimensions of projections and dimension profiles.
\newblock {\em Math. Proc. Cambridge Philos. Soc.}, 121(2):269--286, 1997.

\bibitem{FaMa96}
Kenneth~J. Falconer and Pertti Mattila.
\newblock The packing dimension of projections and sections of measures.
\newblock {\em Math. Proc. Cambridge Philos. Soc.}, 119(4):695--713, 1996.

\bibitem{Fami08}
Kenneth~J. Falconer and Jun Miao.
\newblock Exceptional sets for self-affine fractals.
\newblock {\em Math. Proc. Cambridge Philos. Soc.}, 145(3):669--684, 2008.

\bibitem{FaON99}
Kenneth~J. Falconer and T.~C. O'Neil.
\newblock Convolutions and the geometry of multifractal measures.
\newblock {\em Math. Nachr.}, 204:61--82, 1999.

\bibitem{FLR02}
Ai-Hua Fan, Ka-Sing Lau, and Hui Rao.
\newblock Relationships between different dimensions of a measure.
\newblock {\em Monatsh. Math.}, 135(3):191--201, 2002.

\bibitem{Fen19}
De-Jun Feng.
\newblock Dimension of invariant measures for affine iterated function systems.
\newblock {\em Duke Math. J.}, 172(4):701--774, 2023


\bibitem{FeHu09}
De-Jun Feng and Huyi Hu.
\newblock Dimension theory of iterated function systems.
\newblock {\em Comm. Pure Appl. Math.}, 62(11):1435--1500, 2009.

\bibitem{FengShmerkin2014}
De-Jun Feng and Pablo Shmerkin.
\newblock Non-conformal repellers and the continuity of pressure for matrix
  cocycles.
\newblock {\em Geom. Funct. Anal.}, 24(4):1101--1128, 2014.

\bibitem{ZhouFeng2022}
Zhou Feng.
\newblock Intermediate dimensions under self-affine codings.
\newblock {\em arXiv:2305.06991},  2023.

\bibitem{Fuglede1960}
Bent Fuglede.
\newblock On the theory of potentials in locally compact spaces.
\newblock {\em Acta Math.}, 103:139--215, 1960.

\bibitem{Haase1986}
H.~Haase.
\newblock Non-{$\sigma$}-finite sets for packing measure.
\newblock {\em Mathematika}, 33(1):129--136, 1986.

\bibitem{HocRap22}
Michael Hochman and Ariel Rapaport.
\newblock Hausdorff dimension of planar self-affine sets and measures with
  overlaps.
\newblock {\em J. Eur. Math. Soc. (JEMS)}, 24(7):2361--2441, 2022.

\bibitem{HoJo91}
Roger~A. Horn and Charles~R. Johnson.
\newblock {\em Topics in matrix analysis}.
\newblock Cambridge University Press, Cambridge, 1991.

\bibitem{How01}
J.~D. Howroyd.
\newblock Box and packing dimensions of projections and dimension profiles.
\newblock {\em Math. Proc. Cambridge Philos. Soc.}, 130(1):135--160, 2001.

\bibitem{HuTa94}
Xiaoyu Hu and S.~James Taylor.
\newblock Fractal properties of products and projections of measures in {${\bf
  R}^d$}.
\newblock {\em Math. Proc. Cambridge Philos. Soc.}, 115(3):527--544, 1994.

\bibitem{HK97}
Brian~R. Hunt and Vadim~Yu. Kaloshin.
\newblock How projections affect the dimension spectrum of fractal measures.
\newblock {\em Nonlinearity}, 10(5):1031--1046, 1997.

\bibitem{Hut81}
John~E. Hutchinson.
\newblock Fractals and self-similarity.
\newblock {\em Indiana Univ. Math. J.}, 30(5):713--747, 1981.

\bibitem{JJKKSS14}
Esa J\"{a}rvenp\"{a}\"{a}, Maarit J\"{a}rvenp\"{a}\"{a}, Antti
  K\"{a}enm\"{a}ki, Henna Koivusalo, \"{O}rjan Stenflo, and Ville Suomala.
\newblock Dimensions of random affine code tree fractals.
\newblock {\em Ergodic Theory Dynam. Systems}, 34(3):854--875, 2014.

\bibitem{JJLS2016}
Esa J\"{a}rvenp\"{a}\"{a}, Maarit J\"{a}rvenp\"{a}\"{a}, Bing Li, and \"{O}rjan
  Stenflo.
\newblock Random affine code tree fractals and {F}alconer-{S}loan condition.
\newblock {\em Ergodic Theory Dynam. Systems}, 36(5):1516--1533, 2016.

\bibitem{JJWW2017}
Esa J\"{a}rvenp\"{a}\"{a}, Maarit J\"{a}rvenp\"{a}\"{a}, Meng Wu, and Wen Wu.
\newblock Random affine code tree fractals: {H}ausdorff and affinity dimensions
  and pressure.
\newblock {\em Math. Proc. Cambridge Philos. Soc.}, 162(2):367--382, 2017.

\bibitem{Jor11}
Thomas Jordan.
\newblock Private communication.
\newblock 2011.

\bibitem{JPS07}
Thomas Jordan, Mark Pollicott, and K\'aroly Simon.
\newblock Hausdorff dimension for randomly perturbed self affine attractors.
\newblock {\em Comm. Math. Phys.}, 270(2):519--544, 2007.

\bibitem{Kae04}
Antti K\"aenm\"aki.
\newblock On natural invariant measures on generalised iterated function
  systems.
\newblock {\em Ann. Acad. Sci. Fenn. Math.}, 29(2):419--458, 2004.

\bibitem{KaVi10}
Antti K\"{a}enm\"{a}ki and Markku Vilppolainen.
\newblock Dimension and measures on sub-self-affine sets.
\newblock {\em Monatsh. Math.}, 161(3):271--293, 2010.

\bibitem{Kahane1985}
Jean-Pierre Kahane.
\newblock {\em Some random series of functions}, volume~5 of {\em Cambridge
  Studies in Advanced Mathematics}.
\newblock Cambridge University Press, Cambridge, second edition, 1985.

\bibitem{Mar54}
J.~M. Marstrand.
\newblock Some fundamental geometrical properties of plane sets of fractional
  dimensions.
\newblock {\em Proc. London Math. Soc. (3)}, 4:257--302, 1954.

\bibitem{Mat75}
Pertti Mattila.
\newblock Hausdorff dimension, orthogonal projections and intersections with
  planes.
\newblock {\em Ann. Acad. Sci. Fenn. Ser. A I Math.}, 1(2):227--244, 1975.

\bibitem{Mattila1995}
Pertti Mattila.
\newblock {\em Geometry of sets and measures in {E}uclidean spaces}.
\newblock Cambridge University Press, Cambridge, 1995.

\bibitem{MaMa97}
Pertti Mattila and R.~Daniel Mauldin.
\newblock Measure and dimension functions: measurability and densities.
\newblock {\em Math. Proc. Cambridge Philos. Soc.}, 121(1):81--100, 1997.

\bibitem{Morris2016}
Ian~D. Morris.
\newblock An inequality for the matrix pressure function and applications.
\newblock {\em Adv. Math.}, 302:280--308, 2016.

\bibitem{PeresSchlag2000}
Yuval Peres and Wilhelm Schlag.
\newblock Smoothness of projections, {B}ernoulli convolutions, and the
  dimension of exceptions.
\newblock {\em Duke Math. J.}, 102(2):193--251, 2000.

\bibitem{Rogers70}
C.~A. Rogers.
\newblock {\em Hausdorff measures}.
\newblock Cambridge University Press, London-New York, 1970.

\bibitem{Ros14}
Eino Rossi.
\newblock Local dimensions of measures on infinitely generated self-affine
  sets.
\newblock {\em J. Math. Anal. Appl.}, 413(2):1030--1039, 2014.

\bibitem{SY97}
Timothy~D. Sauer and James~A. Yorke.
\newblock Are the dimensions of a set and its image equal under typical smooth
  functions?
\newblock {\em Ergodic Theory Dynam. Systems}, 17(4):941--956, 1997.

\bibitem{ShiehXiao2010}
Narn-Rueih Shieh and Yimin Xiao.
\newblock Hausdorff and packing dimensions of the images of random fields.
\newblock {\em Bernoulli}, 16(4):926--952, 2010.

\bibitem{Sol98}
Boris Solomyak.
\newblock Measure and dimension for some fractal families.
\newblock {\em Math. Proc. Cambridge Philos. Soc.}, 124(3):531--546, 1998.

\bibitem{Xiao1997}
Yimin Xiao.
\newblock Packing dimension of the image of fractional {B}rownian motion.
\newblock {\em Statist. Probab. Lett.}, 33(4):379--387, 1997.

\bibitem{Young1982}
Lai~Sang Young.
\newblock Dimension, entropy and {L}yapunov exponents.
\newblock {\em Ergodic Theory Dynam. Systems}, 2(1):109--124, 1982.

\end{thebibliography}
\end{document}